\definecolor{DarkPurple}{RGB}{88, 41, 123}
\newcommand*\leftdash{\rotatebox[origin=c]{-45}{$\dabar@\dabar@\dabar@$}}
\newcommand*\rightdash{\rotatebox[origin=c]{45}{$\dabar@\dabar@\dabar@$}}
\newcommand{\wqw}[3]{{#1}\leftdash{#2}\rightdash{#3}}
\newcommand{\qw}[2]{{#1}\rightdash{#2}}
\newtheorem{thm}{Theorem}[section]
\newtheorem{theorem}[thm]{Theorem}
\newtheorem{corollary}[thm]{Corollary}
\newtheorem{observation}[thm]{Observation}
\newtheorem{lemma}[thm]{Lemma}
\newtheorem{prop}[thm]{Proposition}
\newtheorem{proposition}[thm]{Proposition}
\newtheorem{conj}[thm]{Conjecture}
\newtheorem{claim}[thm]{Claim}
\theoremstyle{definition}
\newtheorem{definition}[thm]{Definition}
\newtheorem{example}[thm]{Example}
\newtheorem{remark}[thm]{Remark}
\newcommand{\gmod}{\mathfrak{g}\mathrm{-mod}}
\newcommand{\Afmod}{\mathcal{A}_f}
\newcommand{\KL}{\mathrm{KL}_{\mathcal{O}}}
\newcommand{\BGGO}{\mathcal{O}_{\hat{0}}}
\renewcommand{\O}{\mathcal{O}_{0}}
\newcommand{\tO}{\tilde{\mathcal{O}}_{0}}
\newcommand{\Da}{\mathcal{D}^{\mathrm{alg}}}
\newcommand{\Dg}{\mathcal{D}^{\mathrm{geom}}}
\title[Weight modules and gluing of sheaves on the flag variety]{Weight modules and \\gluing of sheaves on the flag variety}
\author{Pablo Boixeda Alvarez and Calder Morton-Ferguson}
\date{\today}
\begin{document}

\begin{abstract}
    We study a natural enlargement of the BGG Category $\mathcal{O}$ for a semisimple Lie algebra: the category of weight modules with trivial central character and finite-dimensional weight spaces supported on the root lattice. We give a geometric realization of this category as unipotently monodromic sheaves on the flag variety satisfying a singular support condition. We then explain that a derived version $\mathcal{D}$ of this category is Koszul dual to the Kazhdan--Laumon Category $\mathcal{O}$, a different enlargement of the BGG Category $\mathcal{O}$ obtained from a gluing construction for sheaves on the flag variety. This characterizes $\mathcal{D}$ as the category of algebras over a natural monad on a direct sum of copies of the derived Category $\mathcal{O}$. These results give new interpretations of classical algebraic constructions of weight modules over semisimple Lie algebras due to Fernando and Mathieu. We also conjecture that our results fit naturally into a proposed Koszul duality relating the small quantum group and the semi-infinite flag variety to the geometry of affine Springer fibers.
\end{abstract}

\maketitle

\section{Introduction}

The BGG Category $\mathcal{O}$ for a semisimple Lie algebra $\mathfrak{g}$ is one of the central objects of study in Lie theory, and its rich structure has been continually studied from algebraic and geometric perspectives over the past 50 years since its definition in \cite{BGG}. The principal goal of the present paper is to compare two natural enlargements of Category $\mathcal{O}$ which arise from gluing together a collection of copies of $\mathcal{O}$ indexed by the Weyl group $W$.

The idea to compare these two abelian categories arose from the observation that they have the same number of simple objects. Although upon first reflection it is tempting to conjecture an equivalence between these categories, in the present paper, we will explain that instead, these categories are \emph{Koszul dual} to one another. To make this claim precise, we will establish a Koszul duality equivalence between derived versions of these categories. 

We expect the approach in the present paper to have interesting applications and connection to other work. Most directly, we will explain that our results allow for a geometric interpretation of some of the classical constructions for weight modules, studied purely algebraically in \cite{Fernando} and \cite{Mathieu}, and we expect that further results in this direction, including a geometric reframing of the classification theorems in loc.\ cit., may follow from this approach. Further, we will explain the relationship between the Koszul duality illustrated in the present paper and the Koszul duality proposed in \cite{BBAMY} between modules over the small quantum group (alternatively, perverse sheaves on the semi-infinite flag variety) and microlocal sheaves on certain affine Springer fibers.

\subsection{Two categories}

We now introduce these two categories. The first, which we call the category of \emph{finite weight modules}, proceeds from the following basic procedure. In Definition \ref{def:cato}, we will recall that the principal block $\mathcal{O}_0$ of Category $\mathcal{O}$ is defined by a semisimplicity condition on the action of the Cartan subalgebra $\mathfrak{h}$ as well as a local finiteness condition on the nilpotent radical $\mathfrak{n}$ of a Borel subalgebra $\mathfrak{b}$ which contains $\mathfrak{h}$. (The precise definition which we will use is that in Definition \ref{def:catovariant}.) This definition therefore involves a choice of one of the $W$-many Borel subalgebras containing $\mathfrak{h}$, which we label as $\mathfrak{b}^w$ for $w \in W$. We let $\mathcal{O}_0^w$, for $w \in W$, be the resulting $W$-many versions of Category $\mathcal{O}$. We can then consider the smallest extension-closed subcategory containing all of these versions at once.
\begin{definition}
    The category $\Afmod$ of \emph{finite weight modules} is the Serre subcategory of $\gmod$ generated under extensions by
    \[\bigcup_{w \in W} \mathcal{O}_0^w \subset \gmod.\]
\end{definition}

The name we give $\Afmod$ in this definition is justified by the following result, which we prove in Corollary \ref{cor:afdefs}, which gives an even more natural description of $\Afmod$ in terms of some basic axioms similar to those used in the definition of Category $\mathcal{O}$ but now independent of any choice of Borel.

\begin{proposition}\label{prop:introafdef}
    The category $\Afmod$ is the full subcategory of modules $M \in \gmod$ satisfying the following conditions.
    \begin{enumerate}
        \item $M$ is a finitely-generated $\mathcal{U}(\mathfrak{g})$-module.
        \item The center $Z(\mathfrak{g}) \subset \mathcal{U}(\mathfrak{g})$ acts trivially on $M$.
        \item $M$ has a decomposition $$M = \bigoplus_{\lambda \in Q} M_\lambda$$
        on which $\mathfrak{h}$ acts locally finitely with generalized eigenvalue $\lambda$ on $M_\lambda$.
        \item Each weight space $M_\lambda$ is finite-dimensional.
    \end{enumerate}
\end{proposition}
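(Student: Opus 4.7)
The plan is to prove each inclusion separately between $\Afmod$ and the full subcategory $\mathcal{B}$ of $\gmod$ cut out by axioms (1)--(4).

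For the inclusion $\Afmod \subseteq \mathcal{B}$, I would first verify directly that $\mathcal{O}_0^w \subseteq \mathcal{B}$ for each $w$. This amounts to standard properties of the principal block of Category $\mathcal{O}$: objects are finitely generated by definition, the center acts by the trivial central character on the principal block, the weight decomposition with weights in $Q$ follows from the fact that all simples $L(w \cdot 0)$ have highest weight in $W \cdot 0 \subset Q$ and the $\mathfrak{n}^w$-action only lowers weights by elements of $Q$, and finite-dimensionality of weight spaces is classical. Then I would check that $\mathcal{B}$ is a Serre subcategory: conditions (2), (3), and (4) are manifestly stable under subobjects, quotients, and extensions, while for (1), stability under quotients and extensions is automatic, and stability under subobjects can be reduced to a finite-length argument for objects of $\mathcal{B}$ — using that for such $M$, the Harish-Chandra-type bound coming from trivial central character together with finite-dimensionality of weight spaces constrains the module to be of finite length.

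For the reverse inclusion $\mathcal{B} \subseteq \Afmod$, the first reduction is that since every object of $\mathcal{B}$ has finite length, it suffices to show each simple $L \in \mathcal{B}$ lies in some $\mathcal{O}_0^w$. Here I would invoke the Fernando--Mathieu classification of simple weight modules with finite-dimensional weight spaces: any such $L$ is either a highest weight module with respect to some Borel $\mathfrak{b}^w \supset \mathfrak{h}$, or $L$ is cuspidal, meaning every root space acts injectively. In the former case, the trivial central character places $L$ into $\mathcal{O}_0^w$ directly. The latter case must be excluded by the remaining hypotheses.

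The main obstacle is excluding the cuspidal case. For $L$ cuspidal, Mathieu's classification restricts the type to $A$ or $C$ and presents $L$ as a subquotient of a twist of a semisimple coherent family, whose support is a single coset $\lambda + Q \subset \mathfrak{h}^*$. The condition that weights of $L$ lie in $Q$ forces $\lambda \in Q$, but combined with trivial central character this can be ruled out by a direct computation in the Casimir. The toy case is $\mathfrak{sl}_2$: a cuspidal module has $e$ and $f$ acting injectively, but on any weight vector $v_0$ of weight $0 \in Q$, triviality of the central character forces the Casimir to vanish, and after reducing the Casimir expression using $h v_0 = 0$ one concludes $fv_0 = 0$, contradicting injectivity of $f$; the argument for higher rank reduces to this computation along each $\mathfrak{sl}_2$-triple. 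Once the cuspidal case is ruled out, the classification places $L$ into some $\mathcal{O}_0^w$, completing the proof.
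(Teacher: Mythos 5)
Your overall architecture (check the axiomatic category is Serre and contains each $\mathcal{O}_0^w$, then reduce to simple objects and invoke Fernando--Mathieu) matches the paper, and the $\mathfrak{sl}_2$ Casimir computation you give is correct in rank one. But the way you exclude the non-highest-weight simples has two genuine gaps. First, the dichotomy ``$L$ is highest weight with respect to some $\mathfrak{b}^w \supset \mathfrak{h}$, or $L$ is cuspidal'' is false in rank $\geq 2$: Fernando's theorem presents $L$ as the simple quotient of $\mathrm{Ind}_{\mathfrak{p}}^{\mathfrak{g}}(C)$ with $C$ cuspidal over the Levi $\mathfrak{l}$ of $\mathfrak{p}$, and the intermediate case $\mathfrak{h} \subsetneq \mathfrak{l} \subsetneq \mathfrak{g}$ (e.g.\ induction from a cuspidal module over an $\mathfrak{sl}_2$-Levi in $\mathfrak{sl}_3$) produces simples that are neither highest weight for any Borel containing $\mathfrak{h}$ nor cuspidal over $\mathfrak{g}$. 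The paper's proof is organized precisely around this point: it transfers the trivial central character and root-lattice support conditions to the Levi (via the Harish--Chandra homomorphism, landing in $Q_{\mathfrak{l}}$) and then rules out cuspidal modules over \emph{any} Levi $\mathfrak{l} \supsetneq \mathfrak{h}$ with those induced conditions. Your proposal never addresses this case.

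Second, the proposed higher-rank exclusion of the cuspidal case ``reduces to the $\mathfrak{sl}_2$ computation along each $\mathfrak{sl}_2$-triple'' does not go through. The Casimir of the subalgebra $\mathfrak{s}_\alpha = \langle e_\alpha, h_\alpha, f_\alpha\rangle$ is not central in $\mathcal{U}(\mathfrak{g})$ (or in $\mathcal{U}(\mathfrak{l})$), so triviality of the central character of $\mathfrak{g}$ does not force it to act by zero on a weight-$0$ vector $v_0$; the restriction of a simple $\mathfrak{g}$-module to $\mathfrak{s}_\alpha$ mixes many $\mathfrak{s}_\alpha$-central characters. Using the quadratic Casimir of $\mathfrak{g}$ itself only yields the single relation $\sum_{\alpha>0}(e_\alpha f_\alpha + f_\alpha e_\alpha)v_0 = 0$, not the vanishing of each summand, so no contradiction with injectivity of the root vectors results. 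The paper avoids this entirely by a different mechanism: given a putative cuspidal module $L'$ with the stated conditions, it takes Mathieu's unique semisimple coherent extension $M$ with $M[0]\cong L'$, uses Mathieu's Propositions 4.8 and 6.2 to produce an infinite-dimensional admissible highest weight constituent $L(\lambda)$ whose central character forces $\lambda$ into the $(W_{\mathfrak{l}},\cdot)$-orbit of $0$, hence $\lambda \equiv 0 \bmod Q_{\mathfrak{l}}$, so that $L(\lambda)$ would be a composition factor of $M[0]\cong L'$ --- contradicting cuspidality. To repair your argument you would need either this coherent-family argument or some genuinely new computation pinning down the action of the relevant non-central elements; the $\mathfrak{sl}_2$-triple reduction as stated is not such a computation.
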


The second category to be considered, which we compare with $\Afmod$ in the present paper, is the \emph{Kazhdan--Laumon Category $\mathcal{O}$}. In 1988, Kazhdan and Laumon introduced a formalism for gluing a collection of abelian categories equipped with functors called gluing functors. Their main example involved gluing $W$-many copies of the category of perverse sheaves on the basic affine space $G/U$ associated to a semisimple algebraic group $G$ along gluing functors called symplectic Fourier transforms. In \cite{KLCatO}, this gluing construction was studied after restricting to the subcategory $\mathrm{Perv}_B(G/U) \subset \mathrm{Perv}(G/U)$, which (as we explain in Section \ref{sec:bb}) can be viewed as a geometric model for $\mathcal{O}_0$. It it explained in loc.\ cit. (and was established even earlier, in \cite{P}) that after restricting to this category, the gluing endofunctors of $\mathrm{Perv}_B(G/U)$ which appear in Kazhdan--Laumon's constructed are given by convolution with certain standard and costandard sheaves (these convolutions are well-studied and are sometimes referred to as \emph{intertwining functors} in the literature). For the sake of the introduction only, we denote the corresponding endofunctors of $\mathcal{O}_0$ by $I_w$ for $w \in W$; they are explained more in Section \ref{sec:klgluing}. The result is the following category.

\begin{definition}
    The \emph{Kazhdan--Laumon category $\mathcal{O}$}, which we call $\mathrm{KL}_{\mathcal{O}}$ is the category of tuples $(A_w)_{w \in W}$ for $A_w \in \mathcal{O}_0$ equipped with morphisms
    \begin{align*}
        I_y(A_w) \to A_{yw}
    \end{align*}
    for each $y \in W$ satisfying the compatibilies in Definition \ref{def:gluedcat}.
\end{definition}

The starting point for why one might expect a meaningful comparison between these categories is a purely combinatorial observation. In the case $\mathfrak{g} = \mathfrak{sl}_2$, the category $\mathcal{O}_0$ (which depends on the choice of a Borel subalgebra containing $\mathfrak{h}$ which here we call $\mathfrak{b}$) has two simple objects: $L_{-2}^{\mathfrak{b}}$ (the Verma module with highest weight $-2$, which can be written as $\mathrm{Ind}_{\mathfrak{b}}^{\mathfrak{g}}\mathbb{C}_{-2}$) and $L_0$ (the trivial representation). One can show that there are three simple objects in $\Afmod$:
\begin{align*}
    L_{-2}^{\mathfrak{b}}, \quad L_0, \quad L_{2}^{\mathfrak{b}_-},
\end{align*}
where $L_{2}^{\mathfrak{b}_-} = \mathrm{Ind}_{\mathfrak{b}_-}^{\mathfrak{g}} \mathbb{C}_2$ for $\mathfrak{b}_-$ the opposite Borel subalgebra. By comparison, it is explained in \cite{KLCatO} that when $\mathfrak{g} = \mathfrak{sl}_2$ case there are three simple objects in $\KL$, given as tuples by
\begin{align*}
    (L_{0}, 0), \quad (L_{-2}^{\mathfrak{b}}, L_{-2}^{\mathfrak{b}}), \quad (0, L_0).
\end{align*}
A similar count for $\mathcal{A}_f$ when $\mathfrak{g} = \mathfrak{sl}_3$ yields $19$ simple objects; similarly, $19$ simple objects for $\mathrm{KL}_{\mathcal{O}}$ are enumerated explicitly in \cite[Figure 1]{KLCatO}. This observation was first made informally by Ben Webster.
\begin{observation}[Webster, 2023]
    The number of simple objects in $\Afmod$ is equal to the number of simple objects in $\mathrm{KL}_{\mathcal{O}}$ when $\mathfrak{g} = \mathfrak{sl}_3$.
\end{observation}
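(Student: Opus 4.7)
The plan is to count simple objects on both sides for $\mathfrak{g} = \mathfrak{sl}_3$ and observe that both counts equal $19$. On the $\KL$ side, the $19$ simples are enumerated explicitly in \cite[Figure 1]{KLCatO}, so only the $\Afmod$ side requires work.

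For $\Afmod$, since it is the Serre subcategory generated by $\bigcup_w \mathcal{O}_0^w$, every simple of $\Afmod$ is already a simple of some $\mathcal{O}_0^w$. Each $\mathcal{O}_0^w$ contains $|W| = 6$ simples, so $\bigsqcup_w \mathrm{Irr}(\mathcal{O}_0^w)$ has $36$ elements to sort modulo isomorphism. The organizing tool is Fernando's theorem: for each simple weight module $M$, the set $R(M) = \{\alpha \in \Phi : e_\alpha \text{ acts locally nilpotently on } M\}$ is a parabolic subset of $\Phi$, defining an associated parabolic $\mathfrak{p}(M) \supseteq \mathfrak{h}$, and $M$ lies in $\mathcal{O}_0^w$ exactly when $\mathfrak{b}^w \subseteq \mathfrak{p}(M)$. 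The parabolic subalgebras of $\mathfrak{sl}_3$ containing $\mathfrak{h}$ are $\mathfrak{g}$, the $6$ Borels, and $6$ maximal parabolics, each maximal parabolic containing exactly $2$ Borels. The $36$ pairs thus split into three groups: the trivial representation, with $\mathfrak{p}(M) = \mathfrak{g}$, contributing $1$ simple that lies in all $6$ of the $\mathcal{O}_0^w$; the antidominant simple Verma for each $\mathfrak{b}^w$, with $\mathfrak{p}(M) = \mathfrak{b}^w$, contributing $6$ simples each in one $\mathcal{O}_0^w$; and the $4 \cdot 6 = 24$ intermediate pairs with $\mathfrak{p}(M)$ a maximal parabolic.

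The main step will be to verify that each of the $6$ maximal parabolics is $\mathfrak{p}(M)$ for exactly $2$ simples of $\Afmod$, which yields $12$ distinct intermediate simples and the total $1 + 6 + 12 = 19$. For $\mathfrak{sl}_3$ this can be done by direct character computation: since all Kazhdan--Lusztig polynomials in type $A$ equal $1$, for each intermediate $x \in \{s_1, s_2, s_1 s_2, s_2 s_1\}$ the character $\mathrm{ch}\, L(x \cdot 0)$ equals $\sum_{y \geq x} (-1)^{\ell(y) - \ell(x)} \mathrm{ch}\, M(y \cdot 0)$, from which weight multiplicities are read off and local nilpotence of each $f_\alpha$ on every weight vector is checked. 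The pairing produced by this computation is $\{L(s_1 \cdot 0), L(s_1 s_2 \cdot 0)\}$ sharing the maximal parabolic whose Levi contains the $\mathfrak{sl}_2$ for $\alpha_2$, and symmetrically $\{L(s_2 \cdot 0), L(s_2 s_1 \cdot 0)\}$ sharing the other. Propagating by $W$-conjugation across the $6$ Borels then assigns exactly $2$ simples to each of the $6$ maximal parabolics and closes the count. The main calculational obstacle is this last character bookkeeping, which is elementary but a bit tedious; the cleaner alternative is to invoke the Fernando--Mathieu classification and count cuspidal simples over the Levis directly, but the case-by-case check is unavoidable.
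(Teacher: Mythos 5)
Your count is correct and lands on the same number ($19$) as the paper, but you organize the $\Afmod$ side differently. The paper treats the $\mathfrak{sl}_3$ case as an instance of its general count (Corollary \ref{cor:countsimples}): simples of $\Afmod$ form the $W$-orbit of the simples of $\mathcal{O}_0$, the stabilizer of $L_w$ is the parabolic subgroup $P(w)$ generated by the left ascents of $w$, and orbit--stabilizer gives $\sum_{w}|W/P(w)| = 1+3+3+3+3+6 = 19$; the $\KL$ side is cited from \cite[Figure 1]{KLCatO} exactly as you do. You instead stratify the simples by Fernando's associated parabolic $\mathfrak{p}(M)\supseteq\mathfrak{h}$, getting the breakdown $1+6+12$. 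These are two packagings of the same combinatorics: $P(w)$ is precisely the Weyl group of the Levi of $\mathfrak{p}(L_w)$, and your statement that the two intermediate simples sharing a maximal parabolic are $\{L(s_1\cdot 0), L(s_1s_2\cdot 0)\}$ and $\{L(s_2\cdot 0), L(s_2s_1\cdot 0)\}$ is exactly the condition $\ell(s_\alpha x)>\ell(x)$ defining $P(x)$. One advantage of your route: by using the introduction's definition of $\Afmod$ as the Serre subcategory generated by $\bigcup_w \mathcal{O}_0^w$, the statement that every simple lies in some $\mathcal{O}_0^w$ is immediate, whereas the paper's Proposition \ref{prop:afsimples} (needed for its axiomatic definition) invokes the Fernando--Mathieu classification to exclude cuspidal Levi support. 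Two small repairs: the blanket claim that all Kazhdan--Lusztig polynomials in type $A$ equal $1$ is false in general (it fails already for $S_4$), though it is true for $S_3$, which is all you use; and the character bookkeeping you describe is avoidable, since local $\mathfrak{p}_\alpha$-finiteness of the simple highest weight module $L(x\cdot 0)$ is equivalent to the integrality/ascent criterion $\ell(s_\alpha x)>\ell(x)$, which immediately yields your pairing without computing any weight multiplicities. Also note that the ``if'' direction of your criterion ($\mathfrak{b}^w\subseteq\mathfrak{p}(M)$ implies $M\in\mathcal{O}_0^w$) silently uses the standard Fernando--Kac lemma that the locally finite vectors form a submodule, together with the fact that simplicity supplies finite generation and the central character and root-lattice conditions; this is routine but should be said.
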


The present paper shows that this is not a coincidence, and the number of simple objects in these two categories is equal in any type. It was shown in \cite{KLCatO} that the number of simple objects in $\mathrm{KL}_{\mathcal{O}}$ is
    \begin{equation*}
\sum_{w \in W} |W/P(w)|
    \end{equation*}
where for any element $w\in W$, $P(w)$ is the standard parabolic subgroup of $W$ generated by all simple reflections $s$ with $\ell(sw) > \ell(w)$. We show in Corollary \ref{cor:countsimples} with a straightforward combinatorial proof that the same is true of $\Afmod$; this fact will also follow from the main result which we now explain.

In $\O$, one often considers \emph{tilting} objects; this definition can be adapted to provide a notion of tilting objects in $\KL$ as well. In \cite{BGS} (see also \cite{BY}), the \emph{Koszul duality} functor $\mathbb{K}$, an idempotent autoequivalence of $D^b(\O^{\mathrm{gr}})$, is introduced (where $\O^{\mathrm{gr}}$ is a technical variant of $\O$ consisting of objects of $\O$ equipped with a grading); this functor (following the normalizations in \cite{BY}) interchanges simple objects and tilting objects. In this paper we will explain examples where it is natural to give an explicit matching between the simple objects of $\Afmod$ and tilting objects of $\KL$, whereas attempting to directly give a bijection between the simple objects of these categories is much less natural.

This suggests that $\Afmod$ and $\KL$ are \emph{Koszul dual} rather than equivalent as abelian categories, and a precise formulation and proof of this fact comprises the main result of the present paper. Since Koszul duality takes place on the level of derived categories, we introduce in the present paper an appropriate derived version of $\Afmod$ which we call $\Da$ in addition to a category $\KL^\vee$ where the gluing functors on $D^b(\O)$ are twisted by $\mathbb{K}$, and then prove the following main result.
\begin{theorem}\label{thm:mainalg}
    There is an equivalence of dg categories
    \begin{align*}
        \Da \cong \KL^\vee
    \end{align*}
    between the derived version $\Da \subset D(\gmod)$ of the category $\Afmod$ of finite weight modules, and the \emph{Koszul dual Kazhdan--Laumon category $\O$} denoted $\KL^\vee$.
\end{theorem}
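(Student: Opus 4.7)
The plan is to realize both sides of the claimed equivalence as categories built by gluing $W$-many copies of $D^b(\O)$ along explicit functors, and then to show that Koszul duality $\mathbb{K}$ exchanges the two gluing recipes. More precisely, $\KL^\vee$ is \emph{by definition} modules (or algebras) over a monad $T^\vee = \bigoplus_{w} \mathbb{K} \circ I_w \circ \mathbb{K}^{-1}$ on $\bigoplus_{w \in W} D^b(\O)$, so the content of the theorem is to exhibit on $\Da$ a matching monadic presentation and to identify the monad.

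First I would establish the right monadic description of $\Da$. Using the geometric realization of $\Afmod$ as unipotently monodromic sheaves on $G/B$ with a singular-support condition (the result advertised in the abstract and which I will take as given from the earlier portion of the paper), the Bruhat stratification of the flag variety produces a semiorthogonal-type decomposition whose strata are each equivalent to $D^b(\O)$. The gluing data among the strata are given by !- and *-restrictions along the inclusions of Bruhat cells, and these collectively assemble into a monad $T^{\mathrm{geom}}$ acting on $\bigoplus_{w \in W} D^b(\O)$. Writing $\Dg$ for the resulting sheaf-theoretic category, the singular support condition ensures that the resulting lift of $\Afmod$ to its derived version $\Da$ is identified with $T^{\mathrm{geom}}$-algebras in $\bigoplus_w D^b(\O)$. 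This step realizes $\Da$ as a glued category in the sense of Kazhdan--Laumon, with \emph{explicit} gluing functors arising from Bruhat-cell restrictions.

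Next I would invoke Koszul duality in the form of \cite{BGS, BY}: the functor $\mathbb{K}$ intertwines the standard/costandard convolution functors (the intertwining functors $I_w$) with the shifted restriction/corestriction functors along Bruhat strata. Thus one has a natural isomorphism $\mathbb{K} \circ I_w \circ \mathbb{K}^{-1} \cong T^{\mathrm{geom}}_w$ (up to shifts absorbed into the grading). Applying $\bigoplus_w \mathbb{K}$ componentwise transports the monad $T^\vee$ governing $\KL^\vee$ to the monad $T^{\mathrm{geom}}$ governing $\Da$, and the induced equivalence on algebras yields the claimed equivalence of dg categories. This step must be carried out at the level of coherent monadic data, so one needs not only an isomorphism of underlying endofunctors but also compatibility with the monad structure (multiplication and unit), which follows from naturality of $\mathbb{K}$ combined with the compatibilities built into the Kazhdan--Laumon formalism.

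The main obstacle is the precise matching at the level of monads, rather than merely at the level of objects or underlying endofunctors. Concretely, one must verify that the composition laws $I_y \circ I_{y'} \to I_{yy'}$ (when $\ell(yy') = \ell(y) + \ell(y')$), which govern the compatibilities in Definition \ref{def:gluedcat}, are sent by $\mathbb{K}$ to the genuine composition of Bruhat restriction functors appearing in $T^{\mathrm{geom}}$. Equivalently, one must check that the homotopy coherence data on both sides line up. I would approach this by expressing both monads in a common geometric language on $G/B$ (via the geometric realization) and exhibit $\mathbb{K}$ as a morphism of the ambient $2$-categorical structures (Hecke category actions), at which point the matching of monads is automatic. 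The remaining bookkeeping amounts to verifying that the singular support cutoff defining $\Da$ matches, under Koszul duality, the condition that $\KL^\vee$ consists of \emph{all} coherent tuples rather than some subcategory thereof.
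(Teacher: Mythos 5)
There is a genuine gap, in fact two. First, your description of the sheaf-theoretic side is not the right gluing structure. The $W$-many copies of $D(\tO)$ out of which $\Dg$ (equivalently $\Da$) is built are \emph{not} the strata of the Bruhat stratification: each Bruhat cell contributes sheaves on an affine space, not a copy of $D^b(\O)$, and a stratification would give a recollement/semiorthogonal picture of a \emph{single} copy of $D_U(G/B)$, not of the larger category cut out by the singular support condition $Z$. In the paper the $w$-th copy corresponds to the choice of unipotent subgroup $U^w$, and the monad is $T^{\mathrm{geom}}=\beta\alpha$ built from the averaging adjunctions $(\pi^w_!,\pi_w^*,\pi^w_*)$ along $G/B\to U^w\backslash G/B$; its components are the functors $\Phi_w=\pi^w_*\pi_e^*$. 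Because you have the wrong monad on the geometric side, the asserted matching of monads with $\KL^\vee$ cannot be carried out as stated. Relatedly, your appeal to Koszul duality misstates what $\mathbb{K}$ does: in \cite{BY} it sends $\Delta_w,\nabla_w$ to the free-monodromic objects $\hat{\Delta}_w,\hat{\nabla}_w$ (so convolution functors go to monodromic convolution functors), not to ``restriction/corestriction along Bruhat strata.'' Moreover $\mathbb{K}$ lives in the mixed $\ell$-adic setting and is only used in the paper as motivation; over $\mathbb{C}$ the category $\KL^\vee$ is \emph{defined} directly by the gluing functors $\hat{\nabla}_w[-\ell(w)]*-$, so no conjugation by $\mathbb{K}$ is available or needed. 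The actual bridge between the averaging monad and these convolution functors is the concrete base-change computation $\pi^w_*\pi_e^*\cong\hat{\nabla}_w[-\ell(w)]*-$ (Proposition \ref{prop:convinterpretation}), which your outline has no substitute for.

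Second, even granting a monadic presentation, the substantive hypotheses of Barr--Beck--Lurie are exactly where the work lies, and your proposal does not address them. One needs (i) conservativity of the comparison functor $j^!$ (equivalently $\beta$), which in the paper rests on the nontrivial classification of simple finite weight modules with trivial central character (Proposition \ref{prop:afsimples}, via Fernando--Mathieu) to guarantee every object of $\Afmod$ is detected by some $\O^w$ (Lemma \ref{lem:abcons}, Corollary \ref{cor:conservative}); and (ii) colimit-preservation of $j^!$, proved by constructing a further right adjoint $j_*$ using the invertibility of convolution with $\hat{\Delta}_{w_0}$, $\hat{\nabla}_{w_0}$ and an extension-of-adjunctions argument to the cocompletion (Proposition \ref{prop:betagamma}, Corollary \ref{cor:jlowerstar}). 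Your phrase that ``the singular support condition ensures'' the identification of $\Da$ with algebras over the monad is precisely the statement that needs these two inputs; without them the monadicity claim is unsupported. A corrected route would follow the paper: identify $\Dg\cong\Da$ by localization, verify the Barr--Beck--Lurie hypotheses as above to get $\Dg\cong\bigl(\bigoplus_{w\in W}D_U(G/B)\bigr)^{T^{\mathrm{geom}}}$ (Proposition \ref{prop:monad}), and then observe that by Proposition \ref{prop:convinterpretation} and Proposition \ref{prop:klismonad} this category of algebras is by definition $\KL^\vee$ (Theorem \ref{thm:maingeom}).
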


Although thus far, we have spoken of $\Afmod$ purely algebraically as a subcategory of $\gmod$, many of the objects, categories, and functors we consider can be richly understood geometrically via the Beilinson--Bernstein localization equivalence. This equivalence, between $D$-modules on the flag variety $G/B$ and the category $\gmod_0$ of $\mathfrak{g}$-modules with trivial central character, will be used throughout the paper to geometrically recast some of the main algebraic objects introduced thus far. The geometric version of $\Afmod$ will be the category of $T$-monodromic perverse sheaves on $G/B$ with unipotent monodromy and an interesting singular support condition. The derived version of this category $\Dg$ is equivalent to $\Da$ under the localization equivalence, and we will obtain the following geometric version of Theorem \ref{thm:mainalg}.
\begin{theorem}\label{main:geom}
    There is an equivalence of dg categories
    \begin{align*}
        \Dg \cong \mathrm{KL}_{\mathcal{O}}^\vee,
    \end{align*}
    where $\Dg$ is a certain completion of the category of unipotently $T$-monodromic sheaves on $G/B$ with singular support lying in
    \begin{align*}
        Z & = \bigcup_{w, z} T_{Y_w^z}^*(G/B),
    \end{align*}
    where for $w, z \in W$, $Y_w^z$ is the $z$-translate of the Schubert variety $Y_w = \overline{BwB} \subset G/B$ under left multiplication, so $Y_w^z = \overline{zBwB}$.
\end{theorem}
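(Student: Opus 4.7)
The plan is to deduce Theorem \ref{main:geom} from the algebraic Theorem \ref{thm:mainalg} by applying Beilinson--Bernstein localization, so the main work is identifying the geometric category $\Dg$ with the algebraic category $\Da$ and verifying that the two descriptions match on the nose (including the singular support condition and the completion).

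First, I would recall the monodromic version of the Beilinson--Bernstein equivalence: the appropriate completion of the category of unipotently $T$-monodromic $D$-modules on $G/B$ is equivalent to the category $\gmod$ with trivial generalized central character and with $\mathfrak{h}$ acting locally finitely with generalized eigenvalues in the root lattice $Q$. Under this equivalence, a monodromic sheaf with (generalized) monodromy $e^{2\pi i \lambda}$ corresponds to a module on which $\mathfrak{h}$ acts with generalized eigenvalue $\lambda$; unipotent monodromy corresponds to $\lambda \in Q$. This identification already handles conditions (2) and (3) of Proposition \ref{prop:introafdef}; condition (4) (finite-dimensional weight spaces) together with (1) (finite generation) will correspond to the constructibility of the underlying sheaf, which is automatic in the monodromic setting after imposing the appropriate singular support bound.

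The key identification is that each summand $\mathcal{O}_0^w$ corresponds geometrically to the unipotently monodromic sheaves on $G/B$ whose singular support lies in $\bigcup_u T^*_{Y_u^w}(G/B)$. Indeed, $\mathcal{O}_0^w$ is defined by local finiteness for the nilpotent radical $\mathfrak{n}^w$ of the Borel $\mathfrak{b}^w = w \mathfrak{b} w^{-1}$; after localization, this translates to being constructible with respect to the $B^w = wBw^{-1}$-orbit stratification of $G/B$, whose strata are exactly the translated Bruhat cells $Y_u^w = \overline{wBu B}/B$. Taking the union over $w \in W$ of these conditions, the Serre subcategory generated by $\bigcup_{w \in W}\mathcal{O}_0^w$ corresponds geometrically to sheaves constructible with respect to the union stratification, equivalently to sheaves with singular support inside $Z = \bigcup_{w, z}T^*_{Y_w^z}(G/B)$. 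Extension closure on the algebraic side matches the passage from the strata-individual categories to the full category of $Z$-singular-support sheaves on the geometric side.

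Passing to the derived level, the category $\Dg$ is defined as the appropriate dg-completion of this subcategory of the derived monodromic category, and it remains to identify it with $\Da$. The equivalence $\Dg \cong \Da$ then follows formally from the above matching together with the localization equivalence applied at the derived/dg level. Combining this equivalence with the algebraic equivalence $\Da \cong \KL^\vee$ from Theorem \ref{thm:mainalg} yields the desired $\Dg \cong \KL^\vee$. The main obstacle I expect is purely technical: one must check that the completion procedure on the geometric side (handling unipotent but possibly nontrivial monodromy, i.e.\ the distinction between monodromic and free-monodromic sheaves) matches precisely with the derived enhancement $\Da$ on the algebraic side, and that the equivalence of singular-support conditions with the Serre subcategory description is compatible with the dg structures; once this is in place, the rest is a direct transport of Theorem \ref{thm:mainalg} through localization.
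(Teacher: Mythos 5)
Your overall strategy inverts the paper's logic, and in doing so it both risks circularity and skips the actual content of the proof. In the paper there is no independent algebraic proof of Theorem \ref{thm:mainalg} to transport through localization: both theorems are established simultaneously in Section \ref{sec:main} by a Barr--Beck--Lurie monadicity argument. Concretely, $\KL^\vee$ is by definition the glued category for the functors $\hat{\nabla}_w[-\ell(w)]*-$, which Proposition \ref{prop:klismonad} repackages as the category of algebras over the monad $T^{\mathrm{geom}}=\beta\alpha$ once one knows $\Phi_w=\pi_*^w\pi_e^*\cong \hat{\nabla}_w[-\ell(w)]*-$ (Proposition \ref{prop:convinterpretation}); the equivalence $\Dg\cong(\bigoplus_w D_U(G/B))^{T^{\mathrm{geom}}}$ then requires checking that $\beta$ is conservative (Corollary \ref{cor:conservative}, which rests on Proposition \ref{prop:afsimples}) and colimit-preserving, the latter via the explicit right adjoint $\gamma$ built from the same convolution identification (Proposition \ref{prop:betagamma}). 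None of this appears in your proposal, so even granting Theorem \ref{thm:mainalg} as a black box, the only content you supply is the identification $\Dg\cong\Da$ --- and that is exactly where your sketch has a genuine gap.

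The gap is in your claim that unipotently monodromic sheaves with singular support in $Z$ are ``equivalently'' the sheaves constructible with respect to the union of the translated Bruhat stratifications, and that finite-dimensionality of weight spaces comes from constructibility. The translates $zBwB/B$ for varying $z$ do not form a stratification, and having singular support inside the union of conormals $Z$ is strictly stronger than constructibility with respect to any common refinement, so this equivalence is false as stated; likewise constructibility alone does not bound weight multiplicities. The paper's actual mechanism is different: Lemma \ref{lem:zunion} and Lemma \ref{lem:ssloc} identify $\mu^{-1}(Z_+)=Z$ for the moment map, and Proposition \ref{prop:ugt} shows that associated variety in $Z_+$ is equivalent to local finiteness of $\mathcal{U}(\mathfrak{g})^T$, hence to finite-dimensional weight spaces; this is what matches the singular support condition defining $\mathcal{B}_f$ with the axioms defining $\Afmod$. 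Moreover, the equivalence between the singular-support description and the ``generated under extensions by the $\O^w$'' description, which your union-of-conditions argument needs on both sides of localization, is Proposition \ref{prop:afsimples} / Corollary \ref{cor:afdefs}, whose proof invokes the Fernando--Mathieu classification of simple weight modules --- a nontrivial input your argument assumes silently. Finally, the cocompletion step you defer as ``purely technical'' is precisely where the paper needs Proposition \ref{prop:imageco} and the relative-adjunction argument of Proposition \ref{prop:betagamma}, so it cannot be waved away.
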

This geometric result allows us to study all of the objects considered up to this point using geometric tools. The category $\Dg$ can then be understood in two different ways: firstly, as the image under the localization functor of the category $\Da$ which arises naturally from finite weight modules as described. Secondly, we note that $\Dg$ and the Koszul duality perspective explored above can be understood in terms of microlocal sheaves on affine Springer fibers as follows.

\subsection{Connection to microlocal sheaves on affine Springer fibers}

We now explain a connection with the geometry of affine Springer fibers and the work in \cite{BBAMY}. In light of the Koszul duality to be described in \cite{BBAMY}, the equivalence in Theorem \ref{main:geom} should be understood as the Koszul dual of a functor from $\mathrm{KL}_{\mathcal{O}}$ to the semi-infinite flag variety constructed in \cite{KLCatO}. This is part of the original motivation of the present work, but is not directly necessary for our results, so we will leave out most of the details.

Denote by $\mathcal{F}\ell$ the affine flag variety of $G$ and write $P_-=\ker(G[t^{-1}]\rightarrow G)$. This group acts on $\mathcal{F}\ell$ such that there is an open inclusion $j : G/B\hookrightarrow P_-\backslash\mathcal{F}\ell$. For a regular semisimple $s\in\mathfrak{g}$ we can construct a character on $P_-$ using the group homomorphism $P_-\rightarrow\mathfrak{g}$ and pairing with $s$ under the Killing form; we denote this character by $\psi:P_-\rightarrow\mathbb{G}_a$.

In \cite{BBAMY}, the shifted Hamiltonian reduction $P_-\backslash\backslash\backslash_\psi\mathcal{F}\ell=:\mathcal{M}_\psi$ is constructed and a conical Lagrangian homeomorphic to the affine Springer fiber $\mathcal{F}\ell_{ts}$ is identified. The category $D_{(P_-,\psi)}(\mathcal{F}\ell)$ is then considered and a microlocalization functor
$$D_{(P_-,\psi)}(\mathcal{F}\ell)\rightarrow\mu \mathrm{Sh}(\mathcal{M}_\psi)$$
is defined. In particular we can consider the singular support of an object in $D_{(P_-,\psi)}(\mathcal{F}\ell)$ as a Lagrangian in $\mathcal{M}_\psi$. We denote by $\widetilde{\mathcal{D}}_\psi$ the full subcategory of $D_{(P_-,\psi)}(\mathcal{F}\ell)$ of objects whose singular support is contained in the Lagrangian $\mathcal{F}\ell_{ts}$ and further denote by $\mathcal{D}_\psi$ the full subcategory of objects whose singular support has finitely many components in $\mathcal{F}\ell_{ts}$. 

Denote by $\mathfrak{u}_q$ the small quantum group of the Langlands dual group $G^\vee$. We can consider the $\mathfrak{u}_q$-modules with a compatible grading by the character lattice and we will denote the category of these by $\overset{\bullet}{\mathfrak{u}} _q\mathrm{-mod}$. Denote by $\overset{\bullet}{\mathfrak{u}} _q\mathrm{-mod}_{\hat{0}}$ the principal block of this category.

The main result in \cite{BBAMY} is the following.
\begin{theorem}
    The category $\mathcal{D}_\psi$ is Koszul dual to the category of perfect complexes in $\overset{\bullet}{\mathfrak{u}} _q\mathrm{-mod}_{\hat{0}}$.
\end{theorem}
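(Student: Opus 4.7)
The plan is to establish the Koszul duality as the upgrade of an underlying derived equivalence between $\widetilde{\mathcal{D}}_\psi$ and (a suitable completion of) the derived category of $\overset{\bullet}{\mathfrak{u}}_q\mathrm{-mod}_{\hat{0}}$, using as a bridge the Arkhipov--Bezrukavnikov--Ginzburg and Bezrukavnikov--Finkelberg equivalences relating small quantum groups to sheaves on the affine flag variety, and then matching the singular-support condition defining $\mathcal{D}_\psi$ with perfectness on the quantum side.

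First I would identify $D_{(P_-,\psi)}(\mathcal{F}\ell)$ with a Whittaker-type sheaf category on the affine flag variety. Because $\psi:P_-\to\mathbb{G}_a$ is built from a regular semisimple element $s$ via the Killing form, the $(P_-,\psi)$-equivariance is a generalized Whittaker condition, and standard averaging/unipotent-reduction arguments should relate it to the Iwahori--Whittaker category on $\mathcal{F}\ell_G$. One then invokes the equivalence of Arkhipov--Bezrukavnikov--Ginzburg (and the refinements of Bezrukavnikov--Finkelberg) between this Iwahori--Whittaker category on $\mathcal{F}\ell_G$ and the derived principal block of the small quantum group of the Langlands dual $G^\vee$. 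The character-lattice grading $\bullet$ on the quantum side should correspond to a maximal-torus equivariance on the sheaf side.

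The heart of the argument is matching the geometric condition that defines $\mathcal{D}_\psi$ -- singular support lying in the conical Lagrangian $\mathcal{F}\ell_{ts}\subset\mathcal{M}_\psi$ with only finitely many components -- with the algebraic condition of being a perfect complex over the principal block of $\overset{\bullet}{\mathfrak{u}}_q$. The affine Springer fiber $\mathcal{F}\ell_{ts}$ plays the role of the characteristic variety, and its irreducible components index a natural family of generators on both sides. One expects compact generators on the quantum side (for instance, tilting objects, or lattice translates of the unit under the $\bullet$-grading) to correspond, under the equivalence constructed in the previous step, to sheaves whose microlocal support has only finitely many components; closing under finite colimits and Karoubi-completing then yields the equivalence of perfect/compact subcategories.

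Finally, the upgrade to a Koszul duality comes from equipping both sides with their natural mixed/graded structures -- on the sheaf side, via the Tate/Frobenius or mixed Hodge structure on $\mathcal{F}\ell$; on the quantum side, via the $\bullet$-grading by the character lattice -- and verifying the Koszul property by exhibiting a generator whose $\mathrm{Ext}$-algebra is Koszul, or equivalently by matching simples on one side with tilting objects on the other, in the manner of \cite{BGS}. The main obstacle is the singular-support-versus-perfectness matching: tracking how microlocalization on the shifted Hamiltonian reduction $\mathcal{M}_\psi$ interacts with the ABG/Bezrukavnikov--Finkelberg equivalence requires a careful local analysis at each component of $\mathcal{F}\ell_{ts}$, and this is where genuinely new ideas -- beyond direct application of prior equivalences -- are needed.
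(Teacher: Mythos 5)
The first thing to note is that this theorem is not proved in the paper at all: it is quoted verbatim as the main result of \cite{BBAMY}, and is used here only as motivation for the conjecture relating $j_!:\Dg\rightarrow \widetilde{\mathcal{D}}_\psi$ to the functor $\KL\rightarrow \overset{\bullet}{\mathfrak{u}}_q\mathrm{-mod}_{\hat{0}}$ of \cite[\S 7]{KLCatO}. So there is no in-paper argument to compare your proposal against; the actual proof lives in \cite{BBAMY}.

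Judged as a standalone argument, your outline has a genuine gap, and you name it yourself: the entire content of the theorem is precisely the matching of the microlocal condition defining $\mathcal{D}_\psi$ (singular support in the conical Lagrangian $\mathcal{F}\ell_{ts}\subset\mathcal{M}_\psi$ with finitely many components) with perfectness in $\overset{\bullet}{\mathfrak{u}}_q\mathrm{-mod}_{\hat{0}}$, together with the Koszul, rather than merely derived, nature of the comparison. Deferring that step to ``genuinely new ideas'' reduces the theorem to itself. In addition, the opening reduction is not available as stated: $P_-=\ker(G[t^{-1}]\rightarrow G)$ equipped with the character $\psi$ attached to a \emph{regular semisimple} element $s$ is not the Iwahori--Whittaker setup (a nondegenerate character of a pro-unipotent radical attached to a regular nilpotent), and there is no standard averaging argument identifying $D_{(P_-,\psi)}(\mathcal{F}\ell)$ with the Iwahori--Whittaker category that feeds into the Arkhipov--Bezrukavnikov--Ginzburg or Bezrukavnikov--Finkelberg equivalences. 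The relevant geometry in this equivariance pattern is the shifted Hamiltonian reduction $\mathcal{M}_\psi$ and the affine Springer fiber $\mathcal{F}\ell_{ts}$, which is exactly the structure those equivalences do not see; so the bridge on which your whole plan rests would itself require proof, and the subsequent steps (compact generators matching components of $\mathcal{F}\ell_{ts}$, the mixed upgrade to Koszul duality) remain unsubstantiated sketches rather than arguments.
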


Note that we have an open embedding $j:G/B\hookrightarrow P_-\backslash \mathcal{F}\ell$. It follows that we have an open embedding $T^*(G/B)\hookrightarrow \mathcal{M}_\psi$. Denote by $\Lambda=\bigcup_{z} T_{Y_1^z}^*(G/B)\subset T^*(G/B)$. In \cite{LusRemarks} a locally closed subvariety of $\mathcal{F}\ell_{ts}$ is identified with $\Lambda$ and it is compatible with the diagram

\[\begin{tikzcd}
	\Lambda & {\mathcal{F}\ell_{ts}} \\
	{T^*(G/B)} & {\mathcal{M}_\psi}
	\arrow[hook, from=1-1, to=1-2]
	\arrow[from=1-1, to=2-1]
	\arrow[from=1-2, to=2-2]
	\arrow[hook, from=2-1, to=2-2].
\end{tikzcd}\]

Further, there is an action of $W$ on $H_*(\mathcal{F}\ell)$ and we can extend the identification in \cite{LusRemarks} with an identification of a locally closed $Z=\bigcup_{w, z} T_{Y_w^z}^*(G/B)\hookrightarrow \mathcal{F}\ell_{ts}$. We can then check the following statement.
\begin{claim}
    The image of the functor $j_!:\Dg\rightarrow D_{(P_-,\psi)}(\mathcal{F}\ell)$ lies in $\widetilde{\mathcal{D}}_\psi$.
\end{claim}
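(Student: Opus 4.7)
The plan is to verify the singular support condition at the level of the Hamiltonian reduction $\mathcal{M}_\psi$, using the identification of $Z$ with a locally closed subvariety of $\mathcal{F}\ell_{ts}$ recalled in the paragraph preceding the claim. Since membership in $\widetilde{\mathcal{D}}_\psi$ is a purely microlocal condition, the task reduces to showing that $\mathrm{SS}(j_!F)\subset\mathcal{F}\ell_{ts}$ for every $F\in\Dg$, where the singular support is computed in $\mathcal{M}_\psi$ via the microlocalization of \cite{BBAMY}.

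I would split the analysis according to the open decomposition induced by $j$. Over the open locus $T^*(G/B)\hookrightarrow\mathcal{M}_\psi$, the functoriality of singular support under open $!$-extensions gives $\mathrm{SS}(j_!F)\cap T^*(G/B)=\mathrm{SS}(F)$, and by definition of $\Dg$ this is contained in $Z$, which by hypothesis embeds as a locally closed subvariety of $\mathcal{F}\ell_{ts}$. Over the closed complement, I would invoke the standard estimate $\mathrm{SS}(j_!F)\subset\overline{\mathrm{SS}(F)}$ (closure taken inside $\mathcal{M}_\psi$), together with possible boundary conormal contributions. Since $\mathcal{F}\ell_{ts}$ is a closed conical Lagrangian in $\mathcal{M}_\psi$, the closure of $Z$ remains inside $\mathcal{F}\ell_{ts}$, handling the first contribution. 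For the boundary conormal contributions, I would identify each with a component of $\mathcal{F}\ell_{ts}$ obtained from $\Lambda$ by the $W$-action on $H_*(\mathcal{F}\ell)$ that was used to extend $\Lambda$ to $Z$ in the first place; these are precisely the closures of $T^*_{Y_w^z}(G/B)$ meeting the boundary of $G/B$ in $P_-\backslash\mathcal{F}\ell$.

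The main obstacle is the second step: making the boundary analysis precise inside the shifted Hamiltonian reduction $\mathcal{M}_\psi$ rather than on the ordinary quotient $P_-\backslash\mathcal{F}\ell$. Concretely, one must verify that the closures of the conormals $T^*_{Y_w^z}(G/B)$, after being transported by the $W$-action used to extend $\Lambda$ to $Z$, land in the irreducible components of $\mathcal{F}\ell_{ts}$ that extend them locally. I expect this will require a stratum-by-stratum analysis, keeping careful track of the Iwahori stratification of $\mathcal{F}\ell$ and of the compatibility with the character $\psi$. A secondary check is that $j_!$ indeed respects the $(P_-,\psi)$-equivariant structure on the target; this should follow from the unipotent monodromy condition on $\Dg$ together with the pro-unipotent nature of the relevant part of $P_-$, but it merits explicit verification.
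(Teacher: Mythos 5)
The paper itself does not prove this Claim: it is stated in the motivational subsection on affine Springer fibers with the sentence ``We can then check the following statement,'' and the authors explicitly say they leave out the details since the claim is not needed for the main results. So there is no paper argument to compare against line by line; your outline does match the idea the paper gestures at (use the identification of $Z$ as a locally closed subvariety of $\mathcal{F}\ell_{ts}$ extending Lusztig's identification of $\Lambda$ via the $W$-action, together with compatibility with the open embedding $T^*(G/B)\hookrightarrow\mathcal{M}_\psi$). Your treatment of the open locus is fine: $\mathrm{SS}(j_!F)\cap T^*(G/B)=\mathrm{SS}(F)\subset Z\subset\mathcal{F}\ell_{ts}$, and closedness of $\mathcal{F}\ell_{ts}$ handles the closure of $Z$.

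However, as a proof the proposal has a genuine gap, and it is exactly the step you flag but do not carry out. First, the estimate you invoke for the boundary, $\mathrm{SS}(j_!F)\subset\overline{\mathrm{SS}(F)}$ ``together with possible boundary conormal contributions,'' is not a valid general bound for $!$-extension along an open embedding: the Kashiwara--Schapira estimate involves the sum operation combining \emph{limits} of covectors in $\mathrm{SS}(F)$ with conormal directions along the boundary, and the resulting covectors need not lie in $\overline{\mathrm{SS}(F)}$ nor be pure conormals to boundary strata. Whether these mixed boundary covectors land in $\mathcal{F}\ell_{ts}$ is precisely the nontrivial content of the Claim, and your proposal replaces it with an expectation (``I expect this will require a stratum-by-stratum analysis''), plus an identification of the boundary contributions with ``closures of $T^*_{Y_w^z}(G/B)$ meeting the boundary,'' which cannot be right as stated since those closures were already accounted for in your first contribution; the new components live over the complement of $G/B$ and must be compared with components of $\mathcal{F}\ell_{ts}$ by an actual argument. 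Second, the singular support relevant to $\widetilde{\mathcal{D}}_\psi$ is the one computed in $\mathcal{M}_\psi$ via the microlocalization of \cite{BBAMY}, not the naive singular support in $T^*\mathcal{F}\ell$; constructible-sheaf estimates on $\mathcal{F}\ell$ therefore have to be transported through the shifted Hamiltonian reduction, and it is the $(P_-,\psi)$-equivariance of $j_!F$ (which you defer to a ``secondary check'') that constrains the boundary covectors to lie in $\mathcal{M}_\psi$ in the first place. Until the boundary analysis and the equivariant/microlocal bookkeeping are actually performed, what you have is a correct plan---essentially the one the paper sketches---rather than a proof.
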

This can be thought of as a categorical interpretation of the geometric embedding in \cite{LusRemarks} together with the group action of $W$ on the categories involved. We can then state a conjecture relating Theorem \ref{main:geom} to the results in \cite{KLCatO}, which motivated our consideration of the category $\Dg$.

\begin{conj}
    The functor $j_!:\Dg\rightarrow \widetilde{\mathcal{D}}_\psi$ is Koszul dual to the functor $\mathrm{KL}_\mathcal{O}\rightarrow \overset{\bullet}{\mathfrak{u}} _q\mathrm{-mod}_{\hat{0}}$ constructed in \textup{\cite[\S 7]{KLCatO}}.
\end{conj}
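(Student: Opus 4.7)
The plan is to combine Theorem \ref{main:geom} with the main result of \cite{BBAMY} to reduce the conjecture to a statement about the compatibility of two geometric constructions. By Theorem \ref{main:geom}, we may identify $\Dg \cong \mathrm{KL}_{\mathcal{O}}^\vee$, so that $\Dg$ is already realized as the Koszul dual of $\KL$. On the other side, the main result of \cite{BBAMY} identifies (a suitable subcategory of) $\mathcal{D}_\psi$ with the Koszul dual of perfect complexes in $\overset{\bullet}{\mathfrak{u}}_q\mathrm{-mod}_{\hat{0}}$. Combining these two Koszul dualities, the conjecture becomes equivalent to commutativity of the square whose top arrow is the functor $\KL \to \overset{\bullet}{\mathfrak{u}}_q\mathrm{-mod}_{\hat{0}}$ of \cite[\S 7]{KLCatO}, whose bottom arrow is $j_! : \Dg \to \widetilde{\mathcal{D}}_\psi$, and whose vertical arrows are the respective Koszul dualities.

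To verify this commutativity, I would check the correspondence on a set of generators of $\KL$. Because Koszul duality exchanges simple and tilting objects, the natural generators to use are either the simple objects of $\KL$, whose explicit classification via pairs $(w, P(w))$ is given in \cite{KLCatO}, or the standard and costandard objects coming from the embeddings $\O^w \hookrightarrow \KL$. For each such generator, one would compute its image under the \cite[\S 7]{KLCatO} functor, track this through the \cite{BBAMY} Koszul duality to obtain an object of $\widetilde{\mathcal{D}}_\psi$, and compare it with the $j_!$-image of the corresponding tilting object in $\Dg$ under the equivalence of Theorem \ref{main:geom}. The extended Lusztig identification $Z \hookrightarrow \mathcal{F}\ell_{ts}$ described in the excerpt is the geometric bridge that makes these singular-support-indexed objects match on the two sides, and in particular underlies the preceding Claim that $j_!$ indeed lands in $\widetilde{\mathcal{D}}_\psi$.

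The main obstacle I expect is the formulation of Koszul duality for $\widetilde{\mathcal{D}}_\psi$ itself. The theorem of \cite{BBAMY} is stated for $\mathcal{D}_\psi$, which cuts out objects whose singular support has only finitely many components in $\mathcal{F}\ell_{ts}$, whereas $\widetilde{\mathcal{D}}_\psi$ is a priori larger. A necessary first step is to check that $\widetilde{\mathcal{D}}_\psi$ admits a compatible Koszul-dual description as an appropriate completion of $\overset{\bullet}{\mathfrak{u}}_q\mathrm{-mod}_{\hat{0}}$, analogous to how $\Dg$ is defined as a completion in Theorem \ref{main:geom}. Assuming this, the remaining work is to trace through the microlocalization construction of \cite{BBAMY} and verify that the Kazhdan--Laumon intertwining functors $I_w$ and the gluing data they carry are transported compatibly into the semi-infinite flag variety picture; this amounts to checking a compatibility between the $W$-action on $H_*(\mathcal{F}\ell)$ used to identify $Z$ and the intertwining functors which define the gluing on $\KL$.
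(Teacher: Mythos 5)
This statement is a conjecture in the paper: the authors explicitly do not prove it, noting that it motivated the construction of $\Dg$ but "is not directly necessary for our results," and that the Koszul duality on the affine Springer fiber side is a result \emph{to be described} in \cite{BBAMY}. So there is no proof in the paper to compare against, and your proposal should be judged as an attempted resolution of an open statement rather than a reconstruction of an existing argument.

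Judged on those terms, what you have written is a reasonable strategy outline, but it contains genuine gaps and does not constitute a proof. First, the reduction to "commutativity of a square" quietly assumes a Koszul duality for $\widetilde{\mathcal{D}}_\psi$ itself; you flag this, but it is not a side issue --- the theorem of \cite{BBAMY} concerns $\mathcal{D}_\psi$ (finitely many singular-support components) and perfect complexes in $\overset{\bullet}{\mathfrak{u}}_q\mathrm{-mod}_{\hat{0}}$, and extending it to $\widetilde{\mathcal{D}}_\psi$ as a completion compatible with the completion defining $\Dg$ is an unproven ingredient of comparable difficulty to the conjecture. Second, verifying commutativity "on a set of generators" is not sufficient for an equivalence-level statement: one must produce a natural isomorphism of functors compatible with the gluing/monadic structure on $\KL^\vee$ (equivalently, with the intertwining functors $\hat{\nabla}_w[-\ell(w)]\ast -$ that define the monad in Proposition \ref{prop:klismonad}), not merely an object-by-object matching; and in any case you do not actually compute the images of the proposed generators under the functor of \cite[\S 7]{KLCatO} or under the \cite{BBAMY} duality. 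Third, the final "compatibility between the $W$-action on $H_*(\mathcal{F}\ell)$ and the intertwining functors" is precisely the geometric heart of the conjecture --- the extended Lusztig identification $Z\hookrightarrow\mathcal{F}\ell_{ts}$ is only stated at the level of spaces, and upgrading it to a categorical compatibility of gluing data is where the real work lies. In short, the proposal correctly identifies the relevant inputs and the shape of a possible argument, but each of the three steps it defers is itself open, so the statement remains a conjecture.
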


\subsection{Overview of the paper}

We begin in Section \ref{sec:algebraic} by introducing $\Afmod$ and its derived version $\Da$. We will show that this category is natural to consider as it can be defined simply in two different ways: as the extension-closure of all of the $W$-many versions of $\O$, or by the axioms in Proposition \ref{prop:introafdef}. Key to our subsequent arguments will be the introduction in this section of ``restriction functors'' from $\Afmod$ to $\O$ which are adjoint to the natural inclusions of $W$-conjugates of $\O$ into $\Afmod$.

We then explain geometric analogs of these categories and functors in Section \ref{sec:geometric}. We explain geometric versions of $\Da$ and $\Afmod$ in terms of constructible and perverse sheaves on $G/B$, and we explain that the restriction functors established in the preceding section have natural interpretations in terms of pushforward and pullback along the projection $G/B \to U\backslash G/B$.

In Section \ref{sec:bb}, we recall the Beilinson--Bernstein localization equivalence and prove the equivalences between the algebraic and geometric objects considered thus far. We then explain how to build gluing functors (in the sense of the Kazhdan--Laumon construction), which are endofunctors of the derived version of $\O$ built by composing the inclusion and restriction functors introduced in the previous two sections. We explain that the gluing functors can be interpreted algebraically as Zuckerman functors or geometrically in terms of convolution with standard and costandard objects. The central computation of this section establishes this geometric interpretation which is crucial to the comparison of our categories with the Kazhdan--Laumon Category $\O$, which is defined in \cite{KLCatO} in terms of these standard and costandard convolutions.

In Section \ref{sec:klgluing} we recall the definition of this category $\KL$. We explain Koszul duality in the context of $\KL$ and define the Koszul dual category $\KL^\vee$. In Section \ref{sec:main}, we then combine all of the above to show that $\Da$, $\Dg$, and $\KL^\vee$ are all equivalent as dg categories. We explain that $\KL^\vee$ is the category of algebras over a certain monad, and thus the main tool in this section is the Barr--Beck--Lurie theorem, which we apply to an adjoint pair of functors built from the inclusion and restriction functors introduced in Sections \ref{sec:algebraic} and \ref{sec:geometric}. We also examine the interaction of $t$-structures with the derived equivalence we establish.

In Section \ref{sec:explicit} we provide some examples of objects in $\Da$, $\Dg$, and $\KL^\vee$, explaining details and consequences of the main equivalence in some explicit cases. These examples lead us to the construction in Section \ref{sec:coherent}, where we explain how to use the perspectives in the present paper to geometrically realize the coherent families of weight modules studied by Fernando and Mathieu in \cite{Fernando}, \cite{Mathieu}. We hope that this will be a first step in recasting their important classification of weight modules with finite-dimensional weight spaces in the context of algebraic geometry.

\subsection{Acknowledgments} We would like to thank Ben Gammage, Matt Hogancamp, Michael McBreen, Raphael Rouquier, and Ben Webster for useful discussions and for sharing many insights which were very helpful at various stages of the project.

\section{The category of finite weight modules}\label{sec:algebraic}

\subsection{Preliminaries}

Let $G$ be a reductive algebraic group over $\mathbb{C}$ with Lie algebra $\mathfrak{g}$. We fix a maximal torus $T \subset G$ with corresponding subalegbra $\mathfrak{h} \subset \mathfrak{g}$, and a Borel subgroup $B \subset G$ containing $T$ with corresponding subalgebra $\mathfrak{b}$. Let $U$ be the unipotent radical of $B$ and $\mathfrak{n}$ its Lie subalgebra, the nilpotent radical of $\mathfrak{b}$. 

We let $W = N_G(T)/T$ be the Weyl group. We let $S \subset W$ denote the set of simple reflections in $W$. We choose once and for all a set of lifts $\dot{w} \in N_G(T)$ which satisfy the Weyl group relations. We let $\Lambda$ be the weight lattice and $Q \subset \Lambda$ the root sublattice generated by the positive roots $R^+ \subset R$, where $R$ is the corresponding root system.  Let $\Pi$ be the set of simple roots.

\subsection{The BGG Category \texorpdfstring{$
\mathcal{O}$}{O} and a generalization}

\subsubsection{The BGG Category \texorpdfstring{$\mathcal{O}$}{O}}
The main starting point for the objects studied in the present paper is the principal block $\BGGO$ of the Bernstein-Gel'fand-Gel'fand Category $\mathcal{O}$, defined in \cite{BGG} as follows.
\begin{definition}\label{def:cato}
    Let $\BGGO \subset \gmod$ be the full subcategory of modules $M$ satisfying the following conditions.
    \begin{enumerate}
        \item $M$ is a finitely-generated $\mathcal{U}(\mathfrak{g})$-module.
        \item The center $Z(\mathfrak{g}) \subset \mathcal{U}(\mathfrak{g})$ acts locally nilpotently on $M$.
        \item $M$ is $\mathfrak{h}$-semisimple, that is, $M$ is a weight module \[M = \bigoplus_{\lambda \in Q} M_\lambda.\]
        \item $M$ is locally $\mathfrak{n}$-finite.
    \end{enumerate}
\end{definition}
We note that if one assumes $\mathfrak{h}$-semisimplicity (with arbitrary weights) along with the other three axioms, it follows from the basic structure theory of $\BGGO$ that all nonzero weight spaces lie in $Q \subset \Lambda$, and so we choose to emphasize this in the definition. 

A common technical variant of $\BGGO$ which is more directly amenable to the geometric context we will soon introduce relaxes the semisimplicity condition on the action of $\mathfrak{h}$, instead requiring that any element of $\mathfrak{h}$ acts on a weight space by a Jordan block with eigenvalue determined by the weight, while requiring that $Z(\mathfrak{g})$ acts trivially rather than locally nilpotently.

\begin{definition}\label{def:catovariant}
    Let $\O \subset \gmod$ be the full subcategory of modules $M$ satisfying the following conditions.
    \begin{enumerate}
        \item $M$ is a finitely-generated $\mathcal{U}(\mathfrak{g})$-module.
        \item The center $Z(\mathfrak{g}) \subset \mathcal{U}(\mathfrak{g})$ acts trivially on $M$.
        \item $M$ has a decomposition $$M = \bigoplus_{\lambda \in Q} M_\lambda$$
        such that $\mathfrak{h}$ acts locally finitely on each $M_\lambda$ with generalized eigenvalue $\lambda$.
        \item $M$ is locally $\mathfrak{n}$-finite.
    \end{enumerate}
    We also write $\tO$ for the subcategory of modules satisfying all of the above conditions except the first (finite generation) condition. 
\end{definition}

\subsubsection{Finite weight modules}
We now introduce a larger subcategory of $\gmod$ which consists of all weight modules with a trivial action of $Z(\mathfrak{g})$ along with finite-dimensional weight spaces.
\begin{definition}
    Let $\Afmod \subset \gmod$ be the full subcategory of modules $M$ satisfying the following conditions.
    \begin{enumerate}
        \item $M$ is a finitely-generated $\mathcal{U}(\mathfrak{g})$-module.
        \item The center $Z(\mathfrak{g}) \subset \mathcal{U}(\mathfrak{g})$ acts trivially on $M$.
        \item  $M$ has a decomposition $$M = \bigoplus_{\lambda \in Q} M_\lambda$$
        such that $\mathfrak{h}$ acts locally finitely on each $M_\lambda$ with generalized eigenvalue $\lambda$.
        \item Each weight space $M_\lambda$ is finite-dimensional.
    \end{enumerate}
    We let $\gmod_0$ denote the larger subcategory of $\mathcal{U}(\mathfrak{g})$-modules on which the center $\mathcal{Z}(\mathfrak{g})$ acts trivially.
    We call $\Afmod$ the category of \emph{finite weight modules} in $\gmod_0$.
\end{definition}

\subsubsection{The categories $\O^w$}

For any $w \in W$, we can define a Borel subalgebra $\mathfrak{b}^w = w\mathfrak{b}w^{-1}$ containing $\mathfrak{h}$, and $\{\mathfrak{b}^w\}_{w \in W}$ is the complete set of Borel subalgebras of $\mathfrak{g}$ containing $\mathfrak{h}$. We write $\mathfrak{n}^w = w\mathfrak{n}w^{-1}$ which is the nilpotent radical of $\mathfrak{b}^w$.

\begin{definition}
    For any $w \in W$, let $\O^w \subset \gmod$ be the full subcategory of modules $M$ satisfying the conditions of Definition \ref{def:catovariant} but with the role of $\mathfrak{n}$ replaced by $\mathfrak{n}^w$.
\end{definition}

By the PBW theorem, $\mathfrak{n}^w$-local finiteness of a finitely generated module $M$ implies that all weight spaces are finite-dimensional, a fact which we record in the following proposition.
\begin{prop}\label{prop:iotaw}
    For any $w \in W$, there is a fully faithful exact inclusion functor $\iota_w: \O^w \to \Afmod$. 
\end{prop}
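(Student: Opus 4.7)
The plan is to verify that every object $M$ of $\O^w$ automatically satisfies the four axioms defining $\Afmod$. Axioms (1)--(3) are identical to the defining conditions of $\O^w$, so the only genuine content lies in axiom (4): local $\mathfrak{n}^w$-finiteness together with finite generation must force each weight space $M_\lambda$ to be finite-dimensional. Once this is established, the inclusion $\iota_w$ exists as a morphism of full subcategories of $\gmod$, so full faithfulness is immediate. Exactness then follows from the observation that each of the four axioms of $\Afmod$ is evidently preserved under passage to subobjects and quotients in $\gmod$.

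To prove finite-dimensionality of weight spaces, I would apply a standard PBW argument. Starting from any finite generating set of $M$, I would first enlarge it by taking the $\mathcal{U}(\mathfrak{n}^w)$-span of each generator, which remains finite-dimensional by local $\mathfrak{n}^w$-finiteness, and then project onto generalized $\mathfrak{h}$-weight spaces. The result is a finite-dimensional subspace $V \subset M$ which is stable under $\mathfrak{b}^w = \mathfrak{h} \oplus \mathfrak{n}^w$ and still generates $M$ as a $\mathcal{U}(\mathfrak{g})$-module. Writing $\mathfrak{n}^{-,w}$ for the nilpotent radical opposite to $\mathfrak{n}^w$ in $\mathfrak{g}$, the PBW decomposition $\mathcal{U}(\mathfrak{g}) \cong \mathcal{U}(\mathfrak{n}^{-,w}) \otimes \mathcal{U}(\mathfrak{h}) \otimes \mathcal{U}(\mathfrak{n}^w)$ then gives $M = \mathcal{U}(\mathfrak{n}^{-,w}) \cdot V$.

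For a fixed $\lambda \in Q$, the weight space $M_\lambda$ is then spanned by vectors $u \cdot v$ where $v \in V$ has generalized $\mathfrak{h}$-weight $\mu$ and $u \in \mathcal{U}(\mathfrak{n}^{-,w})$ has weight $\lambda - \mu$. Since $V$ carries only finitely many weights and each weight component of $\mathcal{U}(\mathfrak{n}^{-,w})$ is finite-dimensional (spanned by PBW monomials in the finitely many root vectors negative with respect to $\mathfrak{b}^w$ whose weights sum to the prescribed total), $M_\lambda$ is finite-dimensional as required.

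The one technical point requiring care, which I would regard as the main (if still minor) obstacle, is the construction of the finite-dimensional $\mathfrak{b}^w$-stable generating subspace $V$; this is the step that genuinely uses the hypothesis of local $\mathfrak{n}^w$-finiteness rather than merely finite generation. The remainder of the argument is elementary bookkeeping with weights.
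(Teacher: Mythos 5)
Your proposal is correct and follows essentially the same route as the paper, which likewise dismisses axioms (1)--(3) as immediate and justifies axiom (4) by the PBW argument that $\mathfrak{n}^w$-local finiteness of a finitely generated module forces finite-dimensional weight spaces (you merely spell out the details the paper leaves implicit). The only slight imprecision is that ``projecting onto generalized weight spaces'' does not by itself make $V$ stable under $\mathfrak{h}$; one should instead take the $\mathcal{U}(\mathfrak{h})$-span, which is finite-dimensional because Definition \ref{def:catovariant} only requires $\mathfrak{h}$ to act locally finitely on each $M_\lambda$, and this is exactly what that hypothesis supplies.
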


\subsubsection{Simple objects in $\Afmod$}

We now prove the following classification result, which follows from combining the classification of weight modules in \cite{Fernando} and \cite{Mathieu} with the trivial central character hypothesis.
\begin{prop}\label{prop:afsimples}
    For any simple object $L \in \Afmod$, there exists some $w \in W$ such that $L \in \O^w$.
\end{prop}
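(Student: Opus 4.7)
My plan is to deduce the statement from the Fernando--Mathieu classification of simple weight modules with finite-dimensional weight spaces, using the trivial central character and integral-weight hypotheses to exclude contributions from proper Levi subalgebras.

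First, I would invoke Fernando's theorem: every simple weight module with finite-dimensional weight spaces is isomorphic to $L_\mathfrak{p}(N)$ for some parabolic $\mathfrak{p} \subset \mathfrak{g}$ with Levi decomposition $\mathfrak{l} \oplus \mathfrak{n}_\mathfrak{p}$, where $N$ is a simple cuspidal $\mathfrak{l}$-module (meaning every root subalgebra of $\mathfrak{l}_{ss}$ acts injectively on $N$), and $L_\mathfrak{p}(N)$ denotes the unique simple quotient of the parabolically induced module. If $\mathfrak{p} = \mathfrak{b}^w$ is itself a Borel, then $\mathfrak{l}_{ss} = 0$, $N$ is one-dimensional, and $L$ is a simple highest-weight module for $\mathfrak{b}^w$, placing $L$ in $\O^w$. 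The remaining task is to rule out the case where $\mathfrak{p}$ is a proper parabolic, so that $\mathfrak{l}_{ss} \neq 0$.

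Suppose then that $\mathfrak{l}_{ss} \neq 0$. I would cascade the two additional conditions from the definition of $\Afmod$ down to $N$: the weight-support condition $M = \bigoplus_{\lambda \in Q} M_\lambda$ forces the $\mathfrak{h}$-weights of $N$ to lie in $Q$, so the weights of $N|_{\mathfrak{l}_{ss}}$ are integral with respect to the root lattice of $\mathfrak{l}_{ss}$; and the triviality of the $Z(\mathfrak{g})$-action, combined with a standard $\rho$-shift computation comparing the Harish-Chandra isomorphisms of $\mathfrak{g}$ and $\mathfrak{l}$, forces the central character of $N$ on $Z(\mathfrak{l})$ to be integral. I would then invoke Mathieu's classification: for a semisimple Lie algebra, simple cuspidal modules exist only in types A and C, and within each of Mathieu's semisimple coherent families the simple cuspidal objects occur only over a Zariski-dense open subset of central characters which excludes the integral locus; over this excluded locus, the objects in the coherent family are reducible with highest-weight composition factors. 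This contradicts the assumption that $N|_{\mathfrak{l}_{ss}}$ is a simple cuspidal module, and therefore forces $\mathfrak{l}_{ss} = 0$.

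The main obstacle I anticipate is pinning down the precise statement of Mathieu's theorem characterizing the locus on which cuspidal modules fail to be simple, and matching it to the central character of $N$ on $Z(\mathfrak{l})$ induced from the trivial central character on $Z(\mathfrak{g})$. This is essentially a bookkeeping calculation — once the integrality of both the weight support of $N$ and its central character are established, Mathieu's reducibility statement yields the contradiction, and the conclusion $L \in \O^w$ follows directly from the Borel case.
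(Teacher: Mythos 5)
Your overall strategy --- Fernando's theorem to reduce to a simple cuspidal module $N$ over the Levi, then Mathieu's coherent-family theory to rule out $\mathfrak{l}_{ss}\neq 0$ --- is the same as the paper's, and the Borel case plus the cascading of the hypotheses down to $N$ are fine. But the step you defer as ``bookkeeping'' is exactly where the argument breaks, because the version of Mathieu's criterion you plan to invoke is not true as stated. Within a semisimple irreducible coherent family the central character is \emph{fixed}; what varies is the support coset $t \in \mathfrak{h}^*/Q_{\mathfrak{l}}$, and simplicity (equivalently cuspidality) of the piece $\mathcal{M}[t]$ fails only for those cosets $t$ which actually meet the $(W_{\mathfrak{l}},\cdot)$-linkage class determined by that central character. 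In particular, simple cuspidal modules with \emph{integral} central character do exist: already for $\mathfrak{sl}_2$, the dense simple module with support on the odd integers and Casimir eigenvalue $0$ is cuspidal and has the same (trivial, hence integral) central character as the trivial module. So knowing that $N$ has integral weight support and integral central character, which is all your cascade establishes, does not yet yield a contradiction: one must further show that the support coset of $N$ modulo $Q_{\mathfrak{l}}$ is the \emph{same} coset containing the weights $W_{\mathfrak{l}}\cdot 0$ attached to the central character, so that the highest-weight constituent supplied by Mathieu's results lands inside $N$ itself.

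This coset-matching is precisely what the paper's proof supplies: it passes to the unique semisimple coherent extension $M$ of $L'=N$ of the appropriate degree (Mathieu, Prop.\ 4.8), uses Prop.\ 6.2(ii) to produce an infinite-dimensional admissible highest-weight module $L(\lambda)\subset M[\lambda \bmod Q_{\mathfrak{l}}]$ with the same central character, and then argues that $\lambda$ lies in the $(W_{\mathfrak{l}},\cdot)$-orbit of $0$, hence $\lambda\equiv 0 \bmod Q_{\mathfrak{l}}$, which is the coset carrying $M[0]\cong L'$; thus $L(\lambda)$ is a composition factor of the cuspidal module $L'$, a contradiction. Your proposal never compares cosets modulo $Q_{\mathfrak{l}}$ --- it only records integrality and then appeals to a reducibility statement phrased over ``central characters excluding the integral locus,'' which is false --- so as written the contradiction does not follow. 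The gap is repairable, but only by replacing your criterion with the support-coset argument above.
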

\begin{proof}
    By the definition of $\Afmod$, $L$ is a simple weight module with finite-dimensional weight spaces. Such modules are classified in \cite{Fernando} and \cite{Mathieu} as follows. In \cite{Fernando}, it is shown that for any such simple module, there exists a unique parabolic subalgebra $\mathfrak{p} \subset \mathfrak{g}$ with unipotent radical $\mathfrak{u}$ and Levi subalgebra $\mathfrak{l}$ such that $L^\mathfrak{u}$ is irreducible and $L$ is the unique irreducible quotient of $\mathrm{Ind}_{\mathfrak{p}}^{\mathfrak{g}}(L^{\mathfrak{u}})$. If $\mathfrak{l} = \mathfrak{g}$ is the only Levi for which this can occur, $L$ is called \emph{cuspidal}. In the above setup, we can always choose a parabolic such that $L^{\mathfrak{u}}$ is cuspidal. We will show that with the additional assumptions that $L$ has trivial central character and all nonzero weight spaces $L_\lambda$ have $\lambda \in Q$, we must have $\mathfrak{l} = \mathfrak{h}$, so that $L$ is the quotient of a module induced from a Borel subalgebra as above. Such an induced module is by definition a Verma module in $\O^w$ for some $w \in W$, whose unique irreducible quotient also lies in $\O^w$, which will then complete the proof of the proposition.
    
    We note that for any $\mathfrak{p}$ as above, $L^{\mathfrak{u}}$ as a module over $\mathcal{U}(\mathfrak{l})$ must have central character $\chi_{\mathfrak{l}} \in \mathfrak{h}^*/(W_{\mathfrak{l}}, \cdot)$ corresponding to the orbit of some element whose containing $(W, \cdot)$-orbit corresponds to the trivial central character; in particular, it corresponds to the orbit of some element of the root lattice $Q_{\mathfrak{l}}$ for the Levi subalgebra $\mathfrak{l}$. It remains then to show that there can be no cuspidal module over any Levi subalgebra $\mathfrak{l} \supsetneq \mathfrak{h}$ with central character corresponding to an element of $Q_{\mathfrak{l}}$ which is supported on weights lying in $Q_{\mathfrak{l}}$.
    
    Suppose for contradiction that such a cuspidal module $L'$ over $\mathcal{U}(\mathfrak{l})$ existed. By \cite{Mathieu}, there exists some integer $d \geq 0$ such that for all $\lambda$, $\dim L_\lambda \leq d$. By Proposition 4.8 of loc.\ cit., there exists a unique semisimple coherent extension (c.f. Section 4 of loc.\ cit.) $M$ of $L'$ of degree $d$. In particular, this means $M[0] \cong L'$ (we use the notation of loc.\ cit., where $M[\lambda]$ denotes the submodule of $M$ consisting of the direct sum of weight spaces $M_{\mu}$ for $\mu \equiv \lambda \mod Q$). By Proposition 6.2(ii) of loc.\ cit., there is some infinite-dimensional admissible highest weight module (with respect to any Borel subgroup $\mathfrak{b}$ of our choice) $L(\lambda) \subset N[\lambda \mod Q_\mathfrak{l}]$. By Proposition 4.8(iii) of loc.\ cit., it must again have central character in $Q_\mathfrak{l}$. We know by the structure theory of $\O$ that any such highest weight module has $\lambda$ in the $(W_\mathfrak{l}, \cdot)$-orbit of $0$, which in particular lies in the root lattice $Q_\mathfrak{l}$. This means $\lambda \equiv 0\mod Q_\mathfrak{l}$, and so $L(\lambda)$ is actually a composition factor of $M[0]$. This is impossible, since $M[0] \cong L'$ is cuspidal.
\end{proof}

Thus Proposition \ref{prop:afsimples} is what gives the equivalence between the two definitions of $\mathcal{A}_f$ provided in the introduction.
\begin{corollary}\label{cor:afdefs}
    $\mathcal{A}_f$ is the full subcategory of $\gmod$ generated under extensions by objects in $\tO^w$, $w \in W$. 
\end{corollary}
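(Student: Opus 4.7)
The plan is a length induction powered by Proposition \ref{prop:afsimples}. The first thing to verify is that every $M \in \Afmod$ has finite length as a $\mathfrak{g}$-module, which is not automatic since $M$ may have infinite weight support in $Q$. I would argue as follows: by Proposition \ref{prop:afsimples}, every simple object of $\Afmod$ lies in some $\O^w$, and each such principal block contains only finitely many simple isomorphism classes (namely $|W|$), so $\Afmod$ has only finitely many simple isomorphism classes overall. For each such simple $L$, choosing a weight $\mu_L \in Q$ with $L_{\mu_L} \neq 0$ yields $[M : L] \leq \dim M_{\mu_L} < \infty$, and summing over the finitely many simple types produces a finite bound on the length of $M$.

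With finite length in hand, the nontrivial inclusion proceeds by induction on the length of $M$. If $M$ is simple, Proposition \ref{prop:afsimples} places $M$ directly into some $\O^w \subset \tO^w$. Otherwise, any simple submodule $N \subset M$ (which exists since $M$ has finite length) again lies in some $\tO^w$, while the quotient $M/N \in \Afmod$ has strictly smaller length (closure of $\Afmod$ under quotients is immediate from the axioms, each of finite generation, trivial central character, generalized weight decomposition over $Q$, and finite-dimensional weight spaces) and by the inductive hypothesis is an iterated extension of objects in $\bigcup_w \tO^w$, so $M$ is too. The reverse inclusion is routine: we have $\O^w \subset \Afmod$ by Proposition \ref{prop:iotaw} after restricting to the finitely generated objects of $\tO^w$, and $\Afmod$ is stable under extensions in $\gmod_0$ since each of the four defining axioms survives short exact sequences of $\mathcal{U}(\mathfrak{g})$-modules in which $Z(\mathfrak{g})$ acts trivially.

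The essential mathematical content here is entirely packed into Proposition \ref{prop:afsimples}, which has already been established. The only genuinely new step left for the corollary itself is the finite-length argument in the first paragraph; the subtle point is to resist the urge to dimension-count across the infinite weight support of $M$, and instead to leverage the finiteness of the set of simple isomorphism classes together with a single pinned weight $\mu_L$ per simple class to bound the multiplicities.
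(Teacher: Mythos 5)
Your proposal is correct and takes essentially the same route as the paper, which states Corollary \ref{cor:afdefs} as an immediate consequence of Proposition \ref{prop:afsimples} without a separate written proof: reduce to simple objects, which the proposition places in the various $\O^w$, and close under extensions. The only step you add explicitly is the finite-length argument (finitely many simple isomorphism classes plus one pinned weight per class bounding multiplicities), which the paper leaves implicit but uses elsewhere (e.g.\ in Lemma \ref{lem:abcons}), and your sketch of it is sound.
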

One can then enumerate the number of simple objects in $\mathcal{A}_f$ as follows.
\begin{corollary}\label{cor:countsimples}
    The number of simple objects in $\Afmod$ is
    \begin{equation*}
\sum_{w \in W} |W/P(w)|
    \end{equation*}
where for any element $w\in W$, $P(w)$ is the standard parabolic subgroup of $W$ generated by all simple reflections $s$ with $\ell(sw) > \ell(w)$.
\end{corollary}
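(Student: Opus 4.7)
The plan is to begin from Proposition \ref{prop:afsimples}, which says every simple $L \in \Afmod$ lies in some $\mathcal{O}^w$. Since the simples of each $\mathcal{O}^w$ are parametrized by $W$ (via highest weights, as in the usual principal block of Category $\mathcal{O}$), there is a surjection $W \times W \twoheadrightarrow \mathrm{Simples}(\Afmod)$, $(w, y) \mapsto L^w(y)$, where $L^w(y)$ denotes the unique simple in $\mathcal{O}^w$ with highest weight $y\rho - w\rho$ relative to $\mathfrak{b}^w$. For each simple $L \in \Afmod$, set $H(L) = \{w \in W : L \in \mathcal{O}^w\}$; the fibre over $L$ has size $|H(L)|$, yielding
\begin{equation*}
\#\mathrm{Simples}(\Afmod) \,=\, \sum_{(w,y)\in W\times W} \frac{1}{|H(L^w(y))|}.
\end{equation*}

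The central step is to show $|H(L^w(y))| = |P(w^{-1}y)|$. Let $\mathfrak{p}(L) = \{X \in \mathfrak{g} : X \text{ acts locally finitely on } L\}$ denote the Fernando parabolic. For $L \in \Afmod$ simple, the condition $L \in \mathcal{O}^w$ is equivalent to $\mathfrak{n}^w$ acting locally nilpotently (using finite-dimensionality of weight spaces to promote local finiteness to local nilpotence on root vectors), hence to $\mathfrak{b}^w \subset \mathfrak{p}(L)$. Thus $H(L)$ is in bijection with the Borels containing $\mathfrak{h}$ inside $\mathfrak{p}(L)$, so $|H(L)| = |W_{\mathfrak{l}(L)}|$ where $\mathfrak{l}(L)$ is the Levi of $\mathfrak{p}(L)$. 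To compute this for $L = L^w(y)$, I would apply the classical integrability criterion in Category $\mathcal{O}$: $L(\lambda) \in \mathcal{O}$ is integrable along a simple root $\alpha$ iff $\langle \lambda + \rho, \alpha^\vee\rangle \in \mathbb{Z}_{> 0}$. For $L^w(y)$ with highest weight $\mu = y\rho - w\rho$ relative to $\mathfrak{b}^w$ (so $\mu + \rho^w = y\rho$), integrability along the simple root $w\alpha_s$ of $\mathfrak{b}^w$ becomes $\langle w^{-1}y\rho, \alpha_s^\vee\rangle > 0$, which holds iff $\ell(sw^{-1}y) > \ell(w^{-1}y)$. The simple reflections $s \in S$ satisfying this are exactly the generators of $P(w^{-1}y)$ in the statement's notation, so the simple reflections of $S^w = wSw^{-1}$ generating $W_{\mathfrak{l}(L^w(y))}$ generate the conjugate $wP(w^{-1}y)w^{-1}$, giving $|H(L^w(y))| = |P(w^{-1}y)|$.

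Finally, substituting $z = w^{-1}y$ (a bijection $W \to W$ for each fixed $w$):
\begin{equation*}
\#\mathrm{Simples}(\Afmod) \,=\, \sum_{w \in W}\sum_{y \in W} \frac{1}{|P(w^{-1}y)|} \,=\, |W|\sum_{z \in W}\frac{1}{|P(z)|} \,=\, \sum_{z \in W} |W/P(z)|.
\end{equation*}
The main obstacle is the identification of the Fernando Levi for $L^w(y)$, which reduces via the integrability criterion to a combinatorial condition about left descent sets in $W$; the rest of the argument is bookkeeping.
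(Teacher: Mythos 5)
Your argument is correct, and it arrives at the formula by a route that is related to but organized differently from the paper's. The paper's proof is an orbit–stabilizer argument: by Proposition \ref{prop:afsimples} the simples of $\Afmod$ form the orbit of the simples of $\O$ under the $W$-action $\theta_y$ (conjugation of the $\mathfrak{g}$-action by $\dot{y}$), and the stabilizer of $L_w$ is asserted (``one can then check'') to be $P(w)$, so each orbit has size $|W/P(w)|$. You instead count fibers of the surjection $W\times W \twoheadrightarrow \mathrm{Simples}(\Afmod)$, $(w,y)\mapsto L^w(y)$, identify the fiber size $|H(L^w(y))|$ with the order of the Weyl group of the Levi of the Fernando parabolic, and compute that order via the integrability criterion, reducing to the descent condition $\ell(sw^{-1}y)>\ell(w^{-1}y)$; the reindexing $z=w^{-1}y$ then gives $\sum_z |W/P(z)|$. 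The two counting mechanisms are equivalent, and your integrability computation is exactly the Lie-theoretic kernel that the paper's stabilizer claim compresses into ``one can then check,'' so your write-up effectively supplies that verification; the paper's version is shorter and packages the simples as $W$-orbits labelled by cosets $zP(w)$, a structure it reuses later (e.g.\ for the tilting objects $T_w^z$). Two small points to tighten: the equivalence ``$L\in\mathcal{O}^{w'}$ iff $\mathfrak{b}^{w'}\subset\mathfrak{p}(L)$'' needs the standard facts (due to Fernando) that the locally finite vectors for $\mathfrak{n}^{w'}$ form a submodule, so that element-wise local nilpotence of root vectors upgrades to local $\mathcal{U}(\mathfrak{n}^{w'})$-finiteness, and that for a simple highest weight module the Fernando subalgebra is the standard parabolic determined by the integral-dominant simple directions; these should be cited rather than treated as immediate, but they are not gaps. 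Also, the positivity $\langle w^{-1}y\rho,\alpha_s^\vee\rangle>0$ is automatically integrality-free here since both weights lie in the lattice, so your use of the criterion is fine as stated.
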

\begin{proof}
    By Proposition \ref{prop:afsimples}, the set of isomorphism classes of simple objects in $\Afmod$ is the orbit under the $W$-action (where $y \in W$ acts on a module by conjugating the $\mathfrak{g}$-action by $\dot{y}$) of the set of isomorphism classes of simple objects in $\O$. For $w \in W$, we write $L_w \in \O$ for the simple object with highest weight $w \cdot (-2\rho)$. One can then check that the stabilizer of $L_w$ under this $W$-action on isomorphism classes of simple objects in $\Afmod$ is exactly $P(w)$, and so the result follows from the orbit-stabilizer theorem. 
\end{proof}

\subsection{Singular support, algebraically}

\subsubsection{The associated variety of a $\mathfrak{g}$-module}

We now recall the definition of the \emph{associated variety} associated to any $\mathfrak{g}$-module $M$. We first note that $\mathcal{U}(\mathfrak{g})$, equipped with the PBW filtration, has associated graded $\mathrm{gr}~\mathcal{U}(\mathfrak{g}) \cong \mathrm{Sym}(\mathfrak{g}) \cong \mathbb{C}[\mathfrak{g}^*]$. 

\begin{definition}
    Let $M$ be a finitely generated $\mathcal{U}(\mathfrak{g})$-module. A \emph{good filtration} on $M$ is an ascending filtration of $\mathbb{C}$-vector spaces
    \[
        0 = M^0 \subset M^1 \subset \cdots \subset M
    \]
    such that:
    \begin{enumerate}
        \item $\mathcal{U}^i(\mathfrak{g}) \cdot M^j \subset M^{i+j}$ for all $i, j \geq 0$, where $\mathcal{U}^i(\mathfrak{g})$ is the part of the PBW filtration on $\mathcal{U}(\mathfrak{g})$ consisting of elements with degree at most $i$,
        \item $\bigcup_{i} M^i = M$,
        \item The associated graded module $\mathrm{gr}\,M = \bigoplus_{i} M^i / M^{i-1}$ is finitely generated as a module over $\mathrm{gr}~ \mathcal{U}(\mathfrak{g}) \cong \mathrm{Sym}(\mathfrak{g})$.
    \end{enumerate}
\end{definition}

\begin{definition}
    Let $M$ be a finitely generated $\mathcal{U}(\mathfrak{g})$-module equipped with a good filtration $(M^i)_{i \geq 0}$. The \emph{associated variety} $\mathrm{AV}(M) \subset \mathfrak{g}^*$ is defined as the support of the associated graded module $\mathrm{gr}\,M$ considered as a finitely generated $\mathrm{Sym}(\mathfrak{g}) \cong \mathbb{C}[\mathfrak{g}^*]$-module. That is,
    \[
        \mathrm{AV}(M) = \mathrm{Supp}_{\mathbb{C}[\mathfrak{g}^*]}(\mathrm{gr}\,M) \subset \mathfrak{g}^*.
    \]
    The associated variety is independent of the choice of good filtration.
\end{definition}

\begin{definition}
    Let $I_+ \subset \mathbb{C}[\mathfrak{g}^*] \cong \mathrm{gr}~\mathcal{U}(\mathfrak{g})$ be the ideal generated by all positive degree elements of the subalgebra $\mathcal{U}(\mathfrak{g})^T$ of weight-zero elements. Then let $Z_+ \subset \mathfrak{g}^*$ be the vanishing set, so $Z_+ = V(I_+)$.
\end{definition}

For any $\mathfrak{g}$-module $M$, we say that $M$ has singular support in $Z_+$ if $\mathrm{AV}(M) \subset Z_+$.

\begin{proposition}\label{prop:ugt}
    For a module $M \in \mathfrak{g}\mathrm{-mod}$, the action of $\mathcal{U}(\mathfrak{g})^T$ on $M$ is locally finite if and only if $M$ has singular support in $Z_+$. If $M$ is finite-length, then this occurs if and only if $M$ has finite-dimensional weight spaces.
\end{proposition}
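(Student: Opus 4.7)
My plan is to prove the first equivalence (local finiteness of $\mathcal{U}(\mathfrak{g})^T$ iff $\mathrm{AV}(M)\subset Z_+$) by standard associated-graded arguments, and then to handle the finite-length refinement by a direct argument using simplicity and the weight decomposition. For the forward direction of the first equivalence, I would fix a good filtration $(M^i)$ on $M$. Given $u\in\mathcal{U}(\mathfrak{g})^T$ of positive PBW degree and $m\in M$, local finiteness provides a monic polynomial relation $p(u)\cdot m=0$; comparing leading terms in the associated graded shows $\sigma(u)^N$ annihilates the image of $m$ in $\mathrm{gr}\,M$. Finite generation of $\mathrm{gr}\,M$ over the Noetherian ring $\mathbb{C}[\mathfrak{g}^*]$ allows a uniform choice of $N$, so $\sigma(u)\in\sqrt{\mathrm{Ann}(\mathrm{gr}\,M)}$, giving $I_+\subset\sqrt{\mathrm{Ann}(\mathrm{gr}\,M)}$ and $\mathrm{AV}(M)\subset V(I_+)=Z_+$.

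For the converse, the Nullstellensatz together with Noetherianity will yield $N$ with $I_+^N\cdot\mathrm{gr}\,M=0$. Given $m\in M$, I would equip $\mathcal{U}(\mathfrak{g})^T\cdot m$ with the filtration induced from $(M^i)$. Reductivity of $T$ gives $\mathrm{gr}\,\mathcal{U}(\mathfrak{g})^T=(\mathrm{gr}\,\mathcal{U}(\mathfrak{g}))^T=\mathbb{C}[\mathfrak{g}^*]^T$, so the associated graded of $\mathcal{U}(\mathfrak{g})^T\cdot m$ embeds into the cyclic $\mathbb{C}[\mathfrak{g}^*]^T$-submodule $\mathbb{C}[\mathfrak{g}^*]^T\cdot\bar{m}\subset\mathrm{gr}\,M$. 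This cyclic module is annihilated by $J_+^N$, where $J_+=I_+\cap\mathbb{C}[\mathfrak{g}^*]^T$ is the unique graded maximal ideal of the finitely generated graded algebra $\mathbb{C}[\mathfrak{g}^*]^T$; since $\mathbb{C}[\mathfrak{g}^*]^T/J_+^N$ is finite-dimensional, so is $\mathcal{U}(\mathfrak{g})^T\cdot m$. This backward direction will be the main technical step; the key subtlety is using reductivity of $T$ to pass from the associated-graded annihilation back to a filtered statement about the action on $M$.

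Finally, for the finite-length case, the implication from finite-dimensional weight spaces to local finiteness is immediate, since $\mathcal{U}(\mathfrak{g})^T$ is weight-preserving so the cyclic orbit of any $m\in M_\lambda$ lies in the finite-dimensional $M_\lambda$. For the converse, I would observe that local finiteness on finite-length $M$ restricts to local finiteness on each simple subquotient $L$, which in particular yields a generalized $\mathfrak{h}$-weight decomposition $L=\bigoplus_\lambda L_\lambda$. For any nonzero $m\in L_\lambda$, simplicity of $L$ together with the uniqueness of the weight decomposition forces any other weight-$\lambda$ vector $m'=u\cdot m$ to satisfy $m'=u_0\cdot m$ for $u_0\in\mathcal{U}(\mathfrak{g})^T$, so $L_\lambda=\mathcal{U}(\mathfrak{g})^T\cdot m$ is finite-dimensional by local finiteness. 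Finite-dimensionality of weight spaces is preserved by extensions, so $M$ itself has finite-dimensional weight spaces.
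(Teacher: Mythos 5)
Your first paragraph is fine and is essentially the paper's own argument for the forward implication, and your finite-length paragraph runs parallel to the paper's (restrict to simple subquotients, identify a weight space of a cyclic weight module with a $\mathcal{U}(\mathfrak{g})^T$-orbit). The problem is the backward implication, which is precisely the part the paper compresses into ``the converse follows similarly,'' and the mechanism you propose does not work. The claimed embedding $\mathrm{gr}\bigl(\mathcal{U}(\mathfrak{g})^T\cdot m\bigr)\hookrightarrow\mathbb{C}[\mathfrak{g}^*]^T\cdot\bar{m}$ is false for the induced filtration $N^i=(\mathcal{U}(\mathfrak{g})^T\cdot m)\cap M^i$: what holds in general is only the reverse containment $\mathbb{C}[\mathfrak{g}^*]^T\cdot\bar{m}\subseteq\mathrm{gr}\,N$. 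If $u\in\mathcal{U}(\mathfrak{g})^T$ has degree $d$ and $u\cdot m$ drops below filtration degree $d+\deg m$ --- which is exactly what the hypothesis $I_+^N\,\mathrm{gr}\,M=0$ forces to happen constantly --- then the symbol of $u\cdot m$, taken in its actual degree, need not lie in $\mathbb{C}[\mathfrak{g}^*]^T\cdot\bar{m}$ at all. (Already for $A=\mathbb{C}[x]$ acting on $M=\mathbb{C}[x]$ with the good filtration $M^i=\{\deg\leq i+1\}$ and $m=1$, one has $\mathbb{C}[x]\cdot\bar m=\mathbb{C}\bar m$ while $\mathrm{gr}(A\cdot m)=\mathrm{gr}\,M$ is infinite-dimensional.) Switching to the quotient filtration $(\mathcal{U}(\mathfrak{g})^T)^{\leq d}\cdot m$ does not repair this: there one gets a surjection from $\mathbb{C}[\mathfrak{g}^*]^T$, but the annihilation $I_+^N\,\mathrm{gr}\,M=0$ is a statement about the $M$-filtration and does not transfer. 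Since $\mathrm{gr}\,M$ is finitely generated only over $\mathbb{C}[\mathfrak{g}^*]$ and not over $\mathbb{C}[\mathfrak{g}^*]^T$, being $J_+^N$-torsion does not by itself bound $\mathrm{gr}\,N$.

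Moreover the gap is not merely technical: without an a priori weight-module hypothesis the implication ``$\mathrm{AV}(M)\subseteq Z_+$ implies $\mathcal{U}(\mathfrak{g})^T$ acts locally finitely'' cannot be proved by any such filtered argument, because it fails. For $\mathfrak{g}=\mathfrak{sl}_2$ take the nondegenerate Whittaker module $Y=\mathcal{U}(\mathfrak{g})/\mathcal{U}(\mathfrak{g})(e-1)+\mathcal{U}(\mathfrak{g})(\Omega-c)$: it is cyclic and irreducible, and the principal symbols of $e-1$ and of the Casimir $\Omega-c$ (namely $x_e$ and an invariant nondegenerate quadratic form) already force $\mathrm{AV}(Y)\subseteq V(x_e,x_h)\subseteq Z_+$; yet $h$ does not act locally finitely on $Y$, since Whittaker modules contain no nonzero $\mathfrak{h}$-finite vectors. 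So a correct proof of the converse must use more than the support of $\mathrm{gr}\,M$; in the paper's actual application (Corollary \ref{cor:bfinaf}) the module is already known to be a weight module with weights in $Q$ via unipotent $T$-monodromicity, and that is the setting in which the statement should be read and proved. The same example shows that the ``finite-dimensional weight spaces $\Rightarrow$ locally finite'' half of your last paragraph (like the paper's) silently assumes that $M$ actually decomposes into its weight spaces: $Y$ has finite length and all weight spaces zero.
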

\begin{proof}
    Suppose that the action of $\mathcal{U}(\mathfrak{g})^T$ on $M$ is locally finite. If $(M^i)_{i \geq 0}$ is a good filtration for $M$, this means that for any $m \in M$ and positive degree element $a \in \mathcal{U}(\mathfrak{g})^T$, there exists some $n$ for which $a^k \in M^n$ for all $k \geq 0$. Since $a$ has positive degree this means the action of the image of $a^n$ in $\mathbb{C}[\mathfrak{g}^*]$ on $\overline{m} \in \mathrm{gr}~M^\bullet$ is zero. Since $\mathrm{gr}~M^\bullet$ is finitely-generated, taking the maximum of the $n$ obtained in this way over a generating set implies that some power of $a$ lies in $\mathrm{Ann}(\mathrm{gr}~M^\bullet)$. This means the radical of $\mathrm{Ann}(\mathrm{gr}~M^\bullet)$ contains $I_+$, and so $\mathrm{gr}~M^\bullet$ is supported in $V(I_+)$; the converse follows similarly.
    
    Now note that if $M$ is a cyclic module, clearly local finiteness of the $\mathcal{U}(\mathfrak{g})^T$ action is equivalent to the finite-dimensionality of all weight spaces, since $\mathcal{U}(\mathfrak{g})^T$ manifestly preserves weight spaces. If $M$ is finite-length, then applying this to a Jordan-H{\"o}lder filtration for $M$ gives the final claim in the proposition.
\end{proof}

We now give an alternative natural description of the subvariety $Z_+$.
\begin{lemma}\label{lem:zunion}
    The variety $Z_+$ can be described as a union of nilpotent subalgebras; we have
    \begin{align}
        Z_+ = \bigcup_{w \in W} (\mathfrak{b}^\perp)^w \subset \mathfrak{g}^*.
    \end{align}
    After identifying $\mathfrak{g}^* \cong \mathfrak{g}$ under the Killing form, we have
    \begin{align}
         Z_+ = \bigcup_{w \in W} \mathfrak{n}^w \subset \mathfrak{g}.
    \end{align}
\end{lemma}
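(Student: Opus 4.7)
The plan is to reduce to proving the second identity $Z_+ = \bigcup_{w \in W} \mathfrak{n}^w$ inside $\mathfrak{g}$: under the Killing form identification $\mathfrak{g}^* \cong \mathfrak{g}$, the annihilator $\mathfrak{b}^\perp$ corresponds to $\mathfrak{n}$ (the Killing form pairs $\mathfrak{g}_\alpha$ nondegenerately with $\mathfrak{g}_{-\alpha}$ and vanishes on $\mathfrak{h} \times \mathfrak{n}$), and this identification is $W$-equivariant. Since $I_+$ is by definition generated by the positive-degree elements of $\mathbb{C}[\mathfrak{g}]^T$, the variety $V(I_+) = Z_+$ is precisely the null cone of the adjoint $T$-action on $\mathfrak{g}$, and the problem becomes a GIT question.

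The key step I would invoke is the Hilbert--Mumford criterion for the reductive group $T$: a point $x \in \mathfrak{g}$ lies in $Z_+$ if and only if there exists a one-parameter subgroup $\lambda : \mathbb{G}_m \to T$ with $\lim_{t \to 0} \mathrm{Ad}(\lambda(t))\,x = 0$. Decomposing $x = x_0 + \sum_{\alpha \in R} x_\alpha$ with $x_0 \in \mathfrak{h}$ and $x_\alpha \in \mathfrak{g}_\alpha$, one computes
\[ \mathrm{Ad}(\lambda(t))\,x = x_0 + \sum_{\alpha} t^{\langle \alpha, \lambda\rangle} x_\alpha, \]
so the $t \to 0$ limit vanishes exactly when $x_0 = 0$ and $\langle \alpha, \lambda\rangle > 0$ for every $\alpha$ in the support $\mathrm{supp}(x) = \{\alpha \in R : x_\alpha \neq 0\}$.

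It then remains to observe that $\mathrm{supp}(x) \subset R$ is contained in some open half-space cut out by a cocharacter of $T$ if and only if $\mathrm{supp}(x) \subset wR^+$ for some $w \in W$. A small perturbation turns any defining $\lambda$ into a regular one without changing the containment (the support is finite and the condition is open), and for regular $\lambda$ the half-space $\{\alpha \in R : \langle \alpha, \lambda\rangle > 0\}$ is exactly the positive system $wR^+$ for the Weyl chamber containing $\lambda$; conversely any regular element of the $w$-chamber detects $wR^+$. Chaining these equivalences yields $x \in Z_+$ iff $x \in \mathfrak{n}^w$ for some $w$, as desired.

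The main technical input is the Hilbert--Mumford criterion identifying $V(I_+)$ with the $T$-null cone; everything else is direct weight-space bookkeeping combined with the elementary perturbation argument matching open half-spaces to Weyl-translates of $R^+$. I do not expect any serious obstacle beyond keeping conventions (weights versus coweights, and the Killing form identification) consistent.
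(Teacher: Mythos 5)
Your proof is correct, but it takes a genuinely different route from the paper's. The paper argues directly with generators of $I_+$: for $\bigcup_w \mathfrak{n}^w \subset Z_+$ it checks that every positive-degree weight-zero monomial in the $h_\alpha, e_\beta, f_\gamma$ vanishes on $\mathfrak{n}$, and for the reverse containment it observes that if the support $\{\beta \in R : x_\beta \neq 0\}$ is contained in no positive system $wR^+$, then it admits a nontrivial $\mathbb{Z}_{\geq 0}$-linear relation, and the corresponding weight-zero monomial $\prod_i e_{\beta_i}^{c_i}\prod_j f_{\beta_j}^{d_j}$ lies in $I_+$ yet does not vanish at $x$. You instead identify $Z_+$ with the null cone of the adjoint $T$-action and invoke the Hilbert--Mumford criterion, converting ``support in an open half-space'' into ``support in $wR^+$'' by perturbing to a regular cocharacter. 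The two arguments are essentially Farkas-dual to each other: the paper's claim that a set of roots lying in no positive system carries a nonnegative relation is exactly the dual of your half-space/chamber statement, so the combinatorial core is the same convexity fact approached from opposite sides. Your version buys a cleaner conceptual framing (the $T$-null cone) and a single chain of equivalences settling both inclusions at once, at the cost of the external input of Hilbert--Mumford (elementary for a torus, but a bigger hammer) and one point you should make explicit: $I_+$ is defined via positive-degree elements of $\mathcal{U}(\mathfrak{g})^T$, so to say $Z_+$ is the null cone you need that their symbols generate the same ideal as the positive-degree part of $\mathrm{Sym}(\mathfrak{g})^T \cong \mathbb{C}[\mathfrak{g}^*]^T$ (e.g.\ via the $T$-equivariant symmetrization map); the paper uses this identification implicitly when it describes $I_+$ as spanned by weight-zero monomials.
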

\begin{proof}
    We will show each containment for the second formulation. First, suppose that $x \in w\mathfrak{n}w^{-1}$ for some $w \in W$. Without loss of generality (and by choosing an alternative set of positive roots by conjugating the original set by $w$), we can simply assume that $x \in \mathfrak{n}$. We note that $I_+$ is spanned by monomials which are products of $h_\alpha$ for some $\alpha \in \Pi$, $e_\beta$ for some $\beta \in R^+$ and $f_\gamma$ for some $\gamma \in R^+$ such that the sum of the $\beta$ which appear minus the sum of the $\gamma$ which appear is zero. Since $x \in \mathfrak{n}$, terms with an $h_\alpha$ will all vanish. All other terms contain at least one nonzero $f_\gamma$ for $\gamma \in R^+$, and therefore vanish on $\mathfrak{n}$. This implies that $x \in Z_+$.

    On the other hand, suppose $x \in Z_+$. Suppose that there is no $w \in W$ such that $wxw^{-1} \in \mathfrak{n}$. This means the set of $\beta \in R$ such that $f_\beta(x) \neq 0$ is not contained in any subset of $R$ which serves as a subset of positive roots (otherwise, such a subset would be a $W$-conjugate of $R^+$, and we would have $wxw^{-1} \in \mathfrak{n}$ for some $w$). This means there exists some $\{\beta_i\}_i \subset R$ such that $f_{\beta_i}(x) \neq 0$ for every $\beta_i$ in the subset, such that there exists some nontrivial $\mathbb{Z}_{\geq 0}$-linear relation between the $\beta_i$. Writing
    \begin{align*}
        \sum_i c_i\beta_i + \sum_j d_j\beta_j = 0,
    \end{align*}
    by separating the positive roots $\beta_i$ from the negative roots $\beta_j$, we then consider the monomial
    $$\prod_i e_{\beta_i}^{c_i}\prod_j f_{\beta_j}^{d_j},$$
    which lies in $I_+$ since the linear relation above confirms that it has weight zero. On the other hand, its value on $x$ is not zero by construction, contradicting the fact that $x \in Z_+$.
\end{proof}

\subsection{Functors and derived categories}

In this section, we define a collection of functors relating the abelian categories $\Afmod$ and $\O$, as well as certain derived analogues.
\begin{definition}
    Let $\Da_0$ be the full dg subcategory of the bounded derived category $D^b(\gmod_0)$ generated by the essential image of the natural functor $D^b(\Afmod) \to D^b(\gmod_0)$.
\end{definition}

\begin{definition}\label{def:algwaction}
    For any $w \in W$, let $\theta_w : \gmod_0 \to \gmod_0$ denote the exact equivalence of categories obtained by conjugating the $\mathfrak{g}$-action by $\dot{w} \in N_G(T)$. We continue to use the notation $\theta_w$ to refer to the corresponding automorphism of $\Da_0$ when the context is clear.
\end{definition}

\subsubsection{Ascending and descending chains in $\tO$}
The following technical lemma is required to properly define the remaining functors.
\begin{lemma}\label{lem:zorn}
    Suppose $M \in \gmod_0$ admits an ascending chain of submodules $\{M_i\}_{i \geq 0}$ whose union is $M$, with each $M_i \in \tO$. Then $M \in \tO$. Further, if $M \in \gmod_0$ admits a descending chain of submodules $\{N_i\}_{i \geq 0}$ such that each quotient $M/N_i \in \tO$, then $M \in \tO$. 
\end{lemma}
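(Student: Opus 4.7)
The plan is to verify each of the three non-finite-generation axioms of $\tO$ from Definition \ref{def:catovariant} in turn for $M$; the trivial central character condition is immediate because $M$ is assumed to lie in $\gmod_0$. So in both cases only the weight decomposition with generalized eigenvalues in $Q$ and the local $\mathfrak{n}$-finiteness need to be established.

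For the ascending case, the strategy is that every element $m \in M$ lies in some $M_i$, and each condition that is local on elements transfers from $M_i$ to $M$. Concretely, I would define $M_\lambda := \bigcup_i (M_i)_\lambda$ and check: $(i)$ $M = \sum_\lambda M_\lambda$, since any $m$ lies in some $M_i = \bigoplus_\lambda (M_i)_\lambda$; $(ii)$ the sum is direct, because any finite relation $\sum_j m_{\lambda_j} = 0$ lives in $M_{i_0}$ for $i_0 = \max_j i_j$, where directness is given by hypothesis; $(iii)$ for $m \in M_i$, both $\mathcal{U}(\mathfrak{h}) \cdot m$ and $\mathcal{U}(\mathfrak{n}) \cdot m$ are contained in $M_i$ (a submodule), and so inherit local finiteness and the generalized-eigenvalue property from $M_i \in \tO$.

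For the descending case, the approach is dual: use the surjections $M \twoheadrightarrow M/N_i$ to transfer the $\tO$-structure back to $M$ by studying the compatible system of images $\bar{m}_i \in M/N_i$. These are compatible with the weight decompositions since the projections $M/N_{i+1} \twoheadrightarrow M/N_i$ are $\mathfrak{g}$-module maps, so each $\bar{m}_i$ has a coherent decomposition $\bar m_i = \sum_\lambda \bar m_{i,\lambda}$ with generalized eigenvalues in $Q$. One then defines $M_\lambda \subset M$ to be the subspace of elements whose image in every $M/N_i$ lies in $(M/N_i)_\lambda$, and verifies the axioms using that $\bigcap_i N_i = 0$ so that $M \hookrightarrow \prod_i M/N_i$.

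The main obstacle is the descending case: while each $\bar m_i$ generates finite-dimensional $\mathcal{U}(\mathfrak{h})$- and $\mathcal{U}(\mathfrak{n})$-orbits in $M/N_i$, local finiteness on $M$ itself does not follow from these finite-dimensional quotients alone --- a priori the dimensions $\dim \mathcal{U}(\mathfrak{h}) \bar m_i$ and $\dim \mathcal{U}(\mathfrak{n}) \bar m_i$ could be unbounded in $i$, so that lifting back to $\mathcal{U}(\mathfrak{h}) \cdot m$ or $\mathcal{U}(\mathfrak{n}) \cdot m$ inside $M$ yields an infinite-dimensional span. To close this gap I would use that the chain is \emph{strict} in the sense that $\bigcap_i N_i = 0$ (which should be part of the intended hypothesis) combined with the fact that the actions of $\mathcal{U}(\mathfrak{h})$ and $\mathcal{U}(\mathfrak{n})$ on $m$ are bounded by the stabilized annihilators $\bigcap_i \mathrm{Ann}(\bar m_i)$, which under reasonable stability of the filtration (such as eventually constant support of $\bar m_i$ in the weight lattice) are of finite codimension; this is how the lemma will be applied in the sequel, where the descending chains arise in concrete constructions (e.g.\ completions of highest weight modules) possessing precisely these stability properties.
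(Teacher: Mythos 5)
Your ascending-case argument is correct and is essentially the paper's: every element lies in some $M_i$, and the weight decomposition, the generalized eigenvalue condition, and local $\mathfrak{h}$- and $\mathfrak{n}$-finiteness are all conditions checked element-by-element inside a submodule, so they pass to the union. You are also right that the descending case only makes sense with $\bigcap_i N_i = 0$ read into the hypothesis (otherwise taking $N_i$ constant gives a counterexample); the paper's own proof uses this implicitly when it passes from $p_i(a\cdot m)=0$ for all $i$ to $a\cdot m = 0$.

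The descending case, however, is where your proposal has a genuine gap, and your proposed fix does not close it. You correctly identify the obstacle --- finite-dimensionality of $\mathcal{U}(\mathfrak{h})\bar m_i$ and $\mathcal{U}(\mathfrak{n})\bar m_i$ in each quotient does not by itself bound $\mathcal{U}(\mathfrak{h})\cdot m$ or $\mathcal{U}(\mathfrak{n})\cdot m$ in $M$ --- but then you resolve it by invoking ``reasonable stability of the filtration,'' ``eventually constant support of $\bar m_i$,'' and the way the lemma ``will be applied in the sequel.'' These are additional hypotheses that appear nowhere in the statement and are not verified, so what you sketch is a strictly weaker assertion. The missing idea, which makes the finiteness automatic and uniform in $i$, is that every object of $\tO$ has its nonzero weight spaces supported in $\mathbb{Z}_{\leq 0}R^+$: local $\mathfrak{n}$-finiteness plus the trivial central character force any weight to sit below some element of $W\cdot 0$, hence below $0$. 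Consequently, if $\lambda$ is the weight of the images $p_i(m)$, there is a single $d$ (depending only on $\lambda$, not on $i$) such that every element $a \in \mathcal{U}(\mathfrak{n})$ of PBW degree at least $d$ satisfies $p_i(a\cdot m) = 0$ for all $i$; since $\bigcap_i N_i = 0$, this gives $a\cdot m = 0$, so $\mathfrak{n}$ acts locally nilpotently on $M$ and $\mathcal{U}(\mathfrak{n})\cdot m$ is finite-dimensional. The paper then finishes by showing that the cyclic submodule $\mathcal{U}(\mathfrak{g})\cdot m$ (finitely generated, locally $\mathfrak{n}$-nilpotent, trivial central character) lies in $\O$, by induction on $\dim \mathcal{U}(\mathfrak{n})\cdot m$: produce a vector $v_+$ annihilated by $\mathfrak{n}$, observe that triviality of the central character makes it an honest weight vector, so $\mathcal{U}(\mathfrak{g})\cdot v_+$ is a quotient of a Verma module in $\O$, and pass to the quotient by it. Then $M$ is a union of submodules lying in $\tO$ and the ascending-chain part concludes. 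Without this uniform bound on weights (or some substitute for it), your construction of the spaces $M_\lambda$ inside $\prod_i M/N_i$ does not yield local finiteness, so the proof as proposed is incomplete.
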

\begin{proof}
    Suppose $M$ admits an ascending chain of submodules as in the lemma. Then for any $m \in M$, we can choose some $k$ such that $m \in M_k$. Since $M_k \in \tO$, let $\lambda$ be the weight of $m$ in $M_k$; then we know $\mathcal{U}(\mathfrak{n})\cdot m$ and $\mathcal{U}(\mathfrak{h})$ are finite-dimensional, where $\mathfrak{h}$ acts with eigenvalues determined by the character $\lambda$.
    
    Suppose $M$ admits a descending chain of submodules $\{N_i\}_{i \geq 0}$ such that each quotient $M/N_i$ lies in $\tO$. Let $m \in M$, then there is some $\lambda \in Q$ such that $p_i(m)$ is of weight $\lambda$ in each $M/N_i$ where $p : M \to M/N_i$ is the quotient map. Since elements of $\tO$ have nonzero weight spaces supported in $\mathbb{Z}_{\leq 0}R^+$, there exists some $d$ such that any element $a$ of $\mathcal{U}(\mathfrak{n})$ with PBW degree at least $d$ has $p_i(a \cdot m) = 0$ for all $i \geq 0$. This means $a \cdot m = 0$ for any such $a$, proving that $\mathfrak{n}$ acts locally nilpotently on $M$. Finally, one sees that the $\mathcal{U}(\mathfrak{g})$-submodule of $M$ generated by $m$ is a finitely-generated submodule on which $\mathfrak{n}$ acts locally nilpotently and $Z(\mathfrak{g})$ acts trivially.
    
    One then argues that such a module lies in $\O$ by induction on $\dim \mathcal{U}(\mathfrak{n})\cdot m$, which we just showed is finite-dimensional. Indeed, by local nilpotency of the $\mathfrak{n}$-action, one can find a vector $v_+ \in \mathcal{U}(\mathfrak{g})\cdot m$ annihilated by $\mathcal{U}(\mathfrak{n})$; since the center acts trivially, one can then see that $v_+$ is a weight vector, i.e.\ a vector on which $\mathfrak{h}$ acts by scalars. It follows that $\mathcal{U}(\mathfrak{g})\cdot v_+$ is the quotient of a Verma module with trivial central character, and therefore lies in $\O$. If $\mathcal{U}(\mathfrak{g})\cdot v_+ \subsetneq \mathcal{U}(\mathfrak{g}) \cdot m$, one continues by applying the same reasoning to the image of $m$ in the quotient module $(\mathcal{U}(\mathfrak{g}) \cdot m)/(\mathcal{U}(\mathfrak{g})\cdot v_+)$. Since $\dim \mathcal{U}(\mathfrak{n})\cdot m \subset M$ is finite-dimensional, this process must terminate.
\end{proof}
Of course, the result in Lemma \ref{lem:zorn} holds just as well for $\tO^w$ for any $w \in W$. 

\subsubsection{Inclusion and its adjoints}
Recall that in Proposition \ref{prop:iotaw}, we defined for any $w \in W$ a fully faithful and exact inclusion functor $\iota_w : \O^w \hookrightarrow \Afmod$, which we also can consider as a functor $\iota_w : \tO^w \to \gmod_0$. Recall that if $\theta_w$ is the functor defined in Definition \ref{def:algwaction}, it restricts to an isomorphism $\theta_w : \tO \cong \tO^w$.

\begin{definition}
    For any $w \in W$, we define
    \begin{align*}
        j_{w!} : \tO \to \gmod_0
    \end{align*}
    by $j_{w!} = \iota_w \circ \theta_w$. Since this is a fully faithful exact functor, we use the same notation for the derived functor $j_{w!} : D(\tO) \to D(\gmod_0)$.
\end{definition}

\begin{definition}
    For any $w \in W$, we let ${}^\circ j_w^! : \gmod_0 \to \tO$ and ${}^\circ j_w^* : \gmod_0 \to \tO$ be the left- and right-exact functors defined respectively as follows. For any $M \in \gmod_0$, let $M_w$ be the largest submodule of $M$ contained in $\tO^w$, and let $M^w$ be the largest quotient of $M$ contained in $\tO^w$. These modules are well-defined by Lemma \ref{lem:zorn}. We then let
    \begin{align*}
        {}^\circ j_w^!M & = \theta_{w^{-1}} M_w \in \tO,\\
        {}^\circ j_w^*M & = \theta_{w^{-1}} M^w \in \tO.
    \end{align*}
    Let $j_w^!, j_w^* : D(\gmod_0) \to D(\tO)$ be their right- and left-derived functors respectively. For the rest of the paper, we use the same notation to denote the restriction of these derived functors to $\Da_0$.
\end{definition}

\begin{proposition}\label{prop:originaladjunctions}
    For any $w \in W$, there are adjoint pairs of functors
    \begin{align*}
        (j_{w!}, {}^\circ j_w^!), & \qquad ({}^\circ j_w^*, j_{w!})
    \end{align*}
    between $\tO$ and $\gmod_0$, and
    \begin{align*}
        (j_{w!}, j_w^!), & \qquad (j_w^*, j_{w!}).
    \end{align*}
    between $D(\tO)$ and $\Da_0$.
\end{proposition}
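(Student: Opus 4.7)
The strategy is to establish both adjunctions at the abelian level by a direct universal-property argument, and then to upgrade to the derived level by leveraging the exactness of $j_{w!}$.

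For the abelian adjunction $(j_{w!}, {}^\circ j_w^!)$, I would argue as follows. Given $N \in \tO$ and $M \in \gmod_0$, any morphism $\varphi : j_{w!}N \to M$ has image equal to a quotient of $j_{w!}N = \iota_w\theta_w N \in \tO^w$. Since conditions (2)--(4) of Definition~\ref{def:catovariant} are inherited by subquotients in $\gmod_0$, this image lies in $\tO^w$ and is therefore contained in the maximal such submodule $M_w$. Hence $\varphi$ factors uniquely through $M_w \hookrightarrow M$, and applying the equivalence $\theta_{w^{-1}} : \tO^w \isom \tO$ yields the natural isomorphism
\begin{align*}
\mathrm{Hom}_{\gmod_0}(j_{w!}N, M) \cong \mathrm{Hom}_{\tO^w}(\iota_w\theta_w N, M_w) \cong \mathrm{Hom}_{\tO}(N, {}^\circ j_w^! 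M).
\end{align*}
The adjunction $({}^\circ j_w^*, j_{w!})$ is formally dual: the image of any $M \to j_{w!}N$ is a quotient of $M$ lying in $\tO^w$, and thus factors uniquely through the largest such quotient $M^w$.

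For the derived adjunctions, I would invoke the standard principle that for an exact functor $F : \mathcal{A} \to \mathcal{B}$ with right adjoint $G$ between abelian categories (with $\mathcal{B}$ having enough injectives), the right derived functor $RG$ remains right adjoint to $F$ on derived categories. The key point is that $G$ preserves K-injective complexes: for any acyclic complex $A$ over $\mathcal{A}$ and K-injective $I$ over $\mathcal{B}$,
\begin{align*}
\mathrm{Hom}_{K(\mathcal{A})}(A, GI) \cong \mathrm{Hom}_{K(\mathcal{B})}(FA, I) = 0,
\end{align*}
since $FA$ is acyclic. Applying this with $(F, G) = (j_{w!}, {}^\circ j_w^!)$ yields the derived adjunction $(j_{w!}, j_w^!)$; the formally dual statement, using K-projective resolutions, yields $(j_w^*, j_{w!})$.

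The main technical obstacle is verifying that $j_w^!$ and $j_w^*$ take $\Da_0$ into $D(\tO) \subset D(\gmod_0)$. The cohomology objects present no difficulty: each $R^i {}^\circ j_w^!(M)$ for $M \in \Afmod$ is a subquotient of $\theta_{w^{-1}}(I^i)_w \in \tO$, and $\tO$ is closed under subquotients in $\gmod_0$. The subtler issue is boundedness, which reduces to showing that ${}^\circ j_w^!$ and ${}^\circ j_w^*$ have finite cohomological dimension on $\Afmod$. To establish this I would use Corollary~\ref{cor:afdefs} to reduce to objects in $\tO^{w'}$ for various $w'$, and then construct bounded resolutions of such objects by $j_{w'!}$-images of projective or Verma-like modules in $\tO$, which are $j_w^!$- and $j_w^*$-acyclic by the abelian adjunctions themselves.
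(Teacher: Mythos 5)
Your proof is correct and takes essentially the same route as the paper: the abelian adjunctions are established by the identical argument that the image of a morphism out of (resp.\ into) $j_{w!}N$ inherits the defining conditions of $\tO^w$ and hence factors through the maximal submodule $M_w$ (resp.\ quotient $M^w$), and the derived adjunctions follow from the standard theorem on deriving an adjunction whose left adjoint is exact, which the paper simply cites while you spell out the K-injective/K-projective argument. The ``boundedness obstacle'' you flag at the end is not actually needed, since the paper takes $j_w^!$ and $j_w^*$ to be derived functors valued in the unbounded derived category $D(\tO)$ and then merely restricts their source to $\Da_0$.
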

\begin{proof}
    Suppose that $M \in \tO$ and $N \in \gmod_0$ where $f : j_{w!}M \to N$ is a morphism in $\gmod_0$. Since $\mathfrak{n}$ and $\mathfrak{h}$ act locally finitely on $M$, they must also act locally finitely on the image of $M$ under $f$. Similarly, the weight space decomposition of $M$ implies a weight space decomposition on the image of $f$. This implies that the image of $f$ is a submodule of $N$ lying in $\tO$, which by definition is a submodule of ${}^\circ j_w^!N$, thereby inducing a map $M \to {}^\circ j_w^!N$. Composing with the natural inclusion ${}^\circ j_w^!N \to N$ gives the other direction. A similar argument proves the proposition for the adjunction $({}^\circ j_w^*, j_{w!})$.
    
    By the adjunction theorem in \cite{Quil}, this implies the stated adjunctions of derived functors.
\end{proof}

\subsubsection{Example of an object in $\Afmod$ not lying in any $\O^w$}

\begin{example}
    In the case $\mathfrak{g} = \mathfrak{sl}_2$ where $W = \{e, s\}$, we now give a basic example of a module lying in $\mathcal{A}_0$ which does not lie in $\O$ or $\O^s$.  In this figure, each dot with an even integer label denotes a weight space of dimension one with a given weight, while arrows denote isomorphisms of vector spaces given by the action of the usual elements $e$ and $f$ in $\mathfrak{sl}_2$
    
    \begin{center}
        \begin{tikzpicture}[scale=0.8, every node/.style={circle, fill=black, inner sep=1.5pt}, >=stealth]
            
            \foreach \x in {-6, -4,-2,0,2,4, 6}
            \node (w\x) at (\x,0) {};
            
            \foreach \x in {-4,-2,0,2,4}
            \node[fill=none, below=8pt] at (\x,0) {$\x$};
            
            \node[fill=none] at (-6.65,0) {$\dots$};
            \node[fill=none] at (6.65,0) {$\dots$};
            
            \draw[->, thick, black, bend right=0] (w0) to node[above, sloped, midway, fill=none, inner sep=0pt, outer sep=0pt] {\textnormal{$f$}} (w-2);
            \draw[->, thick, black, bend left=30] (w-2) to node[above, sloped, midway, fill=none, inner sep=0pt, outer sep=0pt] {\textnormal{$f$}} (w-4);
            \draw[->, thick, black, bend right=-30] (w-4) to node[above, sloped, midway, fill=none, inner sep=0pt, outer sep=0pt] {\textnormal{$e$}} (w-2);
            \draw[->, thick, black, bend right=-30] (w-4) to node[above, sloped, midway, fill=none, inner sep=0pt, outer sep=0pt] {\textnormal{$f$}} (-6,0);
            \draw[->, thick, black, bend right=-30] (-6,0) to node[above, sloped, midway, fill=none, inner sep=0pt, outer sep=0pt] {\textnormal{$e$}} (w-4);

            \draw[->, thick, black, bend left=0] (w0) to node[above, sloped, midway, fill=none, inner sep=0pt, outer sep=0pt] {\textnormal{$e$}} (w2);
            \draw[->, thick, black, bend right=-30] (w2) to node[above, sloped, midway, fill=none, inner sep=0pt, outer sep=0pt] {\textnormal{$e$}} (w4);
            \draw[->, thick, black, bend left=30] (w4) to node[above, sloped, midway, fill=none, inner sep=0pt, outer sep=0pt] {\textnormal{$f$}} (w2);
            \draw[->, thick, black, bend left=30] (w4) to node[above, sloped, midway, fill=none, inner sep=0pt, outer sep=0pt] {\textnormal{$e$}} (6,0);
            \draw[->, thick, black, bend left=30] (6,0) to node[above, sloped, midway, fill=none, inner sep=0pt, outer sep=0pt] {\textnormal{$f$}} (w4);
        \end{tikzpicture}
    \end{center}
    If $M$ denotes this module, then it is easy to check that 
    \begin{align*}
        {}^\circ j_e^* M & = \Delta(0) & {}^\circ j_e^! M & = \Delta(-2).
    \end{align*}
\end{example}

\section{Sheaves on the flag variety}\label{sec:geometric}

\subsection{Preliminaries}

Considering $G/B$ as a complex variety, we let $D(G/B)$ denote the weakly constructible derived category of sheaves on $G/B$, considered as a dg category. We can consider the constructible derived category $D^c(G/B)$, where stalks and costalks are required to be finite-dimensional, as a full dg subcategory $D^c(G/B) \subset D(G/B)$. We write $D^b(G/B)$ for the subcategory of $D^c(G/B)$ consisting of objects with finitely many nonzero cohomology objects.

We note that $D^b(G/B)$ is equipped with the perverse $t$-structure whose heart we denote by $\mathrm{Perv}(G/B)$. We also consider the $U$-equivariant weakly constructible derived category $D_U(G/B)$ which consists of elements of $D(G/B)$ stratified with respect to the Bruhat stratification $\{BwB\}_{w \in W}$ on $G/B$. Similarly, we write $\mathrm{Perv}_U(G/B)$ for the heart of $D_U^c(G/B)$, which is the full abelian subcategory of $\mathrm{Perv}(G/B)$ consisting of objects constructible with respect to the Bruhat stratification. We also use the identification $D_U(G/B) \cong D(U \backslash G/B)$ to the weakly constructible derived category on the stack $U \backslash G/B$.

We let $\mathrm{Perv}_T(G/B)$ be the category of $T$-equivariant perverse sheaves on $G/B$ with respect to the left multiplication action of $T$. We say that an object in $\mathrm{Perv}(G/B)$ is unipotently $T$-monodromic if it is a finite extension of objects lying in the subcategory $\mathrm{Perv}_T(G/B)$.

\subsection{\texorpdfstring{$U$}{U}-equivariant sheaves on the flag variety and generalizations}

We recall in Section \ref{sec:bb} that $\mathrm{Perv}_U(G/B)$ is a geometric model for $\O$. We now explain two technical variants of $\mathrm{Perv}_U(G/B)$ which give rise to monoidal actions on its right and left respectively. 

\subsubsection{Equivariant and monodromic categories}
We also consider the $B$-equivariant derived category $D_B(G/B)$, which unlike in the case of $D_U(G/B)$, is not the derived category of $\mathrm{Perv}_B(G/B)$. It admits a monoidal structure
\begin{align*}
    - * - : D_B(G/B) \times D_B(G/B)  \to D_B(G/B) 
\end{align*}
by convolution.

In \cite{BY}, it is explained that there exists a different monoidal category Koszul dual to $D_B(G/B)$ which acts on $D_U(G/B)$ on the left; this is the category of \emph{free-monodromic} sheaves $\hat{D}(\wqw{B}{G}{B})$. Although defined in loc.\ cit.\ for $\ell$-adic sheaves on a variety over a field of positive characteristic, such a category can be defined (and was used and studied in \cite{DhTa}) with base field and coefficient field $\mathbb{C}$; this is the version of the category we use in the present paper. There is a convolution product
\begin{align*}
    - * - : \hat{D}(\wqw{B}{G}{B}) \times \hat{D}(\wqw{B}{G}{B})  \to \hat{D}(\wqw{B}{G}{B})
\end{align*}
which again equips $\hat{D}(\wqw{B}{G}{B})$ with the structure of a monoidal category.

For any $w \in W$, let $j_w : \mathbb{C}^{\ell(w)} \cong BwB \subset G/B$ be the inclusion. In $\mathrm{Perv}_B(G/B)$, there are standard and costandard objects 
\begin{align*}
    \Delta_w & = j_{w!}\mathbb{C}[\ell(w)]\\
    \nabla_w & = j_{w*}\mathbb{C}[\ell(w)]
\end{align*}
Following \cite{DhTa}, letting $\mathcal{L}_w$ be the universal unipotently monodromic local system on $\mathbb{C}^{\ell(w)} \times T \cong BwU \subset G/U$; we let $\tilde{j}_w$ denote this inclusion. We can then define analogues of the above objects which now lie in $\hat{D}(\wqw{B}{G}{B})$ by
\begin{align*}
    \hat{\Delta}_w & = \tilde{j}_{w!}\mathcal{L}_w[\ell(w)]\\
    \hat{\nabla}_w & = \tilde{j}_{w*}\mathcal{L}_w[\ell(w)].
\end{align*}
We note that $\Delta_e$ (resp.\ $\hat{\Delta}_e$) is the monoidal unit for convolution in $D_B(G/B)$ (resp.\ $\hat{D}(\wqw{B}{G}{B})$). 

\subsubsection{Singular support and a new category}

In Section \ref{sec:bb} we recall the Riemann--Hilbert correspondence by which one can identify $\mathrm{Perv}(G/B)$ with regular holonomic $\mathcal{D}$-modules on $G/B$. By this equivalence there is therefore a well-defined notion of singular support for any object of $\mathrm{Perv}(G/B)$, which is a subset of $T^*(G/B)$. 

\begin{definition}
    Given $w \in W$, recall the Schubert cell $UwB = BwB \subset G/B$ and its closure, the Schubert variety $Y_w = \overline{BwB} \subset G/B$. For any $z \in W$, we write $Y_w^z$ for the $z$-translate of $Y_w$ under multiplication by $\dot{z}$, so that $Y_w^z = \overline{zBwB}$. We now let
    \begin{align*}
        Z & = \bigcup_{w, z} T_{Y_w^z}^*(G/B)
    \end{align*}
\end{definition}

\begin{definition}
    Let $\mathcal{B}_f \subset \mathrm{Perv}(G/B)$ be the the subcategory of unipotently $T$-monodromic perverse sheaves $\mathcal{F}$ with singular support in $Z$.
\end{definition}

\begin{proposition}
    There is a natural exact and faithful inclusion functor $$\mathrm{Perv}_U(G/B) \to \mathcal{B}_f.$$ 
\end{proposition}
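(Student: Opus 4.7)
My plan is to construct the functor simply as the inclusion $\mathrm{Perv}_U(G/B) \hookrightarrow \mathrm{Perv}(G/B)$, and then verify that its image lies inside $\mathcal{B}_f$. Since $\mathrm{Perv}_U(G/B)$ is a \emph{full} subcategory of $\mathrm{Perv}(G/B)$, the resulting functor $\mathrm{Perv}_U(G/B) \to \mathcal{B}_f$ is automatically fully faithful (hence faithful) and the $t$-exactness claim is trivial. So the content of the statement is entirely the verification of the two conditions defining $\mathcal{B}_f$: unipotent $T$-monodromy and containment of the singular support in $Z$.

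For unipotent $T$-monodromy, I would show the stronger fact that any $\mathcal{F} \in \mathrm{Perv}_U(G/B)$ is genuinely $T$-equivariant for the left multiplication action. The Bruhat cell $BwB \subset G/B$ is preserved by left multiplication by $T$, is isomorphic to $\mathbb{A}^{\ell(w)}$ (in particular simply connected), and the restriction of $\mathcal{F}$ to it is therefore a shift of a constant sheaf; such a sheaf carries a canonical $T$-equivariant structure, and these glue across cells by the standard argument for Whitney stratified sheaves with simply connected strata. Since $\mathrm{Perv}_T(G/B)$ is by definition contained in the subcategory of unipotently $T$-monodromic perverse sheaves (viewing any object as a one-step extension of itself), this settles the monodromy condition.

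For the singular support condition, I would use the general fact that a complex constructible with respect to a Whitney stratification $\{X_\alpha\}$ of a smooth variety $X$ has singular support contained in $\bigcup_\alpha \overline{T^*_{X_\alpha}(X)}$. Applied to the Bruhat stratification, and using that the cell $BwB$ is smooth and open in $Y_w = \overline{BwB}$ (so that $\overline{T^*_{BwB}(G/B)} = T^*_{Y_w}(G/B)$ by the definition of the conormal to a possibly singular variety via its smooth locus), we conclude
\[
    SS(\mathcal{F}) \subset \bigcup_{w \in W} T^*_{Y_w}(G/B) = \bigcup_{w \in W} T^*_{Y_w^e}(G/B) \subset Z,
\]
which is exactly the singular support condition, with only the $z = e$ term of the union being used.

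The only mildly delicate point is the gluing of the canonical $T$-equivariant structures on each cell into a global equivariant structure on $\mathcal{F}$; if one wishes to avoid that discussion entirely, a cleaner alternative is to observe directly that the $T$-action preserves the Bruhat stratification and acts trivially on the stalks of each constant summand, so that the monodromy representation of $\pi_1(T)$ on nearby cycles is unipotent, giving unipotent monodromicity without constructing a full equivariant structure.
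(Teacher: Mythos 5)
Your proposal is correct and takes essentially the same route as the paper, whose proof is just the observation that Bruhat-constructible perverse sheaves have singular support in $\bigcup_{w} T^*_{Y_w}(G/B) \subset Z$ and are unipotently $T$-monodromic. The only loose point is the ``gluing'' of canonical $T$-equivariant structures across strata (for a torus, extensions of equivariant perverse sheaves need not be equivariant, the obstruction living in $H^2(BT)\otimes \mathrm{Hom}$), but this stronger equivariance claim is not needed: finite length together with $T$-equivariance of the simple constituents $\mathrm{IC}_w$ already exhibits any object of $\mathrm{Perv}_U(G/B)$ as a finite extension of objects of $\mathrm{Perv}_T(G/B)$, which is exactly what the paper's definition of unipotent $T$-monodromicity requires.
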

\begin{proof}
    Any $\mathcal{F} \in \mathrm{Perv}_U(G/B)$ has singular support in 
    \begin{align*}
        \bigcup_{w \in W} T_{Y_w}^*(G/B) \subset Z,
    \end{align*}
    and is unipotently $T$-monodromic; it can therefore be considered as an object in $\mathcal{B}_f$.
\end{proof}

\begin{definition}
    For any $w \in W$, let $\mathcal{P}^w = \mathrm{Perv}_{U^w}(G/B)$, where $U = \dot{w}U\dot{w}^{-1}$. It is equivalent to $\mathrm{Perv}_{U^w}(G/B)$ under the functor $\sigma_{w*} \cong \sigma_{w^{-1}}^*$, where $\sigma_w$ is the left multiplication map $G/B \to G/B$ by $\dot{w}$. 
\end{definition}

\subsection{Derived setup and adjoint functors}

\begin{definition}
    Let $\Dg_0 \subset D^b(G/B)$ be the full dg subcategory generated by the image of the natural functor $D^b(\mathcal{B}_f) \to D^b(G/B)$.
\end{definition}

\begin{definition}
    For any $w \in W$, we define $\pi_w^* : D_U(G/B) \to D(G/B)$ to be the composition of the isomorphism $\sigma^*_w : D_{U}(G/B) \to D_{U^w}(G/B)$ with the natural inclusion into $D(G/B)$ (i.e.\ the forgetful functor for $U^w$-equivariance on the left). 
\end{definition}

\begin{definition}
    For any $w \in W$, we define $\pi^w_!, \pi_{w*} : D(G/B) \to D_U(G/B)$ to be the composition of the natural shriek and star pushforward functors for the projection morphism (which can be thought of as $U^w$-averaging) $\pi_! : D(G/B) \to D({U^w}\backslash G /B) = D_{U^w}(G/B)$ with the isomorphism $\sigma^*_w : D_{U^w}(G/B) \to D_U(G/B)$. 
\end{definition}

Since one can think of $\pi_e^*$ as the pullback $\pi^*$ under this same projection morphism $\pi : D(G/B) \to D(U \backslash G/B)$, we obtain the following result from the usual six-functor formalism.
\begin{proposition}
    For any $w \in W$, there are adjunctions
    \begin{align*}
        (\pi_!^w, \pi_w^*),\qquad (\pi_w^*, \pi_*^w)
    \end{align*}
    between $D_U(G/B)$ and $D(G/B)$.
\end{proposition}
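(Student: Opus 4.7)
The plan is to reduce the statement to the standard adjunctions of the six-functor formalism for the quotient morphism $p_w : G/B \to U^w \backslash G/B$. First I would unpack the definitions: by construction, each of the three functors factors as a composition of two pieces, namely an equivalence $\sigma_w^*$ (in one direction or its inverse) and one of the shriek/star/pullback functors attached to $p_w$. After identifying $D_{U^w}(G/B) \cong D(U^w \backslash G/B)$, this gives
\begin{align*}
    \pi_w^* & = p_w^* \circ \sigma_w^*, & \pi_!^w & = \sigma_w^* \circ p_{w!}, & \pi_*^w & = \sigma_w^* \circ p_{w*},
\end{align*}
where the first $\sigma_w^*$ goes $D_U(G/B) \to D_{U^w}(G/B)$ and the other two go in the reverse direction.

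Next I would invoke the standard adjunctions from the six-functor formalism for sheaves on algebraic stacks. Since $p_w$ is a smooth (in fact an affine fibration) quotient morphism by the unipotent group $U^w$, there are adjoint pairs $(p_{w!}, p_w^*)$ and $(p_w^*, p_{w*})$ between $D(G/B)$ and $D(U^w \backslash G/B) \cong D_{U^w}(G/B)$. This is the only nontrivial input and is entirely formal at this level.

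Finally, I would combine the two ingredients: $\sigma_w^*$ is an equivalence of categories (induced by the isomorphism of stacks given by left multiplication by $\dot{w}$, which carries $U$-orbits to $U^w$-orbits), and composing an adjoint pair of functors with an equivalence on either side preserves the adjunction. Applying this observation to the factorizations above immediately yields both adjunctions $(\pi_!^w, \pi_w^*)$ and $(\pi_w^*, \pi_*^w)$ as stated.

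There is no real obstacle here: the proposition is essentially a bookkeeping consequence of the six-functor formalism, with the only point requiring care being the direction of the equivalence $\sigma_w^*$ at each stage and the fact that the identification $D_{U^w}(G/B) \cong D(U^w \backslash G/B)$ intertwines the quotient functors with equivariant ones.
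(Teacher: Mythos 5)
Your proposal is correct and follows essentially the same route as the paper, which derives the proposition directly from the standard six-functor adjunctions for the projection $G/B \to U^w\backslash G/B$ (identified with the equivariant averaging/forgetful functors) composed with the equivalence $\sigma_w^*$. The only caveat you already flag yourself, namely tracking the direction of $\sigma_w^*$ in each factorization so that the equivalences cancel in the adjunction isomorphisms, is exactly the bookkeeping the paper leaves implicit.
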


\section{Equivalences via the localization theorem}\label{sec:bb}

\subsection{Beilinson--Bernstein localization}

In \cite{BBloc}, Beilinson and Bernstein introduced the localization functor, an equivalence of categories
\begin{align*}
    \Gamma : \mathcal{D}\mathrm{-mod}(G/B) \to \gmod_0.
\end{align*}
Further, the Riemann--Hilbert correspondence tells us that $\mathrm{Perv}(G/B)$ is equivalent to the full subcategory of regular holonomic $\mathcal{D}$-modules on $G/B$.

\subsubsection{Localization and derived versions for $\tO$}
Considering the category of $U$-equivariant $\mathcal{D}$-modules, which are then already guaranteed to be regular holonomic, one often considers a restriction of the localization functor 
\begin{align*}
    \Gamma : \mathrm{Perv}_U(G/B) \to \O.
\end{align*}
We will also use its derived version 
\begin{align*}
    \Gamma : D_U(G/B) \cong D(\tO),
\end{align*}
which follows from the observation in \cite{BGS} that $D_U(G/B)$ is naturally isomorphic to the derived category of $\mathrm{Perv}_U(G/B)$. 

\subsubsection{Localization for $\mathcal{B}_f$}

\begin{lemma}\label{lem:ssloc}
    Suppose that $\mathcal{F} \in \mathrm{Perv}(G/B)$ has singular support lying in $Z$. Then the associated variety of the corresponding module $\Gamma(\mathcal{F}) \in \gmod_0$ lies in $Z_+$. 
\end{lemma}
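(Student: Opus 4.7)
The plan is to deduce the lemma from the moment-map interpretation of the associated variety of a global-sections module, combined with a direct geometric check on the conormals $T^*_{Y_w^z}(G/B)$. Recall that the moment map $\mu : T^*(G/B) \to \mathfrak{g}^*$ is realized (after the Killing form identification $\mathfrak{g}^* \cong \mathfrak{g}$ used in Lemma \ref{lem:zunion}) as the Springer map, sending a pair $(gB, \xi)$ with $\xi \in \mathrm{Ad}(g)\mathfrak{n}$ to $\xi \in \mathcal{N}$.

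First I would invoke the standard fact, going back to Borho--Brylinski and Ginzburg, that for any coherent $\mathcal{D}$-module $M$ on $G/B$ there is an inclusion
\[
    \mathrm{AV}(\Gamma(M)) \subset \mu(\mathrm{SS}(M)).
\]
The proof is that a good filtration on $M$ induces a filtration on $\Gamma(M)$ whose associated graded is a quotient of $\Gamma(\mathrm{gr}\,M)$, making $\mathrm{gr}\,\Gamma(M)$ a module over $\Gamma(\mathrm{Sym}\,T_{G/B}) = \mathbb{C}[T^*(G/B)]$ whose support pushes forward along the affine morphism induced by $\mu$. Passing through the Riemann--Hilbert correspondence to identify $\mathcal{F}$ with the corresponding regular holonomic $\mathcal{D}$-module (for which the characteristic variety agrees with the singular support of $\mathcal{F}$) reduces the lemma to verifying the purely geometric statement $\mu(Z) \subset Z_+$.

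I would then prove $\mu(T^*_{Y_w^z}(G/B)) \subset \mathfrak{n}^z$ for all $w, z \in W$. By $G$-equivariance of $\mu$ and the definition $Y_w^z = \dot{z}\cdot Y_w$, it suffices to treat $z = e$ and show $\mu(T^*_{Y_w}(G/B)) \subset \mathfrak{n}$. At a smooth point $gB \in Y_w$ a conormal covector $\xi \in \mathrm{Ad}(g)\mathfrak{n}$ must annihilate the tangent space $T_{gB}Y_w \subset \mathfrak{g}/\mathrm{Ad}(g)\mathfrak{b}$. Because $Y_w$ is $B$-stable, the image of $\mathfrak{b}$ under the infinitesimal action lies in $T_{gB}Y_w$, so $\xi$ pairs trivially with $\mathfrak{b}$; hence $\xi \in \mathfrak{b}^\perp = \mathfrak{n}$ under the Killing form identification, and passing to closures gives $\mu(T^*_{Y_w}(G/B)) \subset \mathfrak{n}$. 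Taking the union over $w, z$ and applying Lemma \ref{lem:zunion} yields $\mu(Z) \subset \bigcup_{z} \mathfrak{n}^z = Z_+$, which combined with the previous paragraph proves the lemma.

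The main obstacle is cleanly invoking the inclusion $\mathrm{AV}(\Gamma(M)) \subset \mu(\mathrm{SS}(M))$ in a form directly applicable to a regular holonomic $\mathcal{D}$-module arising from a perverse sheaf via Riemann--Hilbert; in particular, one must ensure the singular support of $\mathcal{F}$ as a constructible sheaf coincides with the characteristic variety of the associated $\mathcal{D}$-module as a subset of $T^*(G/B)$. This identification is classical but worth stating carefully. The conormal computation is elementary and should present no serious difficulty once the $B$-invariance of $Y_w$ and the $G$-equivariance of $\mu$ are in hand.
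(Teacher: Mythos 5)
Your proposal is correct and follows essentially the same route as the paper: both rest on the standard fact that $\mathrm{AV}(\Gamma(\mathcal{F}))$ is contained in the moment-map image of the singular support, combined with Lemma \ref{lem:zunion} to identify $Z_+$ with $\bigcup_w \mathfrak{n}^w$. The only difference is that where the paper cites the well-known equality $\mu^{-1}(\mathfrak{n}) = \bigcup_{w} T^*_{Y_w}(G/B)$ to get $\mu^{-1}(Z_+) = Z$, you prove directly the (sufficient) containment $\mu(T^*_{Y_w^z}(G/B)) \subset \mathfrak{n}^z$ by the elementary conormal computation using $B$-stability of $Y_w$ and $G$-equivariance of $\mu$, which is a perfectly adequate and self-contained substitute.
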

\begin{proof}
    Let $\mu : T^*(G/B) \to \mathfrak{g}^*$ be the moment map; it is a general fact about the Beilinson--Bernstein localization functor that if $\mathcal{F}$ has singular support in some $Z' \subset T^*(G/B)$, then the associated variety of $\Gamma(\mathcal{F})$ is contained in $\mu(Z')$.

    The result in the lemma will then follow from the fact that
    \[\mu^{-1}(Z_+)= Z.\]
    By Lemma \ref{lem:zunion}, we have
    \begin{align}
        \mu^{-1}(Z_+) = \bigcup_{w \in W} \mathfrak{n}^w.
    \end{align}
    By the definition of $Z$, it is then clear that this follows from the well-known fact that
    \begin{align}
        \mu^{-1}(\mathfrak{n}) = \bigcup_{w \in W} T_{Y_w}^*(G/B),
    \end{align}
    in other words, the preimage of $\mathfrak{n}$ under the moment map is exactly the union of the conormal varieties to all Schubert varieties.
\end{proof}

\begin{corollary}\label{cor:bfinaf}
    For any $\mathcal{F} \in \mathcal{B}_f$, the associated module $\Gamma(\mathcal{F})$ obtained under Beilinson--Bernstein localization lies in $\mathcal{A}_f$.
\end{corollary}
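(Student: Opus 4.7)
The plan is to verify the four defining conditions of $\mathcal{A}_f$ from Definition one at a time for $M := \Gamma(\mathcal{F})$, combining Lemma \ref{lem:ssloc} with some standard facts about Beilinson--Bernstein localization and $T$-monodromicity.

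The central character condition is automatic from the very definition of the localization equivalence $\Gamma : \mathcal{D}\mathrm{-mod}(G/B) \isom \gmod_0$, so (2) is free. For (1), any $\mathcal{F} \in \mathcal{B}_f$ is in particular a perverse (hence regular holonomic, hence coherent) $\mathcal{D}$-module on the projective variety $G/B$, and global sections of a coherent $\mathcal{D}$-module on $G/B$ are finitely generated over $\mathcal{U}(\mathfrak{g})$; this is immediate from the standard quasi-coherence properties of $\Gamma$ on $G/B$ together with the equivalence $\Gamma$.

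For the weight decomposition (3), I would use the assumption of unipotent $T$-monodromy. By definition, $\mathcal{F}$ is a finite iterated extension of $T$-equivariant perverse sheaves for the left $T$-action, and the action of $T$ on $G/B$ lifts to an action on global sections of $\mathcal{D}_{G/B}$-modules compatible with $\Gamma$. On each $T$-equivariant constituent, the derivative of the $T$-action endows $\Gamma$ with an $\mathfrak{h}$-semisimple structure with weights in the character lattice of $T$; trivial central character forces these weights to lie in the root lattice $Q$. Passing to iterated extensions of such equivariant objects converts $\mathfrak{h}$-semisimplicity into the locally-finite action of $\mathfrak{h}$ with generalized eigenvalue $\lambda$ on $M_\lambda$, for $\lambda \in Q$, which is precisely condition (3).

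For (4), I would apply Lemma \ref{lem:ssloc} to conclude that the associated variety of $M$ lies in $Z_+$, and then invoke Proposition \ref{prop:ugt} to get finite-dimensionality of all weight spaces. To apply the finite-length case of Proposition \ref{prop:ugt}, one needs to know that $M$ itself has finite length; but $\mathcal{F}$ has singular support in the finite union $Z = \bigcup_{w,z} T^*_{Y_w^z}(G/B)$ and hence is constructible with respect to the (finite) stratification of $G/B$ by intersections of the translated Schubert cells $zBwB$, so $\mathcal{F}$ has finite length as a perverse sheaf and therefore $M$ has finite length in $\gmod_0$. The main step where care is needed is the translation in (3) between unipotent $T$-monodromy and the generalized eigenvalue condition on $\mathfrak{h}$; the rest is a straightforward assembly of Lemma \ref{lem:ssloc}, Proposition \ref{prop:ugt}, and standard properties of $\Gamma$.
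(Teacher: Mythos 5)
Your proposal follows essentially the same route as the paper's proof: unipotent $T$-monodromy (i.e.\ being a finite extension of $T$-equivariant objects) gives the locally finite $\mathfrak{h}$-action with integral weights, and Lemma \ref{lem:ssloc} combined with Proposition \ref{prop:ugt} gives finite-dimensionality of the weight spaces. You are in fact somewhat more careful than the paper, since you also verify finite generation and, importantly, finite length of $\Gamma(\mathcal{F})$ (via constructibility with respect to the finite stratification by translated Schubert cells), which is needed to invoke the second half of Proposition \ref{prop:ugt}; the paper leaves these points implicit.

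One justification in your step (3) is off, though the conclusion is the one the paper asserts: it is not true that trivial central character forces the weights into the root lattice $Q$. For $\mathfrak{g}=\mathfrak{sl}_2$ there are weight modules with trivial central character all of whose weights are odd (e.g.\ the relevant piece of Mathieu's coherent family), so central character alone cannot do this. What the $T$-equivariance of the constituents gives you is that the weights lie in the character lattice $X^*(T)$; that this lattice coincides with $Q$ is a statement about the torus acting on $G/B$ (it holds when the action is through the adjoint torus, as in the paper's implicit conventions), not a consequence of the central character condition. Since the rest of your argument only uses ``weights in $Q$'' as an input, this is a local repair rather than a structural gap.
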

\begin{proof}
    Suppose $\mathcal{F} \in \mathcal{B}_f$. Since $\mathcal{F}$ is unipotently monodromic for the $T$-action of $G/B$ on the left, we know that $\Gamma(\mathcal{F})$ has a locally finite action of $\mathfrak{h}$ with eigenvalues determined by weights lying in the root lattice $Q$. This is clear by Beilinson--Bernstein localization for $T$-equivariant sheaves, and this condition is stable under finite extensions. Further, by Lemma \ref{lem:ssloc} and Proposition \ref{prop:ugt}, $\Gamma(\mathcal{F})$ has finite-dimensional weight spaces. This ensures that $\Gamma(\mathcal{F}) \in \mathcal{A}_f$ as desired.
\end{proof}

Corollary \ref{cor:bfinaf} implies the following equivalence, which we will continue to denote by $\Gamma$.
\begin{corollary}\label{cor:dggamma}
    The Beilinson--Bernstein localization functor induces an equivalence of dg categories
    \begin{align*}
        \Gamma : \Dg_0 \cong \Da_0. 
    \end{align*}
\end{corollary}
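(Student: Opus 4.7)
The plan is to bootstrap from the ambient Beilinson--Bernstein derived equivalence. The BB equivalence of abelian categories $\mathcal{D}\text{-mod}(G/B) \cong \gmod_0$ induces an equivalence on bounded derived categories, which pre-composed with derived Riemann--Hilbert produces a fully faithful embedding $\Gamma : D^b(G/B) \hookrightarrow D^b(\gmod_0)$. Since $\Dg_0$ and $\Da_0$ are defined as the full dg subcategories of these ambient derived categories generated by the images of $D^b(\mathcal{B}_f)$ and $D^b(\Afmod)$ respectively, and since full faithfulness is inherited by restriction to full subcategories, the corollary reduces to showing that $\Gamma$ restricts to an equivalence of abelian categories $\mathcal{B}_f \cong \Afmod$.

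One direction is immediate from Corollary~\ref{cor:bfinaf}, which yields $\Gamma(\mathcal{B}_f) \subset \Afmod$. For essential surjectivity onto $\Afmod$, my argument would proceed as follows. Any $M \in \Afmod$ has finite length, as a standard consequence of its being finitely generated with nilpotent associated variety (which lies in $Z_+$ by Lemma~\ref{lem:zunion} and Proposition~\ref{prop:ugt}). By Proposition~\ref{prop:afsimples}, each simple composition factor of $M$ lies in some $\O^w$, whose BB preimage lies in $\mathcal{P}^w$; each $\mathcal{P}^w$ is contained in $\mathcal{B}_f$, since its objects are unipotently $T$-monodromic and have singular support in $\bigcup_{z} T^*_{Y_z^w}(G/B) \subset Z$. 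Since $\mathcal{B}_f$ is closed under extensions in $\mathrm{Perv}(G/B)$---the singular support condition and unipotent $T$-monodromy both being stable under extensions---and since the BB equivalence preserves short exact sequences, the BB preimage of $M$ is an iterated extension of objects in $\bigcup_w \mathcal{P}^w$, hence lies in $\mathcal{B}_f$.

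Once the abelian equivalence $\mathcal{B}_f \cong \Afmod$ is in hand, the derived statement follows formally: the restricted functor $\Gamma : D^b(\mathcal{B}_f) \to D^b(\Afmod)$ is an equivalence compatible with the embeddings into the ambient derived categories, so passage to the dg subcategories generated by cones and shifts yields $\Dg_0 \cong \Da_0$. The main obstacle I anticipate is the essential surjectivity step---specifically the finite-length claim for $\Afmod$-objects and the identification of the BB preimages of objects of $\O^w$ with $\mathcal{P}^w$. An alternative route through Corollary~\ref{cor:afdefs} (generation by $\tilde{\mathcal{O}}^w$) would obviate the finite-length fact but would then require handling preimages of non-finitely-generated modules, which the finite-length approach avoids.
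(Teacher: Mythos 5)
Your argument is correct, and it actually supplies more than the paper does: the paper's entire proof is the one-line assertion that Corollary \ref{cor:bfinaf} implies the statement, i.e.\ it records only the containment $\Gamma(\mathcal{B}_f) \subset \Afmod$ and leaves essential surjectivity implicit, presumably because the ingredients reverse --- Proposition \ref{prop:ugt} is an equivalence (locally finite $\mathcal{U}(\mathfrak{g})^T$-action iff associated variety in $Z_+$) and the proof of Lemma \ref{lem:ssloc} establishes $\mu^{-1}(Z_+) = Z$, so the defining conditions of $\mathcal{B}_f$ and $\Afmod$ are meant to match directly under localization. Your route to the converse is genuinely different: instead of reversing the singular-support estimate (which would require the converse inclusion $\mathrm{Ch}(\mathrm{Loc}\,M) \subset \mu^{-1}(\mathrm{AV}(M))$ and the converse monodromicity statement), you d\'evissage $M \in \Afmod$ to its composition factors via Proposition \ref{prop:afsimples}, identify their localizations as objects of $\mathcal{P}^w \subset \mathcal{B}_f$, and use that perversity, regular holonomicity, unipotent $T$-monodromicity, and the condition $\mathrm{SS} \subset Z$ are all stable under extensions. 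This buys you a converse that only needs facts about simple objects, at the cost of the finite-length input --- which the paper itself uses without comment in Lemma \ref{lem:abcons}, so this is a fair trade. One small correction: finite generation plus \emph{nilpotent} associated variety does not by itself give finite length (e.g.\ $\mathcal{U}(\mathfrak{g})_0$ is cyclic with associated variety the whole nilpotent cone and has infinite length); the correct reason is that $\mathrm{AV}(M) \subset Z_+$ forces the localization to have characteristic variety in the Lagrangian $Z = \mu^{-1}(Z_+)$, hence to be holonomic and of finite length (alternatively, cite Fernando's finiteness theorem for finitely generated weight modules with finite-dimensional weight spaces). With that repair, and with the standard observations that regular holonomic modules form a Serre subcategory (so the preimage of $M$ exists as a perverse sheaf) and that the ambient functor $D^b(G/B) \to D^b(\gmod_0)$ is fully faithful, your passage from the abelian equivalence to the generated dg subcategories is exactly right.
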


\subsection{Algebraic and geometric interpretations of functors}

We now explain how the functors we have introduced are intertwined by the localization equivalence $\Gamma$. To start, one obtains the following equivalences directly from the definitions.
\begin{lemma}\label{lem:functordict1}
    For any $w \in W$, the diagrams
    \[\begin{tikzcd}
        \Dg_0 \arrow[d, "\sigma_{w*}"] \arrow[r, "\Gamma"] & \Da_0 \arrow[d, "\theta_w"]\\
        \Dg_0 \arrow[r, "\Gamma"] & \Da_0
    \end{tikzcd}\qquad \begin{tikzcd}
        D_U(G/B) \arrow[d, "\pi_w^*"] \arrow[r, "\Gamma"] & D(\tO) \arrow[d, "j_{w!}"]\\
        \Dg_0 \arrow[r, "\Gamma"] & \Da_0
    \end{tikzcd}\]
    commute.
\end{lemma}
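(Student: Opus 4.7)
The plan is to prove both diagrams commute essentially by unwinding the definitions and applying standard equivariance properties of the Beilinson--Bernstein localization functor. Since the author describes these as following ``directly from the definitions,'' no deep input should be needed; the content is tracking how $\Gamma$ interacts with (a) left multiplication by Weyl group representatives and (b) forgetful functors between equivariant categories.

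For the first diagram, I would argue as follows. The global sections functor $\Gamma$ produces $\mathfrak{g}$-modules via the infinitesimal left $G$-action on $\mathcal{D}$-modules. Because the left translation automorphism $\sigma_w$ is given by multiplication by $\dot{w} \in N_G(T) \subset G$, the induced map on $\Gamma$ is precisely conjugation of the $\mathfrak{g}$-action by $\dot{w}$, which is the definition of $\theta_w$. Thus there is a canonical natural isomorphism $\Gamma \circ \sigma_{w*} \cong \theta_w \circ \Gamma$ on $\mathcal{D}$-modules, and this isomorphism descends to the derived subcategories $\Dg$ and $\Da$ because $\sigma_{w*}$ preserves the singular support condition (translating the stratification $\{Y_w^z\}$ back into itself) and $\theta_w$ manifestly preserves $\Afmod$.

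For the second diagram, I would factor $\pi_w^*$ as its definition dictates: the isomorphism $\sigma_w^* : D_U(G/B) \isom D_{U^w}(G/B)$ followed by the forgetful functor $D_{U^w}(G/B) \hookrightarrow D(G/B)$. The first step corresponds under $\Gamma$ to $\theta_w : D(\tO) \isom D(\tO^w)$ by the same argument as in the first diagram (applied now on the full subcategory of equivariant sheaves, noting that $U$-equivariance corresponds to local $\mathfrak{n}$-nilpotence and $U^w$-equivariance to local $\mathfrak{n}^w$-nilpotence). The forgetful step corresponds to the inclusion $\iota_w : D(\tO^w) \to D(\gmod_0)$ because the forgetful functor on $\mathcal{D}$-modules intertwines with the forgetful functor of the $\mathcal{U}(\mathfrak{g})$-module structure under $\Gamma$. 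Composing the two identifications and recalling that $j_{w!} := \iota_w \circ \theta_w$ gives the required natural isomorphism $\Gamma \circ \pi_w^* \cong j_{w!} \circ \Gamma$.

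The main (and only mild) obstacle is bookkeeping at the derived level: one must check that the isomorphisms assembled from equivariant/non-equivariant forgetful functors and from $\sigma_w^*$ are compatible with the passage to the bounded derived subcategories $D_U(G/B)$, $\Dg_0$, and their algebraic counterparts, and with the identification $D_U(G/B) \cong D^b(\mathrm{Perv}_U(G/B)) \cong D(\tO)$. This is routine because $\sigma_w$ is an isomorphism (so $\sigma_{w*} = \sigma_{w^{-1}}^*$ is $t$-exact) and the forgetful functor is $t$-exact; both sides are therefore obtained by deriving functors whose isomorphism is already established on abelian hearts, and there is no obstruction to passing to derived categories.
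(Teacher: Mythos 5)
Your argument is essentially the paper's: the lemma is asserted there to follow directly from the definitions, and your unwinding --- that $\Gamma$ intertwines translation by $\dot{w}$ with conjugation of the $\mathfrak{g}$-action (i.e.\ $\theta_w$), and intertwines the forgetful functor from $U^w$-equivariant objects with the inclusion $\tO^w \subset \gmod_0$, so that $\Gamma\circ\pi_w^* \cong \iota_w\circ\theta_w\circ\Gamma = j_{w!}\circ\Gamma$, with $t$-exactness handling the passage to derived categories --- is exactly the intended justification. The only point to watch is the $\sigma_{w*}$ versus $\sigma_w^{*}$ bookkeeping (whether translation by $\dot{w}$ or $\dot{w}^{-1}$ corresponds to $\theta_w$), which must be chosen consistently in both squares; this is a matter of fixing conventions, on which the paper itself is loose, and does not affect the substance of the argument.
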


By uniqueness of left and right adjoints, one then obtains the following corollary.
\begin{corollary}\label{cor:functordict2}
    For any $w \in W$, the diagrams
    \[\begin{tikzcd}
        \Dg_0 \arrow[d, "\pi_*^w"] \arrow[r, "\Gamma"] & \Da_0 \arrow[d, "j_w^!"]\\
        D_U(G/B) \arrow[r, "\Gamma"] & D(\tO)
    \end{tikzcd}\qquad 
    \begin{tikzcd}
        \Dg_0 \arrow[d, "\pi_!^w"] \arrow[r, "\Gamma"] & \Da_0 \arrow[d, "j_w^*"]\\
        D_U(G/B) \arrow[r, "\Gamma"] & D(\tO)
    \end{tikzcd}\]
    commute.
\end{corollary}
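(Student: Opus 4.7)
The plan is to deduce the corollary formally from Lemma \ref{lem:functordict1} using the uniqueness of adjoint functors. The starting point is the isomorphism $\Gamma \circ \pi_w^* \cong j_{w!} \circ \Gamma$ provided by that lemma, together with the fact (Corollary \ref{cor:dggamma}) that $\Gamma : \Dg_0 \isom \Da_0$ is an equivalence of dg categories, so a quasi-inverse $\Gamma^{-1}$ exists and serves as both a left and right adjoint to $\Gamma$. I would also use the adjoint pairs $(j_{w!}, j_w^!)$ and $(j_w^*, j_{w!})$ from Proposition \ref{prop:originaladjunctions}, paralleled by the analogous pairs $(\pi_!^w, \pi_w^*)$ and $(\pi_w^*, \pi_*^w)$ on the geometric side.

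For the first diagram, I would take right adjoints of both sides of $\Gamma \circ \pi_w^* \cong j_{w!} \circ \Gamma$. Since right adjoints compose contravariantly, the left side produces $\pi_*^w \circ \Gamma^{-1}$ while the right side produces $\Gamma^{-1} \circ j_w^!$. Uniqueness of right adjoints up to canonical isomorphism gives $\pi_*^w \circ \Gamma^{-1} \cong \Gamma^{-1} \circ j_w^!$, which, after composing with $\Gamma$, is exactly the commutativity of the first diagram.

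For the second diagram, the argument is identical with left adjoints in place of right adjoints: the left adjoint of $\Gamma \circ \pi_w^*$ is $\pi_!^w \circ \Gamma^{-1}$, the left adjoint of $j_{w!} \circ \Gamma$ is $\Gamma^{-1} \circ j_w^*$, and uniqueness of left adjoints forces these to agree. There is no real obstacle here, as the substantive geometric content was already absorbed into Lemma \ref{lem:functordict1} and the present result is a purely categorical consequence. The only mild point to verify is that all four adjunctions descend to (or are defined on) the distinguished subcategories $\Da_0 \subset D(\gmod_0)$ and $\Dg_0 \subset D^b(G/B)$, which is guaranteed by the construction of these subcategories in Sections \ref{sec:algebraic} and \ref{sec:geometric} and by the fact that $\Gamma$ restricts to an equivalence between them.
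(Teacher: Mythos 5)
Your argument is correct and is exactly the paper's proof: the paper deduces Corollary \ref{cor:functordict2} from Lemma \ref{lem:functordict1} by uniqueness of left and right adjoints, using the adjunctions of Proposition \ref{prop:originaladjunctions} and their geometric counterparts, with $\Gamma^{-1}$ serving as two-sided adjoint of the equivalence $\Gamma$. Your write-up just spells out the same passage to adjoints in more detail.
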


\subsection{Gluing functors}

\subsubsection{Definition of gluing functors}

\begin{definition}\label{def:geogluing}
    For any $w \in W$, let
    \begin{align*}
        \Phi_w : D_U(G/B) & \to D_U(G/B) & \Psi_w : D_U(G/B) & \to D_U(G/B)\\
        \Phi_w & = \pi_*^w\pi_e^*& \Psi_w & = \pi^w_!\pi_e^*.
    \end{align*}
\end{definition}
\begin{definition}
    For any $w \in W$, let
    \begin{align*}
        F_w : D(\tO) & \to D(\tO) & G_w : D(\tO) & \to D(\tO)\\
        F_w & = j_w^!j_{e!}& G_w & = j_w^*j_{e!}.
    \end{align*}
    We write ${}^\circ F_w$ and ${}^\circ G_w$ for the abelian versions considered as endofunctors of $\O$.
\end{definition}
The following is then implied by Lemma \ref{lem:functordict1} and Corollary \ref{cor:functordict2}.
\begin{corollary}\label{cor:glueloc}
    The diagrams
\[\begin{tikzcd}
D_U(G/B) \arrow[r, "\Gamma"] \arrow[d, "\Phi_w"] & D(\tO)\arrow[d, "F_w"]\\
D_U(G/B) \arrow[r, "\Gamma"] & D(\tO)
\end{tikzcd} \qquad \begin{tikzcd}
D_U(G/B) \arrow[r, "\Gamma"] \arrow[d, "\Psi_w"] & D(\tO)\arrow[d, "G_w"]\\
D_U(G/B) \arrow[r, "\Gamma"] & D(\tO)
\end{tikzcd}\]
commute.
\end{corollary}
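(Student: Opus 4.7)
The plan is to observe that this corollary is essentially formal: the gluing functors $\Phi_w, \Psi_w, F_w, G_w$ are defined as two-step compositions, and both steps have already been shown to intertwine $\Gamma$ by the previous lemma and corollary. So the strategy is simply to paste together commutative squares.

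More concretely, I would first specialize Lemma \ref{lem:functordict1} to $w = e$, which gives a commutative square relating $\pi_e^*$ and $j_{e!}$ via $\Gamma$. Next, I would invoke Corollary \ref{cor:functordict2}, once with its first diagram (which intertwines $\pi_*^w$ with $j_w^!$) and once with its second (which intertwines $\pi_!^w$ with $j_w^*$). Stacking the $\pi_e^*$--$j_{e!}$ square on top of the $\pi_*^w$--$j_w^!$ square (respectively the $\pi_!^w$--$j_w^*$ square) yields the two commutative diagrams of the corollary, since
\begin{align*}
\Gamma \circ \Phi_w &= \Gamma \circ \pi_*^w \circ \pi_e^* = j_w^! \circ \Gamma \circ \pi_e^* = j_w^! \circ j_{e!} \circ \Gamma = F_w \circ \Gamma,\\
\Gamma \circ \Psi_w &= \Gamma \circ \pi_!^w \circ \pi_e^* = j_w^* \circ \Gamma \circ \pi_e^* = j_w^* \circ j_{e!} \circ \Gamma = G_w \circ \Gamma.
\end{align*}

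The only thing worth checking carefully is that all the functors involved indeed preserve the subcategories on which the isomorphisms of Lemma \ref{lem:functordict1} and Corollary \ref{cor:functordict2} were stated; that is, that $\pi_e^*$ lands inside $\Dg$ (with the matching $j_{e!}$ landing inside $\Da$) so the second step of the composition can be applied. This is immediate from the definitions, since $\pi_e^*$ of a $U$-equivariant sheaf is unipotently $T$-monodromic with singular support in $Z$, and $j_{e!}$ of an object of $\tO$ visibly lies in $\Afmod$ (hence in $\Da$). With that verification in hand the rest is just pasting, so I do not expect any step to be a genuine obstacle; the only bookkeeping subtlety is ensuring the target categories match as we compose, which is handled by the explicit identifications of images above.
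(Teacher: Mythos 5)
Your proof is correct and is exactly the argument the paper intends: the corollary is stated as an immediate consequence of Lemma \ref{lem:functordict1} (specialized to $w=e$) and Corollary \ref{cor:functordict2}, obtained by pasting the two commutative squares just as you do. Your extra check that $\pi_e^*$ lands in $\Dg$ (and $j_{e!}$ in $\Da$) is the right point to verify and is implicit in the way the paper draws those diagrams.
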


\subsubsection{Algebraic interpretation} We now explain an algebraic interpretation of the gluing functors $F_w$, $G_w$ in terms of \emph{Zuckerman functors}. These functors were defined in \cite{Zuckerman}, and an explanation more amenable to the setup of the present paper can also be found in \cite[Section 1]{MS}.

\begin{definition}
Given a simple reflection $s \in S$, let $\mathfrak{g}^s$ denote the corresponding minimal parabolic subalgebra of $\mathfrak{g}$ which contains $\mathfrak{h}\oplus \mathfrak{n}$. We denote by ${}^s\O \subset \O$ the corresponding parabolic subcategory, which consists of all locally $\mathfrak{g}^s$-finite objects in $\O$.
\end{definition}
\begin{definition}
    For any simple reflection $s \in S$, let 
    \begin{align*}
        Z_s & : \O \to \O,\\
        Z_s' & : \O \to \O
    \end{align*}
    be the functors which take a module $M \in \O$ to the maximal quotient of $M$ contained in ${}^s\O$, and the maximal submodule of $M$ contained in ${}^s\O$ respectively.
\end{definition}

One can then check the following from the definitions.
\begin{prop}\label{prop:oursiszuck}
    For any $s \in S$,
    \begin{align*}
        Z_s & = {}^\circ j_s^*j_{e!} = {}^\circ G_s,\\
        Z_s' & = {}^\circ j_s^!j_{e!} = {}^\circ F_s.
    \end{align*}
\end{prop}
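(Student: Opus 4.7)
The plan is to compute $({}^\circ j_s^\ast\, j_{e!})(M)$ directly for $M \in \O$ and compare with $Z_s(M)$; the analogous equality $Z_s' = {}^\circ j_s^!\, j_{e!} = {}^\circ F_s$ will follow from the same argument with largest submodules replacing largest quotients. Unwinding definitions, $j_{e!}$ is simply the inclusion $\tO \hookrightarrow \gmod_0$, so $({}^\circ j_s^\ast\, j_{e!})(M) = \theta_{s^{-1}}(M^s)$, with $M^s$ the largest quotient of $M$ in $\tO^s$. The first step is to identify $M^s$ with $Z_s(M)$ as quotients of $M$. The decisive point, specific to $s$ being a simple reflection, is the decomposition $\mathfrak{n}^s = (\mathfrak{n}\cap\mathfrak{n}^s)\oplus\mathfrak{g}_{-\alpha_s}$ alongside $\mathfrak{n} = (\mathfrak{n}\cap\mathfrak{n}^s)\oplus\mathfrak{g}_{\alpha_s}$. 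Any quotient $Q$ of $M$ remains in $\O$, hence is $\mathfrak{n}$-locally finite, so $Q \in \tO^s$ reduces to $f_{\alpha_s}$ acting locally finitely on $Q$; combined with $\mathfrak{n}$-local finiteness, this is equivalent to $\mathfrak{g}^s$-local finiteness, i.e.\ $Q \in {}^s\O$. Taking the largest such quotient yields $M^s = Z_s(M)$.

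The second step is to trivialize the twist by $\theta_{s^{-1}}$ on ${}^s\O$. For $N \in {}^s\O$, every vector lies in a finite-dimensional $\mathfrak{g}^s$-subrepresentation whose weights, lying in $Q \subset \Lambda$, are integral, so these subrepresentations integrate to representations of the minimal parabolic subgroup $P_s \supset B$ of $G$ corresponding to $\mathfrak{g}^s$. The lift $\dot s \in P_s$ then acts on $N$ by a linear automorphism $\phi_s$, which because it arises from a group conjugation satisfies $\phi_s(x \cdot v) = \mathrm{Ad}(\dot s)(x) \cdot \phi_s(v)$ for all $x \in \mathfrak{g}$ and $v \in N$. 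This is exactly the data of a $\mathfrak{g}$-module isomorphism $N \isom \theta_{s^{-1}}(N)$, and it is natural in $N$ because any morphism in ${}^s\O$ commutes with the integrated $P_s$-action.

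Combining the two steps yields $({}^\circ j_s^\ast\, j_{e!})(M) = \theta_{s^{-1}}(Z_s(M)) \cong Z_s(M)$, naturally in $M$, establishing $Z_s = {}^\circ j_s^\ast\, j_{e!} = {}^\circ G_s$. The corresponding statement for $Z_s'$ follows by the same argument, replacing quotients with submodules and using that any submodule of an object of $\O$ again lies in $\O$, so that the identification $\tO^s \cap \O = {}^s\O$ still applies. The main obstacle is the second step: one must check carefully that the locally finite $\mathfrak{g}^s$-action on any $N \in {}^s\O$ integrates canonically and functorially to a $P_s$-action, which is what allows the $\theta_{s^{-1}}$-twist to be absorbed into a natural identification of endofunctors of $\O$ rather than a mere collection of pointwise isomorphisms.
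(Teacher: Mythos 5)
Your overall strategy is the intended one (the paper leaves this as a check from the definitions), and your first step is correct and complete in outline: since any quotient of $M \in \O$ automatically inherits finite generation, the strict central character, the generalized weight decomposition and local $\mathfrak{n}$-finiteness, membership in $\tO^s$ reduces to local finiteness of $\mathfrak{g}_{-\alpha_s}$, which together with local $\mathfrak{b}$-finiteness is equivalent to local $\mathfrak{g}^s$-finiteness; hence the largest quotient of $M$ in $\tO^s$ coincides with $Z_s(M)$, and dually for submodules and $Z_s'$.

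The gap is in your second step. You assert that each finite-dimensional $\mathfrak{g}^s$-subrepresentation of $N \in {}^s\O$ integrates to the parabolic subgroup $P_s$ because its weights lie in $Q \subset \Lambda$. But an algebraic representation of $P_s$ restricts to an algebraic representation of $T$, which forces $\mathfrak{h}$ to act semisimply, whereas the category $\O$ used here (Definition \ref{def:catovariant}) only requires $\mathfrak{h}$ to act locally finitely with \emph{generalized} eigenvalue $\lambda$ on $M_\lambda$. So as written the integration step would fail on any object with nontrivial Jordan blocks for $\mathfrak{h}$, and you have neither excluded such objects from ${}^s\O$ nor argued around them. The cleanest repair stays at the Lie algebra level: on $N \in {}^s\O$ both $e_{\alpha_s}$ and $f_{\alpha_s}$ act locally nilpotently, so the operator $\phi = \exp(e_{\alpha_s})\exp(-f_{\alpha_s})\exp(e_{\alpha_s})$ is defined and satisfies $\phi(xv) = \mathrm{Ad}(\tilde{s})(x)\phi(v)$ for the specific lift $\tilde{s} = \exp(e_{\alpha_s})\exp(-f_{\alpha_s})\exp(e_{\alpha_s}) \in N_G(T)$; the remaining discrepancy between $\tilde{s}$ and the chosen lift $\dot{s}$ is an element $t \in T$, and twisting by $\mathrm{Ad}(t)$ is trivialized functorially on any module with generalized weight decomposition supported on $Q \subset X^*(T)$ by the operator acting by the scalar $\lambda(t)$ on the generalized $\lambda$-weight space (this works because $x M_\lambda \subset M_{\lambda+\alpha}$ for $x \in \mathfrak{g}_\alpha$ holds for generalized weight spaces, and morphisms preserve them). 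Alternatively, you could first prove that strict, regular central character forces $\mathfrak{h}$-semisimplicity on $\O$, but that is an additional nontrivial claim you would need to justify rather than assume. With this repair, your identification $\theta_{s^{-1}}(Z_s(M)) \cong Z_s(M)$, natural in $M$, goes through and the proposition follows as you describe.
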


Both variants of Zuckerman functors satisfy braid relations in $W$, see \cite{Zuckerman} or \cite{MS}, and thus by Proposition \ref{prop:oursiszuck}, these relations also hold for our functors ${}^\circ F_w$ and ${}^\circ G_w$.

\begin{remark}
    The main purpose of this section is to show that our dg functors $F_w$ and $G_w$ are ``derived versions" of a pair of easy-to-describe and well-studied functors which show up ubiquitously in the literature on Category $\O$. However, one must be careful about the precise technicalities used to construct these ``derived versions." This is because if one naively considers the derived functors of $Z_s$ and $Z_s'$, they differ significantly from our $F_s$ and $G_s$; the strange behavior of these derived functors in nonzero cohomological degrees is described in \cite[Theorem 2 and Remark 1.2]{MS}. On the algebraic side of the localization equivalence, this is because $j_s^*$ and $j_s^!$ are defined as derived functors from the larger source category $\gmod \supset \O$, and on the geometric side this discrepancy can be explained by the fact that the $B$-equivariant derived category and the derived category of $B$-equivariant sheaves do not coincide.
\end{remark}

\subsubsection{Geometric interpretation}
We now explain a key observation which will be crucial to our main theorem; this gives a geometric interpretation of the gluing functors $\Phi_w$ and $\Psi_w$ in terms of a familiar convolution action by standard and costandard objects (these convolutions are sometimes called \emph{intertwining functors}), albeit with a cohomological shift.
\begin{proposition}\label{prop:convinterpretation}
    There are natural isomorphisms of functors
    \begin{align*}
        \Phi_w(-) & \cong (\hat{\nabla}_w[-\ell(w)]) * -\\
        \Psi_w(-) & \cong (\hat{\Delta}_w[\ell(w)]) * - 
    \end{align*}
\end{proposition}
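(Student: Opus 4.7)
My plan is to reduce the statement to the case where $w$ is a simple reflection via an induction on length, and then verify that case directly using the $\mathbb{P}^1$-bundle $p_s : G/B \to G/P_s$ projecting to a minimal partial flag variety.

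For the inductive step, both sides of the asserted isomorphism are compatible with length-additive factorizations. On the convolution side, the standard composition formulas in the free-monodromic category \cite{BY, DhTa} yield $\hat{\nabla}_{w_1 w_2} \cong \hat{\nabla}_{w_1} * \hat{\nabla}_{w_2}$ and $\hat{\Delta}_{w_1 w_2} \cong \hat{\Delta}_{w_1} * \hat{\Delta}_{w_2}$ whenever $\ell(w_1 w_2) = \ell(w_1) + \ell(w_2)$, with the shifts $[\pm \ell(w)]$ adding correctly. On the functor side, unwinding the definitions using $\sigma_{w_1 w_2} = \sigma_{w_1}\sigma_{w_2}$ and $U^{w_1 w_2} = \dot{w}_1 U^{w_2}\dot{w}_1^{-1}$ yields the parallel relations $\Phi_{w_1 w_2} \cong \Phi_{w_1}\Phi_{w_2}$ and $\Psi_{w_1 w_2} \cong \Psi_{w_1}\Psi_{w_2}$. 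The key point here is that after the $U^{w_2}$-averaging we land on a $U^{w_2}$-equivariant sheaf, which upon translation by $\dot{w}_1$ becomes $U$-equivariant so that the intermediate $\pi_e^* \pi_*^e$ is the identity. Picking a reduced expression $w = s_1 \cdots s_k$ thus reduces the proposition to $w = s \in S$.

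For the simple-reflection case, let $\alpha_s$ be the corresponding simple root and $U_{-\alpha_s}$ its negative root subgroup. Averaging $\pi_e^*\mathcal{F}$ over $U^s$ reduces to averaging only over $U_{-\alpha_s}$, since the remaining root subgroups of $U^s$ lie in $U$ and hence act trivially on the $U$-equivariant sheaf $\pi_e^*\mathcal{F}$. I would then identify this averaging geometrically with pullback--pushforward along $p_s$: the subgroup $U_{-\alpha_s}$ is contained in $P_s$, and under $P_s/B \cong \mathbb{P}^1$ it acts with a unique fixed point, so $U_{-\alpha_s}$-averaging computes $p_s^* p_{s*}$ up to a shift accounting for the relative dimension of $p_s$. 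Postcomposing with $\sigma_s^*$ gives the description $\Phi_s(-) \cong \sigma_s^* p_s^* p_{s*}(-)[-1]$ with the expected unipotent monodromy on the fibers. On the convolution side, $\hat{\nabla}_s[-1] * \mathcal{F} = \bar{m}_*(\tilde{j}_{s*}\mathcal{L}_s \boxtimes^U \mathcal{F})$ computed via the convolution diagram $U\backslash G/U \times^U G/B \leftarrow U\backslash G \times^U G/B \to U\backslash G/B$; proper base change and the projection formula identify this with the same composite $\sigma_s^* p_s^* p_{s*}(-)[-1]$, with the monodromic twist now arising from $\mathcal{L}_s$. The case $\Psi_s \cong \hat{\Delta}_s[1]*-$ is handled by the Verdier-dual version of the same calculation, replacing $*$-pushforwards by $!$-pushforwards throughout; the sign of the shift flips because the relative-dimension contribution from the $U_{-\alpha_s}$-integration reverses under duality.

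The principal obstacle is the careful bookkeeping of the free-monodromic structure and the cohomological shifts at the simple-reflection step: the perverse normalization $[\ell(s)] = [1]$ in the definitions of $\hat{\Delta}_s$ and $\hat{\nabla}_s$ must combine correctly with the $[-1]$ coming from the relative dimension of $p_s$, and the universal $T$-monodromic local system $\mathcal{L}_s$ must be checked to realize the same monodromy as the tautological one arising from the averaging construction. Once these normalizations are pinned down at $w = s$, the length induction is essentially formal.
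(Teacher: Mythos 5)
Your proposed route has two genuine problems, one fatal and one serious. The fatal one is the base case: averaging over $U_{-\alpha_s}$ is \emph{not} $p_s^*p_{s*}$ up to shift. The functor $p_s^*p_{s*}[1]$ is convolution with $\mathrm{IC}_s$ (the wall-crossing functor), not with $\hat{\nabla}_s$ or $\hat{\Delta}_s$: integrating over the whole $\mathbb{P}^1$-fiber and integrating over the open $\mathbb{A}^1 \cong U_{-\alpha_s}\cdot x$ inside it are different operations, and the difference is exactly the difference between wall-crossing and intertwining functors. A concrete check in $\mathfrak{sl}_2$: for $\mathcal{F} = \Delta_e$ (the skyscraper at the $B$-fixed point), $\Phi_s(\Delta_e) = \sigma_s^*\,\mathrm{Av}^{U^s}_*(\Delta_e)$ is the (unshifted) $*$-extension of the constant sheaf from the open cell, i.e.\ $\nabla_s[-1]$, whereas $\sigma_s^*p_s^*p_{s*}(\Delta_e)[-1] \cong \mathbb{C}_{\mathbb{P}^1}[-1]$; these are not isomorphic. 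Moreover, since $p_s$ is proper we have $p_{s!} = p_{s*}$, so your description would force $\Phi_s$ and $\Psi_s$ to agree up to a shift, contradicting the very statement being proven ($\hat{\nabla}_s$- and $\hat{\Delta}_s$-convolution are mutually inverse equivalences, not shifts of one another); your claim that ``the sign of the shift flips'' under Verdier duality cannot rescue this. The correct geometric content, which is what the paper's proof establishes, is that the $U^s$-averaging of the unit is the $!$- (resp.\ $*$-) extension of the translated universal monodromic local system from the open cell $UsB/U$, identified by proper base change applied to the translate $T^{\sigma_s}\subset G/U$ of the torus; the paper first reduces to evaluating on the free-monodromic unit $\hat{\delta}$ using that both $\tilde{\pi}_!\sigma_s^*(-)$ and $\hat{\Delta}_s*(-)$ commute with the right convolution action of $\hat{D}(\wqw{B}{G}{B})$, and then gets the $\hat{\nabla}$-statement by Verdier duality.

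The second problem is the inductive step. The identity $\Phi_{w_1w_2}\cong \Phi_{w_1}\circ\Phi_{w_2}$ (resp.\ for $\Psi$) for length-additive products is not ``essentially formal'': unwinding the definitions produces a composite of $*$-averagings over two different $W$-conjugates of $U$, and collapsing that composite into a single averaging is precisely the composition law for intertwining functors, which (given the standard formulas $\hat{\nabla}_{w_1}*\hat{\nabla}_{w_2}\cong\hat{\nabla}_{w_1w_2}$) is equivalent to the proposition itself. In the paper this composition law appears \emph{after} the proposition, as Corollary \ref{cor:braids}, deduced from it. Your stated justification (``the intermediate $\pi_e^*\pi_*^e$ is the identity'') does not apply: $\pi_*^e\pi_e^*\cong\mathrm{Id}$ is true but is not what occurs in the composite, where the two averagings are over $U^{w_2}$ and a conjugate $U^{w_2w_1}$, neither of which contains the other. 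If you want to keep the induction you must supply an independent proof of this averaging composition (e.g.\ by filtering $U^w$ by root subgroups and carefully discarding the subgroups contained in $U$), but note that the paper's argument needs no induction at all: the reduction to the unit $\hat{\delta}$ and the proper base change computation work uniformly in $w$.
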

\begin{proof}

    We follow the notation of \cite{BY} and write $D(\qw{G}{B})$ for the category of unipotently $T$-monodromic sheaves on $G/U$ for the right $T$-action, and write $\tilde{\pi}_! : D(\qw{G}{B}) \to D(U \backslash \qw{G}{B}) \cong D(\wqw{B}{G}{B})$ for the $U$-averaging functor. 
    
    Since both $\tilde{\pi}_!$ and $\hat{\Delta}_s[1] * -$, considered as endofunctors of $D(\wqw{B}{G}{B})$, commute with the monoidal action of the category $\hat{D}(\wqw{B}{G}{B})$ by right-convolution, it suffices to prove the proposition in the case where $\mathcal{F}$ is the monoidal identity $\hat{\delta} \in \hat{D}(\wqw{B}{G}{B})$. One can then convolve on the right with an arbitrary object of $D_U(G/B)$ to obtain the stated natural isomorphisms of endofunctors of $D_U(G/B)$.
    
    In other words, since $\tilde{\pi}_!(\sigma_s^*(\tilde{\pi}^!\hat{\delta})) \cong \tilde{\pi}_!(\sigma_s^*(\hat{\delta}))[2]$, we need to show that
    \begin{align}
        \tilde{\pi}_!(\sigma_s^*(\hat{\delta}))[2] \cong \hat{\Delta}_s[1]\label{eqn:proconv}.
    \end{align}

    Let $j_e : T \hookrightarrow G/U$ and let $j_s : UsB \hookrightarrow G/U$ be the natural inclusions. Recall that $\mathcal{L}$ is the universal unipotently monodromic local system on $T \cong B/U \subset G/U$ while $\mathcal{L}_{s}$ is the universal unipotently monodromic local system on $UsB/U \subset G/U$ obtained by considering $UsB/U$ as a $T$-torsor over $UsB/B \subset G/B$. To show (\ref{eqn:proconv}), we must then show
    \begin{align}
        \tilde{\pi}_!(\sigma_s^*(j_{e!}\mathcal{L}))[2] \cong j_{s!}\mathcal{L}_{s}[2]\label{eqn:nconv}.
    \end{align}
    Letting $T^{\sigma_s}$ be the image of $T \subset G/U$  under $\sigma_s$ and $j_e^{\sigma_s}$ its inclusion into $G/U$, we then have a Cartesian diagram
    \[\begin{tikzcd}
        T^{\sigma_s} \arrow[r, "\cong"] \arrow[d, "j_e^{\sigma_s}"] & T \arrow[d, "j_e"]\\
        G/U \arrow[r, "\sigma_s"] &  G/U.
    \end{tikzcd}\]
    By the proper base change theorem, one then has
    \begin{align*}
        \tilde{\pi}_!(\sigma_s^*(j_{e!}\mathcal{L})) & = \tilde{\pi}_!\sigma_s^*j_{e!}\mathcal{L}\\
        & = \tilde{\pi}_!j_{e!}^{\sigma_s}\mathcal{L},
    \end{align*}
    where by abuse of notation $\mathcal{L}$ is now considered as a local system on $T^{\sigma_s}$. We then note that $(\tilde{\pi}\circ j_e^{\sigma_s})_!\mathcal{L} \cong j_{s!}\mathcal{L}_{s}$.

    By Verdier duality one then gets the dual result for $\pi_*$, yielding the second equation in the proposition.
\end{proof}

By Proposition \ref{prop:convinterpretation} and Corollary \ref{cor:glueloc}, we obtain the following. 
\begin{corollary}\label{cor:braids}
    If $s_{i_1} \dots s_{i_k}$ is a reduced word for $w \in W$, there are are natural isomorphisms of functors
    \begin{align*}
        \Phi_{s_{i_1}} \circ \dots \circ \Phi_{s_{i_k}} & \cong \Phi_{w},\\
        \Psi_{s_{i_1}} \circ \dots \circ \Psi_{s_{i_k}} & \cong \Psi_{w},\\
        F_{s_{i_1}} \circ \dots \circ F_{s_{i_k}} & \cong F_{w},\\
        G_{s_{i_1}} \circ \dots \circ G_{s_{i_k}} & \cong G_{w}.
    \end{align*}
\end{corollary}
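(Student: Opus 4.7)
The plan is to reduce everything to the convolution interpretation provided by Proposition \ref{prop:convinterpretation}, use the well-known monoidal property of standard and costandard free-monodromic sheaves under reduced expressions, and then transport to the algebraic side via Corollary \ref{cor:glueloc}.

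First I would handle the $\Phi$ and $\Psi$ statements. By Proposition \ref{prop:convinterpretation}, $\Phi_s \cong \hat\nabla_s[-1] * (-)$ and $\Psi_s \cong \hat\Delta_s[1] * (-)$ for a simple reflection $s$, and associativity of the monoidal structure on $\hat D(\wqw{B}{G}{B})$ lets us combine iterated compositions into a single convolution. Thus for a reduced word $s_{i_1}\cdots s_{i_k}$ for $w$ one obtains
\begin{align*}
    \Phi_{s_{i_1}}\circ\cdots\circ\Phi_{s_{i_k}}(-) & \cong \bigl(\hat\nabla_{s_{i_1}} * \cdots * \hat\nabla_{s_{i_k}}\bigr)[-k] * -,\\
    \Psi_{s_{i_1}}\circ\cdots\circ\Psi_{s_{i_k}}(-) & \cong \bigl(\hat\Delta_{s_{i_1}} * \cdots * \hat\Delta_{s_{i_k}}\bigr)[k] * -.
\end{align*}
Since the word is reduced one has $k = \ell(w)$, so the shifts match those appearing in $\Phi_w$ and $\Psi_w$.

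The remaining input is the standard fact, valid in the free-monodromic setting of \cite{BY} and \cite{DhTa}, that for any reduced expression of $w$
\[
\hat\nabla_{s_{i_1}} * \cdots * \hat\nabla_{s_{i_k}} \cong \hat\nabla_w, \qquad \hat\Delta_{s_{i_1}} * \cdots * \hat\Delta_{s_{i_k}} \cong \hat\Delta_w,
\]
which one proves either by induction on $\ell(w)$, using that for $\ell(sw) = \ell(w) + 1$ the multiplication map $\overline{BsB}\times^B \overline{BwB}\to \overline{BswB}$ is an isomorphism onto the open cell and the corresponding shriek/star pushforwards behave well, or by invoking the braid group action by intertwining functors and noting that $\hat\nabla_s$ and $\hat\Delta_s$ are the braid generators lifting the $\Delta$'s and $\nabla$'s of the non-monodromic setup. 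Combining this with the shift computation yields $\Phi_{s_{i_1}}\circ\cdots\circ\Phi_{s_{i_k}} \cong \Phi_w$ and similarly for $\Psi$.

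Finally, the statements for $F_w$ and $G_w$ follow automatically. By Corollary \ref{cor:glueloc}, the squares
\[\begin{tikzcd}
D_U(G/B) \arrow[r,"\Gamma"]\arrow[d,"\Phi_w"] & D(\tO)\arrow[d,"F_w"]\\
D_U(G/B) \arrow[r,"\Gamma"] & D(\tO)
\end{tikzcd}\]
commute, and the analogous square holds for $\Psi_w$ and $G_w$; since $\Gamma$ is an equivalence by the derived version of Beilinson--Bernstein (as recalled in Section \ref{sec:bb}), the isomorphisms of iterated compositions on the geometric side transport directly to give $F_{s_{i_1}}\circ\cdots\circ F_{s_{i_k}}\cong F_w$ and $G_{s_{i_1}}\circ\cdots\circ G_{s_{i_k}}\cong G_w$. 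The only genuine content is the convolution identity for iterated standards/costandards along a reduced word, which is the step I would expect a careful reader to want filled in; everything else is bookkeeping of cohomological shifts and transport along $\Gamma$.
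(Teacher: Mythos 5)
Your proposal is correct and follows essentially the same route as the paper, which deduces the corollary directly from Proposition \ref{prop:convinterpretation} (the convolution description of $\Phi_w$, $\Psi_w$), the standard fact that free-monodromic (co)standard objects convolve along reduced expressions, and Corollary \ref{cor:glueloc} to transport the isomorphisms across the equivalence $\Gamma$ to obtain the statements for $F_w$ and $G_w$. The only point worth noting is that you also need Proposition \ref{prop:convinterpretation} for the general element $w$ (not just simple reflections) to identify the resulting convolution functor with $\Phi_w$ and $\Psi_w$, which is exactly how the paper states it.
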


\begin{proposition}\label{prop:nus}
    For any $s \in S$, there are natural transformations
    \begin{align*}
        \nu_s & : \Phi_s \circ \Phi_s \to \mathrm{Id},\\
        \nu_s' & : \mathrm{Id} \to \Psi_s \circ \Psi_s
    \end{align*}
    such that the natural transformations $\Phi_s \circ \Phi_s \circ \Phi_s \to \Phi_s$ given by $\nu_s \circ \Phi_s$ and $\Phi_s \circ \nu_s$ coincide. Similarly, the natural transformations $\Psi_s \to \Psi_s \circ \Psi_s \circ \Psi_s$ given by $\Psi_s \circ \nu_s'$ and $\nu_s' \circ \Psi_s$ concide.
\end{proposition}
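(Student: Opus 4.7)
The plan is to translate the statement into a question about morphisms in the monoidal category $\hat{D}(\wqw{B}{G}{B})$ via the convolution description from Proposition \ref{prop:convinterpretation}, then to produce the required morphisms by a standard computation with the multiplication map on $\overline{BsB}$, and finally to verify coherence using the associativity of convolution.

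First, by Proposition \ref{prop:convinterpretation}, we have $\Phi_s \cong \hat{\nabla}_s[-1] * -$ and $\Psi_s \cong \hat{\Delta}_s[1] * -$, where $\hat{D}(\wqw{B}{G}{B})$ acts monoidally on $D_U(G/B)$ by right convolution. Hence constructing $\nu_s$ is equivalent to constructing a morphism $\mu_s : \hat{\nabla}_s * \hat{\nabla}_s \to \hat{\delta}[2]$ in $\hat{D}(\wqw{B}{G}{B})$, and constructing $\nu_s'$ is equivalent to producing a morphism $\mu_s' : \hat{\delta}[-2] \to \hat{\Delta}_s * \hat{\Delta}_s$. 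The stated coherence conditions reduce to the associativity relations $\mu_s * \mathrm{Id}_{\hat{\nabla}_s} = \mathrm{Id}_{\hat{\nabla}_s} * \mu_s$ as morphisms $\hat{\nabla}_s * \hat{\nabla}_s * \hat{\nabla}_s \to \hat{\nabla}_s[2]$, and dually for $\mu_s'$, modulo the canonical associator.

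To construct $\mu_s$, I would compute $\hat{\nabla}_s * \hat{\nabla}_s$ by proper base change along the multiplication map $m : \overline{BsB} \times^B G/U \to G/U$. The image lies in $\overline{BsB}/U$; the fiber of $m$ over the open cell $BsU/U$ is a single point, while the fiber over the identity coset $eU$ is $\overline{BsB}/B \cong \mathbb{P}^1$. The standard free-monodromic computation of this pushforward (following \cite{BY} and used in \cite{DhTa}) identifies a canonical $\hat{\delta}[2]$-contribution coming from the top Borel--Moore cohomology of the $\mathbb{P}^1$-fiber, from which $\mu_s$ is obtained as the canonical projection. The morphism $\mu_s'$ is then produced by Verdier duality applied to $\mu_s$.

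Finally, the coherence conditions are addressed by computing both $\mu_s * \mathrm{Id}_{\hat{\nabla}_s}$ and $\mathrm{Id}_{\hat{\nabla}_s} * \mu_s$ in terms of the iterated convolution via $\overline{BsB} \times^B \overline{BsB} \times^B \overline{BsB} \to G/U$; both compositions factor through the same top-cohomological projection of the triple convolution and agree under the canonical associativity isomorphism for convolution. The main obstacle will be the calculation in the second step: identifying the canonical $\hat{\delta}[2]$-component of $\hat{\nabla}_s * \hat{\nabla}_s$ in the free-monodromic setting requires careful tracking of the nontrivial monodromy of $\mathcal{L}_s$, which is absent in the unipotent analogue. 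However, this is the same geometric input underlying the braid relations in Corollary \ref{cor:braids}, and once $\mu_s$ is identified as the canonical map, the associativity in the third step follows formally from the uniqueness of the top-cohomological projection.
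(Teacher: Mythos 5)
Your reduction to kernel-level morphisms in $\hat{D}(\wqw{B}{G}{B})$ is consistent with how the paper argues, but your construction of $\mu_s$ has a genuine gap. In the free-monodromic category $\hat{\nabla}_s$ is invertible under convolution ($\hat{\nabla}_s * \hat{\Delta}_s \cong \hat{\delta}$), so $\hat{\nabla}_s * \hat{\nabla}_s$ is again invertible and $\mathrm{End}(\hat{\nabla}_s * \hat{\nabla}_s) \cong \mathrm{End}(\hat{\delta})$ has no nontrivial idempotents; the object is indecomposable, so there is no ``canonical $\hat{\delta}[2]$-summand'' and no ``canonical projection'' onto one. This is precisely where the free-monodromic situation differs from the $B$-equivariant one: the monodromy of $\mathcal{L}_s$ does not just require ``careful tracking,'' it destroys the splitting you want, and the geometry you describe is also off (since $s\cdot s=e$, the convolution map for $\overline{BsB}\times^B\overline{BsB}$ has $\mathbb{P}^1$-fibers over every point of its image, not a point over the open cell and a $\mathbb{P}^1$ only over the identity coset). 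A nonzero morphism $\hat{\nabla}_s * \hat{\nabla}_s \to \hat{\delta}[2]$ does exist, but it has to be produced from connecting morphisms of distinguished triangles rather than from a splitting: the paper uses the triangles $\hat{\delta} \to \hat{T}_s \to \hat{\nabla}_s \to \hat{\delta}[1]$ and $\hat{\Delta}_s \to \hat{T}_s \to \hat{\delta} \to \hat{\Delta}_s[1]$ through the free-monodromic tilting object, obtaining one-step transformations $\eta_s : \Phi_s \to \mathrm{Id}$ and $\eta_s' : \mathrm{Id} \to \Psi_s$, and then sets $\nu_s = \eta_s \circ (\Phi_s \eta_s)$ and $\nu_s' = (\Psi_s \eta_s') \circ \eta_s'$.

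The coherence step is the second gap: in your sketch it is asserted, not proved. ``Uniqueness of the top-cohomological projection'' is never established, and even if you showed the relevant morphism space $\mathrm{Hom}(\Phi_s^{3}, \Phi_s)$ were one-dimensional in the appropriate sense, that would only give that $\nu_s \circ \Phi_s$ and $\Phi_s \circ \nu_s$ agree up to a scalar, whereas the proposition (and the monad structure it feeds into later) requires that they coincide. The point of the paper's factorization of $\nu_s$ through a single morphism of kernels $\hat{\nabla}_s[-1] \to \hat{\delta}$ is exactly that the two whiskerings can then be compared using only the associativity and interchange constraints of the convolution product; with a monolithic $\mu_s$ extracted from a stalk computation, this comparison is the genuinely hard part of the statement and your proposal leaves it unargued. (Your Verdier duality step for $\nu_s'$ is fine in principle, though the paper obtains $\nu_s'$ directly from the second triangle in the same way.)
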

\begin{proof}
    For any $s \in S$, there are distinguished triangles
    \[\begin{tikzcd}
        \hat{\delta}\arrow[r] & \hat{T}_s \arrow[r] &  \hat{\nabla}_s \arrow[r, "{[1]}"] & \quad \\
        \hat{\Delta}_s \arrow[r] & \hat{T}_s \arrow[r] & \hat{\delta} \arrow[r, "{[1]}"] & \quad
    \end{tikzcd}\]
    and therefore we have the shifted maps
    \begin{align*}
         \hat{\nabla}_s[-1] & \to \hat{\delta},\\
        \hat{\delta}_s & \to \hat{\Delta}_s[1],
    \end{align*}
    giving rise to natural transformations
    \begin{align*}
        \eta_s & :\Phi_s \to \mathrm{Id},\\
        \eta_s' & :\mathrm{Id} \to \Psi_s.
    \end{align*}
    We then let $\nu_s : \Phi_s \circ \Phi_s \to \mathrm{Id}$ be defined by $\nu_s = \eta_s\circ (\Phi_s\eta_s)$, $\nu_s' = (\Psi \eta_s') \circ \eta_s'$. The given associativity constraints are then implied automatically by the associativity constraints for the convolution product.
\end{proof}
The same result then also holds in the algebraic setting for the functors $F_s$ and $G_s$.

\section{The Kazhdan--Laumon gluing construction}\label{sec:klgluing}
In this section, we recall the Kazhdan--Laumon gluing construction from \cite{KL} and its application in \cite{KLCatO} to the setting of Category $\mathcal{O}$. We explain a natural way to define a category which is \emph{Koszul dual} to the one studied in loc.\ cit., setting up a comparison with the categories introduced and considered in the previous sections. 

\subsection{Overview of the Kazhdan--Laumon construction}

\subsubsection{Gluing of abelian categories}\label{subsec:gluingofabelian}
In the preceding sections, we constructed two sets of data (belonging to the algebraic and the geometric setup respectively) of the following form.
\begin{definition}\label{def:wgluing}
    A left \emph{$W$-gluing data} for abelian categories is:
    \begin{enumerate}
        \item A tuple of abelian categories $(\mathcal{C}_w)_{w \in W}$ (in all cases we will consider, each $\mathcal{C}_w$ will be identical),
        \item A collection of right-exact functors such that for every $w$,
        \begin{align*}
            \mathsf{F}_{w} : \mathcal{C}_{y} \to \mathcal{C}_{wy}
        \end{align*}
        for every $y \in W$.
        \item \label{item:naturals} For any $w, w' \in W$, a natural transformation
        \begin{align*}
            \nu_{w,w'} : \mathsf{F}_w \circ \mathsf{F}_{w'} \to \mathsf{F}_{ww'}
        \end{align*}
        which is an isomorphism when $\ell(w) + \ell(w') = \ell(ww')$, and which satisfy the associativity condition that for any triple $x, y, z \in W$, the equation
        \begin{align*}
            \nu_{w,yz}\circ (\mathsf{F}_w\nu_{y,z}) & = \nu_{wy,z}\circ (\nu_{w,y}\mathsf{F}_z).
        \end{align*}
    \end{enumerate}
\end{definition}
A \emph{right $W$-gluing data} can just as easily be defined with left-exact functors. In setup of the abelian version of the gluing functors considered in the present paper, the existence of natural transformations satisfying the properties of (\ref{item:naturals}) follows by Corollary \ref{cor:braids} and Proposition \ref{prop:nus}.

Given a left (or right) $W$-gluing data, one can define the \emph{glued category} with respect to this data as follows.
\begin{definition}\label{def:gluedcat}
    Given a left $W$-gluing data \[\{(\mathcal{C}_w)_{w \in W}, (\mathsf{F}_w)_{w \in W}, (\nu_{w,w'})_{w, w' \in W}\},\]
    the glued category associated to this data is the category of tuples of objects $(A_w)_{w \in W}$ with $A_w \in \mathcal{C}_w$ equipped with morphisms $\alpha_{y,w} : \mathsf{F}_yA_w \to A_{yw}$ in $\mathcal{C}_{yw}$ for every $y, w \in W$ such that the following diagram
    \[\begin{tikzcd}
        \mathsf{F}_{z}\mathsf{F}_yA_w \arrow[r, "\mathsf{F}_z\alpha_{y,w}"] \arrow[d, "\nu_{z,y}(A_w)"'] & \mathsf{F}_zA_{yw} \arrow[d, "\alpha_{z,yw}"]\\
        \mathsf{F}_{zy}A_w \arrow[r, "\alpha_{zy,w}"'] & A_{zyw}
    \end{tikzcd}\]
    commutes for any triple $z, y, w \in W$.
\end{definition}

The original and central example of Kazhdan--Laumon gluing which appeared in Kazhdan and Laumon's initial paper \cite{KL}, in which this gluing procedure is defined, proceeds as follows. They consider sheaves on the basic affine space $G/U$, but over the algebraic closure of a finite field $\mathbb{F}_q$. As a result, they work with $\ell$-adic sheaves for $\ell$ coprime to $q$ and have access to standard artifacts of this setting such as Artin--Schreier sheaves and Tate twists. In particular, we denote by $(\tfrac{1}{2})$ the ``half Tate twist" corresponding to some chosen square root of $q$.

In this context they construct for every $s \in S$ an endomorphism of the constructible derived category
\begin{align*}
    \mathrm{FT}_{s} : D^b(G/U, \overline{\mathbb{Q}}_\ell) \to D^b(G/U, \overline{\mathbb{Q}}_\ell);
\end{align*}
called a \emph{symplectic Fourier transform}. These functors have abelian versions which they consider as endomorphisms of $\mathrm{Perv}(G/U, \overline{\mathbb{Q}}_\ell)$. The bulk of \cite{KL} is devoted to showing that they satisfy braid relations and assemble to the data of a left $W$-gluing on $D^b(G/U, \overline{\mathbb{Q}}_\ell)$.

\subsubsection{Kazhdan--Laumon Category $\O$}

A simplification of this original example, which will be crucial to compare to the setting of the present paper, was explored in \cite{KLCatO}. Following the same philosophy that Category $\mathcal{O}$ is a fruitful place to start when studying the category of $\mathfrak{g}$-modules, one can consider the Kazhdan--Laumon gluing construction restricted to the subcategory
\[ \mathrm{Perv}(B\backslash G/B, \overline{\mathbb{Q}}_\ell) \subset \mathrm{Perv}(G/U, \overline{\mathbb{Q}}_\ell).\]

The result is the \emph{Kazhdan--Laumon category $\mathcal{O}$}, which we denote by $\mathrm{KL}_{\mathcal{O}}$. The first key observation in the study of this category is as follows.
\begin{lemma}\label{lem:ftisconv}
    There is a natural isomorphism
    \begin{align*}
        \mathrm{FT}_s|_{D^b(B\backslash G/B, \overline{\mathbb{Q}}_\ell)} \cong - * \nabla_s(\tfrac{1}{2}),
    \end{align*}
    where $(\tfrac{1}{2})$ denotes the half Tate twist on $\ell$-adic sheaves and $\nabla_s = j_{s*}\mathbb{C}[1](\tfrac{1}{2})$ for $j_s : BsB \hookrightarrow G/B$. 
\end{lemma}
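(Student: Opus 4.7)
The plan is to unpack the construction of the symplectic Fourier transform $\mathrm{FT}_s$ from \cite{KL} and match it directly against the convolution $- * \nabla_s(\tfrac{1}{2})$ on a generating set of $D^b(B\backslash G/B, \overline{\mathbb{Q}}_\ell)$. Recall that $\mathrm{FT}_s$ arises from the correspondence attached to the minimal parabolic $P_s \supset B$: the basic affine space $G/U$ admits a canonical projection to the quotient by $[P_s,P_s]\cdot U$, whose total space is birationally a rank-$2$ bundle over the base, and $\mathrm{FT}_s$ is defined as the fibrewise Fourier--Deligne transform along this bundle (twisted so as to land back on $G/U$ rather than on the basic affine space for the opposite Borel). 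On the other hand, $-*\nabla_s$ comes from the correspondence $G/B \leftarrow P_s \times^B G/B \to G/B$, which is obtained by collapsing the $T$-direction in the same $P_s$-diagram.

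First I would restrict to $B$-equivariant sheaves and observe that the Bruhat stratification of $G/B$ refines the $P_s$-orbit structure: each $P_s$-orbit on $G/B$ consists of two adjacent Bruhat cells $BwB$ and $BwsB$, assembled into a $\mathbb{P}^1$-bundle over $G/P_s$. This lets one compute $\mathrm{FT}_s$ applied to a standard generator $\Delta_w = j_{w!}\overline{\mathbb{Q}}_\ell[\ell(w)]$ by an explicit fibrewise calculation on an affine bundle of rank $2$, using the classical Fourier--Deligne exchange between extension by zero and $*$-extension along an $\mathbb{A}^1$. The outcome matches the standard convolution formulae $\Delta_w * \nabla_s \cong \Delta_{ws}$ when $\ell(ws) > \ell(w)$ and $\Delta_w * \nabla_s \cong \nabla_{ws}$ when $\ell(ws) < \ell(w)$, with the half Tate twist $(\tfrac{1}{2})$ recording the weight shift introduced by the Fourier--Deligne transform along the single root-direction factor of the rank-$2$ bundle.

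Since $D^b(B\backslash G/B, \overline{\mathbb{Q}}_\ell)$ is generated as a triangulated category by the $\Delta_w$'s, and since the proposed isomorphism holds on each such generator, what remains is to upgrade this matching of values to a genuine natural isomorphism of functors. The main obstacle is precisely this step: producing a functorial comparison rather than a case-by-case identification of isomorphism classes. The most economical approach is to realize both $\mathrm{FT}_s$ and $-*\nabla_s(\tfrac{1}{2})$ as pull-push functors along a common correspondence built out of $P_s$, so that the natural isomorphism is produced uniformly via base change and the projection formula, with only the overall Tate twist to be tracked separately. Alternatively, since both functors are equivalences (the former by the braid relations of \cite{KL}, the latter by the convolution-invertibility of $\nabla_s$ via the triangle $\Delta_s \to T_s \to \nabla_s$), an isomorphism exhibited on the generating set of $\Delta_w$'s extends to a natural isomorphism of functors by standard dévissage, with the normalization of the Fourier--Deligne transform pinning down the factor $(\tfrac{1}{2})$ in the statement.
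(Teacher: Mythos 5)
The first thing to say is that the paper itself does not prove this lemma: it is imported from \cite{KLCatO}, and ultimately from \cite{P}, where it is established by exactly the kernel comparison you gesture at in your final paragraph --- one shows that, on sheaves pulled back from $G/B$, the fibrewise symplectic Fourier--Deligne transform along the rank-two bundle attached to $P_s$ collapses, via base change against the Artin--Schreier kernel, to the $*$-push-pull along $G/B\times_{G/P_s}G/B$, with a net shift and twist $[1](\tfrac12)$. Your identification of the relevant geometry, and your remark that the real issue is functoriality (so the comparison must happen at the level of correspondences, not object by object), are the right instincts. However, as written the proposal has genuine gaps. The ``standard convolution formulae'' you invoke are false: already for $w=e$ one has $\Delta_e*\nabla_s\cong\nabla_s$, not $\Delta_s$; and in the length-decreasing case $\Delta_w*\nabla_s\cong\Delta_{ws}$ (write $\Delta_w\cong\Delta_{ws}*\Delta_s$ and use $\Delta_s*\nabla_s\cong\Delta_e$), not $\nabla_{ws}$; while for $\ell(ws)>\ell(w)$ with $w\neq e$ the object $\Delta_w*\nabla_s$ is in general neither standard nor costandard. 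So the claimed check on generators does not match the statement being proved --- at $w=e$ it contradicts it --- which indicates the fibrewise calculation was not actually carried out.

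Second, the fallback argument is not valid: exhibiting isomorphisms $F(\Delta_w)\cong G(\Delta_w)$ object by object on a generating set does not produce a natural isomorphism of functors, and no d\'evissage can manufacture naturality afterwards; d\'evissage only upgrades an already-constructed natural transformation from being an isomorphism on generators to being an isomorphism everywhere. Hence the only viable route in your proposal is the common-correspondence/kernel argument, and that is precisely the part left unexecuted. To complete it you must (i) show that $\mathrm{FT}_s$ preserves the subcategory $D^b(B\backslash G/B,\overline{\mathbb{Q}}_\ell)\subset D^b(G/U,\overline{\mathbb{Q}}_\ell)$ at all, which is not automatic since the symplectic Fourier transform intertwines the right $T$-action only up to the twist by $s$ and its definition on $G/U$ involves an extension-by-zero and restriction that must be tracked; and (ii) perform the degeneration of the two-variable Fourier kernel on sheaves constant along the $T$-direction of the fibres, which is exactly where the shift $[1]$ and the single half Tate twist in $\nabla_s=j_{s*}\overline{\mathbb{Q}}_\ell[1](\tfrac12)$ come from. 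As it stands, the proposal names the correct strategy (the one used in \cite{P} and \cite{KLCatO}) but proves neither point, and its explicit intermediate computations are incorrect.
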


One can then simplify the definition of the Kazhdan--Laumon category $\mathcal{O}$ without relying on the full definition of the symplectic Fourier transforms detailed in \cite{KL}, whose complexities are studied further in \cite{P}, \cite{217}.
\begin{definition}\label{def:klcato}
    The \emph{Kazhdan--Laumon category $\mathcal{O}$}, which we denote by $\KL$, is the $W$-glued category of tuples $(A_w)_{w \in W}$ for $A_w \in \mathrm{Perv}(B\backslash G/B)$ equipped with gluing functors \[(\nabla_{w}(\tfrac{\ell(w)}{2}) * - )_{w \in W}.\]
\end{definition}
Although the definition in \cite{KLCatO} and Lemma \ref{lem:ftisconv} are given in terms of the right convolution action by $\nabla_s(1)$, we prefer to define and work with the left action in the setting of the present paper (so as to more directly compare it with the other categories we consider), hence the chosen convention in Definition \ref{def:klcato}, which makes no difference due to the inherent symmetry of $\mathrm{Perv}(B\backslash G/B)$.

\subsection{Derived gluing and monads}

Although abelian categories appear in the original definitions of \cite{KL}, this same definitions have been made in the setting of derived and dg categories in \cite[Section 3.4]{P} and \cite[Section 4]{CMF3}.

Further, the perspective of \cite{BBP} gives a cleaner way to define $W$-glued categories. Given left $W$-gluing data as in Definition \ref{def:wgluing} (or even the analogous gluing data on a tuple of dg categories), it is explained in loc.\ cit.\ how the functors $(\mathsf{F}_w)_{w \in W}$ give rise to the data of a single functor
\begin{align*}
    \mathsf{F} : \bigoplus_{w \in W} \mathcal{C}_w \to \bigoplus_{w \in W} \mathcal{C}_w,
\end{align*}
while similarly the natural transformations $(\nu_{w,y})_{w, y \in W}$ give rise to a single natural transformation $\mathsf{F} \circ \mathsf{F} \to \mathsf{F}$. By the construction of $\mathsf{F}$ there is then additionally a transformation $\mathrm{Id} \to \mathsf{F}$ which equips $\mathsf{F}$ with the structure of a monad. We then get the following result.
\begin{prop}\label{prop:klismonad}
    Any left $W$-gluing data can be described as a monad on $\oplus_{w \in W} \mathcal{C}_w$ given by
    \begin{align*}
        \mathsf{F}((A_w)_{w \in W}) & = (\oplus_{y \in W} \mathsf{F}_yA_{y^{-1}w})_{w \in W}
    \end{align*}
    Similarly, any right $W$-gluing data can be described as a comonad on $\oplus_{w \in W} \mathcal{C}_w$ given by
    \begin{align*}
        \mathsf{G}((A_w)_{w \in W}) & = (\oplus_{y \in W} \mathsf{G}_yA_{y^{-1}w})_{w \in W}.
    \end{align*}
    These observation are true in both the abelian and derived setting.
\end{prop}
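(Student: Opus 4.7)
The plan is to show two things: first, that the prescribed formula for $\mathsf{F}$ carries a canonical monad structure built from the natural transformations $\nu_{w,w'}$, and second, that under the usual $\mathsf{F}_e \cong \mathrm{Id}$ normalization (which follows from $\nu_{e,w}$ being an isomorphism since $\ell(e) + \ell(w) = \ell(w)$), the category of algebras over this monad is exactly the glued category from Definition \ref{def:gluedcat}. The dual claim for right gluing data and comonads is then formal.

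For the first step, I would verify that $\mathsf{F}$ is a well-defined endofunctor of $\bigoplus_{w \in W}\mathcal{C}_w$ and then compute $\mathsf{F}^2$ explicitly. The $w$-component of $\mathsf{F}^2(A_v)_{v \in W}$ unwinds to
\[
\bigoplus_{y}\mathsf{F}_y\Bigl(\bigoplus_{z}\mathsf{F}_z A_{z^{-1}y^{-1}w}\Bigr) \cong \bigoplus_{y,z} \mathsf{F}_y\mathsf{F}_z A_{(yz)^{-1}w} \cong \bigoplus_{u}\Bigl(\bigoplus_{yz=u}\mathsf{F}_y\mathsf{F}_z\Bigr)A_{u^{-1}w},
\]
after reindexing by $u = yz$. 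The multiplication $\mu : \mathsf{F}\circ\mathsf{F} \to \mathsf{F}$ is then defined summand-by-summand via $\nu_{y,z} : \mathsf{F}_y\mathsf{F}_z \to \mathsf{F}_{yz}$, landing in the $u$-th summand of $\mathsf{F}(A_v)_{v \in W}$. The unit $\eta : \mathrm{Id} \to \mathsf{F}$ picks out the $y = e$ summand through the identification $\mathsf{F}_e A_w \cong A_w$. Associativity of $\mu$ is the statement $\mu \circ \mathsf{F}\mu = \mu \circ \mu\mathsf{F}$, which on summands indexed by $(x,y,z)$ asserts exactly the equation $\nu_{x,yz}\circ(\mathsf{F}_x\nu_{y,z}) = \nu_{xy,z}\circ(\nu_{x,y}\mathsf{F}_z)$ imposed in Definition \ref{def:wgluing}; the unit axioms reduce similarly to $\nu_{e,w}$ and $\nu_{w,e}$ being isomorphisms.

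For the second step, I would unpack an $\mathsf{F}$-algebra structure on $(A_w)_{w \in W}$, which is a map $\mathsf{F}(A_w)_{w \in W} \to (A_w)_{w \in W}$, i.e.\ for each $w$ a map $\bigoplus_y \mathsf{F}_y A_{y^{-1}w} \to A_w$. Decomposing by summand and substituting $u = y^{-1}w$ recovers exactly the family of morphisms $\alpha_{y,u} : \mathsf{F}_y A_u \to A_{yu}$ in Definition \ref{def:gluedcat}, while the algebra associativity axiom $\mathrm{act} \circ \mu = \mathrm{act} \circ \mathsf{F}(\mathrm{act})$ reduces on the $(z,y)$-summand to the commutative square in loc.\ cit. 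The unit axiom ensures $\alpha_{e,w}$ is the identity, which is usually absorbed into the convention $\mathsf{F}_e = \mathrm{Id}$. Morphisms on either side match up trivially. The comonad statement is obtained by reversing arrows, using the comonad $\mathsf{G}$ with $\mathsf{G}_y$ being the right-exact replaced by left-exact functors and coalgebra structures corresponding to a right $W$-gluing.

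The main obstacle is purely bookkeeping: one must track the two separate roles played by the indexing Weyl group element (the source component and the functor applied) and make the reindexing $u = yz$ compatible with the associator. In the derived or dg setting there is an additional subtlety, namely ensuring that the combinatorial monad structure can be upgraded to an $A_\infty$- or $E_1$-monad; however since the $\nu_{w,w'}$ are part of the gluing datum and the associativity constraint is already imposed strictly in Definition \ref{def:wgluing}, this lifting is automatic in the examples of interest.
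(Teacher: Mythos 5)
Your proposal is correct and takes essentially the same route as the paper, which gives no separate proof: it merely assembles the $\mathsf{F}_w$ into a single endofunctor of $\bigoplus_{w\in W}\mathcal{C}_w$ and appeals to the construction of \cite{BBP}, and your explicit unwinding (multiplication from the $\nu_{y,z}$ after reindexing $u=yz$, unit from the $e$-summand, associativity reducing to the cocycle condition of Definition \ref{def:wgluing}, and the identification of $\mathsf{F}$-algebras with glued tuples) is exactly that construction spelled out. The only point to state as a convention rather than derive is $\mathsf{F}_e\cong\mathrm{Id}$ together with unitality of $\alpha_{e,w}$ (invertibility of $\nu_{e,w}$ alone only makes $\alpha_{e,w}$ idempotent), but you already flag this.
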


\subsection{Koszul duality}

\subsubsection{Koszul duality between $B \backslash G / B$ and $U \backslash G / U$\label{subsec:kdkl}}

Although in this paper we work with the free monodromic category $\hat{D}(\wqw{B}{G}{B})$ with complex coefficients and base field, it was originally defined in \cite{BY} in the context of $\ell$-adic sheaves with base field $\mathbb{F}_q$; in this subsection, we work $\ell$-adically and consider this category, denoting it by $\hat{D}(\wqw{B}{G}{B}, \overline{\mathbb{Q}_\ell})$ to avoid ambiguity. Similarly, we work also with the bi-equivariant derived category $D(B\backslash G/B, \overline{\mathbb{Q}_\ell})$.

One of the main contributions of \cite{BY} is the following result.
\begin{theorem}[\cite{BY}]
    There exists a \emph{Koszul duality} functor
    \begin{align*}
        \mathbb{K} : D^b(B\backslash G/B, \overline{\mathbb{Q}_\ell}) \to \hat{D}&^b(\wqw{B}{G}{B}, \overline{\mathbb{Q}_\ell})
    \end{align*}
    satisfying the following properties:
    \begin{enumerate}
        \item $\mathbb{K}$ is an equivalence.
        \item If $(\frac{1}{2})$ denotes the half Tate twist introduced in Section \ref{subsec:gluingofabelian}, then for any $\mathcal{F} \in D^b(B\backslash G/B, \overline{\mathbb{Q}_\ell}),$
        \begin{align*}
            \mathbb{K}(\mathcal{F}(\tfrac{1}{2})) & = \mathcal{F}(-\tfrac{1}{2})[-1].
        \end{align*}
        \item For any $w \in W,$
        \begin{align*}
            \mathbb{K}(\Delta_w) & = \hat{\Delta}_w,\\
            \mathbb{K}(\nabla_w) & = \hat{\nabla}_w.
        \end{align*}
    \end{enumerate}
\end{theorem}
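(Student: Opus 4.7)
The plan is to follow the strategy of Bezrukavnikov--Yun, which adapts the original Beilinson--Ginzburg--Soergel Koszul duality for Category $\mathcal{O}$ to the monodromic setting where one can incorporate information from maximal torus monodromy rather than only from the unipotent $B$-action. The starting point is to describe both sides via their natural generators: on the equivariant side $D^b(B\backslash G/B, \overline{\mathbb{Q}_\ell})$ is generated (as a triangulated category under shifts, Tate twists, cones, and direct summands) by the standard objects $\Delta_w$; on the free-monodromic side $\hat{D}^b(\wqw{B}{G}{B}, \overline{\mathbb{Q}_\ell})$ is generated analogously by the tilting-like family $\hat{\Delta}_w$ together with the completed monodromy structure coming from the universal local systems $\mathcal{L}_w$.

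First I would set up a mixed/graded version of each category so that Tate twists and cohomological shifts can be disentangled. On both sides one can form the corresponding Ext algebra of the full sum $\bigoplus_w \Delta_w$ and $\bigoplus_w \hat{\Delta}_w$ respectively, internally graded by weight and externally graded by cohomological degree. The core technical step is a computation showing these two bi-graded $A_\infty$-algebras are Koszul dual: the roles of the two gradings are exchanged, which is precisely the content of the formal identity
\[
\mathbb{K}\bigl(\mathcal{F}(\tfrac{1}{2})\bigr) = \mathcal{F}(-\tfrac{1}{2})[-1]
\]
demanded in property (2), with $(\tfrac{1}{2})$ providing one grading and $[1]$ providing the other. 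Once this algebraic Koszul duality is established, standard formalism (e.g.\ the derived Koszul functor between modules over a positively graded quadratic algebra and its quadratic dual) produces an equivalence of triangulated categories.

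Next, I would define $\mathbb{K}$ explicitly on generators by $\mathbb{K}(\Delta_w) = \hat{\Delta}_w$ and $\mathbb{K}(\nabla_w) = \hat{\nabla}_w$, and verify that this prescription extends functorially along the distinguished triangles relating $\Delta_w, \nabla_w$ and their twists by simple reflections. The compatibility with convolution by $\Delta_s, \nabla_s$ (which generate the monoidal structure on $D^b(B\backslash G/B)$ and its monodromic counterpart) then propagates from the case $w = s$ by braid relations, using the fact that convolution satisfies the same Hecke-type identities on both sides. Full faithfulness reduces, after taking direct sums, to matching the Ext algebras computed in the previous step, while essential surjectivity follows from verifying that the image of $\mathbb{K}$ generates the target.

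The main obstacle is the Ext algebra computation and the identification of the result with a Koszul dual algebra. In particular, one must carry out the free-monodromic deformation carefully: the Ext algebra of the $\hat{\Delta}_w$ involves not just the Bruhat combinatorics but also the action of $H^*_T(\mathrm{pt})$ coming from the universal monodromy, and it is this extra polynomial factor that converts into the Tate twist direction on the equivariant side under duality. All other steps (the definition on distinguished triangles, compatibility with convolution, braid relations, generation) are formal once this algebraic core has been established.
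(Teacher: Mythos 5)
This statement is not proved in the paper at all: it is quoted verbatim from \cite{BY} as background, so the only meaningful comparison is with Bezrukavnikov--Yun's actual argument. Measured against that, your outline captures the right general flavor (a mixed/graded lift on both sides, the exchange of Tate twist and cohomological shift, the extra $H^*_T(\mathrm{pt})$-factor coming from free monodromy), but the core of your plan does not match how the equivalence is, or can be, constructed, and it has genuine gaps.

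First, you propose to ``define $\mathbb{K}$ explicitly on generators by $\mathbb{K}(\Delta_w)=\hat{\Delta}_w$, $\mathbb{K}(\nabla_w)=\hat{\nabla}_w$, and verify that this prescription extends functorially along the distinguished triangles.'' One cannot define a triangulated (let alone dg) functor this way: cones are not functorial, and extending a prescription on objects along triangles is exactly the step that requires an honest construction. In \cite{BY} this is resolved by producing dg models: the monodromic side is modeled by the homotopy category of free-monodromic tilting sheaves, the equivariant side by complexes of semisimple (IC) objects, and the functor is built from an identification of the relevant endomorphism algebras. Second, the algebra you propose to compute is the wrong one: the key computation in \cite{BY} is not a Koszul-duality statement about the bigraded Ext algebra of $\bigoplus_w \Delta_w$ versus $\bigoplus_w \hat{\Delta}_w$ (the standards form an exceptional collection and their Ext algebra does not directly govern the duality), but rather a monodromic Endomorphismensatz/Struktursatz: the endomorphism algebra of a free-monodromic tilting generator is identified, via a Soergel-type functor $\mathbb{V}$, with the equivariant Ext algebra of the sum of the simple perverse sheaves (a completed Soergel algebra over $H^*_T$). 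The normalization of $\mathbb{K}$ is that it sends simples to free-monodromic tiltings; the properties (2) and (3) in the statement, i.e.\ $\Delta_w \mapsto \hat{\Delta}_w$, $\nabla_w \mapsto \hat{\nabla}_w$ and the exchange of $(\tfrac12)$ with $[1]$, are consequences of that construction, not inputs from which the functor can be assembled. Finally, you treat the mixed and completed structures as routine, but a substantial portion of \cite{BY} is devoted precisely to constructing the free-monodromic completion $\hat{D}(\wqw{B}{G}{B})$ with its convolution and its mixed version (pro-objects with unipotent monodromy, no ambient abelian heart), and to proving that tilting objects generate; without these points your appeal to ``standard Koszul formalism for quadratic algebras'' does not get off the ground.
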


One can then check that for any derived $W$-gluing data on $D^b(B\backslash G/B, \overline{\mathbb{Q}_\ell})$, applying the equivalence $\mathbb{K}$ gives rise to a derived $W$-gluing data on the category $\hat{D}^b(\wqw{B}{G}{B}, \overline{\mathbb{Q}_\ell})$. We then observe that for any $s \in S$,
\begin{align*}
    \mathbb{K}(\nabla_s(\tfrac{1}{2})) & = \hat{\nabla}_s(-\tfrac{1}{2})[-1].
\end{align*}
This means that the category $\hat{D}^b(\wqw{B}{G}{B}, \overline{\mathbb{Q}_\ell})$ can be equipped with a gluing data such that for each $s \in S$, the corresponding gluing functor is given by\[\mathcal{F} \mapsto \hat{\nabla}_s(-\tfrac{1}{2})[-1] * \mathcal{F}.\]

For simplicity, we note that we can just as easily apply this gluing construction, with the very same gluing functors, to the category $D_U^b(G/B, \overline{\overline{\mathbb{Q}}_{\ell}})$, since this category also admits an action on the left by the objects above under convolution. We call this the \emph{mixed Koszul dual Kazhdan--Laumon category $\mathcal{O}$}.

\subsubsection{The Koszul dual Kazhdan--Laumon category $\mathcal{O}$}

Returning now to the setup over $\mathbb{C}$, we can then use the mixed setting as motivation to naturally define a complex version of the preceding construction.
\begin{definition}
    The \emph{Koszul dual Kazhdan--Laumon category $\mathcal{O}$}, which we denote by $\mathrm{KL}_{\mathcal{O}}^{\vee}$, is the Kazhdan--Laumon category built from $\oplus_{w \in W} D_U(G/B)$ and the gluing functors $\hat{\nabla}_w[-\ell(w)]*-$.
\end{definition}

In other words, the gluing data $(\Phi_{w})_{w \in W}$ constructed in Definition \ref{def:geogluing} is exactly the gluing data comprising the functors in the definition of $\mathrm{KL}_{\mathcal{O}}^\vee$.

\section{The main equivalence}\label{sec:main}

In this section, we make some technical modifications to the dg categories $\Da$ and $\Dg$ so that we may apply the Barr--Beck--Lurie theorem, the main tool which we will use to show the equivalence between these categories and $\KL^\vee$.

\subsection{Functors and cocompletions}

\begin{definition}
    Define the functors
    \begin{align*}
        j^! & : D(\gmod_0) \to \bigoplus_{w \in W} D(\tO), & j^!(M) & = (j_w^! M)_{w \in W},\\
        j_! & : \bigoplus_{w \in W} D(\tO) \to  D(\gmod_0), & j_!((A_w)_{w \in W}) & = \oplus_{w \in W} j_{w!}(A_w),\\
        \alpha & : \bigoplus_{w \in W} D_U(G/B) \to D(G/B), & \alpha((\mathcal{F}_w)_{w \in W}) & = \oplus_{w \in W} \pi_w^*(\mathcal{F}_w),\\
        \beta & : D(G/B) \to \bigoplus_{w \in W} D_U(G/B), & \beta(\mathcal{F}) & = (\pi^w_* \mathcal{F})_{w \in W}.
    \end{align*}
    We use ${}^\circ j^!$ to denote the abelian version of $j^!$ considered as a functor from $\mathcal{A}_f$ to $\O$. The adjunctions in Proposition \ref{prop:originaladjunctions} imply that there are adjunctions
    \begin{align*}
        (j_!, j^!), & \qquad (\alpha, \beta).
    \end{align*}
\end{definition}

\begin{definition}
    Let $\Da$ and $\Dg$ be the cocompletions of $\Da_0$ and $\Dg_0$ in the categories $D(\gmod_0)$ and $D(G/B)$ respectively.
\end{definition}
Proposition \ref{cor:dggamma} implies that the localization functor $\Gamma : D(G/B) \to D(\gmod_0)$ induces an equivalence $\Dg \to \Da$, whcih we continue to refer to as $\Gamma$. 

\begin{proposition}\label{prop:imageco}
    $\Da$ is the internal cocompletion of the full dg subcategory of $D(\gmod_0)$ generated by the image of $j_{!}$. 
    
    Equivalently, $\Dg$ is the cocompletion of the full dg subcategory of $D(G/B)$ generated by the image of $\alpha$. 
\end{proposition}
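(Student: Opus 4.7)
The plan is to establish the two stated containments for $\Da$ and then deduce the geometric statement by transport along the localization equivalence. First I would show that the image of $j_!$ lies in $\Da$. For $A \in \tO$ and $w \in W$, the module $j_{w!}(A) \in \gmod_0$ lies in $\tO^w$, and may be written as the filtered colimit of its finitely generated $\mathcal{U}(\mathfrak{g})$-submodules; each such submodule lies in $\O^w \subset \Afmod$ by Proposition \ref{prop:iotaw}, hence in $\Da_0$. Since $\Da$ is by construction the internal cocompletion of $\Da_0$, and in particular closed under filtered colimits in $D(\gmod_0)$, this gives $j_{w!}(A) \in \Da$. Taking finite direct sums over $w \in W$ extends this to the entire essential image of $j_!$.

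For the reverse inclusion, the essential input is Corollary \ref{cor:afdefs}, which exhibits every object of $\Afmod$ as a finite iterated extension of objects in $\bigcup_{w \in W} \tO^w$. Each such subquotient lies in the essential image of $j_{w!}$ for an appropriate $w$, hence in the image of $j_!$. Iteratively taking cones along the filtration then places every object of $\Afmod$ in the triangulated subcategory generated by the image of $j_!$, and closing under shifts and cones recovers all of $\Da_0$. Passing to cocompletions yields $\Da \subseteq$ cocompletion of the image of $j_!$, giving the desired equality.

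The geometric statement (read with $D(G/B)$ replacing $D(\gmod_0)$ in its second sentence) then follows formally by transport along the equivalence $\Gamma : \Dg \cong \Da$ of Corollary \ref{cor:dggamma}: by Lemma \ref{lem:functordict1}, $\Gamma$ intertwines $\pi_w^*$ with $j_{w!}$, so the image of $\alpha$ corresponds under $\Gamma$ to the image of $j_!$, and the algebraic statement transports directly. The only subtle point, and where I would expect to focus the most care, is the interplay between $\O^w$ (finitely generated) and $\tO^w$: objects of $\tO^w$ do not a priori lie in $\Da_0$ and enter $\Da$ only after passage to the cocompletion. The argument above is structured precisely to accommodate this distinction, so no genuine obstacle arises beyond bookkeeping around Corollary \ref{cor:afdefs}.
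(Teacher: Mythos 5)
Your argument is correct and follows essentially the same route as the paper: reduce the geometric statement to the algebraic one via the localization equivalence $\Gamma$ together with Lemma \ref{lem:functordict1}, then use the classification of simples (your Corollary \ref{cor:afdefs} is just the packaged form of Proposition \ref{prop:afsimples}, which is what the paper cites) to see that $\Da_0$ is built by extensions, shifts, and cones from the image of $j_!$, and finally pass to cocompletions. The only difference is that you spell out the forward containment (objects of $\tO^w$ as filtered colimits of their finitely generated submodules, hence in $\Da$), a point the paper's proof leaves implicit; this is a welcome clarification rather than a deviation.
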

\begin{proof}
    The localization equivalence $\Dg \to \Da$ and the equivalence of functors given in Lemma \ref{lem:functordict1} and Corollary \ref{cor:functordict2}, it is enough to prove the statement for $\Da$ alone. By Proposition \ref{prop:afsimples}, any object of $\Afmod$ is an extension of objects lying in the image of $j_!$, so $\Da_0$ is the full dg subcategory of $D(\gmod_0)$ generated by the image of $j_!$. Thus the result follows from taking cocompletions in $D(\gmod_0)$.
\end{proof}

\subsection{Verifying the Barr--Beck--Lurie conditions}

\subsubsection{A right adjoint for $j^!$}

\begin{definition}
    Let 
    \begin{align*}
        R_{\nabla} & : D_U(G/B) \to D_U(G/B)\\ 
        R_{\Delta} & : D_U(G/B) \to D_U(G/B)
    \end{align*}
    be the inverse equivalences defined for any element $M \in D(\tO)$ by 
    \begin{align*}
        R_{\nabla}(M) & = \hat{\Delta}_{w_0} * M [\ell(w_0)]\\ 
        R_{\Delta}(M) & = \hat{\nabla}_{w_0} * M [-\ell(w_0)].
    \end{align*}
\end{definition}

\begin{definition}
    For any $w \in W$, let $\gamma_w : D(\tO) \to \Dg$ be the functor defined by
    \begin{align*}
        \gamma_{w} = \pi_{w_0w}^* \circ R_{\Delta}.
    \end{align*}
\end{definition}

\begin{proposition}\label{prop:betagamma}
    For any $w \in W$, there is an adjunction $(\pi_*^w, \gamma_w)$ of functors 
    \begin{align*}
        \Dg \rightleftarrows D_U(G/B).
    \end{align*}
    This implies there is an adjunction $(\beta, \gamma)$ of functors
    \begin{align*}
        \Dg \rightleftarrows \bigoplus_{w \in W} D_U(G/B).
    \end{align*}
\end{proposition}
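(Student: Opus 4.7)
The plan is to reduce the first adjunction $(\pi_*^w, \gamma_w)$ to a key natural isomorphism of functors which can be checked on a generating set, and then to deduce the second adjunction $(\beta,\gamma)$ formally.

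The second adjunction follows from the first: since $W$ is finite, $\beta\mathcal{F} = (\pi_*^w\mathcal{F})_{w\in W}$ and $\gamma((\mathcal{G}_w)_w) = \bigoplus_w \gamma_w\mathcal{G}_w$, so
\[
\mathrm{Hom}_{\bigoplus D_U(G/B)}(\beta\mathcal{F}, (\mathcal{G}_w)_w) \;=\; \prod_w \mathrm{Hom}(\pi_*^w\mathcal{F}, \mathcal{G}_w) \;\cong\; \prod_w\mathrm{Hom}(\mathcal{F}, \gamma_w\mathcal{G}_w)
\]
upon assuming the pointwise adjunction. For the first adjunction, Proposition \ref{prop:convinterpretation} identifies $R_\Delta = \hat\nabla_{w_0}[-\ell(w_0)] * -$ with $\Phi_{w_0} = \pi_*^{w_0}\pi_e^*$, so $\gamma_w\mathcal{G} = \pi_{w_0w}^* \pi_*^{w_0} \pi_e^* \mathcal{G}$. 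Applying the existing adjunctions $(\pi_!^{w_0w}, \pi_{w_0w}^*)$, $(\pi_{w_0}^*, \pi_*^{w_0})$, and $(\pi_!^e, \pi_e^*)$ in turn yields
\begin{align*}
\mathrm{Hom}(\mathcal{F}, \gamma_w\mathcal{G})
&\;\cong\; \mathrm{Hom}(\pi_!^{w_0w}\mathcal{F}, \pi_*^{w_0}\pi_e^*\mathcal{G}) \\
&\;\cong\; \mathrm{Hom}(\pi_{w_0}^* \pi_!^{w_0w}\mathcal{F}, \pi_e^*\mathcal{G}) \\
&\;\cong\; \mathrm{Hom}(\pi_!^e \pi_{w_0}^* \pi_!^{w_0w}\mathcal{F}, \mathcal{G}),
\end{align*}
so by Yoneda the adjunction $(\pi_*^w,\gamma_w)$ reduces to the natural isomorphism of functors
\begin{equation}\label{eq:keyidbetagamma}
\pi_!^e \circ \pi_{w_0}^* \circ \pi_!^{w_0w} \;\cong\; \pi_*^w \qquad \text{on } \Dg.
\end{equation}

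By Proposition \ref{prop:imageco}, $\Dg$ is the cocompletion of the subcategory generated by objects of the form $\pi_y^*\mathcal{H}$ for $y\in W$ and $\mathcal{H}\in D_U(G/B)$. Both sides of \eqref{eq:keyidbetagamma} preserve the relevant colimits, so it suffices to verify \eqref{eq:keyidbetagamma} on such generators. I would do so in two sub-steps. First, prove the Verdier-dual analog of Proposition \ref{prop:convinterpretation}, namely $\pi_!^e \circ \pi_{w_0}^* \cong \hat\nabla_{w_0}[-\ell(w_0)] * -$ on $D_U(G/B)$, by adapting the base-change computation of that proposition to the Cartesian diagram relating $\pi^{(U)}$ and $\pi^{(U^{w_0})}$ via the $w_0$-translation $\sigma_{w_0}$; the swap of shriek for star in this dual setting produces $\hat\nabla_{w_0}$ in place of $\hat\Delta_{w_0}$. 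Second, apply this together with Proposition \ref{prop:convinterpretation} to reduce \eqref{eq:keyidbetagamma} on $\pi_e^*\mathcal{H}$ to the Hecke-category identity $\hat\nabla_{w_0} * \hat\Delta_{w_0w} \cong \hat\nabla_w$, which follows from the factorization $\hat\nabla_{w_0} \cong \hat\nabla_w * \hat\nabla_{w^{-1}w_0}$ (length-additive since $\ell(w) + \ell(w^{-1}w_0) = \ell(w_0)$) and the convolution inverse $\hat\nabla_{w^{-1}w_0} * \hat\Delta_{w_0w} \cong \hat\delta$. A parallel argument with an extra $\sigma_y^*$-twist covers the general generator $\pi_y^*\mathcal{H}$.

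The main obstacle is the first sub-step---establishing the Verdier-dual analog of Proposition \ref{prop:convinterpretation}---which requires a careful base-change calculation showing that the interchange of shriek and star pushforwards combined with the $w_0$-translation produces $\hat\nabla_{w_0}$ with the correct cohomological shift. Once this is in place, the remaining combinatorial identities in the free-monodromic Hecke category are standard applications of length-additivity and the inverse relation between standards and costandards.
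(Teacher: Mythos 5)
Your overall scaffolding is reasonable — reducing the second adjunction to the first, unwinding $\gamma_w$ via Proposition \ref{prop:convinterpretation}, checking a functor identity on the generators $\pi_y^*\mathcal{H}$, and invoking the Hecke-category identity $\hat{\nabla}_{w_0} * \hat{\Delta}_{w_0w} \cong \hat{\nabla}_w$ (which is indeed the combinatorial heart of the paper's proof). But the step your argument hinges on is false: $\pi_!^e \circ \pi_{w_0}^*$ is \emph{not} isomorphic to $\hat{\nabla}_{w_0}[-\ell(w_0)] * -$. Since $\pi_{w_0}^* = \sigma_{w_0}^*\pi_e^*$ and $\pi_!^e\sigma_{w_0}^* \cong \pi_!^{w_0}$ (the same translation compatibility used in the paper's proof), one has $\pi_!^e\pi_{w_0}^* \cong \pi_!^{w_0}\pi_e^* = \Psi_{w_0}$, which by Proposition \ref{prop:convinterpretation} is convolution with $\hat{\Delta}_{w_0}[\ell(w_0)]$. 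There is no swap of $!$ and $*$ here: $\pi_{w_0}^*$ is just a translated forgetful functor, so composing it with a shriek-averaging still yields the standard, not the costandard, object. Consequently your key identity $\pi_!^e\pi_{w_0}^*\pi_!^{w_0w} \cong \pi_*^w$ already fails on generators: on $\pi_e^*\mathcal{H}$ the left side is $\hat{\Delta}_{w_0} * \hat{\Delta}_{w_0w} * \mathcal{H}$ up to shift, not $\hat{\nabla}_w * \mathcal{H}[-\ell(w)]$; for $w = w_0$ it would force $\hat{\Delta}_{w_0}[\ell(w_0)] * - \cong \hat{\nabla}_{w_0}[-\ell(w_0)] * -$, which is false.

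The root of the problem is that you unwound $\gamma_w$ by taking the displayed formulas for $R_\nabla$ and $R_\Delta$ at face value and identifying $R_\Delta$ with $\Phi_{w_0}$; those two displayed formulas are evidently interchanged (the paper's own computation verifies $\pi_*^w \cong (\hat{\nabla}_{w_0}[-\ell(w_0)] * -)\circ \pi_!^{w_0w}$ on the image of the $\pi_y^*$'s, so the right adjoint of $\pi_*^w$ must be $\pi_{w_0w}^*$ followed by convolution with $\hat{\Delta}_{w_0}[\ell(w_0)]$, i.e.\ the \emph{inverse} of $\Phi_{w_0}$, not $\Phi_{w_0}$ itself). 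With the corrected reading, your adjunction-chain strategy collapses into the paper's argument: one proves $\pi_*^w \cong (\hat{\nabla}_{w_0}[-\ell(w_0)] * -)\circ\pi_!^{w_0w}$ on generators using Proposition \ref{prop:convinterpretation} twice plus the identity $\hat{\nabla}_{w_0} * \hat{\Delta}_{w_0w} \cong \hat{\nabla}_w$, and then passes to right adjoints, using that convolution with $\hat{\nabla}_{w_0}[-\ell(w_0)]$ is an equivalence rather than expressing it as a composite of functors each having a left adjoint. Finally, your extension step (``both sides preserve the relevant colimits'') is not justified for $\pi_*^w$, which is a right adjoint; its cocontinuity on $\Dg$ is precisely the delicate point, and the paper instead extends the adjunction from the generating subcategory to the cocompletion via a relative adjunction and a pointwise left Kan extension along the dense inclusion. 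Note also that objectwise agreement on generators does not by itself produce a natural isomorphism on the colimit closure; one must first exhibit a natural transformation to compare the two functors.
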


\begin{proof}
    We first show the adjunction between $D_U(G/B)$ and the essential image $\mathcal{D}'$ of $\pi_w^*$ in $\Dg$. We claim that if $\mathcal{G}$ belongs to this image, then there is a canonical isomorphism
    \begin{align}
        \label{eqn:duality} \pi_*^w\mathcal{G} \cong R_{\nabla} \circ \pi_!^{w_0w}\mathcal{G},
    \end{align}
    which will immediately imply the result by the adjunction $(\pi_!^{w_0w}, \gamma_{w_0w})$ along with the fact that $R_{\nabla}^{-1} \cong R_{\Delta}$. To show (\ref{eqn:duality}) on $\mathcal{D}'$, it is enough to show it for any element of the form $\pi_y^*(\mathcal{F})$ for $\mathcal{F} \in D_U(G/B)$. In fact, since $\pi_y^*(\mathcal{F}) = \sigma_{y}^*\pi_e^*(\mathcal{F})$ and 
    \begin{align*}
        \pi_*^w\sigma^*_y & \cong \pi_*^{wy}\\ 
        \pi_!^{w_0w}\sigma_y^* & \cong \pi_!^{w_0wy}
    \end{align*}
    for any $y \in W$, it suffices to prove that there is a canonical isomorphism
    \begin{align*}
        \pi_*^w\pi_e^*\mathcal{F} \cong R_{\nabla} \pi_!^{w_0w} \pi_e^*\mathcal{F}
    \end{align*}
    for any $w \in W$. We know by Proposition \ref{prop:convinterpretation} that
    \begin{align*}
        \pi_*^{w}\pi_e^*(\mathcal{F}) & \cong \hat{\nabla}_w * \mathcal{F}[-\ell(w)],\\
        \pi_!^{w_0w}\pi_e^*(\mathcal{F}) & \cong \hat{\Delta}_{w_0w} * \mathcal{F}[\ell(ww_0)],
    \end{align*}
    so the result follows from the canonical isomorphism
    \begin{align*}
        \hat{\nabla}_w[-\ell(w)] & \cong \ \hat{\nabla}_{w_0}[-\ell(w_0)]* \hat{\Delta}_{w_0w}[\ell(ww_0)].
    \end{align*}

    Now we claim that this adjunction further extends to the cocompletion of $\mathcal{D}'$ in $D(\gmod_0)$, which by Proposition \ref{prop:imageco} is $\Dg$, thereby giving an adjunction between $\Dg$ and $D_U(G/B)$. Let $I : \mathcal{D}' \to \Dg$ be the inclusion. Then the preceding computation implies that there is an $I$-relative adjunction between $\beta I$ and $\gamma = I\circ \gamma|_{\mathcal{D}'}$ in the sense of \cite{Ulmer}. Since $I$ is dense and since $\Dg$ admits colimits, $\beta$ is clearly the pointwise left extension of $\beta \circ I$ along $I$, so $\beta$ is left adjoint to $\gamma$, e.g. by \cite[Proposition 5.4.7]{Arkor}.
\end{proof}

Passing to the algebraic side under localization, we get the following corollary.
\begin{corollary}\label{cor:jlowerstar}
    The functor $j^!$ has a right adjoint which we call $j_*$. In particular, $j^!$ is both continuous and cocontinuous as a dg functor \[j^! : \Da \to \oplus_{w \in W} D(\tO).\]
\end{corollary}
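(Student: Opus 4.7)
The plan is to simply transport the geometric adjunction in Proposition \ref{prop:betagamma} across the Beilinson--Bernstein localization equivalence. First I would observe that Corollary \ref{cor:dggamma} gives an equivalence of dg categories $\Gamma : \Dg \isom \Da$, and that this equivalence holds summand-wise: $\Gamma$ also induces an equivalence $\bigoplus_{w \in W} D_U(G/B) \isom \bigoplus_{w \in W} D(\tO)$. By Corollary \ref{cor:functordict2}, these equivalences intertwine $\pi_*^w$ with $j_w^!$ for each $w \in W$, hence intertwine $\beta$ with $j^!$.

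Second, I would transport the adjunction. Proposition \ref{prop:betagamma} provides a right adjoint $\gamma$ to $\beta$. Setting
\begin{align*}
    j_* \; := \; \Gamma \circ \gamma \circ \Bigl(\bigoplus_{w \in W} \Gamma^{-1}\Bigr),
\end{align*}
the adjunction $(\beta, \gamma)$ transports to an adjunction $(j^!, j_*)$ since adjunctions are preserved by whiskering with equivalences. This establishes the existence of a right adjoint of $j^!$.

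Finally, for continuity and cocontinuity: Proposition \ref{prop:originaladjunctions} provides a left adjoint $j_!$ of $j^!$ on the level of $\Da_0$, which extends to $\Da$ by cocompletion (note $j_!$, being the derived functor of an exact functor, commutes with colimits, so the adjunction extends). As a right adjoint, $j^!$ is then continuous; as a functor with a right adjoint $j_*$, it is cocontinuous. I do not expect a real obstacle here, since all of the substantive work has already been carried out in Proposition \ref{prop:betagamma}; the only point requiring any care is checking that the identifications in Corollary \ref{cor:functordict2} assemble summand-wise to intertwine $\beta$ with $j^!$, which is immediate from the definitions of these functors as direct sums/tuples of their components.
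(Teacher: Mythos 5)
Your proposal is correct and follows essentially the same route as the paper: the paper obtains this corollary precisely by "passing to the algebraic side under localization," i.e.\ transporting the adjunction $(\beta,\gamma)$ of Proposition \ref{prop:betagamma} through $\Gamma$ using the intertwining statements of Lemma \ref{lem:functordict1} and Corollary \ref{cor:functordict2}, and then noting that $j^!$ has both a left adjoint ($j_!$) and a right adjoint ($j_*$). Your extra remarks about the summand-wise identification and the extension of $(j_!,j^!)$ to the cocompletion are exactly the (routine) checks implicit in the paper's one-line argument.
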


\subsection{Conservativity of \texorpdfstring{$j^!$}{j!}}

\begin{lemma}\label{lem:abcons}
    Suppose that $M \in \Afmod$. Then ${}^\circ j^*M \neq 0$ and ${}^\circ j^!M \neq 0$.
\end{lemma}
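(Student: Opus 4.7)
The plan is to use the extension-generation description of $\Afmod$ from Corollary \ref{cor:afdefs} to produce, for any nonzero $M \in \Afmod$, a nonzero submodule lying in some $\tO^w$ and a nonzero quotient lying in some $\tO^{w'}$. Both assertions then follow immediately from the definitions of ${}^\circ j_w^!$ and ${}^\circ j_w^*$ as the largest sub- and quotient modules of $M$ lying in $\tO^w$, together with the fact that the twists $\theta_{w^{-1}}$ are exact equivalences and thus preserve nonvanishing.

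More precisely, by Corollary \ref{cor:afdefs} one can present any nonzero $M \in \Afmod$ as the top of a finite filtration
\[
0 = M_0 \subsetneq M_1 \subsetneq \cdots \subsetneq M_k = M
\]
with $k \geq 1$ whose successive quotients $M_i/M_{i-1}$ lie in $\tO^{w_i}$ for some $w_i \in W$. The bottom step $M_1 \in \tO^{w_1}$ is then a nonzero submodule of $M$, so the largest submodule $M_{w_1} \subseteq M$ lying in $\tO^{w_1}$ contains $M_1$ and is in particular nonzero; hence ${}^\circ j_{w_1}^! M = \theta_{w_1^{-1}}(M_{w_1}) \neq 0$. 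Dually, the top quotient $M/M_{k-1} \in \tO^{w_k}$ is a nonzero quotient of $M$, so the largest quotient $M^{w_k}$ of $M$ lying in $\tO^{w_k}$ surjects onto $M/M_{k-1}$ and hence is nonzero, giving ${}^\circ j_{w_k}^* M \neq 0$.

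The only real content is the existence of such a filtration; this is exactly the statement of Corollary \ref{cor:afdefs}, interpreted as asserting that $\Afmod$ is the smallest full subcategory of $\gmod$ that is closed under finite extensions and contains each $\tO^w$. As a sanity check for the $^\circ j^*$ direction one can bypass the filtration entirely: finite generation of $M$ plus Zorn's lemma on proper submodules yields a simple quotient $L$, and Proposition \ref{prop:afsimples} then places $L$ in some $\O^w \subset \tO^w$, so ${}^\circ j_w^* M$ surjects onto $\theta_{w^{-1}}(L) \neq 0$. The $^\circ j^!$ direction is the more delicate one, since a finitely generated $\mathcal{U}(\mathfrak{g})$-module need not possess any simple submodule; this is precisely why we lean on the extension-generation characterization of $\Afmod$ rather than on a direct socle argument.
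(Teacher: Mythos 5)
Your argument is correct, and it finishes the same way the paper does: exhibit a nonzero submodule of $M$ lying in some $\tO^w$ and a nonzero quotient lying in some $\tO^{w'}$, then use that ${}^\circ j_w^!M$ and ${}^\circ j_{w'}^*M$ are (twists of) the \emph{largest} such sub/quotient. The difference is in how that sub/quotient is produced: the paper notes that $M$ has finite length, takes a simple submodule $L$ and simple quotient $L'$, and invokes Proposition \ref{prop:afsimples} to place each in some $\O^w$; you instead take the bottom and top pieces of a finite filtration with subquotients in the $\tO^w$'s, which exists by Corollary \ref{cor:afdefs}. Since that corollary is itself deduced from Proposition \ref{prop:afsimples}, the two routes rest on the same structural input, and yours buys only the small convenience of not having to invoke finite length of $M$ explicitly. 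Your closing worry --- that a finitely generated $\mathcal{U}(\mathfrak{g})$-module may have no simple submodule, so a socle argument is unsafe --- is unfounded in this setting: objects of $\Afmod$ do have finite length (this is exactly what the paper uses, and it also follows from your filtration together with the finite-dimensionality of weight spaces), so the direct socle/top argument is available for both directions, not just for ${}^\circ j^*$. One minor point common to both write-ups: the statement should of course be read with $M \neq 0$, which both arguments use implicitly.
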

\begin{proof}
    Since $M$ has finite length, it admits some simple submodule $L$ and a simple quotient $L'$. By Proposition \ref{prop:afsimples}, there exists some $w \in W$ for which $L$ is a submodule of ${}^\circ j_w^!M$ (and some $w \in W$ such that $L'$ is a quotient of ${}^\circ j_w^*M$), which in particular means that $\oplus_{w \in W} {}^\circ j_w^*(M) \neq 0$, $\oplus_{w \in W} {}^\circ j_w^!(M) \neq 0$.
\end{proof}

It is then a general fact (which follows by left or right exactness respectively) that the derived functors must too be conservative.
\begin{corollary}
    The functors $j^*$ and $j^!$ are conservative on $\Da_0$.
\end{corollary}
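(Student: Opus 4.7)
The plan is to combine Lemma \ref{lem:abcons} with the standard fact that $j^!$ is left $t$-exact and $j^*$ is right $t$-exact for the natural $t$-structure on $\Da_0$ inherited from $D(\gmod_0)$. Both $t$-exactness properties are automatic from the construction: $j^!$ is the right-derived functor of the left-exact functor ${}^\circ j^!$, so its cohomologies are the right-derived functors $R^i {}^\circ j^!$, which vanish for $i<0$; dually for $j^*$. Since $j^!$ and $j^*$ are triangulated, showing conservativity reduces to showing they detect the zero object.

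Suppose $M \in \Da_0$ is nonzero and let $m$ be the smallest integer with $H^m(M) \neq 0$. Form the truncation triangle
\[
H^m(M)[-m] \longrightarrow M \longrightarrow \tau_{>m}M \xrightarrow{+1}
\]
in $\Da_0$, and apply $j^!$. Left $t$-exactness gives $j^!(\tau_{>m}M) \in D^{>m}(\tO)$ (since $\tau_{>m}M \in D^{>m}(\gmod_0)$), so the long exact sequence in cohomology collapses in degree $m$ to an isomorphism
\[
H^m(j^!(M)) \;\cong\; H^m\bigl(j^!(H^m(M)[-m])\bigr) \;\cong\; {}^\circ j^!(H^m(M)).
\]
By Lemma \ref{lem:abcons}, ${}^\circ j^!(H^m(M))$ is nonzero, so $j^!(M) \neq 0$. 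The symmetric argument, using the largest integer $n$ with $H^n(M) \neq 0$, the triangle $\tau_{<n}M \to M \to H^n(M)[-n]$, and right $t$-exactness of $j^*$, yields $H^n(j^*(M)) \cong {}^\circ j^*(H^n(M)) \neq 0$, establishing conservativity of $j^*$.

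There is no real obstacle here: the only point one needs to verify is that the abelian functors ${}^\circ j^!_w$ and ${}^\circ j^*_w$ (and hence their direct sums over $w \in W$) really are the zeroth cohomology of the derived functors on the heart, which is immediate from the construction in Section \ref{sec:algebraic} along with the adjunctions in Proposition \ref{prop:originaladjunctions}. Everything else is a formal consequence of $t$-exactness and the abelian conservativity supplied by Lemma \ref{lem:abcons}.
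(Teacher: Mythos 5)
Your proof is correct and takes essentially the same approach as the paper: the paper deduces the corollary from Lemma \ref{lem:abcons} by invoking exactly the general fact about left/right exactness of ${}^\circ j^!$ and ${}^\circ j^*$ that you spell out via truncation triangles and $t$-exactness of the derived functors. You merely make explicit the argument the paper leaves as a one-line remark.
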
 

We now prove conservativity of $j^!$ (resp. $\beta$) on all of $\Da$ (resp. $\Dg$).
\begin{corollary}\label{cor:conservative}
    The functor $j^! : \Da \to \oplus_{w \in W} D(\tO)$ is conservative. Equivalently, the functor $\beta : \Dg \to \oplus_{w \in W} D(\tO)$ is conservative.
\end{corollary}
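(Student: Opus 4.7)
The plan is to deduce conservativity on the cocompletion $\Da$ from the adjunction $(j_!, j^!)$ together with the fact (Proposition \ref{prop:imageco}) that $\Da$ is generated under colimits by the essential image of $j_!$. Since $\Da$ is a stable dg category, it suffices to show that $j^!M \simeq 0$ forces $M \simeq 0$; the earlier Lemma \ref{lem:abcons} and its derived consequence handle this only for $M \in \Da_0$, so the task is simply to bootstrap from $\Da_0$ to its cocompletion.

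So suppose $M \in \Da$ satisfies $j^!M \simeq 0$. For any object $X = (X_w)_{w \in W} \in \bigoplus_{w \in W} D(\tO)$, the adjunction $(j_!, j^!)$ of Proposition \ref{prop:originaladjunctions} (whose continuous derived version is recorded in Corollary \ref{cor:jlowerstar}) gives
\[
    \mathrm{Hom}_{\Da}(j_! X, M) \cong \mathrm{Hom}(X, j^! M) \simeq 0.
\]
By Proposition \ref{prop:imageco}, the essential image of $j_!$ compactly generates $\Da$ (the full dg subcategory it generates is dense, and $\Da$ is its cocompletion inside $D(\gmod_0)$). Since $\mathrm{Hom}(-, M)$ sends shifts, cones, and arbitrary colimits to the corresponding limits, the vanishing above propagates to $\mathrm{Hom}_{\Da}(Y, M) \simeq 0$ for every $Y \in \Da$. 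Taking $Y = M$ forces $\mathrm{id}_M = 0$, hence $M \simeq 0$.

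For the equivalence of the two statements in the corollary, recall from Corollary \ref{cor:functordict2} that $\Gamma \circ \pi_*^w \cong j_w^! \circ \Gamma$ for each $w \in W$; assembling these across $w \in W$ yields a commuting square with $\beta$ on the left and $j^!$ on the right, both intertwined by $\Gamma$. Since $\Gamma : \Dg \to \Da$ is an equivalence of dg categories, conservativity of $\beta$ is equivalent to conservativity of $j^!$. The only substantive ingredient beyond this adjunction bookkeeping is the cocompletion description in Proposition \ref{prop:imageco}, which is the step that upgrades conservativity on the essentially small $\Da_0$ to conservativity on the full presentable $\Da$; I expect no further obstacle, since compactness of the image of $j_!$ in the cocompletion is automatic from the construction.
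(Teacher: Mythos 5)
Your argument is correct, and it takes a more direct route than the paper's. The paper factors $j^!$ as $(j^!)|_{\Da_0}\circ\rho$, where $\rho$ is a right adjoint to the natural functor $\Da_0\to\Da$; it then quotes the abstract criterion that a right adjoint is conservative when the image of its left adjoint generates under colimits, and combines this with the conservativity of $j^!$ on $\Da_0$ (which rests on Lemma \ref{lem:abcons}). You instead run that generation-implies-conservativity mechanism directly on the adjunction $(j_!,j^!)$: from $j^!M\simeq 0$ you get $\mathrm{Hom}(j_!X,M)\simeq 0$ for all $X$, and since by Proposition \ref{prop:imageco} the image of $j_!$ generates $\Da$ under shifts, cones and colimits, the vanishing spreads to all of $\Da$ and in particular to $\mathrm{Hom}(M,M)$. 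This bypasses the construction of $\rho$, the identification $(j^!)|_{\Da_0}\circ\rho = j^!$, and Lemma \ref{lem:abcons} entirely (though the same underlying input, Proposition \ref{prop:afsimples}, enters through Proposition \ref{prop:imageco}), at the cost of relying on the adjunction $(j_!,j^!)$ holding at the level of $D(\gmod_0)$ restricted to $\Da$ --- which is exactly how the paper sets it up, so this is fine. One small caution: your parenthetical claims that the image of $j_!$ \emph{compactly} generates $\Da$, and that compactness is ``automatic from the construction.'' Compactness is neither established in the paper nor needed for your argument; plain generation under colimits suffices, since $\mathrm{Hom}(-,M)$ turns arbitrary colimits into limits regardless of compactness. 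I would simply delete that claim rather than defend it. The reduction of the $\beta$ statement to the $j^!$ statement via $\Gamma$ and Corollary \ref{cor:functordict2} is as in the paper.
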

\begin{proof}
    By definition, $\mathcal{D}^{\mathrm{alg}}$ is the cocompletion of $\Da_0 \subset D(\mathfrak{g}\mathrm{-mod}_{\hat{0}})$. The obvious functor $\Da_0 \to \Da$ is colimit-preserving, and therefore by the adjoint functor theorem has a right adjoint functor $\rho$. The functor $\rho$ is conservative by the correspondence between conservativity of right adjoints and generation under colimits by the corresponding left adjoint which is explained in \cite{Monadic}.
    
    We note that $(j^!)|_{\Da_0} \circ \rho : \mathcal{D}^{\mathrm{alg}} \to \oplus_{w \in W} D(\tO)$ is the composition of conservative functors and is therefore conservative; further, by adjunction we see that \[(j^!)|_{\Da_0} \circ \rho = j^!.\]
\end{proof}

\subsection{The equivalence}

\subsubsection{Statement of the main theorem}
We recall that if $\mathsf{T} : \mathcal{D} \to \mathcal{D}$ is a dg monad on some dg category $\mathcal{D}$, we denote by $\mathcal{D}^\mathsf{T}$ the category of dg algebras over $\mathsf{T}$.
\begin{prop}\label{prop:monad}
    There are canonical equivalences of categories
    \begin{align*}
        \Da & \cong \left(\bigoplus_{w \in W} D(\tO)\right)^{\mathsf{T}^{\mathrm{alg}}}\\
        \Dg & \cong \left(\bigoplus_{w \in W} D_U(G/B)\right)^{\mathsf{T}^{\mathrm{geom}}},
    \end{align*}
    where
    \begin{align*}
        \mathsf{T}^{\mathrm{alg}} & = j^!j_!\\
        \mathsf{T}^{\mathrm{geom}} & = \beta\alpha
    \end{align*}
\end{prop}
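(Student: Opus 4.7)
The plan is to prove this proposition by a direct application of the Barr--Beck--Lurie theorem to the adjunction $(j_!, j^!)$ on the algebraic side and to $(\alpha, \beta)$ on the geometric side. Recall that Barr--Beck--Lurie says that for an adjunction $(L, R)$, the right adjoint $R$ is monadic provided $R$ is conservative and preserves geometric realizations of $R$-split simplicial objects; both conditions are implied by conservativity together with cocontinuity of $R$.

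For the algebraic equivalence, I would first observe that $j_!$ factors through $\Da \subset D(\gmod_0)$ by Proposition \ref{prop:imageco}, so that $(j_!, j^!)$ is a well-defined adjoint pair between $\bigoplus_{w \in W} D(\tO)$ and $\Da$. The two hypotheses for Barr--Beck--Lurie are then the content of results already established. Conservativity of $j^! : \Da \to \bigoplus_{w \in W} D(\tO)$ is exactly Corollary \ref{cor:conservative}, which in turn rests on the classification of simple objects in $\Afmod$ (Proposition \ref{prop:afsimples}). Cocontinuity of $j^!$ is Corollary \ref{cor:jlowerstar}, which follows from $j^!$ itself admitting a right adjoint $j_*$. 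Applying Barr--Beck--Lurie then yields the desired equivalence
\[
\Da \;\cong\; \left(\bigoplus_{w \in W} D(\tO)\right)^{T^{\mathrm{alg}}}, \qquad T^{\mathrm{alg}} = j^! j_!.
\]

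For the geometric equivalence there are two natural routes. The first is to run the parallel argument with $(\alpha, \beta)$: $\beta$ is conservative by Corollary \ref{cor:conservative}, and cocontinuous because Proposition \ref{prop:betagamma} exhibits a right adjoint $\gamma$ to $\beta$; Barr--Beck--Lurie applies verbatim. The second, and perhaps cleaner, route is to transport the algebraic result across the Beilinson--Bernstein localization equivalence $\Gamma : \Dg \isom \Da$ of Corollary \ref{cor:dggamma} (extended from $\Dg_0, \Da_0$ to their cocompletions). By Lemma \ref{lem:functordict1} and Corollary \ref{cor:functordict2}, $\Gamma$ intertwines the adjoint pair $(\alpha, \beta)$ with $(j_!, j^!)$, so in particular the monads $T^{\mathrm{geom}} = \beta \alpha$ and $T^{\mathrm{alg}} = j^! j_!$ correspond, and the algebraic equivalence induces the geometric one.

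The ``main obstacle'' is essentially not in this proof itself but in its inputs, which are already in place: the monadicity argument is formal once one has (i) the classification-based conservativity of $j^!$ (hence of $\beta$), and (ii) a right adjoint to $j^!$ (hence to $\beta$), which on the geometric side came from the convolution interpretation of the gluing functors in Proposition \ref{prop:convinterpretation}. With these ingredients assembled, the remaining step is a short invocation of Barr--Beck--Lurie and transport along $\Gamma$.
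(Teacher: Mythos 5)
Your proposal is correct and matches the paper's own argument: the paper also deduces the equivalence by applying Barr--Beck--Lurie to the adjunction $(j_!, j^!)$, using cocontinuity of $j^!$ from Corollary \ref{cor:jlowerstar} and conservativity from Corollary \ref{cor:conservative}, with the geometric case following by the parallel argument for $(\alpha,\beta)$ and the localization dictionary. Your explicit mention of the factorization through $\Da$ via Proposition \ref{prop:imageco} and the transport along $\Gamma$ just makes precise what the paper leaves implicit.
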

\begin{proof}
    There is an adjunction $(j_!, j^!)$. The composition $\mathsf{T}^{\mathrm{alg}} = j^!j_!$ defines a monad on $\bigoplus_{w \in W} D(\mathcal{O}^w)$. We then have that $j^!$ factors through a functor $j^! : \mathcal{D} \to (\bigoplus_{w \in W} D(\mathcal{O}^w))^{\mathsf{T}^{\mathrm{alg}}}$, the category of algebras over the monad $\mathsf{T}^{\mathrm{alg}}$.
    
    Since $j^!$ is a left adjoint by Corollary \ref{cor:jlowerstar}, it preserves colimits. So by the Barr--Beck--Lurie theorem, to show that $j^!$ is an equivalence, it is enough to show that $j^!$ is conservative, which is shown in Corollary \ref{cor:conservative}.
\end{proof}

\begin{theorem}\label{thm:maingeom}
    There is a canonical equivalence of categories \[\Dg \cong \mathrm{KL}_{\mathcal{O}}^\vee.\]
\end{theorem}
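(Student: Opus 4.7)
The plan is to identify both $\Dg$ and $\mathrm{KL}_{\mathcal{O}}^\vee$ as categories of algebras over isomorphic monads on the common category $\bigoplus_{w \in W} D_U(G/B)$, and then invoke the Barr--Beck--Lurie theorem. Proposition \ref{prop:monad} already supplies half of this: it identifies $\Dg$ with algebras over the monad $T^{\mathrm{geom}} = \beta\alpha$. What remains is to identify this monad with the monad $F$ naturally associated, via Proposition \ref{prop:klismonad}, to the left $W$-gluing data $(\Phi_w)_{w \in W}$ defining $\mathrm{KL}_{\mathcal{O}}^\vee$.

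Unwinding the definitions of $\alpha$ and $\beta$, the geometric monad has the form
\[
T^{\mathrm{geom}}((A_w)_{w \in W})_{y} \;=\; \bigoplus_{w \in W} \pi_*^{y} \pi_w^* (A_w),
\]
while the Kazhdan--Laumon monad described in Proposition \ref{prop:klismonad} has $y$-th slot $\bigoplus_{z \in W} \Phi_z A_{z^{-1}y}$. Comparing summand by summand, under the substitution $z = yw^{-1}$, the identification of endofunctors amounts to exhibiting, for each pair $y, w \in W$, a natural equivalence
\[
\pi_*^y \pi_w^* \;\simeq\; \Phi_{yw^{-1}} \;=\; \hat{\nabla}_{yw^{-1}}[-\ell(yw^{-1})] \ast -.
\]
The case $w = e$ is exactly Proposition \ref{prop:convinterpretation}. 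For general $w$, I would factor $\pi_w^*$ through the left-translation automorphism $\sigma_w$ of $G/B$ composed with the forgetful functor $\pi_e^*$, apply proper base change to the commutative square relating $\pi_y \circ \sigma_w$ to $\sigma_w \circ \pi_{w^{-1}y}$ (through the induced maps on the quotient stacks $U^{w^{-1}y}\backslash G/B \to U^y\backslash G/B$), and then reduce again to Proposition \ref{prop:convinterpretation} to rewrite the resulting pushforward as a left convolution with $\hat{\nabla}_{yw^{-1}}[-\ell(yw^{-1})]$.

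It then remains to check that the monad structure maps of $T^{\mathrm{geom}}$ and $F$ agree under this identification of endofunctors. The unit of $T^{\mathrm{geom}}$ is the adjunction unit of $(\alpha, \beta)$, which on each slot recovers the identity $\mathrm{Id} = \Phi_e \to F$ coming from $\hat{\nabla}_e = \hat{\delta}$ being the monoidal unit. The multiplication of $T^{\mathrm{geom}}$ is induced by the counit $\alpha\beta \to \mathrm{Id}$; under the identification above it recovers, slot by slot, the composition transformations $\Phi_z \circ \Phi_{z'} \to \Phi_{zz'}$ supplied by convolution of standard/costandard objects, which are precisely the gluing multiplication (Corollary \ref{cor:braids} and Proposition \ref{prop:nus}). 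With the monads identified, Barr--Beck--Lurie gives the equivalence $\Dg \simeq \mathrm{KL}_{\mathcal{O}}^\vee$ of dg categories of algebras.

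The main obstacle will be the endofunctor identification $\pi_*^y \pi_w^* \simeq \Phi_{yw^{-1}}$ and, more delicately, showing that the base-change isomorphisms used in that step interact correctly with the associativity constraints of the convolution product, so that the resulting matching of monad multiplications is coherent rather than only pointwise. Once the coherences are in place, the remaining Barr--Beck--Lurie step is formal.
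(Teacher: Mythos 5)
Your proposal follows essentially the same route as the paper: the paper's proof of Theorem \ref{thm:maingeom} simply combines Proposition \ref{prop:monad} (Barr--Beck--Lurie identifying $\Dg$ with algebras over $T^{\mathrm{geom}}=\beta\alpha$) with Proposition \ref{prop:klismonad}, treating the identification of $T^{\mathrm{geom}}$ with the gluing monad as immediate from $\Phi_w=\pi_*^w\pi_e^*$. The extra steps you describe --- the slotwise identification $\pi_*^y\pi_w^*\simeq\Phi_{yw^{-1}}$ via translation by $\sigma_w$ and Proposition \ref{prop:convinterpretation}, and the compatibility of the adjunction-induced monad structure with the convolution-induced gluing transformations --- are exactly the details the paper leaves implicit, so your write-up is a correct (and somewhat more careful) version of the same argument.
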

\begin{proof}
    Since $\Phi_{w} = \pi_{*}^w\pi_e^*$, by Proposition \ref{prop:klismonad} the category $\widetilde{\mathrm{KL}}$ is exactly the category of algebras over the monad $\mathsf{T}^{\mathrm{geom}}$, so the result follows from Proposition \ref{prop:monad}.
\end{proof}

\subsection{\texorpdfstring{$t$}{t}-structures under the equivalence}

Theorems \ref{thm:mainalg} and \ref{thm:maingeom} give us an equivalence between $\Da$ (or $\Dg$) and the category of $W$-tuples of objects of $D(\O)$ equipped with morphisms satisfying certain gluing compatibilities. The category $\Da$ has a natural $t$-structure given by the restriction of the tautological $t$-structure on $D(\gmod_0)$; we use $\tau_{\leq k}$ and $\tau_{\geq k}$ to denote its truncation functors and ${}^\heartsuit \Da$ to denote its heart.

We can then ask whether there is a characterization of the corresponding $t$-structure on $\KL^\vee$ under the equivalence in Theorem \ref{thm:mainalg}. 

\begin{definition}
    Let ${}_f\KL^\vee$ be the image of $\Dg_0$ (or equivalently $\Dg_0$) under the equivalence in Theorem \ref{thm:mainalg} (or Theorem \ref{thm:maingeom}).
\end{definition}
Since $\Da_0$ admits a bounded $t$-structure which is inherited from the tautological $t$-structure on $D^b(\gmod_0)$, which also agrees with the bounded $t$-structure on $\Dg_0$ inherited from the perverse $t$-structure on $D^b(G/B)$, passing this through the main equivalences ensures that ${}_f\KL^\vee$ admits a bounded $t$-structure which we denote by $({}_f^{\leq 0}\KL^\vee, {}_f^{\geq 0}\KL^\vee)$.

\begin{proposition}\label{prop:boundedt}
    The bounded $t$-structure on ${}_f\KL^\vee$ obtained abstractly from the obvious $t$-structures on $\Da_0$ or $\Dg_0$ under the equivalences in Theorem \ref{thm:mainalg} and Theorem \ref{thm:maingeom} can be described explicitly as follows:
    \begin{align}
        {}_f^{\geq 0}\KL^\vee & = \{(A_w)_{w \in W} \in {}_f\KL^\vee ~|~ A_w \in D^{\geq 0}_U(G/B) \textrm{ for all } w \in W\},\label{eqn:geqkl}\\
        {}_f^{\leq 0}\KL^\vee & = \{(A_w)_{w \in W} \in {}_f\KL^\vee ~|~ \hat{\Delta}_{w_0} * A_w \in D^{\leq \ell(w_0)}_U(G/B) \textrm{ for all } w \in W\}.\label{eqn:leqkl}
    \end{align}
\end{proposition}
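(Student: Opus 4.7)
The plan is to transport the $t$-structure through the equivalence of Theorem \ref{thm:maingeom} and to read off the non-negative and non-positive parts as explicit conditions on the tuples $(A_w)_{w \in W}$, where $A_w = \pi_*^w \mathcal{F}$ is the $w$-component under the equivalence. The strategy splits into two abelian-detection lemmas on the algebraic side followed by a translation via the dictionary of Section \ref{sec:bb} and Proposition \ref{prop:betagamma}.

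First I would prove the algebraic criteria: $M \in \Da_0^{\geq 0}$ iff $j_w^!M \in D^{\geq 0}(\tO)$ for every $w \in W$, and dually $M \in \Da_0^{\leq 0}$ iff $j_w^*M \in D^{\leq 0}(\tO)$ for every $w$. The forward directions are formal: by Proposition \ref{prop:originaladjunctions} the functor $j_{w!}$ is exact, so its adjoints ${}^\circ j_w^!$ and ${}^\circ j_w^*$ are respectively left- and right-exact, and their derived versions $j_w^!$ and $j_w^*$ have cohomological amplitudes $[0,+\infty)$ and $(-\infty,0]$, preserving the corresponding halves of the $t$-structure. For the reverse of the $\geq 0$ criterion, suppose $M \in \Da_0$ has smallest nonvanishing cohomology $H^i(M) \in \Afmod$ at some $i < 0$. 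Applying $j_w^!$ to the triangle $H^i(M)[-i] \to M \to \tau_{>i}M$ and using the amplitude bound identifies $H^i(j_w^!M) \cong {}^\circ j_w^!(H^i(M))$, which Lemma \ref{lem:abcons} shows is nonzero for some $w$, contradicting the hypothesis. The $\leq 0$ criterion is dual, using the largest nonvanishing cohomology.

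Next I translate to the geometric side. Lemma \ref{lem:functordict1} and Corollary \ref{cor:functordict2} give natural isomorphisms $\Gamma \circ \pi_*^w \cong j_w^! \circ \Gamma$ and $\Gamma \circ \pi_!^w \cong j_w^* \circ \Gamma$, so the algebraic criteria become
\[\mathcal{F} \in \Dg_0^{\geq 0} \iff \pi_*^w\mathcal{F} \in D_U^{\geq 0}(G/B) \text{ for every } w,\]
and similarly $\mathcal{F} \in \Dg_0^{\leq 0}$ iff $\pi_!^w\mathcal{F} \in D_U^{\leq 0}(G/B)$ for every $w$. Since under the main equivalence the $w$-th component of the corresponding glued object is $A_w = \pi_*^w \mathcal{F}$, the first criterion is exactly (\ref{eqn:geqkl}). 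For (\ref{eqn:leqkl}), I invert the isomorphism $\pi_*^w \cong R_\nabla \circ \pi_!^{w_0 w}$ established in the proof of Proposition \ref{prop:betagamma} to obtain a natural isomorphism $\pi_!^{w_0 w}\mathcal{F} \cong \hat{\Delta}_{w_0} * A_w[\ell(w_0)]$. Re-indexing by $v = w_0 w$, a bijection of $W$, the condition $\pi_!^v\mathcal{F} \in D_U^{\leq 0}(G/B)$ becomes $\hat{\Delta}_{w_0} * A_{w_0 v} \in D_U^{\leq \ell(w_0)}(G/B)$, which after re-labeling is precisely (\ref{eqn:leqkl}).

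The main technical obstacle is the shift bookkeeping in the last step: one must carry the $[\ell(w_0)]$ inside the definition of $R_\nabla$ through the inversion so that the resulting condition reads $D_U^{\leq \ell(w_0)}$ rather than $D_U^{\leq 0}$, and one should sanity-check this against the isomorphism $\hat{\Delta}_{w_0} * \hat{\nabla}_{w_0} = \hat{\delta}$ and the convolution formulas of Proposition \ref{prop:convinterpretation}. The conceptual content of the argument, however, is entirely in the detection lemmas of the first step, whose nontrivial reverse directions reduce cleanly to the abelian conservativity supplied by Lemma \ref{lem:abcons}.
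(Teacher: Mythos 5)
Your proposal is correct and follows essentially the same route as the paper: the nontrivial direction is a detection argument on the lowest/highest nonvanishing cohomology reduced to Lemma \ref{lem:abcons} (the paper runs this directly with perverse cohomology of $\pi_*^w\mathcal{F}$, $\pi_!^w\mathcal{F}$, invoking Corollary \ref{cor:functordict2}, while you run it on the algebraic side and transport via the localization dictionary — a purely organizational difference), and the reformulation of the $\pi_!$-condition as $\hat{\Delta}_{w_0} * A_w \in D^{\leq \ell(w_0)}_U(G/B)$ uses the same isomorphism from the proof of Proposition \ref{prop:betagamma}. Your shift bookkeeping, including the sanity check against $\hat{\Delta}_{w_0} * \hat{\nabla}_{w_0} \cong \hat{\delta}$, matches the paper's intended computation.
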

\begin{proof}
    First, we will show that under the equivalence in Theorem \ref{thm:maingeom}, the image of the connective and coconnective parts of the $t$-structure on $\Dg_0$ inherited from the perverse $t$-structure on $D(G/B)$ lies in the sets described above. Indeed, suppose $\mathcal{F} \in \Dg \cap D^{\geq 0}(G/B)$. Then by left $t$-exactness of $\pi_*$, we have that 
    \[\pi_*^w\mathcal{F} = D^{\geq 0}_U(G/B)\]
    for any $w \in W$, proving the first containment. If instead $\mathcal{F} \in \Dg \cap D^{\leq 0}(G/B)$, then by the proof of Proposition \ref{prop:betagamma}, for any $w \in W$ we have
    \[\pi_*^w\mathcal{F} \cong R_\nabla \circ \pi_!^{w_0w} \mathcal{F}.\]
    Since $\mathcal{F} \in D^{\leq 0}(G/B)$, and $\pi_!^{w_0w}$ is right $t$-exact, we have $\pi_!^{w_0w} \mathcal{F} \in D^{\leq 0}(G/B)$, so
    \[\hat{\Delta}_{w_0} * \pi_*^{w}\mathcal{F} \cong \pi_!^{w_0w}\mathcal{F} [-\ell(w_0)] \in D^{\leq \ell(w_0)}_U(G/B).\]
    To show the other containment, note that in the above we have shown that if $(A_w)_{w \in W} \in {}_f\KL^\vee$ is in the image of an object $\mathcal{F} \in \Dg_0$, then lying in the set on the right-hand side of (\ref{eqn:geqkl}) (resp. (\ref{eqn:leqkl})) is equivalent to knowing that $\pi_*^w\mathcal{F} \in D^{\geq 0}_U(G/B)$ for all $w$ (resp. $\pi_!^w\mathcal{F} \in D^{\leq 0}_U(G/B)$ for all $w$). Suppose this holds, then it remains to show that $\mathcal{F} \in D^{\geq 0}(G/B)$ (resp. $\mathcal{F} \in D^{\leq 0}(G/B)$).

    If instead $\mathcal{F} \not\in D^{\geq 0}_{U}(G/B)$, then because $\mathcal{F} \in D^b(G/B)$, there exists some minimal $k \in \mathbb{Z}_{<0}$ such that ${}^pH^k(\mathcal{F}) \neq 0$. For any $w \in W$, there are then distinguished triangles
    \[\begin{tikzcd}
        ({}^pH^k\mathcal{F}) \arrow[r] & \mathcal{F}[k] \arrow[r] & \tau_{\geq k+1}\mathcal{F}[k] \arrow[r, "{[1]}"] & ~\\
        \pi_*^w({}^pH^k\mathcal{F}) \arrow[r] & \pi_*^w(\mathcal{F})[k] \arrow[r] & \pi_*^w(\tau_{\geq k+1}\mathcal{F})[k], \arrow[r, "{[1]}"] & ~
    \end{tikzcd}\]
    and the corresponding long-exact sequence gives that
    \[\begin{tikzcd}
        0 \arrow[r] & {}^pH^0(\pi_*^w({}^pH^k\mathcal{F})) \arrow[r] & {}^pH^k(\pi_*^w(\mathcal{F})) \arrow[r] & \dots
    \end{tikzcd}\]
    is exact. By Lemma \ref{lem:abcons} and Corollary \ref{cor:functordict2}, there must be some $w \in W$ such that ${}^pH^0(\pi_*^w({}^pH^k\mathcal{F})) \neq 0$ and therefore ${}^pH^k(\pi_*^w(\mathcal{F})) \neq 0$, contradicting the assumption that $\pi_*^w \mathcal{F} \in D^{\geq 0}(G/B)$ for all $w \in W$. Applying the same argument to $\pi_!^w$ gives the same result to show that $\mathcal{F} \in D^{\leq 0}(G/B)$ if one first assumes that $(\pi_*^w\mathcal{F})$ lies in the right-hand side of (\ref{eqn:leqkl}).
\end{proof}

\begin{theorem}\label{thm:tstruct}
    The natural $t$-structure 
    \[({}^{\leq 0}\Dg, {}^{\geq 0}\Dg)\]
    on $\Dg$ inherited from the perverse $t$-structure on $D(G/B)$ gives rise to a well-defined $t$-structure 
    \[({}^{\leq 0}\KL^\vee, {}^{\geq 0}\KL^\vee)\]
    on $\KL^\vee$ under the equivalence in Theorem \ref{thm:mainalg}. Further, its coconnective part has a particularly direct description, as
    \begin{align}
        {}^{\geq 0}\KL^\vee & = \{(A_w)_{w \in W} \in \KL^\vee ~|~ A_w \in D^{\geq 0}_U(G/B) \textrm{ for all } w \in W\},\label{eqn:geqklreal}.
    \end{align}
\end{theorem}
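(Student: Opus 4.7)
The plan is to establish the theorem in two stages: first, show that the perverse $t$-structure on $D(G/B)$ restricts to a well-defined $t$-structure on $\Dg$ which then transfers along the equivalence of Theorem~\ref{thm:maingeom} to $\KL^\vee$; and second, verify the explicit description of the coconnective part by extending the bounded argument of Proposition~\ref{prop:boundedt} to the cocompletion. For the first stage, the key observation is that the defining properties of $\Dg$ as a subcategory of $D(G/B)$ are preserved under perverse truncation: the singular support containment $\mathrm{SS}(\mathcal{F}) \subset Z$ implies $\mathrm{SS}({}^pH^k\mathcal{F}) \subset Z$ for every $k$, and unipotent $T$-monodromy is likewise inherited by perverse cohomology. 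Therefore the perverse truncations $\tau_{\leq 0}$ and $\tau_{\geq 0}$ restrict to $\Dg$, and the equivalence of Theorem~\ref{thm:maingeom} transfers the resulting $t$-structure to $\KL^\vee$, defining ${}^{\leq 0}\KL^\vee$ and ${}^{\geq 0}\KL^\vee$.

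For the explicit description~(\ref{eqn:geqklreal}), the forward containment follows immediately from the left $t$-exactness of each $\pi_*^w = \sigma_w^* \circ \pi_*$: if $\mathcal{F} \in \Dg^{\geq 0}$, then $\pi_*^w\mathcal{F} \in D_U^{\geq 0}(G/B)$ for every $w \in W$. For the reverse containment, suppose $\mathcal{F} \in \Dg$ has $\pi_*^w\mathcal{F} \in D_U^{\geq 0}(G/B)$ for all $w$, and for contradiction assume some $k < 0$ satisfies ${}^pH^k(\mathcal{F}) \neq 0$. Applying $\pi_*^w$ to the distinguished triangle $\tau_{\leq k}\mathcal{F} \to \mathcal{F} \to \tau_{>k}\mathcal{F}$ and using left $t$-exactness (which gives $\pi_*^w\tau_{>k}\mathcal{F} \in D^{\geq k+1}$), the long exact sequence of perverse cohomology yields the isomorphism ${}^pH^k(\pi_*^w \tau_{\leq k}\mathcal{F}) \cong {}^pH^k(\pi_*^w\mathcal{F})$. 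Combined with the abelian-level conservativity from Lemma~\ref{lem:abcons}, which transports to the statement that the family $({}^pH^0 \pi_*^w)_{w \in W}$ is conservative on the heart of $\Dg$, the aim is to conclude that ${}^pH^k(\pi_*^w\mathcal{F})$ is nonzero for some $w$, contradicting the hypothesis.

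The main obstacle is that the argument of Proposition~\ref{prop:boundedt} invokes the minimality of $k$ within the bounded complex $\mathcal{F}$, so that $\tau_{\leq k}\mathcal{F}$ reduces to the single perverse cohomology object ${}^pH^k(\mathcal{F})[-k]$; this yields the clean injection ${}^pH^0 \pi_*^w ({}^pH^k\mathcal{F}) \hookrightarrow {}^pH^k(\pi_*^w\mathcal{F})$. In the cocompletion $\Dg$, the object $\mathcal{F}$ may fail to be bounded below, so such a minimal $k$ need not exist. I expect that this can be overcome by reducing to the bounded-below case: every object of $\Dg$ can be written as a filtered colimit of objects of $\Dg_0$, and both sides of~(\ref{eqn:geqklreal}) are compatible with filtered colimits, since each $\pi_*^w$ preserves colimits (being a left adjoint, by the adjunction $(\pi_*^w, \gamma_w)$ of Proposition~\ref{prop:betagamma}) and the perverse $t$-structure on $D(G/B)$ commutes with filtered colimits. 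Alternatively, one can exploit the identification $\pi_*^w \cong R_\nabla \circ \pi_!^{w_0 w}$ from Proposition~\ref{prop:betagamma} to trade left $t$-exactness of $\pi_*^w$ for right $t$-exactness of $\pi_!^{w_0 w}$, so that a dual truncation argument applies to arbitrarily negative perverse cohomology degrees without appealing to minimality.
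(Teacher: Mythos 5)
Your forward containment (left $t$-exactness of each $\pi_*^w$) and the broad outline are fine, but the heart of the theorem is the reverse containment of (\ref{eqn:geqklreal}) for objects of the cocompletion $\Dg$, and neither of your two proposed fixes closes the gap you correctly identified. Fix A is not a valid reduction: if $\mathcal{F}\cong\mathrm{colim}\,\mathcal{F}_i$ with $\mathcal{F}_i\in\Dg_0$, the hypothesis that $\pi_*^w\mathcal{F}\in D^{\geq 0}_U(G/B)$ for all $w$ is a property of the colimit and does not descend to the terms $\mathcal{F}_i$, so you cannot feed them into Proposition \ref{prop:boundedt}; and knowing that both sides of (\ref{eqn:geqklreal}) are closed under filtered colimits does not force them to be equal. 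The paper's colimit argument runs in the opposite direction: since $\tau_{\geq 0}$ is a left adjoint and commutes with colimits, ${}^{\geq 0}\Dg$ is generated under colimits by ${}^{\geq 0}\Dg_0$, so the bounded statement of Proposition \ref{prop:boundedt} is applied to the \emph{generators} of the transferred coconnective part, and the conservativity input never leaves the bounded world. (Your stage-1 argument for well-definedness has the same flavor of issue: preservation of singular support and monodromy under perverse cohomology only addresses $\Dg_0$; for $\Dg$ itself one again argues through colimits.)

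Fix B also does not work as stated. The identification $\pi_*^w\cong R_\nabla\circ\pi_!^{w_0w}$ converts the hypothesis into coconnectivity of the $\pi_!^{w}\mathcal{F}$, but a truncation argument built on a right $t$-exact functor naturally detects \emph{connectivity} (a maximal nonvanishing degree), not coconnectivity: applying $\pi_!^w$ to the triangle $\tau_{\leq -1}\mathcal{F}\to\mathcal{F}\to\tau_{\geq 0}\mathcal{F}$, the terms ${}^pH^j(\pi_!^w\tau_{\geq 0}\mathcal{F})$ with $j<0$ are completely uncontrolled (since $\pi_!^w$ is not left $t$-exact), so the long exact sequence yields no contradiction with ${}^pH^j(\pi_!^w\mathcal{F})=0$ for $j<0$. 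Moreover, both of your fixes quietly invoke the statement that the family $({}^pH^0\pi_*^w)_{w\in W}$ is conservative on the heart of $\Dg$, but this is not available: Lemma \ref{lem:abcons} is proved for $\Afmod$, i.e.\ finite-length weight modules with finite-dimensional weight spaces, via the existence of a simple submodule and a simple quotient, and heart objects of the cocompletion need not satisfy any of these hypotheses (they can be infinite colimits with infinite-dimensional weight spaces). The derived conservativity of Corollary \ref{cor:conservative} does not imply this heart-level statement, and the paper's proof is arranged precisely so that Lemma \ref{lem:abcons} is only ever applied inside $\Dg_0$.
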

\begin{proof}
    Suppose that $\mathcal{F} \in {}^{\geq 0}\Dg = \Dg \cap D^{\geq 0}(G/B)$. Then $\mathcal{F} \cong \mathrm{colim}( \mathcal{F}_i)$ for $\mathcal{F}_i \in \Dg_0$. Since $\tau_{\geq 0}$ is a left adjoint, it commutes with colimits, and so $\mathcal{F} \cong \mathrm{colim}(\tau_{\geq 0}\mathcal{F}_i)$. This means ${}^{\geq 0}\Dg$ is exactly the subset of $\Dg$ generated under colimits by ${}^{\geq 0}\Dg_0$. By Proposition \ref{prop:boundedt}, this is exactly the subset of $\Dg$ generated under colimits by the right-hand side of (\ref{eqn:geqkl}), which clearly coincides with the right-hand side of (\ref{eqn:geqklreal}) for the same reason; this completes the proof.
\end{proof}
\begin{remark}
    We note that Proposition \ref{prop:boundedt} cannot be naively generalized to give a direct description of ${}^{\leq 0}\KL^\vee$ as 
    \[{}^{\leq 0}\KL^\vee = \{(A_w)_{w \in W} \in \KL^\vee ~|~ \hat{\Delta}_{w_0} * A_w \in D^{\leq \ell(w_0)}_U(G/B) \textrm{ for all } w \in W\}.\]
    For one, the proof of Theorem \ref{thm:tstruct} cannot be dualized as $\Dg$ is defined in terms of colimits rather than limits. Further, we now explain that even in the case of $\mathrm{SL}_2$, the dual result is actually false.

    Indeed, consider the object $(T_s, 0)$, and note that
    \begin{align}
        \hat{\Delta}_s * T_s \cong T_s \in D^{\leq 0}_U(G/B),
    \end{align}
    which would imply that $(T_s, 0) \in {}^{\leq -1}\KL^\vee$ if such a description were true. However, we also clearly have $(T_s, 0) \in {}^{\geq 0}\KL^\vee$, which would be a contradiction.

    This also serves as a proof that $(T_s, 0)$ is an example of an object in $\KL^\vee$ which does not lie in the image of $\Da_0$ or $\Dg_0$ under our main equivalence.
\end{remark}

\section{Explicit objects}\label{sec:explicit}

Although we used abstract machinery to establish the equivalences in Theorems \ref{thm:mainalg} and \ref{thm:maingeom}, in some cases it is possible to explicitly identify the images of objects under these equivalences. In this section, we work out some of these images explicitly in the case of $\mathfrak{g} = \mathfrak{sl}_2$. We begin explaining an important class of objects for general $\mathfrak{g}$ and later in the section we specialize to the rank $1$ case. This will serve as an explicit $\mathfrak{sl}_2$-blueprint for the important class of examples we study in Section \ref{sec:coherent}.

\subsection{Tilting objects in \texorpdfstring{$\KL^\vee$}{the Koszul dual Kazhdan--Laumon category}}

Recall that by \cite{KLCatO}, the simple objects of $\KL$ are in bijection with pairs $(w, zP(w))$, where $w \in W$ and $zP(w)$ is a coset in $W/P(w)$, where $P(w)$ is defined in Corollary \ref{cor:countsimples}. Any such simple can be described as a tuple with $L_w$ in the entries lying in the coset $zP(w)$ and $0$ elsewhere, where $L_w \in \O$ is the simple object with highest weight $w \cdot (-2\rho)$, or equivalently in the geometric setup it is the simple perverse sheaf $\mathrm{IC}_w$ corresponding to the stratum $BwB \subset G/B$.

Following the Koszul duality philosophy of Section \ref{subsec:kdkl} applied to this classification of simple objects, we then get the following result and definition.

\begin{lemma}
    For any $w \in W$ and coset $zP(w) \in W/P(w)$, there exists an object $T_w^z \in \KL^\vee$ such that for any $y \in W$,
    \begin{align}
        j_y^*(T_w^z)& = \begin{cases}
        T_w & y \in zP(w),\\
        0 & y \not\in zP(w),
        \end{cases}
    \end{align}
    where $T_w$ is the indecomposable tilting object of $\O$ indexed by $w \in W$  (equivalently, the unique indecomposable tilting perverse sheaf in $\mathrm{Perv}_U(G/B)$ obtained by tilting extension from the stratum $UwB$). We call these \emph{tilting objects} in $\KL^\vee$.
\end{lemma}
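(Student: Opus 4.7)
The plan is to construct $T_w^z$ directly as a tuple with monad algebra structure in $\KL^\vee$, following the principle that Koszul duality exchanges simple objects in $\KL$ with tilting objects in $\KL^\vee$ while preserving the underlying support pattern of the tuple.

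First I would use the autoequivalences $\theta_z$ of $\Da$ from Definition \ref{def:algwaction}, which transport through the equivalence $\Da \cong \KL^\vee$ of Theorem \ref{thm:mainalg} to permutations of the indexing positions, in order to reduce to the case $z = e$. Writing $I = \{s \in S : \ell(sw) > \ell(w)\}$ so that $P(w) = W_I$, I would define $T_w^e$ to have underlying tuple $(A_y)_{y \in W}$ with $A_y = T_w$ for $y \in W_I$ and $A_y = 0$ otherwise. By the monadic description in Proposition \ref{prop:klismonad}, making this into a $\KL^\vee$-object amounts to specifying gluing maps $\phi_{y_1, y_2} : \Phi_{y_1} A_{y_2} \to A_{y_1 y_2}$, which are automatically zero unless both $y_2$ and $y_1 y_2$ lie in $W_I$ (which also forces $y_1 \in W_I$). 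The nontrivial content is therefore a compatible action of $W_I$ on $T_w$ via the intertwining functors $\{\Phi_y : y \in W_I\}$.

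By the braid relations of Corollary \ref{cor:braids}, this $W_I$-action is determined by maps $\phi_s : \Phi_s T_w \to T_w$ for each simple reflection $s \in I$ subject to the braid relations of $W_I$. I would take $\phi_s = \eta_s(T_w)$, where $\eta_s : \Phi_s \to \mathrm{Id}$ is the natural transformation from Proposition \ref{prop:nus} coming from the triangle $\hat\delta \to \hat T_s \to \hat\nabla_s$. The main verification is that these $\phi_s$ satisfy the $W_I$-braid relations on $T_w$, which reduces to analyzing convolutions of the form $\hat\nabla_y * T_w$ for $y \in W_I$. Since $s \in I$ means $\ell(sw) > \ell(w)$, the element $w$ is minimal in the coset $W_I w$, so each such convolution carries a canonical filtration in the Hecke category (governed by the action of $W_I$ on $T_w$) producing the needed coherence.

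The hard part will be verifying these braid relations together with the compatibility of the nonzero gluing maps with the forced zero maps at positions outside $W_I$; this comes down to a direct Hecke-category computation on convolutions of costandard objects with the tilting $T_w$, using that $T_w$ is both $\hat\Delta$- and $\hat\nabla$-filtered. Once $T_w^z$ has been built as a $\KL^\vee$-object in this way, the identification $j_y^*(T_w^z) = T_w$ for $y \in zP(w)$ and $0$ otherwise will follow by tracing the construction back through the equivalence of Theorem \ref{thm:mainalg}, combined with the explicit description of $j_y^*$ on $\Da$ obtained from the adjunctions in Proposition \ref{prop:originaladjunctions}.
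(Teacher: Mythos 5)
Your outline is a genuinely different route from the paper, and as written it has a real gap. The paper never constructs the gluing data on $T_w^z$ by hand: the lemma is obtained by transport of structure. The simple objects of $\KL$ are already classified in \cite{KLCatO} as tuples with $L_w$ in the positions of the coset $zP(w)$ and $0$ elsewhere, and the Koszul duality functor $\mathbb{K}$ of \cite{BY} --- which exchanges simple and tilting objects and, by the very construction of $\KL^\vee$ in Section \ref{subsec:kdkl}, intertwines the gluing data of $\KL$ with that of $\KL^\vee$ --- carries these already-glued objects to objects of $\KL^\vee$ whose entries are the tiltings $T_w$. Because the object arrives as the image of an object that already satisfies the gluing axioms (equivalently, as the image of an honest object of $\Dg$, for which the algebra structure over the monad of Proposition \ref{prop:klismonad} is automatic), no coherence verification is needed. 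Your direct construction would, if completed, have the virtue of being independent of the mixed/\'etale Koszul duality formalism, but as it stands it defers exactly the substantive content.

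Concretely, after you set $A_y = T_w$ for $y \in P(w)$, $A_y = 0$ otherwise, and $\phi_s = \eta_s(T_w)$ for $s$ with $\ell(sw)>\ell(w)$, the axiom of Definition \ref{def:gluedcat} is not only the $W_{P(w)}$-braid coherence of the $\phi_s$ (for which ``a canonical filtration in the Hecke category'' is asserted, not proved); it also imposes vanishing conditions at triples that leave the coset. Whenever $u$ and $z'y'u$ lie in $P(w)$ but $y'u$ does not, one route through the commuting square passes through a zero entry, so one must prove $\alpha_{z'y',u}\circ \nu_{z',y'}(T_w) = 0$. Even the simplest instance $z'=y'=s \notin P(w)$, which forces $\nu_s(T_w) = \eta_s(T_w)\circ \Phi_s\eta_s(T_w) = 0$ on $\Phi_s\Phi_s T_w$, requires an argument (here one can hope to conclude by identifying $\hat{\nabla}_s * T_w$ for $\ell(sw)<\ell(w)$ and using $\mathrm{Ext}$-vanishing between tilting objects), and the general case --- where $\nu_{z',y'}$ is a composite of the elementary transformations of Proposition \ref{prop:nus} and Corollary \ref{cor:braids} and the target is reached through a nontrivial $\alpha_{z'y',u}$ built from your $\phi_s$ --- is not addressed at all. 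Until these vanishings and the braid coherence are established for the specific choice $\eta_s(T_w)$, the tuple you write down is not known to be an object of $\KL^\vee$. A smaller point: if the object is built directly as an algebra over the monad, its tuple entries are read off by the forgetful functor, which under the equivalences corresponds to $\pi^y_*$, i.e.\ to $j_y^!$; relating this to the functor $j_y^*$ appearing in the statement needs a remark of its own rather than just the adjunctions of Proposition \ref{prop:originaladjunctions}.
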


\subsection{Geometric examples for \texorpdfstring{$\mathfrak{sl}_2$}{sl2}}\label{sec:geomexamples}

For the rest of this section, we now specialize to the case of $\mathfrak{g} = \mathfrak{sl}_2$, writing $W = \{e, s\}$. In this case $G/B \cong \mathbb{P}^1$, so we let
\begin{align*}
    i_0 & : \{0\} \hookrightarrow \mathbb{P}^1 \\
    i_\infty & : \{\infty\} \hookrightarrow \mathbb{P}^1\\
    j^0 & : \mathbb{P}^1 \setminus \{0\} \hookrightarrow \mathbb{P}^1,\\
    j^{\infty} & : \mathbb{P}^1 \setminus \{\infty\} \hookrightarrow \mathbb{P}^1,\\
    j^{0,\infty} & : \mathbb{P}^1 \setminus \{0,\infty\} \hookrightarrow \mathbb{P}^1.
\end{align*}
be the inclusion maps. We choose a Borel subgroup $B$ and therefore a unipotent radical $U$ with conventions such that $0$ is a $U$-orbit and $\infty$ is a $U^-$-orbit. Objects in the images of the functors
\begin{align*}
    i_{0!}, j^0_!, j^0_*
\end{align*}
are therefore automatically $U$-equivariant whereas objects in the images of the functors \begin{align*}
    i_{\infty!}, j^\infty_!, j^\infty_*
\end{align*}
are $U^-$-equivariant.

We note that objects in $\Dg$ which lie in the image of $\pi_w^*$ for some $w \in W$ are easy to describe as elements of $\KL^\vee$ because of the direct descriptions of the right and left adjoints to restrictions on $\KL$ and therefore $\KL^\vee$. From these descriptions we see that, under the equivalence $\Dg \cong \KL^\vee$, we have
\begin{align*}
    i_{0!}\mathbb{C} & \longleftrightarrow (\Delta_e, \Delta_s[1]),\\
    i_{\infty!}\mathbb{C} & \longleftrightarrow (\Delta_s[1], \Delta_e),\\
    j^0_*\mathbb{C}[1] & \longleftrightarrow (\nabla_s, \Delta_e[1])\\
    j^\infty_*\mathbb{C}[1] & \longleftrightarrow (\Delta_e[1], \nabla_s).
\end{align*}

We now consider extensions of a certain nontrivial local system on $\mathbb{P}^1\setminus \{0, \infty\}$. To do this, we make a more general definition for any torus $T$.
\begin{definition}\label{def:locsys}
    Let $\tilde{\mathcal{L}}$ be the ``ind-universal local system" on $T$ corresponding to the $\mathbb{C}[\mathfrak{h}]$-module $\mathbb{C}(\mathfrak{h})/\mathbb{C}[\mathfrak{h}]$.

    We also make a local version of this definition, letting $\tilde{\mathcal{L}}^{\mathrm{unip}}$ be the \emph{ind-unipotent universal local system} corresponding to the $\mathbb{C}[\mathfrak{h}]$-module $\mathbb{C}((\mathfrak{h}))/\mathbb{C}[[\mathfrak{h}]]$.
\end{definition}

\begin{remark}
    We note that in the literature, the ``pro-universal local system" corresponding to the $\mathbb{C}[\mathfrak{h}]$-module $\mathbb{C}[\mathfrak{h}]$ and its unipotent analog corresponding to $\mathbb{C}[[\mathfrak{h}]]$ are much more ubiquitous, and are simply referred to as the universal (unipotent) local system. Since in the current setup we wish to work with colimits instead of limits, the local systems in Definition \ref{def:locsys} will be more natural from our point-of-view.
\end{remark}

We now consider the object
\begin{align*}
    j_*^{0,\infty}\mathcal{L}[1]
\end{align*}
where $\mathcal{L}$ is the universal local sytem on $\mathbb{G}_m \cong \mathbb{P}^1 \setminus \{0, \infty\}$, which is neither $U$- nor $U^-$-equivariant. It also does not lie in the category $\mathrm{Perv}_U(G/B)$, but we will see that it does lie in (the perverse heart of) the completion $\Dg$. Further, one can define an object $\mathcal{F}_{0}$ as $j_{!}^\infty$ applied to the $*$-extension of $\mathcal{L}[1]$ along the inclusion $\mathbb{P}^1 \setminus \{0, \infty\}$ to $\mathbb{P}^1 \setminus \{\infty\}$. Swapping the roles of $0$ and $\infty$ in this construction gives us an additional object $\mathcal{F}_{\infty}$.

One can then check that under the equivalence $\Dg \cong \KL^\vee$, we have
\begin{align*}
    j_*^{0,\infty} \mathcal{L}[1] & \longleftrightarrow (\nabla_s, \nabla_s),\\
    \mathcal{F}_0 & \longleftrightarrow (T_s, 0),\\
    \mathcal{F}_\infty & \longleftrightarrow (0, T_s),\\
\end{align*}
where $T_s$ is the indecomposable tilting sheaf on $\mathbb{P}^1$ obtained as the unique nontrivial extension
\[\begin{tikzcd}
    0 \arrow[r] & \Delta_s \arrow[r] & T_s \arrow[r] & \Delta_e \arrow[r] & 0.
\end{tikzcd}\]
This serves as a set of examples of nontrivial objects of $\KL^\vee$ whose tuple entries are each perverse but which do not lie in the natural inclusion of any individual $w$-conjugate of $\mathrm{Perv}_U(G/B)$.

\subsection{Algebraic examples for \texorpdfstring{$\mathfrak{sl}_2$}{sl2}}

We now carry these geometric examples forward to the algebraic setting and explain how to interpret them as elements of $\Da$.

We remain in the case $\mathfrak{g} = \mathfrak{sl}_2$, so $\mathcal{U}(\mathfrak{h}) \cong \mathbb{C}[h]$. For any $\lambda \in \mathbb{C}$, we can consider the $\mathbb{C}[h]$-module $H_\lambda = \mathbb{C}((h - \lambda))/\mathbb{C}[[h - \lambda]]$, which is a completion of the space spanned as a $\mathbb{C}$-vector space by vectors $\{v_i\}_{i \geq 0}$ such that $(h - \lambda)v_i = v_{i-1}$ for $i \geq 1$ and $(h - \lambda)v_0 = 0$.

We can then consider the following objects in $\gmod_0$:
\begin{align}
    M_0 & = \mathrm{coInd}_{\mathcal{U}(\mathfrak{h})}^{\mathcal{U}(\mathfrak{g})_0} H_0,\\
    M_{-2} & = \mathrm{coInd}_{\mathcal{U}(\mathfrak{h})}^{\mathcal{U}(\mathfrak{g})_0} H_{-2},\\
    M_2 & = \mathrm{coInd}_{\mathcal{U}(\mathfrak{h})}^{\mathcal{U}(\mathfrak{g})_0} H_2.
\end{align}

One can then compare these modules to the diagrams in Figures \ref{fig:m0}, \ref{fig:m-2}, and \ref{fig:m2}, which are drawn as follows. Dots above an integer $i$ represent a vector of weight $i$ in the representation $M_\lambda$. Rightward arrows represent the action of $e$ while leftward arrows represent the action of $f$. For simplicity, nodes are placed in a vertical hierarchy and arrows are drawn only ``up to lower nodes." For example, a rightward facing arrow from nodes which represent vectors $v_1$ and $v_2$ of weight $k$ and $k + 2$ respectively indicates that $e(v_1)$ is a linear combination of $v_2$ along with vectors corresponding to nodes above $k + 2$ which lie below the node corresponding to $v_2$.

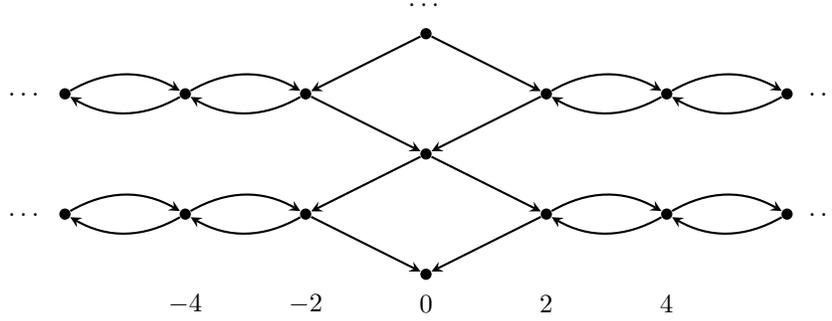
\begin{figure}
    \centering
    \begin{tikzpicture}[scale=0.8, every node/.style={circle, fill=black, inner sep=1.5pt}, >=stealth]
            \node[label=above:{$\dots$}] (w0) at (0,0) {};
            \foreach \x in {-6, -4,-2,2,4, 6}
            \node (w\x) at (\x,-1) {};
            \node (lw0) at (0,-2) {};
            \foreach \x in {-6, -4,-2,2,4, 6}
            \node (lw\x) at (\x,-3) {};
            \node (llw0) at (0,-4) {};
            \foreach \x in {-4,-2,0,2,4}
            \node[fill=none] at (\x,-4.5) {$\x$};
            \node[fill=none] at (-6.65,-1) {$\dots$};
            \node[fill=none] at (6.65,-1) {$\dots$};
            \node[fill=none] at (-6.65,-3) {$\dots$};
            \node[fill=none] at (6.65,-3) {$\dots$};

            \draw[->, thick, black, bend right=0] (w0) to node[above, sloped, midway, fill=none, inner sep=0pt, outer sep=0pt] {} (w-2);
            \draw[->, thick, black, bend left=30] (w-2) to node[above, sloped, midway, fill=none, inner sep=0pt, outer sep=0pt] {} (w-4);
            \draw[->, thick, black, bend right=-30] (w-4) to node[above, sloped, midway, fill=none, inner sep=0pt, outer sep=0pt] {} (w-2);
            \draw[->, thick, black, bend right=-30] (w-4) to node[above, sloped, midway, fill=none, inner sep=0pt, outer sep=0pt] {} (w-6);
            \draw[->, thick, black, bend right=-30] (w-6) to node[above, sloped, midway, fill=none, inner sep=0pt, outer sep=0pt] {} (w-4);
            
            \draw[->, thick, black, bend left=0] (w0) to node[above, sloped, midway, fill=none, inner sep=0pt, outer sep=0pt] {} (w2);
            \draw[->, thick, black, bend right=-30] (w2) to node[above, sloped, midway, fill=none, inner sep=0pt, outer sep=0pt] {} (w4);
            \draw[->, thick, black, bend left=30] (w4) to node[above, sloped, midway, fill=none, inner sep=0pt, outer sep=0pt] {} (w2);
            \draw[->, thick, black, bend left=30] (w4) to node[above, sloped, midway, fill=none, inner sep=0pt, outer sep=0pt] {} (w6);
            \draw[->, thick, black, bend left=30] (w6) to node[above, sloped, midway, fill=none, inner sep=0pt, outer sep=0pt] {} (w4);

            \draw[->, thick, black, bend right=0] (lw0) to node[above, sloped, midway, fill=none, inner sep=0pt, outer sep=0pt] {} (lw-2);
            \draw[->, thick, black, bend left=30] (lw-2) to node[above, sloped, midway, fill=none, inner sep=0pt, outer sep=0pt] {} (lw-4);
            \draw[->, thick, black, bend right=-30] (lw-4) to node[above, sloped, midway, fill=none, inner sep=0pt, outer sep=0pt] {} (lw-2);
            \draw[->, thick, black, bend right=-30] (lw-4) to node[above, sloped, midway, fill=none, inner sep=0pt, outer sep=0pt] {} (lw-6);
            \draw[->, thick, black, bend right=-30] (lw-6) to node[above, sloped, midway, fill=none, inner sep=0pt, outer sep=0pt] {} (lw-4);
            
            \draw[->, thick, black, bend left=0] (lw0) to node[above, sloped, midway, fill=none, inner sep=0pt, outer sep=0pt] {} (lw2);
            \draw[->, thick, black, bend right=-30] (lw2) to node[above, sloped, midway, fill=none, inner sep=0pt, outer sep=0pt] {} (lw4);
            \draw[->, thick, black, bend left=30] (lw4) to node[above, sloped, midway, fill=none, inner sep=0pt, outer sep=0pt] {} (lw2);
            \draw[->, thick, black, bend left=30] (lw4) to node[above, sloped, midway, fill=none, inner sep=0pt, outer sep=0pt] {} (lw6);
            \draw[->, thick, black, bend left=30] (lw6) to node[above, sloped, midway, fill=none, inner sep=0pt, outer sep=0pt] {} (lw4);

            \draw[->, thick, black, bend left=0] (w-2) to node[above, sloped, midway, fill=none, inner sep=0pt, outer sep=0pt] {} (lw0);
            \draw[->, thick, black, bend left=0] (w2) to node[above, sloped, midway, fill=none, inner sep=0pt, outer sep=0pt] {} (lw0);

            \draw[->, thick, black, bend left=0] (lw2) to node[above, sloped, midway, fill=none, inner sep=0pt, outer sep=0pt] {} (llw0);
            \draw[->, thick, black, bend left=0] (lw-2) to node[above, sloped, midway, fill=none, inner sep=0pt, outer sep=0pt] {} (llw0);
        \end{tikzpicture}
    \caption{A schematic picture of the module $M_0$.\label{fig:m0}}
\end{figure}

\begin{figure}
    \centering
    \begin{tikzpicture}[scale=0.8, every node/.style={circle, fill=black, inner sep=1.5pt}, >=stealth]

                    \node[label=above:{$\dots$}] (w0) at (0,0) {};
                    \foreach \x in {-6, -4,-2,2,4, 6}
                    \node (w\x) at (\x,-1) {};
                    \node (lw0) at (0,-2) {};
                    \foreach \x in {-6, -4,-2}
                    \node (lw\x) at (\x,-3) {};

                    \foreach \x in {-4,-2,0,2,4}
                    \node[fill=none] at (\x,-3.7) {$\x$};

                    \node[fill=none] at (-6.65,-1) {$\dots$};
                    \node[fill=none] at (6.65,-1) {$\dots$};
                    \node[fill=none] at (-6.65,-3) {$\dots$};

                    \draw[->, thick, black, bend right=0] (w0) to node[above, sloped, midway, fill=none, inner sep=0pt, outer sep=0pt] {} (w-2);
                    \draw[->, thick, black, bend left=30] (w-2) to node[above, sloped, midway, fill=none, inner sep=0pt, outer sep=0pt] {} (w-4);
                    \draw[->, thick, black, bend right=-30] (w-4) to node[above, sloped, midway, fill=none, inner sep=0pt, outer sep=0pt] {} (w-2);
                    \draw[->, thick, black, bend right=-30] (w-4) to node[above, sloped, midway, fill=none, inner sep=0pt, outer sep=0pt] {} (w-6);
                    \draw[->, thick, black, bend right=-30] (w-6) to node[above, sloped, midway, fill=none, inner sep=0pt, outer sep=0pt] {} (w-4);
                    
                    \draw[->, thick, black, bend left=0] (w0) to node[above, sloped, midway, fill=none, inner sep=0pt, outer sep=0pt] {} (w2);
                    \draw[->, thick, black, bend right=-30] (w2) to node[above, sloped, midway, fill=none, inner sep=0pt, outer sep=0pt] {} (w4);
                    \draw[->, thick, black, bend left=30] (w4) to node[above, sloped, midway, fill=none, inner sep=0pt, outer sep=0pt] {} (w2);
                    \draw[->, thick, black, bend left=30] (w4) to node[above, sloped, midway, fill=none, inner sep=0pt, outer sep=0pt] {} (w6);
                    \draw[->, thick, black, bend left=30] (w6) to node[above, sloped, midway, fill=none, inner sep=0pt, outer sep=0pt] {} (w4);

                    \draw[->, thick, black, bend right=0] (lw0) to node[above, sloped, midway, fill=none, inner sep=0pt, outer sep=0pt] {} (lw-2);
                    \draw[->, thick, black, bend left=30] (lw-2) to node[above, sloped, midway, fill=none, inner sep=0pt, outer sep=0pt] {} (lw-4);
                    \draw[->, thick, black, bend right=-30] (lw-4) to node[above, sloped, midway, fill=none, inner sep=0pt, outer sep=0pt] {} (lw-2);
                    \draw[->, thick, black, bend right=-30] (lw-4) to node[above, sloped, midway, fill=none, inner sep=0pt, outer sep=0pt] {} (lw-6);
                    \draw[->, thick, black, bend right=-30] (lw-6) to node[above, sloped, midway, fill=none, inner sep=0pt, outer sep=0pt] {} (lw-4);

                    \draw[->, thick, black, bend left=0] (w-2) to node[above, sloped, midway, fill=none, inner sep=0pt, outer sep=0pt] {} (lw0);
                    \draw[->, thick, black, bend left=0] (w2) to node[above, sloped, midway, fill=none, inner sep=0pt, outer sep=0pt] {} (lw0);

                \end{tikzpicture}
    \caption{A schematic picture of the module $M_{-2}$.}
    \label{fig:m-2}
\end{figure}

\begin{figure}
    \centering
    \begin{tikzpicture}[scale=0.8, every node/.style={circle, fill=black, inner sep=1.5pt}, >=stealth]

                    \node[label=above:{$\dots$}] (w0) at (0,0) {};
                    \foreach \x in {-6, -4,-2,2,4, 6}
                    \node (w\x) at (\x,-1) {};
                    \node (lw0) at (0,-2) {};
                    \foreach \x in {2,4, 6}
                    \node (lw\x) at (\x,-3) {};

                    \foreach \x in {-4,-2,0,2,4}
                    \node[fill=none] at (\x,-3.7) {$\x$};

                    \node[fill=none] at (-6.65,-1) {$\dots$};
                    \node[fill=none] at (6.65,-1) {$\dots$};

                    \node[fill=none] at (6.65,-3) {$\dots$};

                    \draw[->, thick, black, bend right=0] (w0) to node[above, sloped, midway, fill=none, inner sep=0pt, outer sep=0pt] {} (w-2);
                    \draw[->, thick, black, bend left=30] (w-2) to node[above, sloped, midway, fill=none, inner sep=0pt, outer sep=0pt] {} (w-4);
                    \draw[->, thick, black, bend right=-30] (w-4) to node[above, sloped, midway, fill=none, inner sep=0pt, outer sep=0pt] {} (w-2);
                    \draw[->, thick, black, bend right=0] (w-2) to node[above, sloped, midway, fill=none, inner sep=0pt, outer sep=0pt] {} (lw0);
                    \draw[->, thick, black, bend right=-30] (w-4) to node[above, sloped, midway, fill=none, inner sep=0pt, outer sep=0pt] {} (w-6);
                    \draw[->, thick, black, bend right=-30] (w-6) to node[above, sloped, midway, fill=none, inner sep=0pt, outer sep=0pt] {} (w-4);
                    
                    \draw[->, thick, black, bend left=0] (w0) to node[above, sloped, midway, fill=none, inner sep=0pt, outer sep=0pt] {} (w2);
                    \draw[->, thick, black, bend right=-30] (w2) to node[above, sloped, midway, fill=none, inner sep=0pt, outer sep=0pt] {} (w4);
                    \draw[->, thick, black, bend left=30] (w4) to node[above, sloped, midway, fill=none, inner sep=0pt, outer sep=0pt] {} (w2);
                    \draw[->, thick, black, bend left=30] (w4) to node[above, sloped, midway, fill=none, inner sep=0pt, outer sep=0pt] {} (w6);
                    \draw[->, thick, black, bend left=30] (w6) to node[above, sloped, midway, fill=none, inner sep=0pt, outer sep=0pt] {} (w4);

                    \draw[->, thick, black, bend right=0] (lw0) to node[above, sloped, midway, fill=none, inner sep=0pt, outer sep=0pt] {} (lw2);
                    \draw[->, thick, black, bend right=-30] (lw2) to node[above, sloped, midway, fill=none, inner sep=0pt, outer sep=0pt] {} (lw4);
                    \draw[->, thick, black, bend left=30] (lw4) to node[above, sloped, midway, fill=none, inner sep=0pt, outer sep=0pt] {} (lw2);
                    \draw[->, thick, black, bend left=30] (lw4) to node[above, sloped, midway, fill=none, inner sep=0pt, outer sep=0pt] {} (lw6);
                    \draw[->, thick, black, bend left=30] (lw6) to node[above, sloped, midway, fill=none, inner sep=0pt, outer sep=0pt] {} (lw4);

                    \draw[->, thick, black, bend left=0] (w2) to node[above, sloped, midway, fill=none, inner sep=0pt, outer sep=0pt] {} (lw0);

                \end{tikzpicture}
    \caption{A schematic picture of the module $M_2$.}
    \label{fig:m2}
\end{figure}
Comparing the images of the algebraic and geometric restriction functors on these modules with Section \ref{sec:geomexamples}, we get the following.
\begin{prop}
    Under the equivalences
    \begin{align*}
        \Da \cong \Dg \cong \KL^\vee, 
    \end{align*}
    we have
    \[\begin{array}{ccccc}
        M_0 & \mapsto & j_{*}^{0,\infty}\mathcal{L}[1] & \mapsto & (\nabla_s, \nabla_s),\\
        M_{-2} & \mapsto & \mathcal{F}_0 & \mapsto & (T_s, 0)\\
        M_{2} & \mapsto & \mathcal{F}_\infty & \mapsto & (0, T_s).\\
    \end{array}\]
\end{prop}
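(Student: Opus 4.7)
The strategy is to verify each identification by comparing the images of both sides under the restriction functors $(j_w^!)_{w \in W}$ and $(\pi_*^w)_{w \in W}$, which realize the equivalences $\Da \cong \KL^\vee$ and $\Dg \cong \KL^\vee$ (Theorem \ref{thm:maingeom} and Proposition \ref{prop:monad}) respectively. These restriction functors are jointly conservative by Corollary \ref{cor:conservative}, and the algebraic and geometric sides are intertwined by Corollaries \ref{cor:dggamma} and \ref{cor:functordict2}. Thus the three identifications reduce to checking, for each row, that the restrictions computed on both sides agree with the stated $\KL^\vee$ tuples.

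For the geometric-to-KL identifications, I would compute each $\pi_*^w \mathcal{F}$ directly on $\mathbb{P}^1$. For $j_*^{0,\infty} \mathcal{L}[1]$, the universal monodromy of $\mathcal{L}$ about each puncture produces a copy of $\nabla_s$ under $U$- and $U^-$-averaging; this can be extracted from the formula $\pi_*^w \pi_e^* \cong \hat{\nabla}_w * - [-\ell(w)]$ of Proposition \ref{prop:convinterpretation} after identifying $j_*^{0,\infty}\mathcal{L}[1]$ with $\pi_e^*$ applied to an appropriate (ind-)unipotent local system on $T$. For $\mathcal{F}_0$, which is a $!$-extension from the $U^-$-orbit $\mathbb{P}^1 \setminus \{\infty\}$, one sees directly that $\pi_*^s \mathcal{F}_0 = 0$ while $\pi_*^e \mathcal{F}_0 \cong T_s$, the latter extracted from the nonsplit extension defining $\mathcal{F}_0$; the case of $\mathcal{F}_\infty$ is entirely symmetric.

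On the algebraic side, the restrictions ${}^\circ j_w^! M_\lambda$ can be read off from the weight diagrams of Figures \ref{fig:m0}, \ref{fig:m-2}, and \ref{fig:m2}, using that ${}^\circ j_w^! M$ is the maximal submodule of $M$ on which $\mathfrak{n}^w$ acts locally nilpotently. For $M_0$, the symmetry of Figure \ref{fig:m0} yields ${}^\circ j_e^! M_0 \cong {}^\circ j_s^! M_0 \cong \nabla_s$, matching $(\nabla_s,\nabla_s)$. For $M_{-2}$, only the restriction along $w = e$ is nonzero, being a length-$2$ extension of Vermas, and symmetrically for $M_2$. The main technical obstacle is to identify this length-$2$ extension with the indecomposable tilting $T_s$, as opposed to a split sum or the opposite extension. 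I expect this to follow by exhibiting from Figure \ref{fig:m-2} an explicit nonzero $\mathfrak{n}^-$-action linking a generator of the $\Delta_s$-submodule to the weight-$(-2)$ generator of the $\Delta_e$-quotient, combined with the uniqueness of the nonsplit extension $0 \to \Delta_s \to T_s \to \Delta_e \to 0$ in $\mathrm{Perv}_U(G/B)$.
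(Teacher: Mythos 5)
Your proposal is correct and is essentially the paper's own argument: the paper offers no proof beyond the one-line instruction to compare the images of the algebraic restriction functors ${}^\circ j_w^!$ (read off from Figures \ref{fig:m0}--\ref{fig:m2}) with the geometric restrictions $\pi_*^w$ of $j_*^{0,\infty}\mathcal{L}[1]$, $\mathcal{F}_0$, $\mathcal{F}_\infty$ computed in Section \ref{sec:geomexamples}, using the dictionary of Corollary \ref{cor:functordict2} and the fact that the equivalences with $\KL^\vee$ are realized by these restriction functors. Your extra care about distinguishing the nonsplit extension $T_s$ from the split one (and from the nonexistent opposite extension) via an explicit nonzero arrow in Figure \ref{fig:m-2} is exactly the check the paper leaves implicit, and your treatment is at the same level of rigor as the paper's regarding derived versus abelian restrictions.
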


\section{Coherent families}\label{sec:coherent}

We now introduce the notion of a \emph{coherent family}, which is one of the key tools used in \cite{Fernando} and \cite{Mathieu} to give an algebraic classification of weight modules with finite-dimensional weight spaces.  We will give a natural geometric realization of all coherent families with trivial central character, which are explicitly classified in \cite{Mathieu}.

\begin{definition}\label{def:coherentfamily}
    A \emph{coherent family} of degree $d$ is a weight $\mathfrak{g}$-module $\mathcal{M}$ such that
    \begin{itemize}
        \item $\dim \mathcal{M}_\lambda = d$ for any $\lambda \in \mathfrak{h}^*$.
        \item For any $u \in \mathcal{U}(\mathfrak{g})^T$, the function $\lambda \in \mathfrak{h}^* \mapsto \mathrm{Tr}(u|_{\mathcal{M}_\lambda})$ is polynomial in $\lambda$.
    \end{itemize}
    A coherent family is called irreducible if there exists some $\lambda \in \mathfrak{h}^*$ such that $\mathcal{M}_{\lambda}$ is irreducible. For any coherent family $\mathcal{M}$ and any parameter $t \in T^*/Q$, we let $\mathcal{M}[t] \subset \mathcal{M}$ denote the submodule $\oplus_{\lambda \in t} \mathcal{M}_\lambda$.
\end{definition}

The following theorem follows from applying the classification in \cite[\S 8]{Mathieu} to the case of weight modules with trivial central character.
\begin{theorem}[\cite{Mathieu}]\label{thm:mathieucoherent}
    There are exactly $n$ distinct isomorphism classes of irreducible semi-simple coherent families for $\mathcal{U}(\mathfrak{\mathfrak{sl}_{n+1}})_0$. For $\mathfrak{g}$ outside of Type $A$, no coherent families for $\mathcal{U}(\mathfrak{g})_0$ exist.
\end{theorem}

\begin{remark}
    In \cite{Mathieu}, coherent families for $\mathcal{U}(\mathfrak{g})$ are classified, and it is shown they only exist in Types $A$ and $C$. To reduce Mathieu's classification to Theorem \ref{thm:mathieucoherent}, one checks that only $n$ of the explicit coherent families described in Type $A$ have trivial central character, while none of the Type $C$ families do.
\end{remark}

To study coherent families with trivial central character, we can then focus solely on the Type $A$ case. Thus for the remainder of this subsection, suppose $\mathfrak{g} = \mathfrak{sl}_{n+1}$; it will be convenient to set $G = PGL_{n+1}$. 

\begin{remark}
    Suppose $\mathcal{M}$ is one of the $n$ irreducible semi-simple coherent families for $\mathfrak{sl}_{n+1}$. A careful analysis of the classification in \cite[Lemma 8.3]{Mathieu} for the trivial central character $\chi = 0$ reveals the following description of $\mathcal{M}[0]$.

    Recall from \cite{KL1} that the two-sided Kazhdan--Lusztig cells for $\mathfrak{sl}_{n+1}$ correspond to partitions of $n+1$ and left cells therein are parametrized by standard Young tableaux with shape corresponding to the chosen partition. This means there is a unique two-sided cell (the minimal one among cells containing more than one element) containing $n$ left cells $c_1, \dots, c_n$. For example, when $n = 3$, three left cells in a given two-sided cell are as pictured.
    \[
\begin{tikzpicture}[every node/.style={font=\small}]

  \def\W{2.6}   
  \def\H{2.6}   
  \def\gap{0.8} 

  \coordinate (R1) at (0,0);
  \coordinate (R2) at (\W+\gap,0);
  \coordinate (R3) at (2*\W+2*\gap,0);

  \draw[thick] (R1) rectangle ++(\W,-\H);
  \draw[thick] (R2) rectangle ++(\W,-\H);
  \draw[thick] (R3) rectangle ++(\W,-\H);

  \foreach \i [count=\j] in {0.7,1.3,1.9} {
    \node at ($(R1)+(0.5*\W,-\i)$) (L\j) {};
    \node at ($(R2)+(0.5*\W,-\i)$) (M\j) {};
    \node at ($(R3)+(0.5*\W,-\i)$) (N\j) {};
  }

  \node at (L1) {$s_3s_2s_3$};
  \node at (L2) {$s_1s_3s_2s_3$};
  \node at (L3) {$s_2s_1s_3s_2s_3$};

  \node at (M1) {$s_3s_2s_3s_1$};
  \node at (M2) {$s_3s_2s_1s_2s_3$};
  \node at (M3) {$s_2s_1s_2s_3$};

  \node at (N1) {$s_3s_2s_3s_1s_2$};
  \node at (N2) {$s_3s_2s_1s_2$};
  \node at (N3) {$s_1s_2s_1$};

\end{tikzpicture}
    \]

    Mathieu's results then tell us that irreducible semi-simple coherent families can be labelled by the cells $c_1, \dots, c_n$ in such a way that the coherent family $\mathcal{M}(c_i)$ has the property that $\mathcal{M}(c_i)[0]$ is an extension of the irreducible modules $L(w\cdot (-2\rho))$ for $w \in c_i$, along with some number of copies of the trivial module $L(0)$. These are the families we will construct geometrically, guided by this fact about Kazhdan--Lusztig cells.
\end{remark}

\subsection{A geometric construction of coherent families for \texorpdfstring{$\mathfrak{sl}_{n+1}$}{sl(n+1)}}\label{sec:geomconstruction}
For the purposes of the following construction, we define an intermediate notion of a \emph{coherent complex} as follows. It mimics Mathieu's definition of coherent family while allowing for complexes not necessarily concentrated in degree zero, and will only be used as a technical tool in what follows.
\begin{definition}
     A \emph{coherent complex} is a finite complex $(\mathcal{M}^i)_{i \in \mathbb{Z}}$ of weight modules over $\mathcal{U}(\mathfrak{g})$ such that for any $u \in \mathcal{U}(\mathfrak{g})^T$, the function
    \begin{align*}
        \lambda \mapsto \sum_{i \in \mathbb{Z}} (-1)^i\mathrm{Tr}(u|_{\mathcal{M}^i_\lambda})
    \end{align*}
    is polynomial in $\lambda$, and such that the virtual dimension
    \begin{align*}
        \sum_{i \in \mathbb{Z}} (-1)^i\dim \mathcal{M}_\lambda^i
    \end{align*}
    is uniformly bounded in $\lambda$.
\end{definition}
Evidently, by this definition a coherent complex concentrated in degree $0$ is exactly a coherent family. We can thus consider coherent families themselves as a certain class of coherent complexes. Given a collection of $\{\mathcal{F}^t\}_{t \in \mathfrak{h}^*/Q} \subset D^b(G/B)$, we say it \emph{assembles to a coherent complex} (resp. coherent family) if the direct sum over all $t \in \mathfrak{h}^*/Q$ of the corresponding $\mathcal{U}(\mathfrak{g})$-modules is a coherent complex (resp. coherent family).

Given any $t \in T^* \cong \mathfrak{h}^* / Q$, let $\mathcal{L}_{t}$ be the corresponding irreducible local system on $T$. Note that $G \cong SL_{n+1}$ contains $n$ maximal proper parabolic subgroups $P_i^{\mathrm{max}}$. We can choose $P_1^{\mathrm{max}}$ with corresponding Levi subgroup isomorphic to $PGL_n$, so that $G/P_1^{\mathrm{max}} \cong \mathbb{P}^n$. For any $1 \leq i \leq n$, we let $p_i : G/B \to G/P_i^{\mathrm{max}}$ be the natural projection. For simplicity, we write
\[P = P^{\mathrm{max}}_1.\]
The variety $G/P \cong \mathbb{P}^n$ then inherits an action of $G$. Under this action, $\mathbb{P}^n$ admits an open $T$-orbit isomorphic to $T$ itself. 

\begin{definition}\label{def:g1f1}
    Let $\mathcal{G}^t_1$ be the $*$-extension of the local system $\mathcal{L}_t$ from the open $T$-orbit on $\mathbb{P}^n$ to $\mathbb{P}^n$ itself, and let $\mathcal{F}_1^t \cong p_1^*(\mathcal{G}^t_1)$.
\end{definition}
We can then interpret $\mathcal{F}_1^t$ algebraically as follows.
\begin{definition}
    Let $\mathcal{M}_1[t]$ be the $\mathfrak{g}$-module corresponding to the perverse sheaf $\mathcal{F}_1^t$ under the Beilinson--Bernstein localization functor. Let \[\mathcal{M}_1 = \bigoplus_{t \in T^*} \mathcal{M}_1[t].\]
\end{definition}

\begin{prop}\label{prop:m1}
    The module $\mathcal{M}_1$ is a coherent family of degree $1$ for $\mathfrak{sl}_{n+1}$.
\end{prop}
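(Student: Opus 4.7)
My plan is to verify the two defining conditions of Definition~\ref{def:coherentfamily} for $\mathcal{M}_1$ directly from the geometric construction. The strategy exploits the fact that $\mathcal{G}_1^t$ is built from the rank-one local system $\mathcal{L}_t$ on the open $T$-orbit $T \subset \mathbb{P}^n$, so that $\mathcal{M}_1[t]$ ought to behave as a principal-series-like module with all weight spaces one-dimensional (the $\mathfrak{sl}_2$ analogue of this picture is exactly Section~\ref{sec:explicit}).

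For the weight space dimensions, I will first observe that since $\mathcal{F}_1^t$ is $T$-monodromic on $G/B$ with monodromy parameter $t$, the weights of $\mathcal{M}_1[t]$ under BB-localization lie in the coset $t \in \mathfrak{h}^*/Q$. To compute the $\lambda$-weight space for $\lambda \in t$, I will use base change along the open inclusion $p_1^{-1}(T) \hookrightarrow G/B$: on this open subset, $\mathcal{F}_1^t$ is the pullback of $\mathcal{L}_t$ along the trivial $T$-torsor $p_1^{-1}(T) \to T$ whose fibers are the flag variety $P/B$ of the Levi of $P$. The $T$-weight decomposition of the associated hypercohomology is controlled by the $T$-weight decomposition of $\mathcal{L}_t$, which contributes each $\lambda \in t$ with multiplicity one, together with the cohomology of the simply-connected $P/B$, which contributes only $Q$-weights and does not disturb the coset decomposition. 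A careful analysis of the $*$-extension contributions from the boundary $T$-orbits of $\mathbb{P}^n$ (whose closures are lower-dimensional coordinate subspaces) will confirm that $\dim (\mathcal{M}_1[t])_\lambda = 1$ for each $\lambda \in t$, so that summing over $t \in T^*$ gives $\dim (\mathcal{M}_1)_\lambda = 1$ for all $\lambda \in \mathfrak{h}^*$.

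For polynomiality of traces, I will use that the collection $\{\mathcal{L}_t\}_{t \in T^*}$ assembles into an algebraic family over $T^*$ (essentially the object $\tilde{\mathcal{L}}$ of Definition~\ref{def:locsys}), and hence so does $\{\mathcal{F}_1^t\}$; correspondingly the $\mathcal{U}(\mathfrak{g})$-action on $\mathcal{M}_1$ varies algebraically in $\lambda$. For $u \in \mathcal{U}(\mathfrak{g})^T$ acting on the one-dimensional weight space $(\mathcal{M}_1)_\lambda$, the resulting scalar can then be computed explicitly by reduction to the action on $\Gamma(T, \mathcal{L}_t)$ where $\mathfrak{g}$ acts through vector fields on $T$ (via the embedding $T\hookrightarrow \mathbb{P}^n$); this exhibits the scalar as a polynomial in $\lambda \in \mathfrak{h}^*$.

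The main obstacle is controlling the contribution of the $*$-extension at boundary $T$-orbits in $\mathbb{P}^n$, particularly at special values of $t$ (such as $t \in Q$) where the monodromy becomes trivial along some boundary direction and $j_*\mathcal{L}_t$ acquires nontrivial stalks there. I expect that the cleanest approach is to first establish the weight count for generic $t$, where $j_*\mathcal{L}_t = j_{!*}\mathcal{L}_t = j_!\mathcal{L}_t$ and the computation is clean, and then extend to all $t$ by a semicontinuity argument within the algebraic family $\{\mathcal{F}_1^t\}_{t \in T^*}$; the polynomiality established in the previous paragraph can then be used to upgrade a generic weight-space count of $1$ to a uniform one.
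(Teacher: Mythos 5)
Your second half (polynomiality of traces) is essentially the paper's argument: once you know the $\mathfrak{g}$-action on $\mathcal{M}_1[t]$ is realized by the degree-zero vector fields $x_i\partial/\partial x_j$ acting on sections over the open torus, traces of elements of $\mathcal{U}(\mathfrak{g})^T$ on weight spaces are visibly polynomial in the weight. The gap is in your first half, the weight-space count. The weight spaces of the Beilinson--Bernstein module $\Gamma(\mathcal{F}_1^t)$ are \emph{not} computed by the $T$-weight decomposition of the hypercohomology of the constructible sheaf restricted stratum by stratum: hypercohomology corresponds to the de Rham complex of the $\mathcal{D}$-module, not to its global sections as a $\mathcal{D}$-module, and for generic $t$ one has $H^*(T,\mathcal{L}_t)=0$, so the computation you sketch (an $H^*(P/B)$ factor times the ``weight decomposition of $\mathcal{L}_t$,'' corrected by boundary stalks of the $*$-extension) would give either $0$ or weight multiplicities involving $\dim H^*(P/B)=n!$ rather than $1$. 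Relatedly, your worry about boundary $T$-orbits and special $t\in Q$, and the proposed generic-$t$-plus-semicontinuity patch, addresses a problem that does not arise in the correct formulation: since $T\subset\mathbb{P}^n$ is an \emph{affine} open embedding, the $\mathcal{D}$-module pushforward satisfies $\Gamma(\mathbb{P}^n, j_*\mathcal{L}_t)=\Gamma(T,\mathcal{L}_t)=\mathbb{C}[x_1^{\pm1},\dots,x_n^{\pm1}]x^{t}$ for \emph{every} $t$, with no boundary corrections, and pullback along $p_1$ does not change global sections because the fibers $P/B$ are projective with $Rp_{1*}\mathcal{O}_{G/B}=\mathcal{O}_{\mathbb{P}^n}$.

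This is exactly how the paper argues: one identifies $\mathcal{M}_1[t]$ uniformly in $t$ with the Laurent module $\mathbb{C}[x_1^{\pm1},\dots,x_n^{\pm1}]x^{t_1}\cdots x^{t_n}$, on which $\mathfrak{sl}_{n+1}$ acts through $E_{ij}\mapsto x_i\partial/\partial x_j$. With this explicit model both conditions of Definition \ref{def:coherentfamily} are immediate: each monomial spans a distinct $\mathfrak{h}$-weight line, so every weight space of $\mathcal{M}_1$ is one-dimensional (degree $1$, all $t$ at once, no genericity or semicontinuity needed), and weight-zero monomials in the $x_i\partial/\partial x_j$ act on a monomial by a scalar polynomial in its exponent, giving the trace condition. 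If you replace your hypercohomology/boundary analysis by this identification of global sections, your outline becomes correct and coincides with the paper's proof.
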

\begin{proof}
    We first work in the setting of $\mathcal{D}$-modules to concretely identify the $\mathcal{U}(\mathfrak{sl}_{n+1})$-module corresponding to $\mathcal{M}_1[t]$ for any fixed choice of $t$. Note that the $\mathcal{D}$-module version of the construction of $\mathcal{F}_1^t$ is as follows. The irreducible local system $\mathcal{L}_t$ on $T \cong (\mathbb{C}^\times)^n$ corresponds to the $D$-module
    \begin{align}
        \mathbb{C}[x_1^{\pm 1}, \dots, x_n^{\pm 1}]x^{t_1} \dots x^{t_n},
    \end{align}
    where we represent $t \in T^*/Q$ as a tuple $t = (t_1, \dots, t_n)$ with $t_i \in \mathbb{R}/\mathbb{Z}$ so that each $x_i\frac{\partial}{\partial x_i}$ has eigenvalues $t_i + k$ for $k \in \mathbb{Z}$.

    Pushing forward to $\mathbb{P}^n$ and recalling that $\mathcal{D}(\mathbb{P}^n)$ is generated over $\mathcal{O}_{\mathbb{P}^1}$ by the elements $x_i\frac{\partial_j}{\partial x_j}$ for $0 \leq i, j \leq n$, so we can restrict the $\mathcal{D}$-module on $T$ described above to a module over this ring.

    Now note that performing the pullback along $p_1 : G/B \to G/P \cong \mathbb{P}^n$ is equivalent to pulling this module back along the map from $\mathcal{U}(\mathfrak{sl}_{n+1}) \to \mathcal{D}(\mathbb{P}^1)$ which sends
    \begin{align}
        E_{ij} & \mapsto x_i \frac{\partial}{\partial x_j}, & H_k \mapsto \frac{\partial}{\partial x_k} - \frac{\partial}{\partial x_{k+1}}
    \end{align}
    for any $i \neq j$ with $1 \leq i \leq j$ and any $1 \leq k \leq n-1$.

    One can then see that we have just described a $\mathcal{U}(\mathfrak{sl}_{n+1})$-module with each weight space having dimension $1$. Since every element of $\mathfrak{sl}_{n+1}$ acts by a differential operator as above, one can see that traces of elements of $\mathcal{U}(\mathfrak{sl}_{n+1})^T$ on any weight space must be polynomial in the parameter $t$. This means both conditions of Definition \ref{def:coherentfamily} are verified, and thus $\mathcal{M}_1$ is a coherent family. 
\end{proof}

\begin{lemma}\label{lem:descent}
    Let $w_0^P$ be the longest element of $\langle s_2, \dots, s_n\rangle$. Then for any $2 \leq k \leq n$, the right descent set of the element
    \[w_0^Ps_1 \cdots s_{k-1}\]
    contains all $s_j \in S$ for $j \neq k$.
\end{lemma}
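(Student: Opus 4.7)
The plan is to verify this as a direct combinatorial check in $S_{n+1}$, since the statement is purely about the Coxeter structure of the Weyl group of $\mathfrak{sl}_{n+1}$. Working with $s_i = (i,i+1)$, the parabolic subgroup $\langle s_2,\dots,s_n\rangle$ is the copy of $S_n$ acting on $\{2,\dots,n+1\}$, so its longest element $w_0^P$, in one-line notation on $\{1,\dots,n+1\}$, is
\[
w_0^P = [\,1,\;n+1,\;n,\;n-1,\;\dots,\;3,\;2\,].
\]
I will recall that $s_j$ is a right descent of $w$ iff $w(j) > w(j+1)$, so the entire problem reduces to computing the one-line notation of $w := w_0^P s_1 s_2 \cdots s_{k-1}$ and inspecting consecutive entries.

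Next, I would compute $w$ explicitly. Right multiplication by $s_i$ swaps the entries in positions $i$ and $i{+}1$, so applying $s_1,s_2,\dots,s_{k-1}$ in order to $w_0^P$ has the effect of moving the entry $1$ (initially in position $1$) rightward into position $k$, while each of the entries originally in positions $2,\dots,k$ is shifted one step to the left. The result is
\[
w = [\,n+1,\;n,\;\dots,\;n+3-k,\;1,\;n+2-k,\;n+1-k,\;\dots,\;3,\;2\,],
\]
i.e.\ $w(i)=n+2-i$ for $i\in\{1,\dots,k-1\}\cup\{k+1,\dots,n+1\}$ and $w(k)=1$.

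Finally, I would read off the descents position by position. For $1\le j \le k-2$ one has $w(j)=n+2-j > n+1-j = w(j+1)$, giving a descent. For $j=k-1$, the comparison is $n+3-k > 1$, again a descent (using $k \le n$, so $n+3-k \ge 3$). For $j=k$ the comparison is $1 < n+2-k$, so $s_k$ is \emph{not} a descent (this observation will also show the descent set is exactly $S\setminus\{s_k\}$, which is what one expects for the connection with $P(w)$ in Corollary \ref{cor:countsimples}). Finally, for $k+1\le j\le n$ one again has $w(j)=n+2-j>n+1-j=w(j+1)$, a descent. This exhausts all $j\ne k$ and proves the lemma.

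There is essentially no obstacle here: the argument is a bookkeeping exercise in one-line notation, and the only subtlety is to keep track of the shift in indices after the $1$ is moved past positions $2,\dots,k-1$. The computation could also be repackaged as a reduced-expression length count by observing $\ell(w_0^P s_1\cdots s_{k-1}) = \ell(w_0^P) + (k-1)$, but the direct one-line description above already yields the descent set transparently.
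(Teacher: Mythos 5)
Your proof is correct and takes essentially the same route as the paper, which likewise just writes out the one-line notation of $w_0^P s_1\cdots s_{k-1}$ and reads off the descents. (One harmless slip: for $i\ge k+1$ the correct formula is $w(i)=n+3-i$, not $n+2-i$, matching your displayed one-line list; the entries after position $k$ are still consecutive decreasing integers, so the descent comparisons and the conclusion are unaffected.)
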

\begin{proof}
    Note that the elements $w_0^Ps_1 \dots s_{k-1}$ for $2 \leq k \leq n$ can be represented as permutations, in one-line notation, as
    \begin{align*}
        (n \quad (n - 1) \quad \dots \quad (n - k + 1) \quad 1 \quad (n - k) \quad \dots \quad 2),
    \end{align*}
    which immediately gives the result.
\end{proof}

\begin{definition}
    Let $2 \leq k \leq n$. Then we define $\tilde{\mathcal{F}}_k^t \in D^b(G/B)$ by
    \begin{align*}
        \tilde{\mathcal{F}}_k^t & = \mathcal{G}_1^t *_{P} \mathrm{IC}_{s_1 \cdots s_{k-1}}^{P},
    \end{align*}
    where $\mathrm{IC}_{s_1 \cdots s_{k-1}}^{P} \in \mathrm{Perv}(P \backslash G /B)$
\end{definition}

\begin{lemma}
    The functor $-*_{P} \mathrm{IC}_{s_1 \dots s_{k-1}}^{P}$ is a direct summand of the functor $-*_{B} \mathrm{IC}_{s_1 \dots s_{k-1}}$.
\end{lemma}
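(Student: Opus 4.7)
The plan is to exploit the natural fiber bundle morphism $\sigma : G\times^B G/B \to G\times^P G/B$ between the two convolution correspondences. This $\sigma$ is smooth and proper of relative dimension $\ell(w_0^P)$ with fibers isomorphic to $P/B$ (the flag variety of the Levi of $P$), and the two action maps satisfy $m_B = m_P \circ \sigma$, so that $-*_B \mathrm{IC}_{s_1\cdots s_{k-1}}$ factors through $\sigma_!$ followed by $(m_P)_!$, whereas $-*_P \mathrm{IC}^P_{s_1\cdots s_{k-1}}$ uses only $(m_P)_!$. The main tool will be the Decomposition Theorem applied to $\sigma$, combined with the standard semisimplicity results for the parabolic Hecke category.

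First, I would verify that $s_1 s_2 \cdots s_{k-1}$ is a minimum-length representative of its $W_P$-coset (with $W_P = \langle s_2,\ldots, s_n\rangle$), so that $\mathrm{IC}^P_{s_1\cdots s_{k-1}}$ is well-defined. This is a routine reduced-word manipulation: for each $s_j\in W_P$, a short calculation using braid and commutation relations yields $\ell(s_j s_1\cdots s_{k-1})=k>k-1$. As a consequence, the $B$-orbit indexed by $s_1\cdots s_{k-1}$ is the closed $B$-orbit inside the $P$-orbit it generates, a structural fact that constrains the support relations in the decomposition below.

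Next, I would apply the Decomposition Theorem to $\sigma_!$ of the appropriate externally-boxed complex involving $\mathrm{IC}_{s_1\cdots s_{k-1}}$. Since $\sigma$ is smooth and proper and the input is pure, the result decomposes as a direct sum of shifts of simple perverse sheaves on $G\times^P G/B$. By the Soergel-type semisimplicity for parabolic Hecke categories, one identifies a canonical summand whose underlying IC is $\mathrm{IC}^P_{s_1\cdots s_{k-1}}$; Lemma \ref{lem:descent} enters here by pinning down, via the right descent set of $w_0^P s_1\cdots s_{k-1}$, the exact combinatorial position of this summand in the Bruhat order and precluding interference from other $\mathrm{IC}^P_v$. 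Pushing this summand down via $(m_P)_!$ gives exactly $-*_P \mathrm{IC}^P_{s_1\cdots s_{k-1}}$ as a functor.

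The main obstacle is identifying the specific summand and tracking all cohomological shifts while descending from the sheaf-level direct summand decomposition to a functor-level one. This involves carefully matching the shifts introduced by $\sigma$ (via relative dimension $\ell(w_0^P)$) with those built into the definitions of $\mathrm{IC}^P_{s_1\cdots s_{k-1}}$ and $\mathrm{IC}_{s_1\cdots s_{k-1}}$, and ruling out possible non-split extensions by invoking the semisimplicity of the relevant perverse category. Once the sheaf-level decomposition is established, functoriality and exactness of convolution in each variable promote it to the claimed functor-level direct summand.
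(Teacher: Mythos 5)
Your overall strategy coincides with the paper's: factor the $B$-convolution through the $P$-convolution space via the proper $P/B$-fibration and invoke the decomposition theorem, and your verification that $s_1\cdots s_{k-1}$ is the minimal-length representative of its $W_P$-coset is exactly the (implicit) combinatorial input that guarantees $\mathrm{IC}_{s_1\cdots s_{k-1}}^{P}$ appears as an unshifted summand. However, there is a genuine gap in your middle step. You propose to apply the decomposition theorem to $\sigma_!$ of the ``externally-boxed complex,'' asserting that the input is pure. That complex contains the arbitrary first argument of the functor --- in the intended application it is $\mathcal{G}_1^t$, a $*$-extension of a nontrivial rank-one local system --- so it is neither pure nor semisimple, and $\sigma_!$ of it does not decompose into shifts of simple perverse sheaves on $G\times^P G/B$. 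The decomposition theorem can only be applied to the kernel on its own: to the $P/B$-averaging $\pi_*\mathrm{IC}_{s_1\cdots s_{k-1}}$ on $P\backslash G/B$, a proper pushforward of an IC sheaf, which is what the paper does.

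Once that is corrected, the step you defer to ``functoriality and exactness of convolution'' is precisely the nontrivial content your proposal is missing: one needs the exchange identity relating $-\ast_B \mathrm{IC}_{s_1\cdots s_{k-1}}$ to $-\ast_P\bigl(\pi_*\mathrm{IC}_{s_1\cdots s_{k-1}}\bigr)$, so that the direct-sum decomposition of the averaged kernel becomes a direct-sum decomposition of functors. The factorization $m_B=m_P\circ\sigma$ alone does not give this; you must also show that $\sigma_!$ (equivalently $\sigma_*$, since $\sigma$ is proper) carries the twisted product over $B$ to the twisted product over $P$ with the averaged kernel, and this base-change/projection-formula computation (carried out in the paper with the map $a: B\backslash G\times_B G/B\to B\backslash G\times_P G/B$) is the bulk of the paper's proof. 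Two smaller points: Lemma \ref{lem:descent} and Soergel-type semisimplicity of the parabolic Hecke category are not needed here (the descent lemma enters later, in the proof of Lemma \ref{lem:cohsupp}), and there is no need to ``preclude interference'' from other summands --- the statement only requires that $\mathrm{IC}_{s_1\cdots s_{k-1}}^{P}$ occur, which follows because the averaging map is generically one-to-one over the open $P$-orbit of $\overline{Ps_1\cdots s_{k-1}B}/B$, i.e.\ from the coset-minimality you checked.
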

\begin{proof}
    We write
    \begin{align*}
        m^P & : B\backslash G \times_P G/B \to B\backslash G/B,\\
        m^B & : B\backslash G \times_B G/B \to B\backslash G/B,\\
        a & : B\backslash G \times_B G/B \to B \backslash G\times_P G/B,\\
        p & : G/B \to G/P,
    \end{align*}
    with $\pi_1, \pi_2$ the coordinate projections from $B\backslash G \times_B G/B$ and $\overline{\pi}_1, \overline{\pi}_2$ those from $B\backslash G \times_P G/B$. Then for any $\mathcal{F} \in D^b(B\backslash G/B)$ and $\mathcal{G} \in D^b(P\backslash G /B)$,
    \begin{align*}
        (p_*\mathcal{F}) *_P \mathcal{G} & = m_*^P(\overline{\pi}_1^*p_*\mathcal{F}\otimes \overline{\pi}_2^*\mathcal{G})\\
        & = a_*\pi_1^*\mathcal{F} \otimes \overline{\pi}_2^*\mathcal{G}\\
        & = m_*^Pa_*(\pi_1^*\mathcal{F}\otimes a^*\overline{\pi}_2^*\mathcal{G})\\
        & = m^B_*(\pi_1^*\mathcal{F}\otimes \pi_2^*\mathcal{G})\\
        & = \mathcal{F} *_B \mathcal{G}
    \end{align*}
    since $\overline{\pi}_2\circ a = \pi_2$ and $m^P\circ a = m^B$. 
    
    In particular, this means $-*_{P} ((p_1)_*\mathrm{IC}_{s_1 \dots s_{k-1}}) = -*_B \mathrm{IC}_{s_1 \dots s_{k-1}}$. By the decomposition theorem, we have
    \begin{align*}
        \pi_*\mathrm{IC}_{s_1 \cdots s_{k-1}} & = \mathrm{IC}_{s_1 \cdots s_{k-1}}^{P} \oplus \mathcal{E}
    \end{align*}
    for some object $\mathcal{E} \in \mathrm{Perv}(P \backslash G/B)$, guaranteeing the statement in the lemma.
\end{proof}

\begin{corollary}\label{cor:summand}
    For any $1 \leq k \leq n$, the object $\tilde{\mathcal{F}}_k^t$ is a canonical direct summand of
    \begin{align}
        \mathcal{G}_1^t * \mathrm{IC}_{s_1 \cdots s_{k-1}},
    \end{align}
    where $\mathrm{IC}_{s_1 \cdots s_{k-1}} \in \mathrm{Perv}(B\backslash G/B)$. 
\end{corollary}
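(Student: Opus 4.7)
The corollary is a direct specialization of the preceding lemma to the object $\mathcal{G}_1^t$. The lemma provides two ingredients: the base change identity $(p_*\mathcal{F}) *_P (-) \cong \mathcal{F} *_B (-)$ for the projection $p \colon B\backslash G/B \to P\backslash G/B$, and the decomposition theorem statement $p_* \mathrm{IC}_{s_1 \cdots s_{k-1}} \cong \mathrm{IC}^P_{s_1 \cdots s_{k-1}} \oplus \mathcal{E}$.

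First I would observe that the lemma's base change identity is stated for objects $\mathcal{F}$ that are left $B$-equivariant, while $\mathcal{G}_1^t$ lies on $G/P$ (it is merely $P$-equivariant on the left). However, inspection of the proof of the lemma shows that the argument uses only the commuting square of projections and multiplication maps between the various convolution spaces, and does not use the left equivariance of $\mathcal{F}$. Consequently, the identity extends verbatim to $\mathcal{G}_1^t$ under the natural interpretation of the convolution $\mathcal{G}_1^t * \mathrm{IC}_{s_1 \cdots s_{k-1}}$ as $(p_1^*\mathcal{G}_1^t) *_B \mathrm{IC}_{s_1 \cdots s_{k-1}}$.

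Granting this extension, the proof is completed by a short manipulation:
\begin{align*}
\mathcal{G}_1^t * \mathrm{IC}_{s_1 \cdots s_{k-1}} &\cong \mathcal{G}_1^t *_P (p_* \mathrm{IC}_{s_1 \cdots s_{k-1}}) \\
&\cong (\mathcal{G}_1^t *_P \mathrm{IC}^P_{s_1 \cdots s_{k-1}}) \oplus (\mathcal{G}_1^t *_P \mathcal{E}) \\
&= \tilde{\mathcal{F}}_k^t \oplus (\mathcal{G}_1^t *_P \mathcal{E}),
\end{align*}
where the first isomorphism is the extended base change identity, the second uses additivity of $\mathcal{G}_1^t *_P (-)$ applied to the decomposition theorem output, and the last equality is the definition of $\tilde{\mathcal{F}}_k^t$. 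This exhibits $\tilde{\mathcal{F}}_k^t$ as a canonical direct summand, with the projection onto it induced from the canonical projection $p_* \mathrm{IC}_{s_1 \cdots s_{k-1}} \twoheadrightarrow \mathrm{IC}^P_{s_1 \cdots s_{k-1}}$ coming from the decomposition theorem.

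The main obstacle is the equivariance check in the second paragraph: one must verify that the convolution spaces $B\backslash G \times_B G/B$, $B\backslash G \times_P G/B$ and the associated multiplication maps used in the lemma's proof all continue to make sense when the left factor is only $P$-equivariant, so that the cartesian square argument goes through unchanged. This is routine but requires care in tracking which side carries which equivariance; once it is in hand, the decomposition is formal.
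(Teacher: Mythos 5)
Your argument is correct and is essentially the paper's intended deduction: the corollary follows from the preceding lemma by the same base-change/projection-formula identity, interpreting $\mathcal{G}_1^t * \mathrm{IC}_{s_1\cdots s_{k-1}}$ as $(p_1^*\mathcal{G}_1^t)*_B \mathrm{IC}_{s_1\cdots s_{k-1}} \cong \mathcal{G}_1^t *_P (p_{1*}\mathrm{IC}_{s_1\cdots s_{k-1}})$ and then splitting off $\mathrm{IC}^P_{s_1\cdots s_{k-1}}$ via the decomposition theorem, with canonicity (and $t$-independence) inherited from that splitting. Your point that the lemma's proof uses only the convolution-space diagram and not any left equivariance of the first factor is exactly the right justification for applying it to $\mathcal{G}_1^t$; the only slip is the parenthetical claim that $\mathcal{G}_1^t$ is ``$P$-equivariant on the left''---as a sheaf on $G/P$ it carries right $P$-equivariance and no left equivariance, which is precisely why the remark is needed.
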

By the way this canonical summand arises (in particular, that the direct sum decomposition occurs exactly the same way independently of $t$), we then see that to show $\tilde{\mathcal{F}}_k^t$ assemble to a coherent complex, it is enough to show that the objects $\mathcal{G}_1^t * \mathrm{IC}_{s_1 \cdots s_{k-1}}$ appearing in in Corollary \ref{cor:summand} assembles to a complex family across all $t \in \mathfrak{h}^*/Q$. This follows by induction using the following lemma.
\begin{lemma}
    If $\mathcal{G}^t \in D^b(G/B)$ are objects which assemble to a coherent complex, then for any $s \in S$, the objects
    \[\mathcal{G}^t * \mathrm{IC}_s\]
    also do so.
\end{lemma}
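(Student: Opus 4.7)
The plan is to reduce to a statement about projective functors on $\mathfrak{g}$-modules via Beilinson--Bernstein localization. First I would invoke the standard identification: under the equivalence $\Gamma$, right convolution $-*\mathrm{IC}_s$ on $D^b(G/B)$ corresponds to a projective functor $F_s$ on $\gmod_0$ in the sense of Bernstein--Gelfand, i.e.\ a direct summand of the functor $V\otimes(-)$ for some finite-dimensional $\mathfrak{g}$-module $V$. Since both $\Gamma$ and the passage from a family $\{\mathcal{G}^t\}$ to the underlying $\mathfrak{g}$-module $\bigoplus_t \Gamma(\mathcal{G}^t)$ commute with direct sums in $t$, and since direct summands of coherent complexes are evidently coherent complexes, it suffices to show: if $\mathcal{M} = \bigoplus_{t}\Gamma(\mathcal{G}^t)$ is a coherent complex, then so is $V \otimes \mathcal{M}$.

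For this, the weight decomposition
\[
(V \otimes \mathcal{M})_\lambda \;=\; \bigoplus_{\nu \in \mathrm{supp}(V)} V_\nu \otimes \mathcal{M}_{\lambda - \nu}
\]
immediately yields the uniform bound on virtual dimensions, bounded by $\dim V$ times the uniform bound for $\mathcal{M}$. For the polynomiality of traces, let $u \in \mathcal{U}(\mathfrak{g})^T$ and write $\Delta(u) = \sum_i u_{(1),i}\otimes u_{(2),i}$. A short computation using $T$-homogeneity of the coproduct shows
\[
\mathrm{Tr}\bigl(u \mid (V \otimes \mathcal{M})_\lambda\bigr) \;=\; \sum_{\nu}\sum_{i} \mathrm{Tr}\bigl(u_{(1),i}^{0} \mid V_\nu\bigr)\cdot \mathrm{Tr}\bigl(u_{(2),i}^{0} \mid \mathcal{M}_{\lambda-\nu}\bigr),
\]
where the superscript $0$ denotes the weight-zero component (forced on diagonal blocks by preservation of the total weight $\lambda$). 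The right-hand side is a finite sum in which the first factor is a scalar and each $\mathrm{Tr}(u_{(2),i}^{0} \mid \mathcal{M}_{\lambda-\nu})$ is polynomial in $\lambda - \nu$, hence in $\lambda$, by the coherent-complex hypothesis on $\mathcal{M}$; summing gives the required polynomiality.

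The main obstacle is the first step: identifying $-*\mathrm{IC}_s$ under Beilinson--Bernstein with a direct summand of a tensoring functor $V\otimes(-)$. This is standard in the theory of Hecke-category actions on Category $\O$ (where $F_s$ is the wall-crossing/translation functor $\theta_s$), but requires some care with cohomological shifts and normalizations, as $\mathrm{IC}_s$ is not itself an invertible object and sits in triangles with $\Delta_s$ and $\nabla_s$ (for which the correspondence with intertwining functors is already spelled out in the paper, cf.\ Proposition \ref{prop:convinterpretation}). Alternatively, one may work purely on the geometric side, writing $-*\mathrm{IC}_s \cong p_s^*(p_s)_*(-)$ up to shift, where $p_s : G/B \to G/P_s$ is the minimal-parabolic projection, and verifying the two defining properties of a coherent complex via proper base change along the $\mathbb{P}^1$-fibration $p_s$ together with a direct computation of $T$-characters of $p_s^*(p_s)_*\mathcal{G}^t$; this geometric route requires analogous bookkeeping but parallels the algebraic argument above.
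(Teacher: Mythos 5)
There is a genuine gap, and it sits exactly at the step you flag as the ``main obstacle'': the identification of $-*\mathrm{IC}_s$ with a projective functor is false. Under the dictionary $\mathrm{Perv}_U(G/B)\cong\mathcal{O}_0$, the wall-crossing functor $\theta_s$ corresponds to convolution with the \emph{tilting} object $T_s$, not with $\mathrm{IC}_s$; the functor $-*\mathrm{IC}_s\cong p_s^*p_{s*}(-)[1]$ is not $t$-exact and is not (any shift of) a direct summand of $V\otimes(-)$. Concretely, for $\mathfrak{sl}_2$ one has $\mathrm{IC}_s*\mathrm{IC}_s\cong \mathrm{IC}_s[1]\oplus\mathrm{IC}_s[-1]$, i.e.\ the simple Verma is sent to two shifted copies of itself, whereas $\theta_s$ applied to the simple Verma is the big projective/tilting module, concentrated in degree $0$. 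This swap of $\mathrm{IC}_s$ and $T_s$ is precisely the Koszul-duality phenomenon the paper is organized around, so it cannot be absorbed into ``shifts and normalizations.'' The paper's proof deals with this by resolving $\widetilde{\mathrm{IC}}_s$ by the tilting complex $T_e\to T_s\to T_e$, so that $\mathcal{G}^t*\mathrm{IC}_s$ becomes a total complex whose terms are $\tilde{\mathcal{G}}^t$ and $\tilde{\mathcal{G}}^t*T_s$; only at that point does the wall-crossing functor (a composition of translation functors, i.e.\ tensoring with finite-dimensional modules and projecting to a block) enter, and coherence is checked term by term. Your alternative geometric route via $p_s^*p_{s*}(-)[1]$ states the correct formula but contains no actual verification of polynomiality of traces, so it does not repair the argument.

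A secondary problem: the assertion that ``direct summands of coherent complexes are evidently coherent complexes'' is not evident. Coherence requires $\lambda\mapsto\mathrm{Tr}(u|_{\mathcal{M}_\lambda})$ to be polynomial for every $u\in\mathcal{U}(\mathfrak{g})^T$, and knowing the sum of two trace functions is polynomial does not force each summand to be. The paper is arranged to avoid ever invoking this: the only summand-taking it needs is the projection to the trivial central character block, and there it replaces the projection by a fixed polynomial $P$ in generators of $Z(\mathfrak{g})$, so that the trace of $u$ on the block equals the trace of $Pu\in\mathcal{U}(\mathfrak{g})^T$ on the full module, to which the coherence hypothesis applies directly. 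By contrast, your computation showing that $V\otimes\mathcal{M}$ is a coherent complex (weight decomposition for the dimension bound, coproduct and weight-zero components for polynomiality of traces) is correct and is essentially the content the paper compresses into ``clearly tensoring with a finite-dimensional representation preserves the property''; it would be a useful ingredient in a corrected proof following the tilting-complex route.
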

\begin{proof}
    Let $p : G/U \to G/B$ be the projection, then note that $\mathcal{G}^t *_B \mathrm{IC}_s \cong (p^*\mathcal{G}^t) *_U \widetilde{\mathrm{IC}}_s$, where $\widetilde{\mathrm{IC}}_s$ is the corresponding object of $\mathrm{Perv}_U(G/B)$. We note that in $D^b_U(G/B)$, $\widetilde{\mathrm{IC}}_s$ can be represented as the complex of tilting objects given by
    \[\begin{tikzcd}
        \dots \arrow[r] 0 \arrow[r] & T_e \arrow[r] & T_s \arrow[r] & T_e \arrow[r] & 0 \arrow[r] & \dots,
    \end{tikzcd}\]
    with the $T_s$ term sitting in degree zero.
    Since tilting sheaves are convolution-exact, we can see that the complex $\mathcal{G}^t *_B \mathrm{IC}_s$ is then quasi-isomorphic to the total complex of
    \begin{equation}\label{eqn:doublecomplex}\begin{tikzcd}
        \dots \arrow[r] & 0 \arrow[r] & \tilde{\mathcal{G}}^t \arrow[r] & \tilde{\mathcal{G}^t} *_U T_s \arrow[r] & \tilde{\mathcal{G}}^t \arrow[r] & 0 \arrow[r] & \dots
    \end{tikzcd}\end{equation}
    We then note that the $\mathcal{U}(\mathfrak{g})$-module corresponding to $\tilde{\mathcal{G}}^t *_U T_s$ is exactly the module obtained by applying the \emph{wall-crossing functor} labelled by $s$ to the module corresponding to $\tilde{\mathcal{G}}^t$; this follows from \cite{ABG}, c.f.\ \cite{SCatO}. We note that the wall-crossing functor labeled by $s$ is a composition of \emph{translation functors}, each of which itself is a composition of tensoring with a finite-dimensional module and then projecting to the zero central character block.

    We note that each of these functors preserves the notion of a coherent complex: first, clearly tensoring with a finite-dimensional representation preserves the property of being a coherent complex. The resulting complex will be a finite direct sum of complexes with various central characters. Since then the projection to the trivial central character summand can be written as a polynomial in the generators of $\mathcal{Z}(\mathfrak{g})$, this summand must also be a coherent complex.

    Thus, since each term of (\ref{eqn:doublecomplex}) is a coherent complex, it is clear that the corresponding total complex is coherent. Since this complex is quasi-isomorphic to $\mathcal{G}^t * \mathrm{IC}_s$, the statement in the lemma must hold.
\end{proof}

In the following, let $\mu : T^*(G/B) \to \mathfrak{g}$ be the moment map whose image is the nilpotent cone, i.e.\ the Springer resolution.
\begin{lemma}\label{lem:irredcomponents}
    Suppose $e \in (\mathfrak{sl_{n+1}})^*$ is a nilpotent element belonging to the minimal nonzero nilpotent orbit (this orbit consists of elements with Jordan type given by the partition $(2, n-1)$). Then the Springer fiber $X_e = \mu^{-1}(e) \subset T^*(G/B)$ has $n$ irreducible components $X_e^k$, given by
    \begin{align}
        X_e^k &= Y_k \cap X_e,
    \end{align}
    where for any $1 \leq k \leq n$, $p_k : G/B \to G/P_{k}^{\mathrm{max}}$ is the projection, and $Y_k$ is the image in $T^*(G/B)$ of the correspondence
    \begin{equation}
        \begin{tikzcd}
            & G/P_k^{\mathrm{max}}\times_{G/B} T^*(G/B)\arrow[dl] \arrow[dr] & \\
            T^*(G/P_k^{\mathrm{max}}) & & T^*(G/B).
        \end{tikzcd}
    \end{equation}
\end{lemma}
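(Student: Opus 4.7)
The plan is to pick an explicit representative $e$ for the minimal nilpotent orbit, identify $Y_k$ concretely as a sub-bundle of $T^*(G/B)$, compute $Y_k \cap X_e$ as a flag-theoretic condition, verify irreducibility and top dimension, and finally check that the $n$ resulting subvarieties exhaust the irreducible components of $X_e$.

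I would take $e$ to be a rank-one nilpotent on $\mathbb{C}^{n+1}$ with image $L = \mathrm{Im}(e)$ a line and kernel $H = \ker(e)$ the unique hyperplane containing $L$. A short case analysis of the Springer condition $e(F_i) \subset F_{i-1}$ shows that a complete flag $F_\bullet$ lies in $X_e$ if and only if there exists a unique index $m \in \{2, \ldots, n+1\}$ for which $F_{m-1} \subset H$, $L \subset F_{m-1}$, and $F_m \not\subset H$. This yields a disjoint stratification $X_e = \bigsqcup_{m=2}^{n+1} Z_m$ into $n$ locally closed strata.

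Next, I would interpret the correspondence in the lemma as the sub-bundle $p_k^* T^*(G/P_k^{\max}) \subset T^*(G/B)$ consisting of covectors annihilating the vertical tangent directions of $p_k$. Under the $G$-equivariant identification $T^*(G/B) \cong G \times^B \mathfrak{n}$ via the Killing form, this corresponds to $G \times^B \mathfrak{u}_{P_k^{\max}}$, where $\mathfrak{u}_{P_k^{\max}}$ is the nilpotent radical of $\mathfrak{p}_k^{\max}$. Concretely, a pair $(F_\bullet, \xi) \in T^*(G/B)$ lies in $Y_k$ iff $\xi \in \mathfrak{gl}_{n+1}$ satisfies $\xi(\mathbb{C}^{n+1}) \subset F_k$ and $\xi(F_k) = 0$. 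Applied to $\xi = e$ this becomes $L \subset F_k$ and $F_k \subset H$, so
\[
X_e^k \;=\; Y_k \cap X_e \;=\; \{F_\bullet : L \subset F_k \subset H\}.
\]

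To finish, I would exhibit a Zariski locally-trivial fibration $X_e^k \to \mathrm{Gr}(k-1,\, H/L)$, sending $F_\bullet \mapsto F_k/L$, with fiber over a given $F_k$ equal to $\mathrm{Fl}(F_k) \times \mathrm{Fl}(\mathbb{C}^{n+1}/F_k)$; both base and fibers are irreducible, so $X_e^k$ is irreducible. A short dimension count confirms $\dim X_e^k = \binom{n}{2} = \dim X_e$ for every $k \in \{1, \ldots, n\}$, so each $X_e^k$ is a top-dimensional irreducible closed subvariety of $X_e$ and hence an irreducible component. The uniqueness of the leaving-$H$ index $m$ in the stratification above shows that $Z_m \subset X_e^{m-1}$, so the $n$ subvarieties $X_e^1, \ldots, X_e^n$ cover $X_e$ and therefore exhaust its components. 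The most delicate step is correctly translating the abstract correspondence in the statement into the sub-bundle $p_k^*T^*(G/P_k^{\max})$; once this is in hand, the remaining verifications are direct linear-algebra computations on flags.
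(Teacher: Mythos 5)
Your proposal is correct; note, however, that the paper itself offers no proof of this lemma at all — it is stated as a known fact about Springer fibers of minimal nilpotents (the explicit flag-theoretic description of the components is only invoked later, via the reference to Spaltenstein in the proof of Corollary \ref{cor:pervandconst}) — so what you have done is supply the standard verification, and it is sound. The case analysis of $e(F_i)\subset F_{i-1}$ for a rank-one $e$ with image $L$ and kernel $H$, the resulting stratification by the index where the flag leaves $H$, the identification $Y_k\cap X_e=\{F_\bullet: L\subset F_k\subset H\}$, the fibration over $\mathrm{Gr}(k-1,H/L)$, and the dimension count all check out, and the covering argument via $Z_m\subset X_e^{m-1}$ correctly shows these $n$ top-dimensional closed irreducible subvarieties exhaust the components. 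Two small points. First, the displayed fibre product in the statement does not literally parse (there is no map $G/P_k^{\mathrm{max}}\to G/B$); the intended correspondence is $G/B\times_{G/P_k^{\mathrm{max}}}T^*(G/P_k^{\mathrm{max}})$ as in \cite[4.3.4]{KS}, whose image in $T^*(G/B)$ is exactly the subbundle $p_k^*T^*(G/P_k^{\mathrm{max}})\cong G\times^B\mathfrak{u}_{P_k^{\mathrm{max}}}$ of covectors killing the fibres of $p_k$, so your reading is the right one. Second, two cosmetic slips: $\ker(e)$ is of course not ``the unique hyperplane containing $L$'' — you only need that it is a hyperplane containing $L=\mathrm{Im}(e)$, which holds since a traceless rank-one $e$ satisfies $e^2=0$; and the equality $\dim X_e=\binom{n}{2}$ should be justified either by the general formula $\dim\mu^{-1}(e)=\dim G/B-\tfrac{1}{2}\dim\mathbb{O}$ for the minimal orbit (of dimension $2n$) or, more economically, read off from the covering you establish anyway, after which the component count is complete (together with the evident pairwise distinctness of the $X_e^k$).
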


We can then let $X_{\mathbb{O}}^k$ be the closure of $\mu^{-1}(\mathbb{O})$ where $\mathbb{O}$ is the nilpotent orbit of $e$.
\begin{lemma}\label{lem:cohsupp}
    For any $k$ with $1 \leq k \leq n$ and any $t \in \mathfrak{h}^*/Q$, the singular support of $\tilde{\mathcal{F}}_k^t$ lies in the closure of $X_\mathbb{O}^k \subset T^*(G/B)$.
\end{lemma}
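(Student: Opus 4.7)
The plan is to prove the claim by establishing two separate containments,
\[
SS(\tilde{\mathcal{F}}_k^t) \subset Y_k \quad \text{and} \quad \mu(SS(\tilde{\mathcal{F}}_k^t)) \subset \overline{\mathbb{O}},
\]
from which the desired conclusion $SS(\tilde{\mathcal{F}}_k^t) \subset Y_k \cap \overline{\mu^{-1}(\mathbb{O})} = X_\mathbb{O}^k$ follows.

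For the ``smoothness along $p_k$-fibers'' containment $SS(\tilde{\mathcal{F}}_k^t) \subset Y_k$, the first step is to exploit Lemma \ref{lem:descent}: since $w_0^P s_1\cdots s_{k-1}$ has all $s_j$ with $j \neq k$ in its right descent set, it is the maximum element of its coset modulo $W_{L_k}$, where $L_k$ is the Levi of $P_k^{\max}$. Consequently, the Schubert variety $\overline{B w_0^P s_1\cdots s_{k-1} B}/B$, which equals the closure of $P s_1\cdots s_{k-1} B/B$, is $p_k$-saturated (a union of $p_k$-fibers). This shows that $\mathrm{IC}_{s_1\cdots s_{k-1}}^P \in D(P\backslash G/B)$ is, up to a cohomological shift, the pullback of an IC sheaf $\mathcal{J}$ on $P\backslash G/P_k^{\max}$ along the natural projection $P\backslash G/B \to P\backslash G/P_k^{\max}$. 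Base change for the convolution correspondence then produces a canonical isomorphism $\tilde{\mathcal{F}}_k^t \cong p_k^*(\mathcal{G}_1^t *_P \mathcal{J})$, and the standard functoriality of singular support under smooth pullback yields $SS(\tilde{\mathcal{F}}_k^t) \subset p_k^*\bigl(T^*(G/P_k^{\max})\bigr) = Y_k$.

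For the moment-map containment $\mu(SS(\tilde{\mathcal{F}}_k^t)) \subset \overline{\mathbb{O}}$, I would use Corollary \ref{cor:summand} to realize $\tilde{\mathcal{F}}_k^t$ as a canonical direct summand of $\mathcal{F}_1^t *_B \mathrm{IC}_{s_1\cdots s_{k-1}}$; it therefore suffices to bound $\mu(SS(\mathcal{F}_1^t *_B \mathrm{IC}_{s_1\cdots s_{k-1}}))$. Since $\mathcal{F}_1^t = p_1^* \mathcal{G}_1^t$ is a pullback from $G/P \cong \mathbb{P}^n$ and the moment map $\mu_P : T^*\mathbb{P}^n \to \mathfrak{g}^*$ has image equal to $\overline{\mathbb{O}}$ (being a resolution of the minimal nilpotent orbit closure in type $A$), we already have $\mu(SS(\mathcal{F}_1^t)) \subset \overline{\mathbb{O}}$. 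The remaining point is that right convolution by a $B$-equivariant sheaf on $G/B$ preserves the moment-map image of the singular support. Under the Beilinson--Bernstein equivalence, this expresses the well-known fact that translation and intertwining functors do not enlarge the associated variety of a $\mathfrak{g}$-module, since they are built from tensoring with a finite-dimensional representation followed by projection to a central character block.

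The main obstacle is rigorously justifying the invariance of $\mu(SS(-))$ under convolution with $B$-equivariant sheaves in the second step. Geometrically this should follow from the compatibility of the moment map with the correspondence $G/B \xleftarrow{a} G \times^B G/B \xrightarrow{m} G/B$ defining the convolution, together with the observation that the right $B$-action on $G$ leaves the image of the $G$-moment map on $T^*(G/B)$ unchanged; a short conormal-arithmetic argument along this correspondence should then yield the bound. Once this point is settled, combining the two containments closes the proof.
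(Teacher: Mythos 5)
Your first containment is the same argument the paper gives: Lemma \ref{lem:descent} shows that $\mathrm{IC}^P_{s_1\cdots s_{k-1}}$ is (up to shift) pulled back along $P\backslash G/B \to P\backslash G/P_k^{\mathrm{max}}$, hence $\tilde{\mathcal{F}}_k^t$ lies in the image of $p_k^*$ and its singular support lies in $Y_k$. Your second containment, however, takes a genuinely different route. The paper does not argue geometrically here at all: it uses that the $\tilde{\mathcal{F}}_k^t$ assemble to a coherent complex (established in the lemmas just before) and then quotes Mathieu's Proposition 1.6, that the associated variety of a coherent family lies in the closure of the minimal nilpotent orbit, reading this as the bound by $X_{\mathbb{O}}$. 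You instead start from the fact that the moment map image of $T^*(G/P_1^{\mathrm{max}})\cong T^*\mathbb{P}^n$ is $\overline{\mathbb{O}}$ and propagate this through the convolution via the claim that convolution with $B$-constructible kernels does not enlarge $\mu(\mathrm{SS}(-))$. That claim, which you rightly flag as the crux, is true and can be closed by either of your two suggestions: geometrically, the singular support of a $B$-constructible kernel lies in the union of conormals to the $G$-orbits on $G/B\times G/B$, i.e.\ in the Steinberg correspondence $T^*(G/B)\times_{\mathfrak{g}^*}T^*(G/B)$, so the Kashiwara--Schapira estimate for composition of kernels (the convolution map being proper) gives $\mathrm{SS}(\mathcal{F}*\mathcal{K})\subset \mu^{-1}\bigl(\mu(\mathrm{SS}(\mathcal{F}))\bigr)$; or algebraically via wall-crossing/translation functors, in which case you additionally need the converse direction of Lemma \ref{lem:ssloc} at the regular (trivial) central character, namely $\mu(\mathrm{SS}(\mathcal{F}))\subset \mathrm{AV}(\Gamma\mathcal{F})$ (Borho--Brylinski/Ginzburg), to transfer the associated-variety bound back to a singular-support bound --- note that the paper's own proof tacitly uses this same converse when it says Mathieu's statement ``is exactly'' the bound $\mathrm{SS}(\tilde{\mathcal{F}}_k^t)\subset X_{\mathbb{O}}$, so you are no worse off on that point. (You should also record the small observation that $\mu^{-1}(\overline{\mathbb{O}})$ differs from $\overline{\mu^{-1}(\mathbb{O})}$ only by the zero section, which lies in the closure, so your bound really lands in $X_{\mathbb{O}}^k$.) What each approach buys: yours is more self-contained and purely geometric, independent of Mathieu's classification input and of the coherence of the family; the paper's avoids proving any convolution estimate at the cost of invoking Mathieu plus the previously established coherence --- and both ultimately lean on the same wall-crossing/translation-functor mechanism, the paper for coherence of $\mathcal{G}_1^t*\mathrm{IC}_{s}$, you for the associated-variety bound.
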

\begin{proof}
    First, we claim that $\tilde{\mathcal{F}}_k^t$ lies in the image of $p_k^*$; this implies that the singular support of $\tilde{\mathcal{F}}_k$ lies in $Y_k$. To do so, note that by its definition as a convolution, it is enough to check that $\mathrm{IC}_{s_1 \cdots s_{k-1}}^{P}$ lies in the image of $p_k^*$. Note that $P_k^\mathrm{max}$ corresponds to the parabolic subgroup $\langle s_1, \dots, s_{k-1}, s_{k+1}, \dots, s_n\rangle$, and so $\mathrm{IC}_{s_1 \cdots s_{k}}^{P}$ lying in the image of $p_k^*$ is exactly the statement in Lemma \ref{lem:descent} that the right descent set of $w_0^Ps_1 \dots s_{k-1}$ contains every simple reflection in this parabolic subgroup of $W$.
    
    By Lemma \ref{lem:irredcomponents}, to show that it is contained in $X_{\mathbb{O}}^k$, it remains only to show that it is also contained in $X_{\mathbb{O}}$. In \cite[Proposition 1.6]{Mathieu} (c.f.\ \cite{Fernando}), it is explained that the associated variety of any coherent family lies inside the minimal nonzero nilpotent orbit; this is exactly the statement that the singular support of $\tilde{\mathcal{F}}_k^t$ lies in $X_{\mathbb{O}}$.
\end{proof}

For the following inductive argument, note that in the present case, we have
\begin{align*}
    \mathrm{IC}_{s_1} * \dots * \mathrm{IC}_{s_{k-1}} \cong \mathrm{IC}_{s_1\cdots s_{\mathrm{k-1}}}.
\end{align*}
\begin{corollary}\label{cor:pervandconst}
    For any $1 \leq k \leq n$, the object $\tilde{\mathcal{F}}_k^t$ is the direct sum of a perverse sheaf $\mathcal{P}_k^t$ with some number of cohomologically-shifted copies of the constant sheaf.
\end{corollary}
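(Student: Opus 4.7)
The plan is to combine Corollary \ref{cor:summand}, Lemma \ref{lem:cohsupp}, and the decomposition theorem. By Corollary \ref{cor:summand}, $\tilde{\mathcal{F}}_k^t$ is a canonical direct summand of $\mathcal{G}_1^t * \mathrm{IC}_{s_1 \cdots s_{k-1}}$. I would first observe that $\mathcal{G}_1^t$ is perverse on $G/P \cong \mathbb{P}^n$ up to a cohomological shift by $n$: the open embedding $j : T \hookrightarrow \mathbb{P}^n$ is affine (its complement is the union of $n$ coordinate hyperplanes), so $j_*$ is $t$-exact for the perverse $t$-structure and $j_*\mathcal{L}_t[n]$ is a perverse sheaf. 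Consequently $\mathcal{G}_1^t * \mathrm{IC}_{s_1 \cdots s_{k-1}}$ is, up to a fixed shift, a proper pushforward of a perverse sheaf along the convolution map, so the decomposition theorem applies to present it as a direct sum of cohomologically shifted simple perverse sheaves on $G/B$. This decomposition is inherited by its direct summand $\tilde{\mathcal{F}}_k^t$.

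Next, I would constrain the possible simple summands using the singular support bound. By Lemma \ref{lem:cohsupp}, every simple summand of $\tilde{\mathcal{F}}_k^t$ has singular support contained in $X_{\mathbb{O}}^k$. The zero section of $T^*(G/B)$ is contained in $X_{\mathbb{O}}^k$ (since $Y_k$ projects surjectively to $G/B$ and $0 \in \overline{\mathbb{O}}$), and the only simple perverse sheaf on $G/B$ whose singular support equals the zero section is the IC sheaf of the smooth connected variety $G/B$ itself, namely a shift of $\mathbb{C}_{G/B}$. Any other simple summand must have singular support genuinely meeting $\mu^{-1}(\mathbb{O})$; these will be collected into the perverse sheaf $\mathcal{P}_k^t$ asserted by the corollary.

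The main obstacle is the remaining parity/shift control: showing that the simple summands whose singular support truly meets $\mu^{-1}(\mathbb{O})$ appear in perverse cohomological degree zero, while nontrivial cohomological shifts in the decomposition can only be attached to constant-sheaf summands. I expect this to follow by comparing fiber dimensions of the convolution map over the open orbit $\mathbb{O}$ versus over $\{0\} \in \overline{\mathbb{O}}$: over $\mathbb{O}$ the map is expected to be small, forcing IC summands whose characteristic variety meets $\mu^{-1}(\mathbb{O})$ to occur without shift, while any defect of semismallness is concentrated over $0$ and so can only contribute summands whose singular support collapses to the zero section, i.e.\ cohomologically shifted constant sheaves. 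Once this parity statement is verified, collecting the in-degree-zero summands into $\mathcal{P}_k^t$ and separating off the shifted constants gives the required splitting.
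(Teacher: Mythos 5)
There is a genuine gap, and it occurs at the very first step. Your plan rests on applying the decomposition theorem to $\mathcal{G}_1^t * \mathrm{IC}_{s_1\cdots s_{k-1}}$, but the decomposition theorem requires the input complex to be semisimple (or pure) of geometric origin, and $\mathcal{G}_1^t = j_*\mathcal{L}_t$ is \emph{not} semisimple for special values of $t$: for instance at $t=0$ (trivial local system, which is among the parameters the construction must cover, since the coherent family ranges over all $t \in \mathfrak{h}^*/Q$), the $*$-extension of the constant local system from the open torus orbit of $\mathbb{P}^n$ has many composition factors and no splitting. In fact your route would prove too much: it would exhibit $\tilde{\mathcal{F}}_k^t$ as a direct sum of shifted \emph{simple} perverse sheaves, whereas the corollary deliberately only claims a splitting into a perverse sheaf $\mathcal{P}_k^t$ (in general not semisimple --- compare the $\mathfrak{sl}_2$ examples in Section \ref{sec:geomexamples}, where the relevant perverse objects are $\nabla$-type extensions) plus shifted constants. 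So the semisimple decomposition you start from is false for the non-generic $t$ that are essential later.

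The second problem is that the step you label as ``the main obstacle'' --- controlling which summands can carry a nonzero shift --- is exactly the content of the corollary, and you only offer an expectation (smallness of the convolution map over $\mathbb{O}$, with the defect concentrated over $0$) rather than an argument; even granting a semisimple decomposition, a shifted summand could a priori be $\mathrm{IC}_Z$ for a proper subvariety $Z$ whose conormal maps into $\overline{\mathbb{O}}$, and nothing in your sketch rules this out. The paper proceeds quite differently: it inducts on the length of the word $s_1\cdots s_{k-1}$, rewrites convolution with $\mathrm{IC}_{s_k}$ as $p_{s_k}^*p_{s_k*}(-)[1]$ for the minimal parabolic projection, and then uses the compatibility of Kashiwara--Schapira microlocalization with pushforward together with the fact (via Spaltenstein's explicit description of the components of the Springer fiber) that the correspondence is generically one-to-one precisely on the component $X_{\mathbb{O}}^k$, in codimension one at the generic locus. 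This shows the microlocalization of $p_{s_k*}\tilde{\mathcal{F}}_k^t[1]$ is perverse along every conormal in the singular support except possibly the zero section, so any failure of perversity is forced to have singular support in the zero section, i.e.\ consists of shifted constant sheaves --- no semisimplicity and no smallness statement is ever needed. To repair your proposal you would need either to restrict to generic $t$ (which does not suffice for the application) or to replace the decomposition-theorem step by an argument, such as the paper's microlocal induction, that works for arbitrary monodromy.
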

\begin{proof}
    By induction, the description of $\tilde{\mathcal{F}}_k^t$ in Corollary \ref{cor:summand} shows that it suffices to show that if this statement holds for $\tilde{\mathcal{F}}_k^t$, then it also holds for $\tilde{\mathcal{F}}_k^t*\mathrm{IC}_{s_k}$.

    First note that
    \begin{align*}
        \tilde{\mathcal{F}}_k^t * \mathrm{IC}_{s_k} & \cong p_{s_k}^*p_{s_k*}\tilde{\mathcal{F}}_k^t[1], & p_{s_k} & : G/B \to G/P_{s_k}.
    \end{align*}
    for $P_{s_k}$ the \emph{minimal} parabolic corresponding to the simple reflection $s_k$. Since $p_{s_k}$ is smooth, it is enough to show that $p_{s_k*}\tilde{\mathcal{F}}_k^t[1]$ is perverse (up to some constant sheaf factors). To do so, we use the microlocalization functors discussed in \cite[\S 4.3]{KS}. We note that these functors are compatible with pushforward as detailed in \cite[4.3.4]{KS}: if $Z \subset G/B$ is a subvariety, and we consider the maps $f^{P_{s_k}}$ and $f^B$ as in
    \begin{equation}
        \begin{tikzcd}
            & G/P_{s_k}\times_{G/B} T^*(G/B)\arrow[dl, "f^{P_{s_k}}"'] \arrow[dr, "f^B"] & \\
            T^*(G/P_{s_k}) & & T^*(G/B),
        \end{tikzcd}
    \end{equation}
    then $\mu_{p_{s_k}(Z)}(p_{s_k*}\tilde{\mathcal{F}}_k^t) = f^{P_{s_k}}_*f^{B*}\mu_Z(\tilde{\mathcal{F}}_k^t)$.

    By Lemma \ref{lem:cohsupp}, $\tilde{\mathcal{F}}_k$ has singular support in the closure of $X_{\mathbb{O}}^k \subset T^*(G/B)$. Note that we can write $X_{\mathbb{O}}^k$ as the union of some conormals $T_{Z_i}^*(G/B)$; let $Z$ be any such $Z_i$ appearing in this union. 

    We now note that among the $n$ components $\{X_{\mathbb{O}}^i\}_{i=1}^n$, $X_{\mathbb{O}}^k$ is the unique component for which the map $f^{P_{s_k}}$ is generically one-to-one on $(f^B)^{-1}(X_{\mathbb{O}}^k)$. This is because $X_{\mathbb{O}}^k$ is the image of a correspondence defined by pullback from the maximal parabolic $G/P_k^{\mathrm{max}}$, and $P_k^{\mathrm{max}}$ is the parabolic corresponding to $$\langle s_1, \dots, s_{k-1}, s_{k+1}, \dots, s_n\rangle.$$ Thus this claim follows from the explicit linear-algebraic description of the components $X_{\mathbb{O}}^k$ in terms of flags given, for example, in \cite[\S II.5]{Spalt}. This perspective also shows that $(f^B)^{-1}(Z)$ can be thought of as an intersection of $Z$ with $G/P_{s_k} \times_{G/B} T^*(G/B)$ occuring in codimension $1$ at the generic locus. Together, these facts imply that for any $Z$ as chosen above, $f_{*}^{P_{s_k}}f^{B*}\mu_Z(\tilde{\mathcal{F}}_k^t)[1] = \mu_{p_{s_k}(Z)}(p_{s_k*}\tilde{\mathcal{F}}_k^t[1])$ is perverse.

    Now note that if this were true for all choices of $Z$ with $T_Z^*(G/B)$ lying in the singular support of $\tilde{\mathcal{F}}^t_k$, this would show that $p_{s_k*}\tilde{\mathcal{F}}_k^t[1]$ is itself perverse. There remains, though, the choice of $Z = G/B$, giving rise to the zero section of $T^*(G/B)$. Thus, this argument shows not that $p_{s_k*}\tilde{\mathcal{F}}_k^t[1]$ is perverse, but rather that its perverse cohomology in any degree other than $0$ must have singular support on the zero section of $T^*(G/B)$. The only objects with this singular support are constant sheaves, and thus the claim that $p_{s_k*}\tilde{\mathcal{F}}_k^t[1]$ is perverse up to some constant sheaf factors indeed holds, proving the corollary.
\end{proof}

Since the objects $\tilde{\mathcal{F}}_k^t$ already assemble to a coherent complex, this means that after adding or removing some number of copies of the (unshifted) constant sheaf from each $\mathcal{P}_k^t$ (as was defined in Corollary \ref{cor:pervandconst}), the resulting sheaves assemble to a coherent family. To be more precise about how this new object $\mathcal{F}_k^t$ is constructed in terms of $\mathcal{P}_k^t$, we simply add or remove some number of constant sheaves from $\mathcal{P}_k^t$ so that the trace function in $\lambda$ of any element in $\mathcal{U}(\mathfrak{g})^T$ on its corresponding weight module is equal to that obtained from $\tilde{\mathcal{F}}_k^t$.
\begin{definition}
    Let $\mathcal{F}_k^t$ be the unique coherent family of sheaves obtained from $\mathcal{P}_k^t$ by adding or removing, by direct sum, some number of copies of the constant sheaf.

    Let $\mathcal{M}_k[t]$ be the module corresponding to each $\mathcal{F}_k^t$, so that
    \[\mathcal{M}_k = \oplus_{t \in \mathfrak{h}^*/Q} \mathcal{M}_k[t]\]
    is a coherent family.
\end{definition}

Since Lemma \ref{lem:cohsupp} guarantees that the coherent families $\{\mathcal{M}_k\}_{k=1}^n$ constructed in Section \ref{sec:geomconstruction} are distinct, we obtain the following. Note that for the classification we consider coherent families only up to their Jordan--H{\"o}lder filtration, as is done in \cite{Mathieu}.
\begin{theorem}
    The collection $\{\mathcal{M}_k\}_{k=1}^n$ of coherent families is the complete set of isomorphism classes of the coherent families for $\mathfrak{sl}_{n+1}$ described in Theorem \ref{thm:mathieucoherent}.
\end{theorem}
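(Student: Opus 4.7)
The plan is to reduce the statement to Mathieu's classification (Theorem \ref{thm:mathieucoherent}) by producing $n$ pairwise non-isomorphic irreducible semi-simple coherent families, and then invoking the counting result. The construction in Section \ref{sec:geomconstruction} already provides coherent families $\mathcal{M}_1, \ldots, \mathcal{M}_n$, so what remains is to (i) show that each $\mathcal{M}_k$ is irreducible and semi-simple in the sense of Definition \ref{def:coherentfamily}, and (ii) show that no two $\mathcal{M}_k$ are isomorphic.

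For (i), I would observe that for generic $t \in \mathfrak{h}^*/Q$, the local system $\mathcal{L}_t$ on the open $T$-orbit of $G/P$ has irreducible $*$-extension $\mathcal{G}_1^t$, so $\mathcal{F}_1^t = p_1^*\mathcal{G}_1^t$ is an irreducible perverse sheaf (up to shift) on $G/B$ for such $t$. A parallel argument, together with the characterization of $\mathcal{F}_k^t$ as a direct summand of $\mathcal{G}_1^t * \mathrm{IC}_{s_1\cdots s_{k-1}}$ (Corollary \ref{cor:summand}), shows that $\mathcal{M}_k[t]$ is irreducible for generic $t$. Since coherent families are determined by their Jordan--Hölder constituents (cf.\ the conventions in \cite{Mathieu} just before Theorem \ref{thm:mathieucoherent}), this gives irreducibility and semi-simplicity of $\mathcal{M}_k$ as a coherent family.

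For (ii), the key input is Lemma \ref{lem:cohsupp}, which locates the singular support of $\tilde{\mathcal{F}}_k^t$ (and hence, after subtracting constant-sheaf summands, of $\mathcal{F}_k^t$) inside the component $\overline{X_\mathbb{O}^k}$ of the minimal Springer fiber closure; by Lemma \ref{lem:irredcomponents}, the $n$ components $\overline{X_\mathbb{O}^k}$ are pairwise distinct. To promote this into non-isomorphism of coherent families, I would pick a generic $t$ at which $\mathcal{M}_k[t]$ is irreducible and use the fact (via the moment map computation in Lemma \ref{lem:ssloc}) that the associated variety of the corresponding $\mathfrak{g}$-module equals $\mu(\overline{X_\mathbb{O}^k})$, which depends on $k$. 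Thus an isomorphism $\mathcal{M}_k \cong \mathcal{M}_{k'}$ would force $\overline{X_\mathbb{O}^k} = \overline{X_\mathbb{O}^{k'}}$, whence $k = k'$.

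Combining (i) and (ii), we have $n$ pairwise non-isomorphic irreducible semi-simple coherent families of trivial central character; by Theorem \ref{thm:mathieucoherent} there are exactly $n$ such families, so the $\{\mathcal{M}_k\}$ exhaust the list. The main obstacle in this argument is the bookkeeping in step (i), namely making rigorous the passage from ``singular support of $\tilde{\mathcal{F}}_k^t$ lies in $\overline{X_\mathbb{O}^k}$'' to ``the associated variety of the irreducible module $\mathcal{M}_k[t]$ is exactly $\mu(\overline{X_\mathbb{O}^k})$ and not smaller,'' which requires ruling out degeneration of the singular support to a proper closed subset at generic $t$; for this I would use $T$-equivariance and the observation that the construction $t \mapsto \mathcal{F}_k^t$ varies algebraically in $t$, so the singular support cannot jump to a proper subvariety on a dense open locus of parameters.
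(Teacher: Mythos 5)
Your overall strategy --- produce $n$ pairwise non-isomorphic coherent families, pass to Jordan--H\"older content, and invoke the count in Theorem \ref{thm:mathieucoherent} to conclude completeness --- is exactly the route the paper takes. But your step (ii), the separation of the families, contains a genuine error. You propose to distinguish $\mathcal{M}_k$ from $\mathcal{M}_{k'}$ by the associated variety of the corresponding module, asserting that it equals $\mu(\overline{X_{\mathbb{O}}^k})$, ``which depends on $k$.'' It does not: each $X_{\mathbb{O}}^k$ is a $G$-stable component of $\overline{\mu^{-1}(\mathbb{O})}$ whose fiber over every point of $\mathbb{O}$ is a (nonempty) irreducible component of the Springer fiber by Lemma \ref{lem:irredcomponents}, so $\mu(\overline{X_{\mathbb{O}}^k}) = \overline{\mathbb{O}}$ for every $k$. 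This is forced anyway by Mathieu's result quoted in the proof of Lemma \ref{lem:cohsupp}: the associated variety of \emph{any} coherent family lies in the closure of the minimal nilpotent orbit, so this invariant cannot separate the $n$ families. Moreover, Lemma \ref{lem:ssloc} only gives the containment $\mathrm{AV}(\Gamma(\mathcal{F})) \subseteq \mu(\mathrm{SS}(\mathcal{F}))$, not equality --- and equality is impossible here on dimension grounds, since $\mathrm{AV}(\mathcal{M}_k[t])$ has dimension $n$ (the Gelfand--Kirillov dimension of a bounded weight module) while $\dim \overline{\mathbb{O}} = 2n$. The ``no degeneration at generic $t$'' issue you flag at the end is therefore not the real obstacle; the obstacle is that you have pushed the invariant down to $\mathfrak{g}^*$, where it becomes $k$-independent.

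The invariant that does the work is the singular support itself, as a subvariety of $T^*(G/B)$, not its moment-map image. Since Beilinson--Bernstein localization at the trivial central character is an equivalence, an isomorphism of coherent families $\mathcal{M}_k \cong \mathcal{M}_{k'}$ gives isomorphisms $\mathcal{M}_k[t] \cong \mathcal{M}_{k'}[t]$, hence isomorphisms of the corresponding $\mathcal{D}$-modules and equality of their singular supports; Lemma \ref{lem:cohsupp} places these in the distinct components $\overline{X_{\mathbb{O}}^k}$ and $\overline{X_{\mathbb{O}}^{k'}}$ (and the sheaves are not sums of constant sheaves, so the singular supports are not absorbed into the zero section or the intersection of components), forcing $k = k'$. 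This microlocal argument is precisely how the paper deduces distinctness, after which the appeal to Theorem \ref{thm:mathieucoherent}, up to Jordan--H\"older filtration, proceeds as in your proposal; your step (i) is consistent with the paper's conventions and needs no change.
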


\subsection{A universal description}
Recall now the ``ind-universal local system" $\tilde{\mathcal{L}}$ introduced in Definition \ref{def:locsys}.

\begin{definition}
    For any $k$ with $1 \leq k \leq n$, let $\widetilde{\mathcal{M}}_k$ be the coherent family obtained exactly as $\mathcal{M}_k[t]$ is defined in Section \ref{sec:geomconstruction}, but with the role of the local system $\mathcal{L}_t$ replaced by the ind-universal local system $\tilde{\mathcal{L}}$.
\end{definition}

Since for any irreducible local system $\mathcal{L}_t$, there is a natural inclusion $\mathcal{L}_t \hookrightarrow \tilde{\mathcal{L}}$, we see that for any $t \in T^*/Q$, there is a natural inclusion
\begin{align}
    \mathcal{M}_k[t] \hookrightarrow \widetilde{\mathcal{M}}_k,
\end{align}
and therefore a natural inclusion
\begin{align*}
    \mathcal{M}_k \hookrightarrow \widetilde{\mathcal{M}}_k.
\end{align*}
We can then view $\widetilde{\mathcal{M}}_k$ as a natural universal analog of a coherent family which contains as a submodule the irreducible coherent family constructed in Section \ref{sec:geomconstruction}.

For example, when $\mathfrak{g} = \mathfrak{sl}_2$, there is a unique semisimple coherent family $\mathcal{M}$; we let $\widetilde{\mathcal{M}}$ be its universal analog described at present. Then in this case, $\widetilde{\mathcal{M}}[0]$ is the module pictured in Figure \ref{fig:m0}, while $\mathcal{M}[0]$ is the module pictured in Figure \ref{fig:m0simple}.

\begin{figure}[htb]
    \centering
    \begin{tikzpicture}[scale=0.8, every node/.style={circle, fill=black, inner sep=1.5pt}, >=stealth]
            \foreach \x in {-6, -4,-2,2,4, 6}
            \node (lw\x) at (\x,-3) {};
            \node (llw0) at (0,-3) {};
            \foreach \x in {-4,-2,0,2,4}
            \node[fill=none] at (\x,-3.7) {$\x$};
            \node[fill=none] at (-6.65,-3) {$\dots$};
            \node[fill=none] at (6.65,-3) {$\dots$};

            \draw[->, thick, black, bend left=30] (lw-2) to node[above, sloped, midway, fill=none, inner sep=0pt, outer sep=0pt] {} (lw-4);
            \draw[->, thick, black, bend right=-30] (lw-4) to node[above, sloped, midway, fill=none, inner sep=0pt, outer sep=0pt] {} (lw-2);
            \draw[->, thick, black, bend right=-30] (lw-4) to node[above, sloped, midway, fill=none, inner sep=0pt, outer sep=0pt] {} (lw-6);
            \draw[->, thick, black, bend right=-30] (lw-6) to node[above, sloped, midway, fill=none, inner sep=0pt, outer sep=0pt] {} (lw-4);
            
            \draw[->, thick, black, bend right=-30] (lw2) to node[above, sloped, midway, fill=none, inner sep=0pt, outer sep=0pt] {} (lw4);
            \draw[->, thick, black, bend left=30] (lw4) to node[above, sloped, midway, fill=none, inner sep=0pt, outer sep=0pt] {} (lw2);
            \draw[->, thick, black, bend left=30] (lw4) to node[above, sloped, midway, fill=none, inner sep=0pt, outer sep=0pt] {} (lw6);
            \draw[->, thick, black, bend left=30] (lw6) to node[above, sloped, midway, fill=none, inner sep=0pt, outer sep=0pt] {} (lw4);

            \draw[->, thick, black, bend left=0] (lw2) to node[above, sloped, midway, fill=none, inner sep=0pt, outer sep=0pt] {} (llw0);
            \draw[->, thick, black, bend left=0] (lw-2) to node[above, sloped, midway, fill=none, inner sep=0pt, outer sep=0pt] {} (llw0);
        \end{tikzpicture}
    \caption{A schematic picture of the module $\mathcal{M}[0]$.}
    \label{fig:m0simple}
\end{figure}
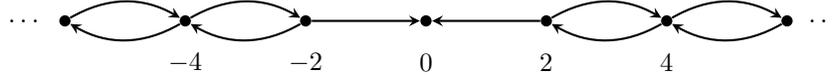

\clearpage

\begingroup
\bibliographystyle{alphaurl}
\bibliography{bibl}
\endgroup

\end{document}